\documentclass[a4paper,11pt,reqno]{amsart}

\pdfoutput=1 
\usepackage{ifpdf}

\usepackage[hidelinks,pdfpagelabels]{hyperref}

\usepackage[british]{babel} 
\hyphenation{
a-be-lian
ac-te-go-ries
ac-te-go-ry
ad-joint
ad-joints
ad-junc-tion
ad-junc-tions
al-ge-broid
al-ge-broids
as-so-ci-a-ti-vi-ty
as-so-ci-a-tor
as-so-ci-a-tors
Bé-na-bou
bi-ad-joint
bi-ad-joints
bi-ad-junc-tion
bi-ad-junc-tions
bi-al-ge-bra
bi-al-ge-bras
bi-al-ge-broid
bi-al-ge-broids
bi-cat-e-go-ri-cal
bi-cat-e-go-ries
bi-cat-e-go-ry
bi-co-lim-it
bi-co-lim-its
bi-du-al
bi-du-als
bi-du-al-i-ties
bi-du-al-i-ty
bi-e-quiv-a-lence
bi-e-quiv-a-len-ces
bi-e-quiv-a-lent
bi-jec-tion
bi-jec-tions
bi-jec-tive
bi-lim-it
bi-lim-its
bi-mon-oid
co-ac-tion
co-ac-tions
co-al-ge-bra
co-al-ge-bras
co-al-ge-broid
co-al-ge-broids
co-as-so-ci-a-tive
co-as-so-ci-a-tiv-i-ty
co-com-mu-ta-tiv-i-ty
co-com-plete
co-com-ple-tion
co-com-ple-tions
co-con-tin-u-ous
co-do-main
co-e-qual-is-er
co-e-qual-is-ers
co-forks
co-ho-mol-o-gy
co-in-var-i-ants
co-lim-it
co-lim-its
com-mu-ta-tiv-i-ty
co-mon-ad
co-mon-ad-ic
co-mon-ads
co-mon-oid
co-mon-oid-al
co-mon-oid-al-ly
co-mon-oid-ale
co-mon-oid-ales
co-mon-oids
co-mod-ule
co-mod-ules
com-pos-a-ble
co-mul-ti-pli-ca-tion
con-tra-var-iant
co-op-mon-ad-ic
co-prod-uct
co-re-flex-ive
co-ring
co-rings
co-skel-e-tal
co-skel-e-tal-i-ty
co-skel-e-ton
co-u-nit
co-u-ni-tal-i-ty
de-gen-er-a-cies
dis-trib-u-tiv-i-ty
Eil-en-berg
ep-i-mor-phic
ep-i-mor-phism
ep-i-mor-phisms
fin-i-ta-ry
func-tor
func-to-ri-al
func-tor-i-al-i-ty
func-tors
i-so-mor-phism
in-vert-i-bil-i-ty
hom
homs
hom-o-lo-gy
hom-o-mor-phic
hom-o-mor-phism
hom-o-mor-phisms
hom-o-to-py
Hopf
Kan
Kleis-li
McCrud-den
Moer-dijk
mon-ad
mon-ads
mon-a-dic
mon-a-dic-i-ty
mon-oid
mon-oid-al
mon-oid-al-ly
mon-oid-ale
mon-oid-ales
mon-oids
mon-o-mor-phic
mon-o-mor-phism
mon-o-mor-phisms
mor-phism
mor-phisms
mul-ti-ob-ject
nat-u-ral
nat-u-ral-i-ty
op-lax
op-mon-a-dic
op-mon-a-dic-i-ty
op-mon-oid-al
Pa-ré
pre-com-pose
pre-com-posed
pre-com-pos-ing
pre-com-po-si-tion
pro-func-tor
pro-func-tors
pseu-do-ad-joint
pseu-do-al-ge-bra
pseu-do-al-ge-bras
pseu-do-func-tor
pseu-do-func-tor-i-al-i-ty
pseu-do-func-tors
pseu-do-in-verse
pseu-do-in-vers-es
pseu-do-lim-it
pseu-do-mon-ad
pseu-do-mon-ads
pseu-do-mon-oid
pseu-do-mon-oids
pseu-do-nat-u-ral
pseu-do-nat-u-ral-i-ty
pseu-do-sim-pli-cial
pull-back
pull-backs
sim-plex
sim-pli-ces
sim-pli-cial
sub-al-ge-bra
sub-bi-cat-e-go-ries
sub-bi-cat-e-go-ry
sub-mod-ule
sub-mod-ules
sub-sim-pli-cial
Szla-chá-nyi
sur-jec-tive
strict-i-fi-ca-tion
strict-i-fied
Sweed-ler
tens-or-ed
tens-or-ing
Ta-ke-u-chi
tri-cat-e-go-ri-cal
tri-cat-e-go-ries
tri-cat-e-go-ry
tri-func-tor
tri-func-tors
tri-nat-u-ral
u-ni-tal-i-ty
u-ni-tal
u-ni-tor
u-ni-tors
whisk-er-ing
} 
\usepackage[latin1]{inputenc} 
\usepackage[T1]{fontenc} 

\usepackage{amsmath,amssymb,amsfonts,amsthm} 
\usepackage{bbm}

\usepackage[all,2cell,cmtip]{xy} 
\UseAllTwocells

\allowdisplaybreaks 
\usepackage[svgnames]{xcolor}
\numberwithin{equation}{section}

\usepackage{enumitem}

\usepackage{etoolbox}
\makeatletter 
\mathchardef\latex@simeq\simeq
\let\simeq\relax
\DeclareRobustCommand{\simeq}{\mathrel{\mathpalette\new@simeq\relax}}
\newcommand{\new@simeq}[2]{%
  \raisebox{\simeq@raise{#1}}{$\m@th#1\latex@simeq$}%
}
\newcommand{\simeq@raise}[1]{%
  \ifx#1\displaystyle .425\fontdimen22\textfont2\fi
  \ifx#1\textstyle .425\fontdimen22\textfont2\fi
  \ifx#1\scriptstyle .425\fontdimen22\scriptfont2\fi
  \ifx#1\scriptscriptstyle .425\fontdimen22\scriptscriptfont2\fi
}
\patchcmd{\@vereq}{.5}{0}{}{}
\makeatother 

\theoremstyle{plain}
\newtheorem{teo}[subsection]{Theorem}
\newtheorem{prop}[subsection]{Proposition}
\newtheorem{lem}[subsection]{Lemma}
\newtheorem{cor}[subsection]{Corollary}

\newtheorem*{teo*}{Theorem}
\theoremstyle{remark}
\newtheorem{eje}[subsection]{Example}

\newtheorem{rem}[subsection]{Remark}

\theoremstyle{definition}
\newtheorem{defi}[subsection]{Definition}

\renewenvironment{proof}{\textit{Proof.}$\;$}{\qed}

\DeclareMathOperator{\Set}{\mathsf{Set}}

\DeclareMathOperator{\Vect}{\mathsf{Vect}}


\DeclareMathOperator{\Opmon}{\mathsf{OpMon}}
\DeclareMathOperator{\SkOpmon}{\mathsf{SkOpMon}}
\DeclareMathOperator{\SkMon}{\mathsf{SkMon}}
\DeclareMathOperator{\OplaxAct}{\mathsf{OplaxAct}}

\DeclareMathOperator{\Cat}{\mathsf{Cat}}

\DeclareMathOperator{\Mod}{\mathsf{Mod}}
\DeclareMathOperator{\Comod}{\mathsf{Comod}}
\DeclareMathOperator{\rComod}{\mathsf{rComod}}
\DeclareMathOperator{\Span}{\mathsf{Span}}
\DeclareMathOperator{\Prof}{\mathsf{Prof}}
\DeclareMathOperator{\Cgb}{\mathsf{Cgb}}

\DeclareMathOperator{\id}{\mathrm{id}}


\newcommand{\ot}{^{\circ}\!}
\newcommand{\ob}{_{\circ}\!}
\renewcommand{\qedsymbol}{$\blacksquare$}
\renewcommand{\qed}{\hfill\qedsymbol}
\renewenvironment{proof}{\textit{Proof.}$\;$}{\qed}
\newcommand{\pb}[1]{\save*!/#1-1.2pc/#1:(-1,1)@^{|-}\restore} 


\newlabel{ax:SKM1}{{{SKM1}}{10}{}{teo.4.4}{}}
\newlabel{ax:SKM2}{{{SKM2}}{10}{}{teo.4.4}{}}
\newlabel{ax:SKM3}{{{SKM3}}{10}{}{teo.4.4}{}}
\newlabel{ax:SKM4}{{{SKM4}}{10}{}{teo.4.4}{}}
\newlabel{ax:SKM5}{{{SKM5}}{10}{}{teo.4.4}{}}

\title{Coalgebroids in monoidal bicategories and their comodules}
\author{Ramón Abud Alcalá}
\thanks{The results in this paper are included in the first chapter of my PhD thesis \emph{Oplax actions and enriched icons with applications to coalgebroids and quantum categories} which was written under the supervision of Steve Lack. I would like to thank Steve Lack for all his guidance and useful comments.}
\begin{document}

\begin{abstract}
Quantum categories have been recently studied because of their relation to bialgebroids, small categories, and skew monoidales. This is the first of a series of papers based on the author's PhD thesis in which we examine the theory of quantum categories developed by Day, Lack, and Street.

A quantum category is an opmonoidal monad on the monoidale associated to a biduality $R\dashv R\ot$, or enveloping monoidale, in a monoidal bicategory of modules $\Mod(\mathcal{V})$ for a monoidal category $\mathcal{V}$. Lack and Street proved that quantum categories are in equivalence with right skew monoidales whose unit has a right adjoint in $\Mod(\mathcal{V})$. Our first important result is similar to that of Lack and Street. It is a characterisation of opmonoidal \emph{arrows} on enveloping monoidales in terms of a new structure named \emph{oplax action}. We then provide three different notions of comodule for an opmonoidal arrow, and using a similar technique we prove that they are equivalent. Finally, when the opmonoidal arrow is an opmonoidal monad, we are able to provide the category of comodules for a quantum category with a monoidal structure such that the forgetful functor is monoidal.
\end{abstract}
\maketitle
\section{Motivation and Historical Context}
Bialgebroids were defined by Takeuchi in \cite{Takeuchi1977} as an alternative to the existing theory of $\times$-bialgebras over a commutative algebra due to Sweedler \cite{Sweedler1974}, so as to allow a non-commutative base algebra as well. Almost twenty years later in \cite{Day2003a}, Day and Street used 2-dimensional category theory to prove that Takeuchi's bialgebroids and small categories share a common theoretical framework which they call \emph{quantum categories}. In this paper we extend certain aspects of the existing theory of bialgebroids to a more general context, which in particular includes that of quantum categories.

While bialgebras over a commutative ring $k$ consist of a $k$-algebra and a $k$-coalgebra interacting in an appropriate way, the elementary description of a bialgebroid from the viewpoint of classical ring and module theory is quite elaborate. If $R$ is a (not necessarily commutative) $k$-algebra, the data for a $R$-bialgebroid consists of a $k$-module $B$ together with suitably compatible $R$-coring and $(R\ot\otimes R)$-ring structures; i.e. a comonoid in $R$-$\Mod$-$R$ and a monoid in $(R\ot\otimes R)$-$\Mod$-$(R\ot\otimes R)$. Some of the symmetry that bialgebras have is now lost for bialgebroids; for example, in the definition of a bialgebra one may exchange the roles of the ``algebra'' and the ``coalgebra'' structures and get a bialgebra again, whereas for a bialgebroid swapping the roles of the ``ring'' and ``coring'' structures gives a different mathematical object. Notice that there are four $R$-actions on the same $k$-module $B$ for which even choosing an adequate notation is not simple and each author does it in a different way.

In the early 2000's Szlachányi made significant contributions towards a simpler description of a bialgebroid based on the work by \cite{Moerdijk2002} and \cite{McCrudden2002} on opmonoidal monads, and later developing some categorical tools himself; namely \emph{skew monoidal categories}.

\begin{teo}\label{teo:Bialgebroid}
For a $k$-algebra $R$ the following are equivalent,
\begin{enumerate}
\item A right $R$-bialgebroid (original definition $\times_R$-bialgebra \cite[Section 4]{Takeuchi1977}).
\item An $(R\ot\otimes R)$-ring $B$ for which the category of right $B$-modules has a monoidal structure such that the forgetful functor is strong monoidal \cite[Theorem 5.1]{Schauenburg1998}.
\item A cocontinuous opmonoidal monad on the category $R$-$\Mod$-$R$ \cite[Section 4.2]{Szlachanyi2003}.
\item A monoid in a monoidal category of coalgebroids \cite[Section 2.1]{Szlachanyi2004}.
\item A closed right skew monoidal structure on the category $\Mod$-$R$ with skew unit $R$ \cite[Theorem 9.1]{Szlachanyi2012}.
\end{enumerate}
\end{teo}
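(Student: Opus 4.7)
The plan is to establish the five conditions are equivalent by proving them pairwise, following the original references. First, for (i) $\Leftrightarrow$ (ii), I would follow Schauenburg's argument. An $(R\ot\otimes R)$-ring structure on $B$ canonically endows each right $B$-module with an $R$-bimodule structure via the two embeddings $R\ot,R\hookrightarrow R\ot\otimes R\to B$. Giving a monoidal structure on $\Mod$-$B$ that lifts the $R$-bimodule tensor product in such a way that the forgetful functor is strong monoidal amounts, by transferring structure along the free--forgetful adjunction, to endowing $B$ with a comultiplication and counit satisfying precisely Takeuchi's $\times_R$-bialgebra axioms.

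For (ii) $\Leftrightarrow$ (iii) I combine two separate observations. Cocontinuous monads $T$ on $R$-$\Mod$-$R$ are determined by $T(R)=B$, which inherits an $(R\ot\otimes R)$-ring structure from the monad structure; Beck's monadicity theorem then identifies Eilenberg--Moore algebras for $T$ with right $B$-modules. Independently, Moerdijk's theorem on opmonoidal monads asserts that opmonoidal structures on such a monad are in bijection with liftings of the monoidal structure of $R$-$\Mod$-$R$ to the category of algebras for which the forgetful functor is strong monoidal. Composing these two bijections yields the equivalence with (ii).

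Next, (iii) $\Leftrightarrow$ (iv) follows by identifying Szlach\'anyi's coalgebroids over $R$ with cocontinuous opmonoidal endofunctors of $R$-$\Mod$-$R$, and checking that the monoidal structure on coalgebroids is exactly composition of such endofunctors; consequently a monoid in coalgebroids is precisely a cocontinuous opmonoidal monad on $R$-$\Mod$-$R$. Finally, (iii) $\Leftrightarrow$ (v) is Szlach\'anyi's skew monoidal reformulation: the skew monoidal tensor on $\Mod$-$R$ is encoded by $B$, the skew unit is $R$, closedness supplies the right adjoints needed to apply Beck's theorem so as to recover the monad structure, and the skew associator and left/right unit constraints translate one-for-one into the opmonoidal comultiplication and counit of the monad.

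The step I expect to demand the most care is (iii) $\Leftrightarrow$ (v), since the skew monoidal associator and unitors are not required to be invertible, and tracking how their non-invertibility encodes the comultiplication and counit of $B$---rather than giving an honest monoidal structure---is the subtle heart of Szlach\'anyi's contribution. The remaining equivalences, while notationally heavy because of the four distinct $R$-actions on $B$, reduce to routine applications of the transfer of structure along adjunctions once the dictionaries between rings, monads, and endofunctors have been set up.
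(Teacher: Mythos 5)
The paper itself offers no proof of this statement: it is a survey theorem, with each equivalence delegated to the cited reference, so there is no in-paper argument to compare against. Your outline reproduces exactly the decomposition used in that literature --- Schauenburg's lifting of the $R$-bimodule tensor product for (i)$\Leftrightarrow$(ii), Eilenberg--Watts together with Moerdijk's correspondence between opmonoidal structures on a monad and monoidal structures on its Eilenberg--Moore category for (ii)$\Leftrightarrow$(iii), composition of coalgebroids for (iii)$\Leftrightarrow$(iv), and Szlach\'anyi's skew monoidal reformulation for (iii)$\Leftrightarrow$(v) --- and at the level of a sketch this is the intended argument; the arrow-level content of (iii)$\Leftrightarrow$(iv) is in fact re-proved later in the paper as Lemma~\ref{lem:CoalgebroidsAsQuantum}. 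One concrete slip is worth correcting: a cocontinuous monad $T$ on $R$-$\Mod$-$R$ is determined via Eilenberg--Watts by its value on the free $R$-bimodule of rank one, i.e.\ $B=T(R\ot\otimes R)$, not by $T(R)$. The object $R$ is the monoidal \emph{unit} of $R$-$\Mod$-$R$, not a generator, and $T(R)\cong B\otimes_{R\ot\otimes R}R$ is in general a proper quotient of $B$; indeed the opmonoidal unit constraint is a map $T(R)\to R$ encoding the counit of the bialgebroid, so $T(R)=B$ would collapse the two. A second, cosmetic point: the canonical maps $R,R\ot\to R\ot\otimes R\to B$ need not be injective, so ``embeddings'' is too strong. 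With those adjustments your dictionary between (ii) and (iii) closes up as intended, and the rest of the sketch is consistent with the cited sources.
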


Motivated by the work of Szlachányi the Australian school of category theory gives a similar account of Theorem~\ref{teo:Bialgebroid} but in a bicategorical language instead, with the concept of quantum categories for a monoidal category $\mathcal{V}$ taking the place where bialgebroids are. Quantum categories are defined for a symmetric monoidal category with equalisers of coreflexive pairs $(\mathcal{V},\otimes,I)$, but within the context of the bicategory $\Comod(\mathcal{V})$ of comonoids in $\mathcal{V}$, two sided comodules between them, and their morphisms. There is a notion of duality amongst the objects of $\Comod(\mathcal{V})$; for each comonoid $R$ in $\mathcal{V}$ there is a comonoid $R\ot$ obtained by reversing the comultiplication rule of $R$. These comonoids come equipped with ``unit'' and ``counit'' two sided comodules
\[
\vcenter{\hbox{\xymatrix{
n:I\ar[r]&R\ot\otimes R
}}}
\qquad\qquad
\vcenter{\hbox{\xymatrix{
e:R\otimes R\ot\ar[r]&I
}}}
\]
both of which have $R$ as the underlying object and whose actions are the left and right regular actions with respect to $R\ot$ and $R$ as pictured above. Furthermore, these comodules satisfy the triangle identities in $\Comod(\mathcal{V})$ up to coherent isomorphism. This concept is that of a biduality, and because every object $R$ has a right bidual $R\ot$ we say that $\Comod(\mathcal{V})$ is right autonomous. Bidualities induce a monoidal structure on the object $R\ot\otimes R$ with product $\xymatrix@1@C=5mm{1\otimes e\otimes 1:R\ot\otimes R\otimes R\ot\otimes R\ar[r]&R\ot\otimes R}$ and unit $n$ satisfying the associative and unit laws up to coherent isomorphism. We call this structure the enveloping monoidale of a biduality. The following theorem summarises the view that the Australian school of category theory gave to Theorem~\ref{teo:Bialgebroid}.

\begin{teo}\label{teo:QuantumCats}
Let $\mathcal{V}$ be a braided monoidal category which has all equalisers of coreflexive pairs, and in which these are preserved by tensoring with objects on either side. For a comonoid $R$ in $\mathcal{V}$ the following are equivalent,
\begin{enumerate}
\item A \emph{quantum category} over $R$ in $\mathcal{V}$ (original definition \cite[Section 12]{Day2003a}); that is a comonad on the enveloping monoidale $R\ot\otimes R$ in $\Comod(\mathcal{V})$ for which the coEilenberg-Moore object has a monoidal structure such that the forgetful arrow is strong monoidal.
\item A monoidal comonad on $R\ot\otimes R$ in $\Comod(\mathcal{V})$ \cite[Proposition 3.3]{Day2003a}.
\item A left skew monoidal structure on $R$ in $\Comod(\mathcal{V})$ such that the skew unit is coopmonadic \cite[Theorem 6.4]{Lack2012}.
\end{enumerate}
\end{teo}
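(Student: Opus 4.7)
The plan is to prove the two equivalences (ii)$\Leftrightarrow$(i) and (ii)$\Leftrightarrow$(iii) separately, by adapting established bicategorical techniques that appear in the cited references.

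For (ii)$\Leftrightarrow$(i), I would invoke the bicategorical version of the Moerdijk--McCrudden correspondence, dualised from monads to comonads and relativised to the monoidal bicategory $\Comod(\mathcal{V})$. Concretely, given a monoidal comonad $(G,\delta,\epsilon)$ on the monoidale $R\ot\otimes R$, the monoidale structure lifts along the cofree arrow to make the coEilenberg--Moore object $\EM(G)$ into a monoidale with the forgetful arrow strong monoidal. For the reverse direction, the hypotheses on $\mathcal{V}$ (coreflexive equalisers, preserved by tensoring) guarantee that such coEM objects exist in $\Comod(\mathcal{V})$, and the universal property of $\EM(G)$, together with the strong monoidality of the forgetful arrow, uniquely recovers a lax monoidal structure on $G$ compatible with $\delta$ and $\epsilon$.

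For (ii)$\Leftrightarrow$(iii), I would follow Lack and Street and exploit the universal property of the enveloping monoidale $R\ot\otimes R$ as the pseudomonoid associated to the biduality $R\dashv R\ot$. Arrows $R\ot\otimes R\to R\ot\otimes R$ transpose via the biduality to arrows $R\otimes R\to R$, and similarly on 2-cells; under this transpose a monoidal comonad on the enveloping monoidale corresponds to a left skew monoidale structure on $R$ in $\Comod(\mathcal{V})$. The comonad's comultiplication and counit, together with the lax-monoidal coherence on its underlying arrow, produce the skew associator and unitors, while the coopmonadic hypothesis on the skew unit records exactly that the original structure on $R\ot\otimes R$ is reconstructible, as a comonad, from its resolution through $R$.

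The main obstacle will be the careful matching of coherence data in (ii)$\Leftrightarrow$(iii): one must verify that the pentagon and unit axioms of a skew monoidale on $R$ translate, under the biduality transpose, into the associativity and unitality of the comonad together with the compatibility axioms for its monoidal structure. This is essentially a bookkeeping exercise with the coherence 2-cells of the pseudomonoid $R\ot\otimes R$ and of the biduality, but it constitutes the bulk of the technical work.
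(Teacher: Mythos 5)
First, a point of reference: the paper itself offers no proof of this statement. Theorem~\ref{teo:QuantumCats} is a survey item in the historical introduction, quoted with citations to Day--Street for (i)$\Leftrightarrow$(ii) and to Lack--Street for (ii)$\Leftrightarrow$(iii); the closest the paper comes to proving anything of this shape is the arrow-level analogue developed in Sections~\ref{sec:OpmonoidalOpmonadicity} and \ref{sec:OplaxActionsOpmonadicity}. So there is no in-paper argument to compare yours against, and your proposal has to be judged on its own terms as a reconstruction of the cited proofs. Your (i)$\Leftrightarrow$(ii) leg is the standard doctrinal-adjunction / lifting argument and is fine in outline, granting the existence of coEilenberg--Moore objects, which the hypotheses on $\mathcal{V}$ do supply.

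The (ii)$\Leftrightarrow$(iii) leg, however, has a concrete gap. You assert that arrows $R\ot\otimes R\to R\ot\otimes R$ ``transpose via the biduality to arrows $R\otimes R\to R$.'' They do not: the biduality $R\dashv R\ot$ gives $\Comod(\mathcal{V})(R\ot\otimes R,\,R\ot\otimes R)\simeq\Comod(\mathcal{V})(R\otimes R\ot\otimes R,\,R)$, with an extra $R\ot$ in the middle. Eliminating that middle factor to land on $R\otimes R\to R$ is exactly where the coopmonadicity of the adjunction induced by the counit of the comonoid $R$ enters: one must descend along the (co)Kleisli-type adjunction, using its couniversal property to both construct the skew product and transport the coherence cells. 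This is the mechanism the present paper isolates as ``opmonoidal $\dashv$ monoidal opmonadicity'' in Theorem~\ref{teo:OpmonadicityOpmonoidal} and Corollary~\ref{cor:Opmon_is_OplaxAct_Local}, and it is also why the statement restricts to skew units that are coopmonadic. In your sketch the coopmonadicity appears only at the end, as a condition ``recording reconstructibility''; in fact it is load-bearing from the start, since without it the correspondence of underlying data does not even typecheck. Repair the leg by factoring it as (a) descent along the coopmonadic adjunction and (b) transposition along the biduality, and then carry the comonad comultiplication, counit, and monoidal constraints through both steps to obtain the skew associator and unitors.
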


Theorem~\ref{teo:QuantumCats} is the starting point of our research, but we will rather consider its dual statement and for a monoidal bicategory $\mathcal{M}$ taking the role of $\Mod(\mathcal{V})$; this is Theorem~\ref{teo:QuantumMonads} below. In this way, there is less notation and structure to keep track of during the proofs. After this switch of perspective, instead of talking about coopmonadic adjunctions we now talk about opmonadic adjunctions. An opmonadic adjunction in $\mathcal{M}$ (or adjunction of Kleisli-type) is an adjunction with a universal property; in particular, it is an initial adjunction amongst those that have the same associated monad. In the case of $\Cat$, opmonadic adjunctions are those whose left adjoint is essentially surjective on objects. In other words, the comparison functor with the Kleisli category of algebras for the associated monad is an equivalence \cite[Theorem IV.5.3]{MacLane1997}. In $\Mod(\mathcal{V})$ opmonadic adjunctions behave quite well since all adjunctions are monadic and opmonadic. For example, the unit arrow $\xymatrix@1@C=5mm{i:I\ar[r]&R}$ of a monoid $R$ in $\mathcal{V}$ induces an adjunction in $\Mod(\mathcal{V})$ as shown.
\[
\vcenter{\hbox{\xymatrix{
R\xtwocell[d]{}_{i}^{i^*}{'\dashv}\\
I
}}}
\]
The opmonadicity of this adjunction translates between two descriptions of left-$R$ right-$X$ modules: as arrows $\xymatrix@1@C=5mm{A:R\ar[r]&X}$ in $\Mod(\mathcal{V})$, or as right $X$-modules $A$ together with a left $R$-action $\xymatrix@1@C=5mm{R\otimes A\ar[r]&A}$ in $\mathcal{V}$.

Now, it is not surprising that the new ambient monoidal bicategory $\mathcal{M}$ must satisfy some mild conditions for the theorem to be true. We explicitly state these technical conditions in Section~\ref{sec:OpmonoidalOpmonadicity} and collect them under the name of \emph{opmonadic-friendly monoidal bicategories}. Informally, this is saying that opmonadic adjunctions behave well with respect to the tensor product and the composition of $\mathcal{M}$.

\begin{teo}\label{teo:QuantumMonads}
Let $\mathcal{M}$ be an opmonadic-friendly monoidal bicategory with Eilenberg-Moore objects for monads, $R$ an object with right bidual $R\ot$, and an opmonadic adjunction as shown,
\[
\vcenter{\hbox{\xymatrix{
R\ot\xtwocell[d]{}_{i\ob}^{i\ot}{'\dashv}\\
I
}}}
\]
then the following are equivalent:
\begin{enumerate}
\item A monoidale $B$ and an arrow $\xymatrix@1@C=5mm{B\ar[r]&R\ot\otimes R}$ which is monadic and strong monoidal.
\item An opmonoidal monad $\xymatrix@1@C=5mm{R\ot\otimes R\ar[r]&R\ot\otimes R}$ on the enveloping monoidale induced by the biduality.
\item A right skew monoidal structure with skew unit $\xymatrix@1@C=5mm{i:I\ar[r]&R}$ the opposite of $i\ot$.
\end{enumerate}
\end{teo}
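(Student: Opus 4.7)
The plan is to organise the proof around the two equivalences \hbox{(i)$\Leftrightarrow$(ii)} and \hbox{(ii)$\Leftrightarrow$(iii)}, with the latter being the genuinely new content and the former following from the bicategorical version of the Moerdijk--McCrudden theorem on opmonoidal monads and strong monoidal monadic arrows.

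First I would handle (ii)$\Rightarrow$(i) by forming the Eilenberg--Moore object $B:=(R\ot\otimes R)^{T}$ of the opmonoidal monad $T$, and invoking the standard fact that in a monoidal bicategory with EM objects, $B$ inherits a monoidale structure for which the canonical forgetful arrow $U\colon B\to R\ot\otimes R$ is monadic and strong monoidal. Here one uses the hypothesis that $\mathcal{M}$ is opmonadic-friendly to ensure the EM object interacts correctly with the tensor product, so that the monoidale data on $R\ot\otimes R$ lifts along $U$. Conversely, for (i)$\Rightarrow$(ii), given a monadic strong monoidal $F\colon B\to R\ot\otimes R$ with left adjoint $L$, the monad $T=FL$ is opmonoidal because $L$ is opmonoidal (as the left adjoint of a strong monoidal arrow, by mates) and $F$ is strong monoidal; the monad unit and multiplication are opmonoidal 2-cells for the same reason.

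The core work is (ii)$\Leftrightarrow$(iii), which I would prove in two stages using the paper's characterisation of opmonoidal arrows $R\ot\otimes R\to R\ot\otimes R$ as oplax actions. First, one transports an opmonoidal monad $T$ across the biduality $R\dashv R\ot$ to an oplax action structure on a single arrow out of $R$; the monoidale structure on $R\ot\otimes R$ is built from $n$, $e$, and the mate of the identity, so the opmonoidal coherence 2-cells translate into the oplax action's associativity and unit coherence 2-cells. Second, one uses the opmonadic adjunction $i\ot\dashv i\ob$ to descend this oplax action along $i\ot$ to a binary product $R\otimes R\to R$ and unit $i\colon I\to R$ on $R$, together with associator and unitor 2-cells pointing in the directions characteristic of a right skew monoidal object. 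The monad axioms for $T$ correspond to the pentagon and triangle coherence axioms for the skew monoidal structure, and the choice of skew unit $i$ as the mate of $i\ot$ is forced by the structure of $n$ in the enveloping monoidale. The reverse direction reverses this procedure: given a right skew monoidal structure on $R$ with the prescribed unit, the opmonadicity of $i\ot$ is precisely what lets one extend its product along $i\ot\otimes i\ot$ to an oplax action, which then transports across the biduality to an opmonoidal monad on $R\ot\otimes R$.

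The main obstacle will be the bookkeeping in this second stage: the opmonadicity condition is used in a delicate way to ensure that the skew monoidal data extends uniquely and coherently to oplax action data, and one must verify that the three different descriptions agree not just on objects but on 2-cells and coherence axioms. In particular, the identification between the associator direction of a right skew monoidal structure and the opmonoidal coaction direction of $T$ requires careful tracking of mates across the biduality, and the triangle law of the skew monoidal structure corresponds to a non-obvious diagram involving $n$, $e$, and the unit of $T$. I would therefore isolate the opmonadic-descent step as a lemma (appealing to the results of the paper's section on opmonadic-friendliness) so that the final verification reduces to matching coherence data between oplax actions and right skew monoidal objects.
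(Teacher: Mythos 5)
You should first be aware that the paper does not actually prove Theorem~\ref{teo:QuantumMonads}: immediately after the statement it attributes (i)$\Leftrightarrow$(ii) to \cite[Proposition 3.3]{Day1997} and (ii)$\Leftrightarrow$(iii) to \cite[Theorem 5.2]{Lack2012}, and uses the result only as motivation for the new, arrow-level analogue it does prove (Theorem~\ref{teo:QuantumArrows}, via Theorems~\ref{teo:OpmonadicityOpmonoidal} and~\ref{teo:OpmonadicityOplaxActions} and Corollary~\ref{cor:Opmon_is_OplaxAct_Local}). Measured against those sources and against the paper's own later machinery, your sketch is a faithful reconstruction: (i)$\Leftrightarrow$(ii) is doctrinal adjunction together with lifting the monoidale structure to the Eilenberg--Moore object, and (ii)$\Leftrightarrow$(iii) is exactly the two-stage ``transpose across the biduality, then descend along the opmonadic adjunction'' technique that the paper formalises for opmonoidal \emph{arrows} and that, enriched with the monad structure, recovers the Lack--Street argument. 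Two corrections are needed, however. First, the lifting of the monoidale structure to $B=(R\ot\otimes R)^{T}$ does not use opmonadic-friendliness; it uses only the existence of Eilenberg--Moore objects (a separate hypothesis in the statement) together with the opmonoidal constraints of $T$, which is how a tensor product of $T$-modules acquires a $T$-module structure. Opmonadic-friendliness (Definition~\ref{def:OpmonadicFriendly}) concerns preservation of opmonadicity under tensoring and of reflexive coequalisers under composition, and enters only in the (ii)$\Leftrightarrow$(iii) descent. Second, you have the handedness of the adjunction reversed: it is $i\ob\dashv i\ot$, with $i\ob$ the left adjoint, and the descent equivalence is given by \emph{precomposition with the left adjoint} $i\ob 1:R\to R\ot\otimes R$; the opmonadicity is what allows one to go back up, extending a skew monoidal product $RR\to R$ (equipped with its canonical module structure for the induced monad) to an arrow out of $R\ot\otimes R$. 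With those repairs your outline matches the cited proofs.
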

Where (i)$\Leftrightarrow$(ii) is \cite[Proposition 3.3]{Day1997}, and (ii)$\Leftrightarrow$(iii) is \cite[Theorem 5.2]{Lack2012}.

\section{Aim and Structure}
We mentioned that bialgebroids are in bijection with monoidal structures on the category of modules over the underlying $(R\ot\otimes R)$-ring. It is natural to ask if a similar situation holds for the category of comodules. And it is true that, for a $k$-coalgebra, bialgebra structures are in bijection with monoidal structures on the category of comodules of the $k$-coalgebra. This is because it is possible to see $k$-coalgebras as coalgebras for a comonad. But in the case of bialgebroids it is only known that the category of comodules has a monoidal structure.

Now, right comodules for an $R$-bialgebroid are defined as right comodules in $\Mod$-$R$ for the underlying $R$-coring. Phùng proves in \cite[Lemma 1.4.1]{Hai2008} that these comodules bear an extra left $R$-module structure. This allows him to tensor comodules over $R$ to form a monoidal structure. This extra left $R$-module structure does not need to exist for comodules over an arbitrary $R$-coring, but it does for what is called an $R|R$-coalgebroid. Coalgebroids were defined by Takeuchi in \cite[Definition 3.5]{Takeuchi1987} in a slightly more general form; for two $k$-algebras $R$ and $S$, an $R|S$-coalgebroid is a module in $(R\ot\otimes R)$-$\Mod$-$(S\ot\otimes S)$ with some further structure which in particular includes an underlying $S$-coring. In \cite{Szlachanyi2004} Szlachányi proved that these $R|S$-coalgebroids are the arrows of a bicategory whose monads are $R$-bialgebroids. Thus, from this point of view the more complicated part in the definition of a bialgebroid rests within the coalgebroid.

This is the first of a series of papers based on the author's PhD thesis \cite{Abud2017} where we explore the theory of coalgebroids but in the generalised context of a monoidal bicategory $\mathcal{M}$. Hence what we really study are opmonoidal arrows between enveloping monoidales. Which in the case that $\mathcal{M}=\Mod(\Vect_k)$ such opmonoidal arrows are the coalgebroids of Takeuchi see Subsection~\ref{subsec:Coalgebroids}. The paper is organised in the following way: Section~\ref{sec:Background} is a small summary of all the background material and conventions taken. Section~\ref{sec:SkewMonoidalesBidualitiesAdjunctions} is a quick survey on formal category theory and what might be called formal monoidal category theory. The reader who feels comfortable working inside monoidal bicategories might go directly to Lemma~\ref{lem:OneRightSkewMonoidale}. In Section~\ref{sec:OpmonoidalOpmonadicity} and Seciton~\ref{sec:OplaxActionsOpmonadicity} we prove a theorem similar to Theorem~\ref{teo:QuantumMonads} where in place of right skew monoidales a new structure appears, we call it \emph{oplax right action}. These oplax right actions are a notion of action within the bicategory $\mathcal{M}$ with respect to a right skew monoidale, the associative and unit laws are witnessed by cells that are not necessarily invertible and satisfy further coherence conditions. Our first main theorem found below as Corollary~\ref{cor:Opmon_is_OplaxAct_Local} reads as follows.

\begin{teo}\label{teo:QuantumArrows}
Let $\mathcal{M}$ be an opmonadic-friendly autonomous monoidal bicategory and let $i\ob\dashv i\ot$ be an opmonadic adjunction as shown,
\[
\vcenter{\hbox{\xymatrix{
R\ot\xtwocell[d]{}_{i\ob}^{i\ot}{'\dashv}\\
I
}}}
\]
then the following are equivalent:
\begin{enumerate}
\item An opmonoidal arrow $\xymatrix@1@C=5mm{R\ot\otimes R\ar[r]&S\ot\otimes S}$ between enveloping monoidales.
\item An oplax right action $\xymatrix@1@C=5mm{S\otimes R\ar[r]&S}$, with respect to the skew monoidal structure on $R$ corresponding to the identity opmonoidal monad on $R\ot\otimes R$ under the equivalence in Theorem~\ref{teo:QuantumCats}.
\end{enumerate}
Furthermore, these structures have the same underlying comonad on $S$.
\end{teo}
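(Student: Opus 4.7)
The plan is to prove the equivalence by composing two reductions, one exploiting the biduality for $S$ and the other the opmonadic adjunction $i\ob\dashv i\ot$, following the template of Theorem~\ref{teo:QuantumMonads}. These reductions are carried out separately in Sections~\ref{sec:OpmonoidalOpmonadicity} and \ref{sec:OplaxActionsOpmonadicity}; the present statement is then obtained by composing them.

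For the first reduction, the biduality data $n_S,e_S$ induce a pseudonatural equivalence $\mathcal{M}(X,S\ot\otimes S)\simeq\mathcal{M}(S\otimes X,S)$, which sends an arrow $F:R\ot\otimes R\to S\ot\otimes S$ to $\widetilde f=(e_S\otimes S)\circ(S\otimes F):S\otimes R\ot\otimes R\to S$. The goal of Section~\ref{sec:OpmonoidalOpmonadicity} is to upgrade this to an equivalence between opmonoidal structures on $F$ and pairs of coherent cells on $\widetilde f$ witnessing compatibility with the multiplication $R\ot\otimes e_R\otimes R$ and unit $n_R$ of the enveloping monoidale on the domain side, subject to axioms that are exactly the opmonoidal coherence conditions translated across the duality.

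For the second reduction, tensoring the opmonadic adjunction $i\ob\dashv i\ot$ with $S$ on the left and $R$ on the right yields, by opmonadic-friendliness, an opmonadic adjunction between $S\otimes R\ot\otimes R$ and $S\otimes R$. The Kleisli-style universal property of this adjunction identifies arrows $\widetilde f$ out of $S\otimes R\ot\otimes R$ equipped with the cells produced by the first reduction with arrows $f:S\otimes R\to S$ equipped with the associativity and unit cells of an oplax right action, where the action is with respect to the skew monoidal structure on $R$ with skew unit $i:I\to R$ opposite to $i\ot$, as produced by Theorem~\ref{teo:QuantumMonads}. This is the content of Section~\ref{sec:OplaxActionsOpmonadicity}. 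Composing the two equivalences gives the correspondence claimed, and direct inspection shows that the underlying comonads on $S$ coincide: the comonad arising from an opmonoidal $F$ via whiskering by $n_S$ and $e_S$ matches the one arising from an oplax action $f$ via evaluation at the skew unit, because both constructions are preserved step by step through the two reductions.

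The principal obstacle is verifying that the axioms match: the opmonoidal coherence for $F$ involves $S\ot\otimes e_S\otimes S$ and $n_S$ on the codomain, whereas the oplax action axioms involve the skew monoidal product and skew unit on $R$. Bridging these requires carefully tracking how the biduality converts codomain constraints into domain constraints and how opmonadicity converts $R\ot$-valued data into $R$-valued data; both are pasting computations whose bookkeeping would be unwieldy if attempted at once. Organising the argument by proving the two reductions in isolation in their own sections keeps each verification tractable, so that the present theorem reduces to the formal composition of equivalences already established.
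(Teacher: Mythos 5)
Your proposal is correct and takes essentially the same route as the paper: the statement is Corollary~\ref{cor:Opmon_is_OplaxAct_Local}, obtained as a composite of the transposition equivalence $\SkOpmon(M,S\ot S)\simeq\OplaxAct(M;S)$ of Theorem~\ref{teo:BidualityOpmonoidalOplaxAction} with the opmonadicity equivalences of Theorems~\ref{teo:OpmonadicityOpmonoidal} and \ref{teo:OpmonadicityOplaxActions}, and your two reductions are exactly the two edges of the paper's commuting square~\eqref{eq:OpmonOplaxactSquare} (you describe the transpose-first composite, the paper records both). The only slip is bookkeeping: the biduality transposition and the oplax-action opmonadicity both live in Section~\ref{sec:OplaxActionsOpmonadicity}, while Section~\ref{sec:OpmonoidalOpmonadicity} carries out the opmonadicity step at the level of opmonoidal arrows; the underlying-comonad claim is Remark~\ref{rem:InducedComonad}.
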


Using this theorem we provide in Example~\ref{exa:CoalgebroidsAsOplaxActions} a simpler description of a coalgebroid in the language of classical ring and module theory. This new description requires only three module structures instead of four, none of which involve $k$-algebras with the reversed multiplication.

\begin{teo}\label{teo:Coalgebroid}
For two $k$-algebras $R$ and $S$ the following are equivalent,
\begin{enumerate}
\item An $R|S$-coalgebroid \cite[Original definition 3.5]{Takeuchi1987}.
\item A cocontinuous opmonoidal functor $\xymatrix@1@C=5mm{R\text{-}\Mod\text{-}R\ar[r]&S\text{-}\Mod\text{-}S}$.
\item A closed oplax right $(\Mod$-$R)$-actegory $\xymatrix@1@C=5mm{\Mod\text{-}S\times\Mod\text{-}R\ar[r]&\Mod\text{-}S}$.
\item A module in $(S\otimes R)$-$\Mod$-$S$ equipped with two module morphisms $\xymatrix@1@C=5mm{\delta:C\ar[r]&C\otimes_S C}$ and $\xymatrix@1@C=5mm{\varepsilon:C\ar[r]&S}$ subject to the equations given in Example~\ref{exa:CoalgebroidsAsOplaxActions}.
\end{enumerate}
\end{teo}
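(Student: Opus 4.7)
The plan is to reduce the whole statement to Theorem~\ref{teo:QuantumArrows} applied to the monoidal bicategory $\mathcal{M}=\Mod(\Vect_k)$. This bicategory is known to be opmonadic-friendly and right autonomous: for a $k$-algebra $R$ the right bidual is $R\ot$, and the unit $\xymatrix@1@C=5mm{k\ar[r]&R}$ gives rise to an opmonadic adjunction $i\ob\dashv i\ot$ because in $\Mod(\Vect_k)$ every adjunction is opmonadic. Under the standard identifications $R\text{-}\Mod\text{-}R\simeq(R\ot\otimes R)\text{-}\Mod$ and $S\text{-}\Mod\text{-}S\simeq(S\ot\otimes S)\text{-}\Mod$, the enveloping monoidale on $R\ot\otimes R$ corresponds to the convolution monoidal structure on $R\text{-}\Mod\text{-}R$ given by tensoring over $R$, and similarly on the $S$ side.

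I would first establish (ii)$\Leftrightarrow$(iii) as a direct application of Theorem~\ref{teo:QuantumArrows}. By a standard Eilenberg--Watts argument, arrows $\xymatrix@1@C=5mm{R\ot\otimes R\ar[r]&S\ot\otimes S}$ in $\Mod(\Vect_k)$ are precisely cocontinuous functors between the corresponding module categories, and the opmonoidal structures on the two sides match because both convolution monoidal structures arise from the same enveloping monoidale. The ``oplax right action'' side of Theorem~\ref{teo:QuantumArrows} similarly translates into a closed oplax right $(\Mod\text{-}R)$-actegory structure on $\Mod\text{-}S$, closedness encoding the cocontinuity in each variable that is automatic for arrows inside $\Mod(\Vect_k)$.

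Next I would prove (iii)$\Leftrightarrow$(iv) by unpacking what an oplax right action looks like in $\Mod(\Vect_k)$. An arrow $\xymatrix@1@C=5mm{S\otimes R\ar[r]&S}$ in $\Mod(\Vect_k)$ is exactly a module $C$ in $(S\otimes R)\text{-}\Mod\text{-}S$. The associativity and unit 2-cells of the oplax action are then module maps, and after identifying the skew monoidal structure on $R$ corresponding to the identity opmonoidal monad on $R\ot\otimes R$ via Theorem~\ref{teo:QuantumCats}, they become morphisms of shape $\delta:C\to C\otimes_S C$ and $\varepsilon:C\to S$. The coherence conditions of an oplax right action translate term by term into the equations displayed in Example~\ref{exa:CoalgebroidsAsOplaxActions}, encoding coassociativity, counitality, and the compatibility between the $S$-coring structure on $C$ and the left $R$-action.

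Finally I would address (i)$\Leftrightarrow$(iv) by direct comparison with Takeuchi's Definition~3.5 in \cite{Takeuchi1987}. His data consists of an underlying $k$-module equipped with four commuting actions---left $R\ot$, left $R$, right $S\ot$, right $S$---together with a compatible coring structure, while (iv) records only three actions. The reduction rests on the observation that, in $\Mod(\Vect_k)$, the $\ot$-actions are not independent data: the left $R\ot$-action is recovered from the left $R$-action through the biduality $R\dashv R\ot$, and similarly for the right $S\ot$-action. The main obstacle is precisely this last step, since Takeuchi's axioms are stated as element-wise Sweedler-style identities simultaneously involving all four actions, and matching them one by one against the oplax action coherences of Example~\ref{exa:CoalgebroidsAsOplaxActions} requires careful bookkeeping of the twistings produced by the bidualities and of which tensor product is taken over which algebra.
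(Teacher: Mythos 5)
Your reduction of (ii)$\Leftrightarrow$(iii)$\Leftrightarrow$(iv) to Theorem~\ref{teo:QuantumArrows} in $\mathcal{M}=\Mod_k$, followed by an Eilenberg--Watts identification and an unpacking of the oplax-action data, is exactly the route the paper takes (Corollary~\ref{cor:Opmon_is_OplaxAct_Local} together with Example~\ref{exa:CoalgebroidsAsOplaxActions}). The problem is your closing step (i)$\Leftrightarrow$(iv), and specifically the claim on which it rests: that the actions missing from (iv) relative to Takeuchi's definition ``are not independent data'' because ``the left $R\ot$-action is recovered from the left $R$-action through the biduality $R\dashv R\ot$.'' This is false. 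Takeuchi's coalgebroid is a module in $RS$-$\Mod$-$RS$, carrying four actions, whereas (iv) carries only three; the missing one is the right $R$-action (equivalently, the left $R\ot$-action in the $R\ot\otimes R$-$\Mod$-$S\ot\otimes S$ picture). At the level of underlying modules a left $R$-action determines no right $R$-action whatsoever --- if it did, restriction along $R\to R\ot\otimes R$ would be an equivalence of module categories, which it is not. The fourth action is genuinely reconstructed from $\delta$ and $\varepsilon$, and the mechanism for this is the opmonadicity of $i\ob 1\dashv i\ot 1$ (the formula expressing the action $\varphi$ in terms of $D^0$ and $D^2$ in the proof of Lemma~\ref{lem:OpmonadicityOpmonoidal}), not a formal biduality identification. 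Since your sketch of the ``careful bookkeeping'' is premised on this incorrect recovery, the (i)$\Leftrightarrow$(iv) comparison as you describe it would not go through.

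The paper sidesteps this difficulty by closing the loop at the other end: it proves (i)$\Leftrightarrow$(ii) directly in Lemma~\ref{lem:CoalgebroidsAsQuantum}, comparing Takeuchi's definition with opmonoidal arrows $R\ot\otimes R\to S\ot\otimes S$. There all four actions are present on both sides, so the comparison is a matter of re-bracketing tensor products and matching the constraint cells $C^0$, $C^2$ with $\varepsilon$, $\delta$; no action has to be reconstructed. If you insist on proving (i)$\Leftrightarrow$(iv) directly, you must first exhibit the formula for the missing right $R$-action in terms of $\delta$, $\varepsilon$ and the three given actions, verify that it satisfies Takeuchi's compatibility axioms (in particular axioms (iv) and (v) of the paper's Definition of a coalgebroid), and check that the two constructions are mutually inverse --- which amounts to redoing the opmonadicity argument by hand in $\Mod_k$.
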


For the last section of the paper we generalise the notion of comodule for a coalgebroid so it can be interpreted in the context of a monoidal bicategory. We describe three different notions of comodule: one for opmonoidal arrows, one for oplax actions, and one more which is certain comodule for a comonad. This particular comonad may be found as an underlying structure for each of the items in Theorem~\ref{teo:QuantumArrows}. Now, by assuming similar conditions on the monoidal bicategory and using a similar technique as before, we show that these three notions of comodule are equivalent.

\begin{teo}
Let $\mathcal{M}$ be an opmonadic-friendly autonomous monoidal bicategory and let $i\dashv i^*$ be an opmonadic adjunction whose dual $i\ob\dashv i\ot$ is opmonadic too.
\[
\vcenter{\hbox{\xymatrix{
R\xtwocell[d]{}_{i}^{i^*}{'\dashv}\\
I
}}}
\qquad\quad
\vcenter{\hbox{\xymatrix{
R\ot\xtwocell[d]{}_{i\ob}^{i\ot}{'\dashv}\\
I
}}}
\]
Fix a structure of each item in Theorem~\ref{teo:QuantumArrows}; then the following are equivalent:
\begin{enumerate}
\item A comodule $\xymatrix@1@C=5mm{R\ar[r]&S}$ for the fixed opmonoidal arrow $\xymatrix@1@C=5mm{R\ot\otimes R\ar[r]&S\ot\otimes S}$ between enveloping monoidales.
\item A morphism of oplax right actions $\xymatrix@1@C=5mm{R\ar[r]&S}$ from $i^*1$ into the fixed oplax right action.
\item A comodule $\xymatrix@1@C=5mm{I\ar[r]&S}$ for the underlying comonad $\xymatrix@1@C=5mm{S\ar[r]&S}$ of any of the fixed items.
\end{enumerate}
\end{teo}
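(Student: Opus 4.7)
The plan is to mount two equivalences, $(\mathrm{i})\Leftrightarrow(\mathrm{ii})$ and $(\mathrm{ii})\Leftrightarrow(\mathrm{iii})$, and to do so by extending the techniques of the proof of Corollary~\ref{cor:Opmon_is_OplaxAct_Local} (Theorem~\ref{teo:QuantumArrows}) from the level of the ambient objects to the level of (co)modules between them. Each of the three items lives in a different Hom-category of $\mathcal{M}$, and the translation from one to the next uses either the biduality $R\dashv R\ot$ together with its partner $S\dashv S\ot$, or the opmonadic adjunction $i\dashv i^*$. In particular, the proof does not require any new bicategorical machinery beyond what is already deployed in Sections~\ref{sec:OpmonoidalOpmonadicity}--\ref{sec:OplaxActionsOpmonadicity}; what is needed is to apply that machinery one categorical dimension lower.

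For $(\mathrm{i})\Leftrightarrow(\mathrm{ii})$, a comodule $A:R\to S$ for the fixed opmonoidal arrow $F:R\ot\otimes R\to S\ot\otimes S$ consists of a coaction and a counit compatible with the opmonoidal structure on $F$. First I would take the mate of $A$ under the biduality to produce a datum of the shape $S\otimes R\to S$, living in the same Hom-category as the fixed oplax right action $a$ provided by Theorem~\ref{teo:QuantumArrows}. Because $F$ and $a$ already correspond under the same mate calculus, each piece of the comodule structure on $A$ translates into one piece of the structure of a morphism of oplax actions from $i^*1$ into $a$, and each comodule axiom translates into one axiom for such a morphism. The verification is a diagrammatic reshuffle strictly parallel to the one performed in the proof of Corollary~\ref{cor:Opmon_is_OplaxAct_Local}.

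For $(\mathrm{ii})\Leftrightarrow(\mathrm{iii})$, the opmonadicity of $i\dashv i^*$ means that arrows out of $R$ are detected by their precomposition with $i^*$ together with descent data governed by the monad $i^*i$. Given a morphism of oplax actions $f:R\to S$, its precomposition $g:=f\circ i^*:I\to S$ inherits from the compatibility of $f$ with the actions exactly the coaction and counit making $g$ a comodule for the underlying comonad on $S$; conversely, any such comodule lifts uniquely along the opmonadic adjunction to an arrow $R\to S$, and the lift automatically satisfies the action-morphism axioms. The appearance of the second opmonadic adjunction $i\ob\dashv i\ot$ in the hypothesis is what guarantees that this passage is compatible with the biduality mate calculus of the previous step, so that the two equivalences compose cleanly.

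The main obstacle will be the bookkeeping needed to verify that the underlying comonad on $S$ produced by any one of the three data sets coincides with the comonad appearing in $(\mathrm{iii})$, and that the coaction arising from the biduality-mate step in $(\mathrm{i})\Leftrightarrow(\mathrm{ii})$ matches the coaction arising from the opmonadic lift in $(\mathrm{ii})\Leftrightarrow(\mathrm{iii})$. This is made more delicate by the non-invertibility of the structural 2-cells of oplax actions, which makes the morphism axioms in $(\mathrm{ii})$ more elaborate than in the pseudo case; but the pattern is the same one already handled at the object level in Corollary~\ref{cor:Opmon_is_OplaxAct_Local}, and the verifications should follow by systematically tracking mates across the biduality and the two adjunctions.
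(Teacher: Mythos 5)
Your overall architecture --- two equivalences, (i)$\Leftrightarrow$(ii) by redoing Corollary~\ref{cor:Opmon_is_OplaxAct_Local} one categorical level down on the coaction cells, and (ii)$\Leftrightarrow$(iii) by the opmonadicity of $i\dashv i^*$ --- is exactly the paper's (Corollary~\ref{cor:Comodules_Local} followed by Theorem~\ref{teo:SkewActions_are_Coactions}, assembled in Corollary~\ref{cor:ComodulesAreComodules}). But two of your mechanisms are misassigned, and one of the misassignments hides the real work.

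First, (i)$\Leftrightarrow$(ii) is \emph{not} a pure mate calculus along the bidualities. Transposition along $S\dashv S\ot$ (Theorem~\ref{teo:BidualityComodule}) only converts the \emph{target} data --- it trades the opmonoidal arrow $C$ and the action $e1$ on $S$ for the oplax action $s:SR\to S$ --- and the paper notes this step is essentially trivial. The substantive half is changing the \emph{source} action from $e1:RR\ot R\to R$ to $i^*1:RR\to R$; these live over different (skew) monoidales, so no mate along a biduality relates them. The passage is precomposition of the coaction square with the $i\ob 1$-comodule of Example~\ref{exa:ARightComodule}, and inverting it requires showing that the coaction cell is a morphism of modules for the monad induced by $1i\ob 1\dashv 1i\ot 1$ (using \eqref{ax:COM1}, \eqref{ax:COM2} and the skew-monoidale axioms) so that it descends along the opmonadic adjunction; this is Theorem~\ref{teo:OpmonadicityComodules}. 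Hence the opmonadicity of $i\ob\dashv i\ot$ is the engine of (i)$\Leftrightarrow$(ii), not, as you say, a compatibility device for composing the two equivalences; note also that the underlying arrow $Y:R\to S$ is unchanged throughout, so nothing of shape $S\otimes R\to S$ is produced. Second, in (ii)$\Leftrightarrow$(iii) the composite $f\circ i^*$ does not typecheck ($i^*:R\to I$): the comparison is precomposition with the \emph{left} adjoint $i:I\to R$, applied after first replacing the coaction cell $y$ by its mate $\tilde y$ under $i\dashv i^*$; the paper then passes through an auxiliary category $\mathcal{Z}(I,S)$ of comonad-comodules equipped with a redundant monad-action before opmonadicity applies, and the lift of a comonad-comodule back to an arrow $R\to S$ is determined only up to isomorphism, not uniquely. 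Neither slip is beyond repair, but a proof following your plan literally would stall at the point where the two source actions fail to be mates of one another.
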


At the level of ring and module theory this equivalence provides us with three equivalent ways of describing comodules for a coalgebroid, depending on the notion of coalgebroid that we decide to use, see Corollary~\ref{cor:ComodulesAreComodules}. In particular, we obtain a generalisation of \cite[Lemma 1.4.1]{Hai2008} which induces the extra left $R$-module structure on a comodule for a coalgebroid that we mentioned at the beginning of this section.

\begin{teo}
For two $k$-algebras $R$ and $S$ fix a structure in each item of Theorem~\ref{teo:Coalgebroid},  the following are equivalent,
\begin{enumerate}
\item A comodule in $\Mod$-$S$ for the fixed $R|S$-coalgebroid.
\item A (cocontinuous) comodule $\xymatrix@1@C=5mm{\Mod\text{-}R\ar[r]&\Mod\text{-}S}$ for the fixed cocontinuous opmonoidal functor.
\item An (cocontinuous) oplax right $(\Mod$-$R)$-actegory oplax morphism $\xymatrix@1@C=5mm{\Mod\text{-}R\ar[r]&\Mod\text{-}S}$ into the fixed oplax $(\Mod$-$R)$-actegory structure.
\end{enumerate}
\end{teo}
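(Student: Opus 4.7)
The plan is to specialise the preceding abstract theorem to $\mathcal{M}=\Mod(\Vect_k)$ and then translate each of its three items into the ring-theoretic language of Theorem~\ref{teo:Coalgebroid}. First, I would verify the hypotheses: $\Mod(\Vect_k)$ is an opmonadic-friendly autonomous monoidal bicategory, a fact already exploited in establishing Theorem~\ref{teo:Coalgebroid}, and for each $k$-algebra $R$ the unit $\xymatrix@1@C=5mm{i:I\ar[r]&R}$ induces an adjunction $i\dashv i^*$ in $\Mod(\Vect_k)$ whose dual $i\ob\dashv i\ot$ comes from the opposite algebra $R\ot$; both are opmonadic because every adjunction in $\Mod(\Vect_k)$ is simultaneously monadic and opmonadic.

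Second, I would identify the data being fixed on each side of the correspondence. Under Theorem~\ref{teo:Coalgebroid}, the given $R|S$-coalgebroid, the cocontinuous opmonoidal functor $\xymatrix@1@C=5mm{\Mod\text{-}R\ar[r]&\Mod\text{-}S}$, and the oplax right $(\Mod\text{-}R)$-actegory structure on $\Mod\text{-}S$ correspond respectively to the choice of an opmonoidal arrow $\xymatrix@1@C=5mm{R\ot\otimes R\ar[r]&S\ot\otimes S}$ in $\Mod(\Vect_k)$ and of its associated oplax right action $\xymatrix@1@C=5mm{S\otimes R\ar[r]&S}$ provided by Theorem~\ref{teo:QuantumArrows}. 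All of these share the same underlying comonad on $S$, namely the $S$-coring underlying the coalgebroid.

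Third, I would match each notion of comodule in the current theorem with an item of the preceding theorem. A right $S$-comodule for the $S$-coring underlying the coalgebroid is, by the opmonadicity of $i\ob\dashv i\ot$ applied on the right, exactly a comodule $\xymatrix@1@C=5mm{I\ar[r]&S}$ in $\Mod(\Vect_k)$ for the underlying comonad, matching item (iii) of the preceding theorem. A cocontinuous comodule functor $\xymatrix@1@C=5mm{\Mod\text{-}R\ar[r]&\Mod\text{-}S}$ is determined, via monadicity of $i\dashv i^*$, by its value on $R$ together with a left $R$-action and a coaction compatible with the opmonoidal structure; this datum is precisely a comodule $\xymatrix@1@C=5mm{R\ar[r]&S}$ in $\Mod(\Vect_k)$ for the opmonoidal arrow, so we recover item (i) of the preceding theorem. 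The same cocontinuity argument translates a cocontinuous oplax morphism into the oplax actegory structure to a morphism of oplax right actions $\xymatrix@1@C=5mm{R\ar[r]&S}$ in $\Mod(\Vect_k)$, which is item (ii).

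The main obstacle will be making the cocontinuity and presentability bridge fully precise, showing that the coaction and oplax-morphism $2$-cells between module categories are exactly the $\Mod(\Vect_k)$-level cells after evaluation on the generators $R$ and $S$, and that no further coherence is required beyond what the abstract theorem supplies. Once this is in hand, the preceding theorem delivers the three-way equivalence, and as a byproduct the comodule $\xymatrix@1@C=5mm{R\ar[r]&S}$ in $\Mod(\Vect_k)$ is automatically a left-$R$ right-$S$ bimodule, thereby recovering the extra left $R$-module structure on comodules for an $R|S$-coalgebroid of \cite[Lemma 1.4.1]{Hai2008} as a formal consequence of this identification.
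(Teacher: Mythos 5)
Your proposal is correct and follows essentially the same route as the paper: the result is obtained by specialising the abstract three-way equivalence (Corollary~\ref{cor:ComodulesAreComodules} together with Corollary~\ref{cor:Comodules_Local} and Theorem~\ref{teo:SkewActions_are_Coactions}) to $\mathcal{M}=\Mod_k$, where every adjunction is monadic and opmonadic, and then translating each item through the dictionary of Theorem~\ref{teo:Coalgebroid} exactly as in Examples~\ref{exa:ComodulesForCoalgebroids} and~\ref{exa:ComoduleForOplaxAction}. One small imprecision: identifying a right $S$-comodule for the underlying $S$-coring with a comodule $\xymatrix@1@C=5mm{I\ar[r]&S}$ for the underlying comonad needs no opmonadicity (arrows $\xymatrix@1@C=5mm{I\ar[r]&S}$ in $\Mod_k$ \emph{are} right $S$-modules); the opmonadicity of $i\dashv i^*$ and of $i\ob\dashv i\ot$ is what drives the abstract equivalences themselves.
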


We finish by showing that if an opmonoidal arrow has an opmonoidal monad structure, then its category of comodules has a monoidal structure such that the forgetful functor is strong monoidal. In particular, the category of comodules for a quantum category is monoidal.
\section{Background, Notation, and Conventions}\label{sec:Background}
We shall make some remarks about the general notation that we use throughout the paper.


\subsection{Categories}
We refer to the data of a category as: objects, arrows, composition, and identities. We reserve the word ``morphism'' for arrows that preserve some algebraic structure.

\subsection{Bicategories}
For a detailed account of 2-dimensional category theory we refer the reader to \cite{Benabou1967}, \cite{Kelly1974}, and \cite{Lack2010a}. A \emph{bicategory} $\mathcal{B}$ consists of several pieces of data that we call objects, arrows, and cells. We use the plain term ``cell'' instead of the standard term ``2-cell'' to avoid referring to two distinct cells as ``two 2-cells''. This shall cause no confusion since we do not use higher cells. All pasting diagrams and proofs are written as if $\mathcal{B}$ was a 2-category. Pasting diagrams have a unique interpretation as a cell within the bicategory $\mathcal{B}$ once we choose a convention to parenthesise its source and target arrows. Thus, we assume that its source has the leftmost bracketing and its target has the rightmost bracketing, although the reader is free to use their own favourite convention, see \cite{MacLane1985}, \cite{Power1990}, and \cite[Apendix~A]{Verity1992} for more details. Empty regions of pasting diagrams are assumed to be strictly commutative. The symbol for composition $\circ$ is mostly avoided; this forces us to write more pasting diagrams making our proofs more visual. Sometimes the isomorphism cells have a preferred direction which we depict by rotating the isomorphism symbol $\cong$ accordingly, so an isomorphism cell as below goes from $f$ to $f'$.
\[
\vcenter{\hbox{\xymatrix{
A\ar@/_3mm/[r]_{f}\ar@/^3mm/[r]^{f'}\ar@{}[r]|*=0[@u]{\cong}&B
}}}
\]

\subsection{Monoidal Bicategories}\label{ssec:2.5DimCatyTheory}

Our main universe of discourse is a monoidal bicategory, these appear in \cite{Gordon1995} and \cite{Gurski2013} as a particular case of the concept of tricategory. Tensor product of objects, arrows, and cells in a monoidal bicategory $\mathcal{M}$ is denoted by juxtaposition. Similar to the coherence theorem for bicategories, there is a coherence theorem for monoidal bicategories in \cite{Gordon1995}. This allows us to draw all pasting diagrams in a monoidal bicategory as if it was a Gray-monoid; that is: the underlying bicategory is a 2-category; the unit and the tensor product with objects in each variable are a 2-functors; the right and left unitors and the associator are identities; as well as the higher structural modifications. What remains is the interchange law between the tensor product and the horizontal composition which holds up to an isomorphism natural in $f$ and $f'$.
\[
\vcenter{\hbox{\xymatrix@!0@=15mm{
XX'\ar[r]^-{f1}\ar[d]_-{1f'}&YX'\ar[d]^-{1f'}\\
XY'\ar[r]_-{f1}\ar@{}[ru]|*=0[@]{\cong}&YY'
}}}
\]
These isomorphisms are subject to three axioms: two which assert that the collection of these isomorphism squares is closed under pairwise pasting along one edge; and a coherence axiom as pictured below.
\[
\vcenter{\hbox{\xymatrix@!0@C=15mm{
&YX'X''\ar[rd]^-{1f'1}&\\
XX'X''\ar[ru]^-{f11}\ar[dd]_-{11f''}\ar[rd]_-{1f'1}&&YY'X''\ar[dd]^-{11f''}\\
&XY'X''\ar[dd]|-{11f''}\ar[ru]^-{f11}\ar@{}[uu]|*=0[@]{\cong}&\\
XXY''\ar[rd]_-{1f'1}\ar@{}[ru]|*=0[@]{\cong}&&YY'Y''\\
&XY'Y''\ar[ru]_-{f11}\ar@{}[ruuu]|*=0[@]{\cong}&
}}}
\quad=\quad
\vcenter{\hbox{\xymatrix@!0@C=15mm{
&YX'X''\ar[rd]^-{1f'1}\ar[dd]|-{11f''}&\\
XX'X''\ar[ru]^-{f11}\ar[dd]_-{11f''}&&YY'X''\ar[dd]^-{11f''}\\
&YX'Y''\ar[rd]^-{1f'1}\ar@{}[ru]|*=0[@]{\cong}&\\
XXY''\ar[rd]_-{1f'1}\ar[ru]_-{f11}\ar@{}[ruuu]|*=0[@]{\cong}&&YY'Y''\\
&XY'Y''\ar[ru]_-{f11}\ar@{}[uu]|*=0[@]{\cong}&
}}}
\]
These axioms are used repeatedly without explicitly being recalled every time. The tensor product $ff'$ of two arrows $\xymatrix@1@C=5mm{f:X\ar[r]&Y}$, $\xymatrix@1@C=5mm{f':X'\ar[r]&Y'}$ in $\mathcal{M}$ always means the following composite $\xymatrix@1@C=5mm{ff':XX'\ar[r]_-{1f'}&XY'\ar[r]_-{f1}&YY'}$.

\begin{description}
\item[$\Cat$] The monoidal 2-category of categories, functors, and natural transformations. The monoidal product is the cartesian product and the monoidal unit is $\mathbbm{1}$ the terminal category.
\item[$\Mod(\mathcal{V})$] The monoidal bicategory $\Mod(\mathcal{V})$ of monoids, two sided modules between them, and their morphisms in $\mathcal{V}$. For $\mathcal{V}$ a symmetric monoidal category such that all coequalisers of reflexive pairs exist and are preserved by tensoring on both sides with an object. If $\mathcal{V}=\Vect_k$ we denote  $\Mod(\mathcal{V})=\Mod_k$ the monoidal bicategory of $k$-algebras, two-sided modules between them, and morphisms of two-sided modules.
\item[$\Span(\mathcal{C})$] The monoidal bicategory of spans in a category $\mathcal{C}$ with finite limits. The monoidal product is taken component-wise as the binary product in $\mathcal{C}$, the monoidal unit is the terminal object of $\mathcal{C}$.
\[
\vcenter{\hbox{\xymatrix@!0@C=15mm{
&EE'\ar[ld]_-{aa'}\ar[rd]^-{bb'}&\\
AA'&&BB'
}}}
\]
\item[$\Prof$] The monoidal bicategory of profunctors. Objects are categories, arrows $\xymatrix@1@C=5mm{\mathcal{C}\ar[r]&\mathcal{D}}$ are profunctors $\xymatrix@1@C=5mm{\mathcal{D}^{\textrm{op}}\times\mathcal{C}\ar[r]&\Set}$, and cells are morphisms of profunctors. The horizontal composition of profunctors $\xymatrix@1@C=5mm{F:\mathcal{C}\ar[r]&\mathcal{D}}$ and $\xymatrix@1@C=5mm{G:\mathcal{D}\ar[r]&\mathcal{E}}$ is given by the coend formula, $(G\circ F)(e,c):=\int^{d\in\mathcal{D}}F(d,c)\times G(e,d)$. The identity on a category $\mathcal{C}$ is the hom functor $\xymatrix@1@C=5mm{\mathcal{C}(\_,\_):\mathcal{C}^{\textrm{op}}\times\mathcal{C}\ar[r]&\Set}$. The monoidal unit is the terminal category $\mathbbm{1}$. The monoidal product is given on objects by the cartesian product of categories, while on arrows the monoidal product of two profunctors $\xymatrix@1@C=5mm{F:\mathcal{C}\ar[r]&\mathcal{D}}$ and $\xymatrix@1@C=5mm{F':\mathcal{C'}\ar[r]&\mathcal{D'}}$ is the composite below,
\[
\vcenter{\hbox{\xymatrix@=6mm{
**{!<6mm>}\mathcal{D}^{\textrm{op}}\times\mathcal{D'}^{\textrm{op}}\times\mathcal{C}\times\mathcal{C'}\ar[r]^-{1\times\mathsf{twist}\times 1}&\mathcal{D}^{\textrm{op}}\times\mathcal{C}\times\mathcal{D'}^{\textrm{op}}\times\mathcal{C'}\ar[r]^-{F\times F'}&\Set\times\Set\ar[r]&\Set
}}}
\]
where the last functor sends a pair of sets to their cartesian product.
\end{description}
\section{Review of Formal Monoidal Category Thoery}
\label{sec:SkewMonoidalesBidualitiesAdjunctions}
\subsection{Monads and Adjunctions}
Recall from \cite{Street1972} that one may define monads and adjunctions within a bicategory $\mathcal{B}$. Every adjunction $f\dashv g$ induces a monad structure on the composite arrow $\xymatrix@1{
t:S\ar[r]|-{f}&R\ar[r]|-{g}&S}$ with unit $\eta$ and multiplication as below.
\[
\vcenter{\hbox{\xymatrix@!0@C=12mm@R=5mm{
S\ar[r]^-{f}&R\ar[rd]_-{g}\ar@/^4mm/[rr]^-{1}\xtwocell[rr]{}<>{^\varepsilon}&&R\ar[r]^-{g}&S\\
&&S\ar[ru]_-{f}&&
}}}
\]

\begin{rem}
Every monad $\xymatrix@1@C=5mm{t:S\ar[r]&S}$ in a bicategory $\mathcal{B}$ induces two monads in $\Cat$ for each object $X$ in $\mathcal{B}$. These are obtained by using the covariant and contravariant hom functors based at $X$.
\[
\vcenter{\hbox{\xymatrix@R=0mm{
\mathcal{B}(S,X)\ar[r]^-{\mathcal{B}(t,X)}&\mathcal{B}(S,X)
}}}
\qquad
\vcenter{\hbox{\xymatrix@R=0mm{
\mathcal{B}(X,S)\ar[r]^-{\mathcal{B}(X,t)}&\mathcal{B}(X,S)
}}}
\]
One may consider the categories of Eilenberg-Moore algebras for each of these monads $\mathcal{B}(S,X)^{\mathcal{B}(t,X)}$ and $\mathcal{B}(X,S)^{\mathcal{B}(X,t)}$; we call their objects \emph{modules for the monad $t$ based at $X$}. An object in $\mathcal{B}(S,X)^{\mathcal{B}(t,X)}$ consists of an arrow $\xymatrix@1@C=5mm{x:S\ar[r]&X}$ together with an \emph{action cell} $\chi$ that is associative and unital with respect to the monad structure of $t$.
\[
\vcenter{\hbox{\xymatrix@!0@C=12mm@R=5mm{
S\ar[rd]_-{t}\ar@/^4mm/[rr]^-{x}\xtwocell[rr]{}<>{^\chi}&&X\\
&S\ar[ru]_-{x}&
}}}
\]
To avoid confusion when we refer to a module for the monad $t$, we specify which hom functor to use, or use the following notation $\xymatrix@1@C=5mm{(x,\chi):S\ar[r]&X}$.
\end{rem}

\begin{defi}
A \emph{Kleisli object} for a monad $\xymatrix@1@C=5mm{t:S\ar[r]&S}$ in a bicategory $\mathcal{B}$, if it exists, is the universal object $S_t$ that represents up to equivalence the modules $\xymatrix@1@C=5mm{(x,\chi):S\ar[r]&X}$ for the monad $t$ based at $X$ for every object $X$ in $\mathcal{B}$. In other words, there is an equivalence of categories as shown.
\[
\mathcal{B}(S_t,X)\simeq\mathcal{B}(S,X)^{\mathcal{B}(t,X)}
\]
\end{defi}

Concretely, a Kleisli object for a monad $t$ is an object $S_t$ and a ``universal module'' $\varphi$ for the monad $t$,
\[
\vcenter{\hbox{\xymatrix@!0@C=12mm@R=5mm{
S\ar[rd]_-{t}\ar@/^4mm/[rr]^-{f}\xtwocell[rr]{}<>{^\varphi}&&S_t\\
&S\ar[ru]_-{f}&
}}}
\]
in the sense that the equivalence in the definition is given by precomposition with $(f,\varphi)$.
\[
\vcenter{\hbox{\xymatrix@R=0mm@C=15mm{
\mathcal{B}(S_t,X)\ar[r]_-{\simeq}&\mathcal{B}(S,X)^{\mathcal{B}(t,X)}\\
}}}
\]
\[
\qquad\qquad
\vcenter{\hbox{\xymatrix{
S_t\ar[r]^-{\bar{x}}&X
}}}
\vcenter{\hbox{\xymatrix{
\ar@{|->}[r]&
}}}
\vcenter{\hbox{\xymatrix@!0@C=12mm@R=5mm{
S\ar[rd]_-{t}\ar@/^4mm/[rr]^-{f}\xtwocell[rr]{}<>{^\varphi}&&S_t\ar[r]^-{\bar{x}}&X\\
&S\ar[ru]_-{f}&&
}}}
\]
As part of the structure that comes together with a Kleisli object, there is an adjunction $\textsf{free}\dashv\textsf{forget}$ that has $t$ as its associated monad \cite[\textsection 1]{Street1972}.
\[
\vcenter{\hbox{\xymatrix{
S_t\dtwocell_{\textsf{free}\ \ }^{\ \quad\textsf{forget}}{'\dashv}\\
S\ar@(ru,rd)^-{t}
}}}
\]
And for every adjunction $f\dashv g$ whose induced monad is $t$,
\[
\vcenter{\hbox{\xymatrix{
R\dtwocell_{f}^{g}{'\dashv}\\
S\ar@(ru,rd)^-{t}
}}}
\]
there is a comparison arrow $\xymatrix@1@C=5mm{S_t\ar[r]&R}$ that commutes with the left and right adjoints up to isomorphism.
\begin{defi}
An adjunction $f\dashv g$ (or a left adjoint) in a bicategory $\mathcal{B}$ is called \emph{opmonadic} (or of \emph{Kleisli type}), if for every object $X$ in $\mathcal{B}$ the adjunction obtained by applying the representable functor $\mathcal{B}(\_,X)$ is monadic in $\Cat$ in the up to equivalence sense.
\[
\vcenter{\hbox{\xymatrix{
R\dtwocell_{f}^{g}{'\dashv}\\
S\ar@(ru,rd)^-{t}
}}}
\qquad\qquad\qquad
\vcenter{\hbox{\xymatrix{ \mathcal{B}(R,X)\dtwocell_{\mathcal{B}(g,X)\hspace{9mm}}^{\hspace{8mm}\mathcal{B}(f,X)}{'\dashv}\\
\mathcal{B}(S,X)\ar@(ru,rd)[]!<5mm,0mm>;[]!<5mm,0mm>^-{\mathcal{B}(t,X)}
}}}
\]
In other words, if $t$ is the monad associated to the adjunction $f\dashv g$, being opmonadic means that $R$ is a Kleisli object for the monad $t$.
\end{defi}

\subsection{Skew Monoidales}
Here we recall the concepts of skew monoidale \cite[Section 4]{Lack2012} and opmonoidal arrows between them which play a central role. To do this we need to upgrade our bicategory $\mathcal{B}$ to a monoidal bicategory $\mathcal{M}$. We also construct some monoidales and skew monoidales from bidualities and adjunctions in different ways, some of which are not standard. And finally we bring the example of a coalgebroid seen as an opmonoidal arrow using the concepts above.

\begin{defi}\label{def:SkewMonoidale}
A \emph{right skew monoidale} in $\mathcal{M}$ consists of an object $M$, a product arrow $\xymatrix@1@C=5mm{m:MM\ar[r]&M}$, a unit arrow $\xymatrix@1@C=5mm{u:I\ar[r]&M}$, an associator cell $\alpha$, a left unitor cell $\lambda$, and a right unitor cell $\rho$ (not necessarily invertible),
\[
\vcenter{\hbox{\xymatrix@!0@=15mm{
MMM\ar[r]^-{m1}\ar[d]_-{1m}\xtwocell[rd]{}<>{^\alpha}&MM\ar[d]^-{m}\\
MM\ar[r]_-{m}&M
}}}
\qquad
\vcenter{\hbox{\xymatrix@!0@=15mm{
M\ar[r]^-{u1}\ar[rd]_-{1}\xtwocell[rd]{}<>{^<-2>\lambda}&MM\ar[d]|-{m}&M\ar[l]_-{1u}\ar[ld]^-{1}\xtwocell[ld]{}<>{^<2>\rho}\\
&M&
}}}
\]
satisfying five axioms: in the same order as \cite[Section 4]{Lack2012} we will refer to them as, the pentagon \eqref{ax:SKM1}, the triangle \eqref{ax:SKM2}, \eqref{ax:SKM3}, \eqref{ax:SKM4}, and \eqref{ax:SKM5}.
\end{defi}
\begin{rem} When $\lambda$ or $\rho$ are invertible we speak of a left or right normal right skew monoidale; and we speak of a \emph{monoidale} when $\alpha$, $\lambda$, and $\rho$ are isomorphisms, and in this case, a well known argument by Kelly \cite{Kelly1964} implies that the axioms may be reduced from five to two: the pentagon \eqref{ax:SKM1} and the triangle \eqref{ax:SKM2}.
\end{rem}
The stereotypical example is to take $\mathcal{M}=\Cat$, where monoidales are monoidal categories. Skew monoidales in $\Cat$ are called \emph{skew monoidal categories}, these first appeared in \cite{Szlachanyi2003}. Skew monoidales appear first in \cite{Lack2012}, where the authors observe that skew monoidales in $\Span$ are categories, and skew monoidales in $\Mod_k$ are bialgebroids.
\begin{defi}
An \emph{opmonoidal arrow} $\xymatrix@1@C=5mm{C:M\ar[r]&N}$ between right skew monoidales $M$ and $N$ in $\mathcal{M}$ consists of an arrow $\xymatrix@1@C=5mm{C:M\ar[r]&N}$ in $\mathcal{M}$ equipped with an opmonoidal composition constraint cell $C^2$ and an opmonoidal unit constraint cell $C^0$ as shown below,
\[
\vcenter{\hbox{\xymatrix@!0@=15mm{
MM\ar[r]^-{CC}\ar[d]_-{m}\xtwocell[rd]{}<>{^C^2}&NN\ar[d]^-{m}\\
M\ar[r]_-{C}&N
}}}
\qquad\qquad
\vcenter{\hbox{\xymatrix@!0@R=15mm@C=10mm{
&I\ar[ld]_-{u}\ar[rd]^-{u}\xtwocell[rd]{}<>{^<3>C^0}&\\
M\ar[rr]_-{C}&&N
}}}
\]
satisfying three axioms.
\begin{align}
\tag{OM1}\label{ax:OM1}
\vcenter{\hbox{\xymatrix@!0@C=15mm{
&NNN\ar[rd]^-{m1}&\\
MMM\ar[ru]^-{CCC}\ar[dd]_-{1m}\ar[rd]_-{m1}\xtwocell[rddd]{}<>{^\alpha}\xtwocell[rr]{}<>{^C^2C\quad\ }&&NN\ar[dd]^-{m}\\
&MM\ar[dd]^-{m}\ar[ru]^-{CC}\xtwocell[rd]{}<>{^C^2}&\\
MM\ar[rd]_-{m}&&N\\
&M\ar[ru]_-{C}&
}}}
\quad&=\quad
\vcenter{\hbox{\xymatrix@!0@C=15mm{
&NNN\ar[rd]^-{m1}\ar[dd]_-{1m}\xtwocell[rddd]{}<>{^\alpha}&\\
MMM\ar[ru]^-{CCC}\ar[dd]_-{1m}\xtwocell[rd]{}<>{^CC^2\quad}&&NN\ar[dd]^-{m}\\
&NN\ar[rd]^-{m}&\\
MM\ar[rd]_-{m}\ar[ru]_-{CC}\xtwocell[rr]{}<>{^C^2\ }&&N\\
&M\ar[ru]_-{C}&
}}}
\\
\tag{OM2}\label{ax:OM2}
\vcenter{\hbox{\xymatrix@!0@C=15mm{
&N\ar@/^7mm/[rddd]^-{1}&\\
M\ar[ru]^-{C}\ar[dd]_-{1u}\xtwocell[rddd]{}<>{^\rho}\ar@/^7mm/[rddd]^-{1}&&\\
&&\\
MM\ar[rd]_-{m}&&N\\
&M\ar[ru]_-{C}&
}}}
\quad&=\quad
\vcenter{\hbox{\xymatrix@!0@C=15mm{
&N\ar[dd]_-{1u}\xtwocell[rddd]{}<>{^\rho}\ar@/^7mm/[rddd]^-{1}&\\
M\ar[ru]^-{C}\ar[dd]_-{1u}\xtwocell[rd]{}<>{^CC^0\quad}&&\\
&NN\ar[rd]^-{m}&\\
MM\ar[rd]_-{m}\ar[ru]_-{CC}\xtwocell[rr]{}<>{^C^2\ }&&N\\
&M\ar[ru]_-{C}&
}}}
\\
\tag{OM3}\label{ax:OM3}
\vcenter{\hbox{\xymatrix@!0@C=15mm{
&N\ar[rd]^-{u1}&\\
M\ar[ru]^-{C}\ar[rd]_-{u1}\ar@/_7mm/[rddd]_-{1}\xtwocell[rddd]{}<>{^\lambda\ }\xtwocell[rr]{}<>{^C^0C\quad}&&NN\ar[dd]^-{m}\\
&MM\ar[dd]^-{m}\ar[ru]^-{CC}\xtwocell[rd]{}<>{^C^2}&\\
&&N\\
&M\ar[ru]_-{C}&
}}}
\quad&=\quad
\vcenter{\hbox{\xymatrix@!0@C=15mm{
&N\ar[rd]^-{u1}\ar@/_7mm/[rddd]_-{1}\xtwocell[rddd]{}<>{^\lambda\ }&\\
M\ar[ru]^-{C}\ar@/_7mm/[rddd]_-{1}&&NN\ar[dd]^-{m}\\
&&\\
&&N\\
&M\ar[ru]_-{C}&
}}}
\end{align}
\end{defi}
\begin{rem}
In the case that both opmonoidal constraints are isomorphisms speak of a \emph{strong monoidal arrow}, and if they are identities we speak of a \emph{strict monoidal arrow}.
\end{rem}
\begin{defi}
An \emph{opmonoidal cell} between a parallel pair of opmonoidal arrows $C$ and $\xymatrix@1@C=5mm{C':M\ar[r]&N}$ in $\mathcal{M}$ consists of a cell $\xi$ as shown,
\[
\vcenter{\hbox{\xymatrix{
M\ar@/_3mm/[r]_-{C}\ar@/^3mm/[r]^-{C'}\xtwocell[r]{}<>{^\xi}&N
}}}
\]
satisfying two axioms.
\begin{align}
\tag{OM4}\label{ax:OM4}
\vcenter{\hbox{\xymatrix@!0@R=16mm@C=19mm{
MM\ar@/^3mm/[r]^-{C'C'}\ar[d]_-{m}\xtwocell[rd]{!<-2mm,2mm>}<>{^<-1>C'^2}&NN\ar[d]^-{m}\\
M\ar@/_3mm/[r]_-{C}\ar@/^3mm/[r]^-{C'}\xtwocell[r]{}<>{^\xi}&N
}}}
\quad&=\quad
\vcenter{\hbox{\xymatrix@!0@R=16mm@C=19mm{
MM\ar@/_3mm/[r]_-{CC}\ar@/^3mm/[r]^-{C'C'}\xtwocell[r]{}<>{^\xi\xi\ }\ar[d]_-{m}\xtwocell[rd]{!<2mm,-2mm>}<>{^<1>C^2}&NN\ar[d]^-{m}\\
M\ar@/_3mm/[r]_-{C}&N
}}}
\\
\tag{OM5}\label{ax:OM5}
\vcenter{\hbox{\xymatrix@!0@R=19mm@C=12mm{
{\xtwocell[rrd]{}<>{^C'^0}}&I\ar[ld]_-{u}\ar[rd]^-{u}&\\
M\ar@/_3mm/[rr]_-{C}\ar@/^3mm/[rr]^-{C'}\xtwocell[rr]{}<>{^\xi}&&N
}}}
\quad&=\quad
\vcenter{\hbox{\xymatrix@!0@R=19mm@C=12mm{
&I\ar[ld]_-{u}\ar[rd]^-{u}\xtwocell[rd]{}<>{^<4>C^0}&\\
M\ar@/_3mm/[rr]_-{C}&&N
}}}
\end{align}
\end{defi}

\begin{rem}
As usual, the process of taking dual bicategories does the job of switching between these notions, we name all of them as follows:
\begin{itemize}
\item $\SkOpmon_{\text{r}}(\mathcal{M})=\SkOpmon(\mathcal{M})$ is the bicategory of right skew monoidales, opmonoidal arrows, and opmonoidal cells between them. This bicategory is going to be used the most throughout the document, and in order to have a lighter notation we omit the subscript.
\item $\SkOpmon_{\text{l}}(\mathcal{M})=\SkOpmon_{\text{r}}(\mathcal{M}^{\text{rev}})$ is the bicategory of left skew monoidales, opmonoidal arrows, and opmonoidal cells between them.
\item $\SkMon_{\text{l}}(\mathcal{M})=\SkOpmon_{\text{r}}(\mathcal{M}^{\text{co}})^{\text{co}}$
is the bicategory of left skew monoidales, monoidal arrows, and monoidal cells between them.
\item $\SkMon_{\text{r}}(\mathcal{M})=\SkOpmon_{\text{r}}(\mathcal{M}^{\text{rev co}})^{\text{co}}$ is the bicategory of right skew monoidales, monoidal arrows, and monoidal cells between them.
\item $\Opmon(\mathcal{M})$ is the bicategory of monoidales, opmonoidal arrows, and opmonoidal cells between them, which can be seen as the full subbicategory of $\SkOpmon_{\text{r}}(\mathcal{M})$ whose objects are monoidales.
\end{itemize}
When there is no room for ambiguity, we omit the ambient monoidal bicategory $\mathcal{M}$ from the hom categories and write them as $\SkOpmon(M,N)$ and $\Opmon(M,N)$.
\end{rem}

In the following lemma we construct right skew monoidales from adjunctions whose left adjoint has domain $I$.
\begin{lem}\label{lem:OneRightSkewMonoidale}
For every adjunction as shown below in a monoidal bicategory $\mathcal{M}$, there is a right skew monoidal structure on $R$.
\[
\vcenter{\hbox{\xymatrix{
R\dtwocell_{i}^{i^*}{'\dashv}\\
I
}}}
\]
\end{lem}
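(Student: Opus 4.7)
The plan is to equip $R$ with the right skew monoidale data $(R, m, u, \alpha, \lambda, \rho)$ where
\[
u := i : I \to R, \qquad m := 1_R \otimes i^* : RR \to RI = R,
\]
and where $\alpha$, $\lambda$, $\rho$ are built from the unit $\eta : 1_I \Rightarrow i^* i$ and counit $\varepsilon : i i^* \Rightarrow 1_R$ of the adjunction, together with the pseudo-interchange cells of the monoidal bicategory $\mathcal{M}$ described in Subsection~\ref{ssec:2.5DimCatyTheory}.

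Specifically, the right unitor would be $\rho := 1_R \otimes \eta : 1_R \Rightarrow 1_R \otimes (i^* i) = m(1u)$. The left unitor is the pasting of the pseudo-interchange isomorphism $m(u1) = (1 \otimes i^*)(i \otimes 1) \cong i i^*$ with $\varepsilon : i i^* \Rightarrow 1_R$, giving $\lambda : m(u1) \Rightarrow 1_R$. The associator is the pseudo-interchange isomorphism between the two canonical composites $m(m1) = (1 \otimes i^*)(1 \otimes i^* \otimes 1)$ and $m(1m) = (1 \otimes i^*)(1 \otimes 1 \otimes i^*)$, both of which represent the double application of $i^*$ to the last two tensor factors of $RRR$.

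With the data in place, I would verify each of \eqref{ax:SKM1}--\eqref{ax:SKM5} in turn. The four axioms \eqref{ax:SKM2}--\eqref{ax:SKM5} should reduce, once the interchange cells in $\lambda$, $\rho$, and $\alpha$ are absorbed via Gray-monoid coherence, to the two triangle identities $\varepsilon i \cdot i \eta = 1_i$ and $i^* \varepsilon \cdot \eta i^* = 1_{i^*}$ of the adjunction $i \dashv i^*$, each whiskered appropriately by $1_R$ in a tensor factor. The pentagon \eqref{ax:SKM1}, whose structural cells involve only interchange isomorphisms (since $\alpha$ itself is an interchange), should follow from the coherence hexagon for the pseudo-interchange displayed in Subsection~\ref{ssec:2.5DimCatyTheory}.

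The main obstacle will be the bookkeeping: correctly organising the pseudo-interchange cells so that each axiom reduces transparently to either a triangle identity of the adjunction or a Gray coherence identity. Adopting the Gray-monoid convention from Subsection~\ref{ssec:2.5DimCatyTheory} is essential to keep the pasting diagrams tractable, after which the five axioms unfold in a direct, if slightly tedious, verification.
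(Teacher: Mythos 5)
There is a genuine error in your choice of data: you have put $i^*$ on the wrong tensor factor, and this flips the variance of the unitors. With your product $m := 1_R\otimes i^*$ the composite $m(1u)$ is $1_R\otimes(i^*i)$, so the unit $\eta$ of the adjunction gives a cell $1_R\Rightarrow m(1u)$, and $m(u1)$ is isomorphic (via interchange) to $ii^*$, so $\varepsilon$ gives a cell $m(u1)\Rightarrow 1_R$ --- exactly as you wrote. But in a \emph{right} skew monoidale as defined in Definition~\ref{def:SkewMonoidale} (and as used in the paper's own proof), the unitors go the other way: $\lambda:1\Rightarrow m(u1)$ and $\rho:m(1u)\Rightarrow 1$. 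Your cells therefore equip $R$ with a \emph{left} skew monoidal structure, i.e.\ a right skew monoidale in $\mathcal{M}^{\mathrm{rev}}$, not the structure the lemma asserts. This cannot be repaired while keeping $m=1_R\otimes i^*$: from $i\dashv i^*$ alone there is no canonical cell $1_R\Rightarrow ii^*$ or $i^*i\Rightarrow 1_I$, so the required unitors simply do not exist for that product. The correct choice, which is what the paper uses, is $m:=i^*\otimes 1_R:RR\to IR=R$; then $m(u1)=(i^*i)\otimes 1_R$ receives $\eta\otimes 1$ as $\lambda$, and $m(1u)\cong ii^*$ admits $\varepsilon$ as $\rho$, both in the right-skew directions.

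Apart from this, your strategy coincides with the paper's: the associator is an interchange isomorphism, the pentagon \eqref{ax:SKM1} is an instance of the interchange coherence axiom, and the remaining axioms reduce to adjunction identities. One small correction there: only \eqref{ax:SKM2} and \eqref{ax:SKM5} use the snake equations of $i\dashv i^*$; the compatibilities \eqref{ax:SKM3} and \eqref{ax:SKM4} follow already from naturality of the interchanger, not from the triangle identities.
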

\begin{proof}

The structure is given as follows:
\begin{center}
\begin{tabular}{rlrl}
\centering
Product&
$
\xymatrix{
RR\ar[r]^-{i^*1}&R
}
$&
Unit&
$
\xymatrix{
I\ar[r]^-{i}&R
}$
\\
Associator&
$
\vcenter{\hbox{\xymatrix@!0@=15mm{
RRR\ar[r]^-{i^*11}\ar[d]_-{1i^*1}&RR\ar[d]^-{i^*1}\\
RR\ar[r]_-{i^*1}\ar@{}[ru]|-*[@ru]{\cong}&R
}}}
$&
Unitors&
$
\vcenter{\hbox{\xymatrix@!0{
R\ar[rr]^-{i1}\ar[rrdd]_-{1}\xtwocell[rrdd]{}<>{^<-2>\eta1}&&RR\ar[dd]|-{i^*1}\ar@{}[rd]|-<<<<*[@rd]{\cong}&&R\ar[ll]_-{1i}\ar[ld]^-{i^*}\ar@/^1cm/[lldd]^-{1}\xtwocell[lldd]{}<>{^<-3>\varepsilon}\\
&&&I\ar[ld]^-{i}&\\
&&R&&
}}}
$
\end{tabular}
\end{center}
Note that the associator is an instance of an interchange isomorphism, therefore the pentagon \eqref{ax:SKM1} holds as an instance of the interchange coherence. The $\alpha$-$\lambda$ compatibility \eqref{ax:SKM3} holds by naturality of the interchanger. The $\alpha$-$\rho$ compatibility \eqref{ax:SKM4} is also an instance of the naturality of the interchanger (regardless of the definition of $\rho$). The $\alpha$-$\lambda$-$\rho$ compatibility \eqref{ax:SKM2} and the $\lambda$-$\rho$ compatibility \eqref{ax:SKM5} are a consequence of the snake equations of the adjunction $i\dashv i^*$.
\[
\vcenter{\hbox{\xymatrix@!0@C=15mm{
&&\\
RR\ar[rd]_-{1i1}\ar@/_7mm/[rddd]_-{1}\ar@/^7mm/[rr]^-{1}\ar[r]^-{i^*1}\xtwocell[rddd]{}<>{^1\eta 1\ }\xtwocell[rr]{}<>{^<-2>\varepsilon 1\ }&I\ar[r]^-{i1}&RR\ar[dd]^-{i^*1}\\
&RRR\ar[dd]^-{1i^*1}\ar[ru]_-{i^*11}\ar@{}[u]|*[@]{\cong}&\\
&&R\\
&RR\ar[ru]_-{i^*1}\ar@{}[ruuu]|*[@]{\cong}&
}}}
=
\vcenter{\hbox{\xymatrix@!0@C=15mm{
&&\\
RR\ar@/_7mm/[rddd]_-{1}\ar@/^7mm/[rr]^-{1}\ar[r]^-{i^*1}\xtwocell[rr]{}<>{^<-2>\varepsilon 1\ }&I\ar[r]^-{i1}\ar@/_4mm/[rdd]_-1\xtwocell[rdd]{}<>{^\eta 1\ }&RR\ar[dd]^-{i^*1}\\
&&\\
&&R\\
&RR\ar[ru]_-{i^*1}&
}}}
=\!\!
\vcenter{\hbox{\xymatrix@!0@C=15mm{
&&\\
RR\ar@/_7mm/[rrdd]_-{i^*1}\ar@/^7mm/[rrdd]^-{i^*1}&&\\
&&\\
&&R\\
&&
}}}
\]
\[
\vcenter{\hbox{\xymatrix@!0@C=15mm{
&&\\
I\ar@/_7mm/[rrdd]_-{i}\ar@/^7mm/[rrdd]^-{i}&&\\
&&\\
&&R\\
&&
}}}
\!=
\vcenter{\hbox{\xymatrix@!0@C=14mm{
&R\ar@/^15mm/[rddd]^-{1}\ar[rd]^-{i^*}\xtwocell[rddd]{}<>{^<-7>\varepsilon}&\\
I\ar[ru]^-{i}\ar[dd]_-{i}\ar@/_7mm/[rr]_-{1}\xtwocell[rr]{}<>{^\eta\ }&&I\ar[dd]^-{i}\\
&&\\
R\ar@/_7mm/[rr]_-{1}&&R\\
&&
}}}
\!\!=
\vcenter{\hbox{\xymatrix@!0@C=15mm{
&R\ar[dd]_-{1i}\ar@/^15mm/[rddd]^-{1}\ar[rd]^-{i^*}\xtwocell[rddd]{}<>{^<-7>\varepsilon}&\\
I\ar[ru]^-{i}\ar[dd]_-{i}&&I\ar[dd]^-{i}\\
&RR\ar[rd]^-{i^*1}\ar@{}[ru]|*[@]{\cong}&\\
R\ar[ru]_-{i1}\ar@/_7mm/[rr]_-{1}\ar@{}[ruuu]|*[@]{\cong}\xtwocell[rr]{}<>{^\eta 1\ }&&R\\
&&
}}}
\]
\end{proof}
\begin{eje}
We can now provide various examples of skew monoidal structures.
\begin{itemize}
\item In the case that $\mathcal{M}=\Cat$ a left adjoint $\xymatrix@1@C=5mm{i:1\ar[r]&R}$ is the same as an initial object $i$ in $R$. The right skew monoidal structure $\odot$ on $R$ induced by $i$ is given by the second projection $a\odot b=b$, it is strictly associative and left unital. The right unitor is the unique arrow $\xymatrix@1@C=5mm{a\odot i=i\ar[r]&a}$ in $R$.
\item When $\mathcal{M}$ is a locally discrete monoidal bicategory, in other words, a monoidal category regarded as a monoidal bicategory, the only example is $I$ itself since adjunctions in $\mathcal{M}$ are the isomorphisms.
\item In the case of $\mathcal{M}=\Mod_k$ an adjunction as in the Lemma~\ref{lem:OneRightSkewMonoidale} amounts to a finitely generated and projective module $P$ in $\Mod$-$R$ \cite[Section 5]{Street2007}. This module $P$ induces a right skew monoidal structure $\odot_P$ on $\Mod$-$R$. The case where $P=R_R$ is simple yet illuminating; the skew monoidal product on $\Mod$-$R$ is $A\odot_R B:=A\otimes_RR\otimes B\cong A\otimes B$, the unit is $R$, the associator is an invertible transformation, and the unitors are given below.
\[
\vcenter{\hbox{\xymatrix@R=0mm{
\lambda:B\ar[r]^-{\eta\otimes 1}&R\otimes B\cong R\underset{R}\odot B\\
b\ar@{|->}[r]&1\otimes b
}}}
\qquad
\vcenter{\hbox{\xymatrix@R=0mm{
\rho:A\underset{R}\odot R\cong A\otimes R\ar[r]^-{A\underset{R}\otimes\varepsilon}&A\\
a\otimes r\ar@{|->}[r]&ar
}}}
\]
Furthermore, under Szlachányi's equivalence, this skew monoidal category corresponds to the simplest $R$-bialgebroid $B=R\ot\otimes R$ \cite[Example 3.2.3.]{Bohm2009}.

\subitem In the general case, the skew monoidal product $\odot_P$ on $\Mod$-$R$ is
$A\odot_P B:=A\otimes_RP^*\otimes B$, the skew unit is $P$, the associator is an invertible transformation, and the unitors are given below.
\[
\vcenter{\hbox{\xymatrix@R=0mm{
\lambda:B\ar[r]^-{\eta\otimes 1}&P{\underset{R}\otimes}P^*\otimes B\cong P\underset{P}\odot B
}}}
\qquad
\vcenter{\hbox{\xymatrix@R=0mm{
\rho:A\underset{P}\odot P\cong A{\underset{R}\otimes}P^*\otimes P\ar[r]_-{A\underset{R}\otimes\varepsilon}&A
}}}
\]
And if $P\neq R$, then the skew monoidal structure $\odot_P$ does not correspond to an $R$-bialgebroid under \cite[Theorem 9.1]{Szlachanyi2012} since a necessary condition is that the skew unit is equal to $R_R$.
\end{itemize}
\end{eje}

\subsection{Bidualities}
\begin{defi}
A \emph{right bidual} of an object $R$ of $\mathcal{M}$ is an object $R\ot$ equipped with a unit $\xymatrix@1@C=5mm{n:I\ar[r]&R\ot R}$ and a counit $\xymatrix@1@C=5mm{e:RR\ot\ar[r]&I}$ arrows, and two cells $\varsigma_l$ and $\varsigma_r$ called left and right triangle (or snake) isomorphisms,
\[
\vcenter{\hbox{\xymatrix@!0@=18mm{
R\ot\ar[r]^-{n1}\ar[rd]_-{1}&R\ot RR\ot\ar[d]^-{1e}\\
\ar@{}[ru]|>>>>>>*[@ru]{\cong}^>>>>>>{\varsigma_l}&R\ot\\
}}}
\qquad\qquad\qquad\qquad
\vcenter{\hbox{\xymatrix@!0@=18mm{
RR\ot R\ar[d]_-{e1}\ar@{}[rd]|<<<<<<*[@]{\cong}^<<<<<<{\varsigma_r}&R\ar[l]_-{1n}\ar[ld]^-{1}\\
R&
}}}
\]
satisfying the swallowtail equations below.
\begin{align*}
\vcenter{\hbox{\xymatrix@!0@C=15mm@R=8.45mm{
&&\\
RR\ot\ar[rd]_-{1n1}\ar@/_7mm/[rddd]_-{1}\ar@/^7mm/[rr]^-{1}&&RR\ot\ar[dd]^-{e}\\
&RR\ot RR\ot\ar[dd]^-{11e}\ar[ru]^-{e11}\ar@{}[uu]|*[@]{\cong}^-{\varsigma_r1\ }&\\
\ar@{}[ru]|*[@]{\cong}^-{1\varsigma_l}&&I\\
&RR\ot\ar[ru]_-{e}\ar@{}[ruuu]|*[@]{\cong}&
}}}
\quad&=\quad
\vcenter{\hbox{\xymatrix@!0@C=15mm{
&&\\
RR\ot\ar@/_7mm/[rrdd]_-{e}\ar@/^7mm/[rrdd]^-{e}&&\\
&&\\
&&I\\
&&
}}}\\
\vcenter{\hbox{\xymatrix@!0@C=15mm{
&&\\
I\ar@/_7mm/[rrdd]_-{n}\ar@/^7mm/[rrdd]^-{n}&&\\
&&\\
&&R\ot R\\
&&
}}}
\quad&=\quad
\vcenter{\hbox{\xymatrix@!0@C=15mm{
&R\ot R\ar[dd]_-{11n}\ar@/^7mm/[rddd]^-{1}&\\
I\ar[ru]^-{n}\ar[dd]_-{n}&&\\
&R\ot RR\ot R\ar[rd]^-{1e1}\ar@{}[ru]|*[@]{\cong}^-{1\varsigma_r\ }&\\
R\ot R\ar[ru]_-{n11}\ar@/_7mm/[rr]_-{1}\ar@{}[ruuu]|*[@]{\cong}&&R\ot R\\
&\ar@{}[uu]|*[@]{\cong}^-{\varsigma_l1\:}&
}}}
\end{align*}
This situation is denoted by $R\dashv R\ot$ and called \emph{a biduality} in $\mathcal{M}$. \emph{Left biduals} are defined as right biduals in $\mathcal{M}^{\text{op rev}}$. A monoidal bicategory $\mathcal{M}$ that has right biduals for every object is called \emph{right autonomous} (or \emph{right rigid}); if instead $\mathcal{M}$ has left biduals it is called \emph{left autonomous}, and if it has both left and right biduals $\mathcal{M}$ is called \emph{autonomous}.
\end{defi}

\begin{eje} This is what bidualities look like in our prototypical monoidal bicategories.
\begin{itemize}
\item In $\Mod_k$ for a commutative ring $k$, the bidual of a $k$-algebra $R$ is the opposite algebra $R\ot$, which has the same underlying $k$-vector space but the reverse multiplication.
\item In $\Span^{\text{co}}$ every set is self-bidual, the unit and counit of the biduality are constructed with the unique comonoid structure (duplicate/discard) that every set has.
\item In $\mathcal{V}$-$\Prof$ for a symmetric monoidal category $\mathcal{V}$, the bidual of a $\mathcal{V}$-category $\mathcal{A}$ is the opposite category $\mathcal{A}^{\text{op}}$.
\item In $\Cat$ with the cartesian product (and in fact in any cartesian monoidal bicategory) a biduality is far too restrictive, because for a biduality $\mathcal{C}\dashv\mathcal{C}\ot$ to exist both categories $\mathcal{C}$ and $\mathcal{C}\ot$ have to be equivalent to the terminal category.
\end{itemize}
\end{eje}

\begin{rem}\label{rem:PantsMonoidale}
Right biduals are unique up to equivalence and a biduality $R\dashv R\ot$ induces a monoidale $R\ot R$ with the structure below. We call \emph{enveloping monoidales} those monoidales that arise from bidualities.
\begin{center}\begin{tabular}{cc}
\centering
Product&
Unit\\
$
\vcenter{\hbox{\xymatrix{
R\ot RR\ot R\ar[r]^-{1e1}&R\ot R
}}}
$&
$
\vcenter{\hbox{\xymatrix@1@C=5mm{I\ar[r]^-{n}&R\ot R
}}}
$\\
&
\\
Associator&
Left and right unitors\\
$
\vcenter{\hbox{\xymatrix@!0@=15mm{
**[l]R\ot RR\ot RR\ot R\ar[r]^-{1e11}\ar[d]_-{11e1}&**[r]R\ot RR\ot R\ar[d]^-{1e1}\\
**[l]R\ot RR\ot R\ar[r]_-{1e1}\ar@{}[ru]|*[@ru]{\cong}&R\ot R
}}}
$&
$
\vcenter{\hbox{\xymatrix@!0@=15mm{
**[l]R\ot R\ar[r]^-{n1}\ar[rd]_-{1}&R\ot RR\ot R\ar[d]|-{1e1}\ar@{}[rd]|<<<<<*[@rd]{\cong}^<<<<<{1\varsigma_r}&**[r]R\ot R\ar[l]_-{1n}\ar[ld]^-{1}\\
\ar@{}[ru]|>>>>>*[@ru]{\cong}^>>>>>{\varsigma_l1}&R\ot R&
}}}
$
\end{tabular}\end{center}
The pentagon axiom~\eqref{ax:SKM1} is an instance of the coherence of the interchange law in $\mathcal{M}$, and the triangle axiom~\eqref{ax:SKM2} is exactly one of the swallowtail equations of the biduality.
\end{rem}

\begin{rem}\label{rem:EndoHomMonoidale}
It is possible to generalise the fact that an adjunction $F\dashv G$ in $\Cat$ is also given by a pair of functors $F$, $G$ and a natural isomorphism of sets $\hom(Fx,y)\cong\hom(x,Gy)$ to the case of biadjunctions in a tricategory (see \cite[Example 1.1.7]{Verity1992} for biadjunctions between bicategories). Given a biduality $R\dashv R\ot$ in a monoidal bicategory $\mathcal{M}$ there exists an adjoint equivalence $\mathcal{M}(RX,Y)\simeq\mathcal{M}(X,R\ot Y)$ equipped with some other structure that satisfies some coherence equations. Hence, every autonomous monoidal bicategory $\mathcal{M}$ is a right closed monoidal bicategory \cite[Section 2]{Day1997} with right internal hom given by $[X,Y]:=X\ot Y$. This allows us to think of the enveloping monoidale $R\ot R$ as the endo-hom monoidale.

An opmonoidal arrow whose source is the monoidal unit $I$ may be called an \emph{internal comonoid} of the target skew monoidale. If $\mathcal{M}=\Cat$, internal comonoids of a (skew) monoidal category are precisely comonoids in the (skew) monoidal category. Now, even if the monoidal bicategory $\mathcal{M}$ is not right closed monoidal or autonomous, for an object $R$ with a bidual $R\ot$ we may still talk about internal comonoids of the enveloping monoidale $R\ot R$. These are opmonoidal arrows $\xymatrix@1@C=5mm{I\ar[r]&R\ot R}$, and it is not hard to see that under transposition internal comonoids of $R\ot R$ correspond to comonads on $R$. We will continue discussing these concepts in Remark~\ref{rem:ComonadsAreIActions} and Remark~\ref{rem:InducedComonad} below.
\end{rem}
\begin{lem}\label{lem:LocalEquivalence}
For every two bidualities $R\dashv R\ot$ and $S\dashv S\ot$ in $\mathcal{M}$ there is an adjoint equivalence of categories
\[
\mathcal{M}(R,S)\simeq\mathcal{M}(S\ot,R\ot).
\]
More generally,
\[
\mathcal{M}(RX,YS)\simeq\mathcal{M}(XS\ot,R\ot Y).
\]
\end{lem}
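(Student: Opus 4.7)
The plan is to prove the more general statement; the first formula then follows by taking $X=Y=I$, so that $\mathcal{M}(RI,IS)=\mathcal{M}(R,S)$ and $\mathcal{M}(IS\ot,R\ot I)=\mathcal{M}(S\ot,R\ot)$. The proof is a standard mate construction. Define a functor $\Phi\colon\mathcal{M}(RX,YS)\to\mathcal{M}(XS\ot,R\ot Y)$ that sends an arrow $f\colon RX\to YS$ to the composite
\[
\xymatrix@C=7mm{
XS\ot\ar[r]^-{n1}&R\ot R X S\ot\ar[r]^-{1f1}&R\ot Y S S\ot\ar[r]^-{1e}&R\ot Y,
}
\]
using $n=n_R$ and $e=e_S$, and acts on cells by whiskering with the identity arrows $n1$ and $1e$. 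Conversely define $\Psi\colon\mathcal{M}(XS\ot,R\ot Y)\to\mathcal{M}(RX,YS)$ sending $g\colon XS\ot\to R\ot Y$ to
\[
\xymatrix@C=7mm{
RX\ar[r]^-{1n}&R X S\ot S\ar[r]^-{1g1}&R R\ot Y S\ar[r]^-{e1}&Y S,
}
\]
now with $n=n_S$ and $e=e_R$, also acting on cells by whiskering. Both constructions are plainly functorial in the hom-category.

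Next I would exhibit natural isomorphisms $\Psi\Phi\cong\id$ and $\Phi\Psi\cong\id$. Unpacking $\Psi\Phi(f)$ gives a pasting in which, after a single application of the interchange isomorphism to separate the $R\dashv R\ot$ snake on the left-hand factor from the $S\dashv S\ot$ snake on the right-hand factor, one recognises two instances of the snake triangles $\varsigma_l$ and $\varsigma_r$ (together with their swallowtail partners) acting on the two sides of $f$. The swallowtail equations then collapse both snakes to identities, yielding a cell isomorphic to $f$; this isomorphism is natural in $f$ by naturality of the interchanger. A symmetric argument using the other swallowtail equations handles $\Phi\Psi\cong\id$. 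Finally, these two isomorphisms can be chosen to satisfy the triangle identities of an adjoint equivalence, again by the coherence of the swallowtails—equivalently, one invokes the fact that any equivalence in a 2-category (here $\Cat$) may be promoted to an adjoint equivalence.

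The main obstacle is purely bookkeeping: one must be careful with the interchange isomorphisms in $\mathcal{M}$ (recalled in Subsection~\ref{ssec:2.5DimCatyTheory}) when moving the factors $n_R1$ and $1e_S$ past one another, and the pasting diagram for $\Psi\Phi(f)$ contains several parallel whiskerings that only commute up to the interchanger. No genuinely new idea is needed; the whole argument is the familiar mates bijection for the pair of adjunctions $R\dashv R\ot$ and $S\ot\dashv S$, transported into $\mathcal{M}$, and the only subtlety is that in a non-strict monoidal bicategory one should record where the coherence cells are used, which is handled uniformly by the Gray-monoid presentation.
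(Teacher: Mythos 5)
Your proposal is correct and is exactly the standard mate/transposition argument that the paper has in mind: it states this lemma without proof (the \qed follows the statement directly), and your composites $\Phi$ and $\Psi$ agree with the transpositions the paper uses implicitly elsewhere, e.g.\ in the definition of $i\ob\dashv i\ot$ and in the proof of Theorem~\ref{teo:BidualityOpmonoidalOplaxAction}. The only cosmetic point is that it is the snake isomorphisms $\varsigma_l$, $\varsigma_r$ themselves that collapse the two snakes in $\Psi\Phi(f)$ and $\Phi\Psi(g)$, while the swallowtail equations are what make the resulting unit and counit satisfy the triangle identities, so the equivalence is already adjoint without invoking the general promotion fact.
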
\qed

\begin{rem}\label{rem:DualsAsALocalEquivalence}
If $\mathcal{M}$ is right autonomous, the axiom of choice allows us to choose a bidual $R\ot$ for every object $R$, thus the equivalence of Lemma~\ref{lem:LocalEquivalence} gives rise to a strong monoidal pseudofunctor in the up to equivalence sense since $(XY)\ot\simeq Y\ot X\ot$.
\[
\vcenter{\hbox{\xymatrix{
\mathcal{M}^{\text{rev op}}\ar[r]^-{(\ )\ot}&\mathcal{M}
}}}
\]
This pseudofunctor is also locally an equivalence in the sense that, for every pair of objects, its action on homs is an equivalence. Furthermore, if $\mathcal{M}$ is autonomous $(\ )\ot$ is a strong monoidal biequivalence of monoidal bicategories \cite[1.33 for definition]{Street1980}. Its pseudoinverse takes an object to its chosen left bidual, thus $(\ )\ot$ is essentially surjective on objects (in the up to equivalence sense) by the existence of left biduals and the uniqueness up to equivalence of right biduals. This appears first in \cite[Section 2]{Day1997} but the authors forget to mention left autonomy. This biequivalence allows us to transpose many structures without losing information, for example, adjunctions.
\end{rem}

\begin{lem}
For every two bidualities $S\dashv S\ot$ and $R\dashv R\ot$ in $\mathcal{M}$, adjunctions $f_*\dashv f^*:\xymatrix@1@C=5mm{S\ar[r]&R}$ are in correspondence with adjunctions $f\ob\dashv f\ot:\xymatrix@1@C=5mm{S\ot\ar[r]&R\ot}$.
\end{lem}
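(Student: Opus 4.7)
The strategy is to transpose adjunctions under biduality, exploiting the local adjoint equivalence from Lemma~\ref{lem:LocalEquivalence}. Specialising it with $X=Y=I$ produces equivalences of categories
\[
\mathcal{M}(S,R)\simeq\mathcal{M}(R\ot,S\ot)
\qquad\text{and}\qquad
\mathcal{M}(R,S)\simeq\mathcal{M}(S\ot,R\ot).
\]
Define $f\ot:R\ot\to S\ot$ as the mate of $f_*$, explicitly the composite
\[
\xymatrix@C=10mm{R\ot\ar[r]^-{n_S 1}&S\ot SR\ot\ar[r]^-{1f_*1}&S\ot RR\ot\ar[r]^-{1e_R}&S\ot,}
\]
and define $f\ob:S\ot\to R\ot$ analogously, using $f^*$, $n_R$, and $e_S$. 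The swallowtail equations for the two bidualities make these two assignments mutually inverse up to canonical isomorphism, as in Remark~\ref{rem:DualsAsALocalEquivalence}.

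Next I would produce unit and counit for $f\ob\dashv f\ot$ by transporting $\eta:1_S\Rightarrow f^*f_*$ and $\varepsilon:f_*f^*\Rightarrow 1_R$ through the local equivalences $\mathcal{M}(S,S)\simeq\mathcal{M}(S\ot,S\ot)$ and $\mathcal{M}(R,R)\simeq\mathcal{M}(R\ot,R\ot)$. The key observation is that taking mates is pseudofunctorial with respect to horizontal composition in the contravariant sense: the mate of a composite $S\xrightarrow{f_*}R\xrightarrow{f^*}S$ is canonically isomorphic to the reverse composite $S\ot\xrightarrow{f\ob}R\ot\xrightarrow{f\ot}S\ot$, with the coherence isomorphism built from $\varsigma_l$ and $\varsigma_r$ of both bidualities. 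Likewise, the mate of $1_S$ is canonically isomorphic to $1_{S\ot}$. Conjugating $\eta$ and $\varepsilon$ by these coherence isomorphisms yields candidate cells
\[
\tilde\eta:1_{S\ot}\Rightarrow f\ot f\ob
\qquad\text{and}\qquad
\tilde\varepsilon:f\ob f\ot\Rightarrow 1_{R\ot}.
\]

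The triangle identities for $f\ob\dashv f\ot$ then follow because the local equivalence is pseudofunctorial with respect to horizontal composition, hence preserves any equation of pasting diagrams built from composition and identities; abstractly this is the statement that the biequivalence $(\ )\ot:\mathcal{M}^{\text{rev op}}\to\mathcal{M}$ of Remark~\ref{rem:DualsAsALocalEquivalence} sends the snake equations for $f_*\dashv f^*$ to the snake equations for $f\ob\dashv f\ot$. The correspondence is reversible because the construction, applied to $f\ob\dashv f\ot$ using the bidualities $R\ot\dashv R$ and $S\ot\dashv S$ implied by the swallowtail equations, returns the original adjunction up to the isomorphisms of the local equivalences. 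The main obstacle in a hands-on proof is verifying the two triangle identities by pasting diagrams: each triangle becomes a lengthy diagram involving the chosen $n$ and $e$ for both objects together with $\eta$ or $\varepsilon$, and one must invoke both swallowtail axioms and the original snake equation to collapse it. This is why packaging the argument via the pseudofunctoriality of the transpose (equivalently, via mates in a biequivalence) is preferable.
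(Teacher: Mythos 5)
Your proposal is correct and is exactly the argument the paper has in mind: the lemma is stated without proof, but the explicit formulas for $i\ob$ and $i\ot$ given immediately afterwards are precisely your mate construction (transpose the right adjoint to get the new left adjoint and vice versa), and packaging the triangle identities via the pseudofunctoriality of $(\ )\ot$ from Remark~\ref{rem:DualsAsALocalEquivalence} is the standard route. Your directions and variances all check out against the paper's special case $S=I$.
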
\qed

The adjunction $f\ob\dashv f\ot$ is called the \emph{opposite or mate} adjunction of $f_*\dashv f^*$. In what follows adjunctions where $S=I$ and their opposites are constantly used, so we spell out the opposite adjunction to have at hand for future calculations.
\[
\vcenter{\hbox{\xymatrix{
R\dtwocell_{i}^{i^*}{'\dashv}\\
I
}}}
\qquad
\vcenter{\hbox{\xymatrix@R=5mm{
i\ob:I\ar[r]^-{n}&R\ot R\ar[r]^-{1i^*}&R\ot\\
i\ot:R\ot\ar[r]^-{i1}&RR\ot\ar[r]^-{e}&I
}}}
\qquad\vcenter{\hbox{\xymatrix{
R\ot\dtwocell_{i\ob}^{i\ot}{'\dashv}\\
I
}}}
\]
Note that the associated monad of $i\ob\dashv i\ot$ has the same underlying arrow as the monad for $i\dashv i^*$ up to isomorphism, but the multiplication is the opposite one.

\subsection{Coalgebroids}\label{subsec:Coalgebroids}
We close this section with the example that gave this paper its name. We use Sweedler's notation $\delta(c)=\sum c_{(1)}\otimes c_{(2)}$ for the image of an element $c\in C$ under a morphism of modules $\xymatrix@1@C=5mm{\delta:C\ar[r]&C\otimes C}$. Recall the definition of an $R|S$-coalgebroid (\cite[Definition 1.1]{Szlachanyi2004} or \cite[pp. 185]{Bohm2009}) which first appeared under the name $R|S$-coring in \cite[Definition 3.5]{Takeuchi1987}.
\begin{defi}\label{def:coalgebroid}
Let $R$ and $S$ be $k$-algebras for a commutative ring $k$. An \emph{$R|S$-coalgebroid} consists of a module $C$ in $RS$-$\Mod$-$RS$, a morphism called comultiplication $\xymatrix@1@C=5mm{\delta:C\ar[r]&C\otimes_S C}$ in $RS$-$\Mod$-$RS$ in which $C\otimes_SC$ uses the two-sided $R$-module structure given by $r.(c\otimes c').r'=cr'\otimes rc'$, that is
\begin{enumerate}
\item $\delta(scs')=\sum sc_{(1)}\otimes c_{(2)}s'$
\item $\delta(rcr')=\sum c_{(1)}r'\otimes rc_{(2)}$
\end{enumerate}
and a morphism called counit $\xymatrix@1@C=5mm{\varepsilon:C\ar[r]&S}$ in $S$-$\Mod$-$S$, that is
\begin{enumerate}\setcounter{enumi}{2}
\item $\varepsilon(scs')=s\varepsilon(c)s'$
\end{enumerate}
subject to the following axioms.
\begin{enumerate}\setcounter{enumi}{3}
\item $\sum rc_{(1)}\otimes c_{(2)}=\sum c_{(1)}\otimes c_{(2)}r$
\item $\varepsilon(rc)=\varepsilon(cr)$
\item $(C,\varepsilon,\delta)$ forms a comonoid in the monoidal category $S$-$\Mod$-$S$
\end{enumerate}
Note that axiom (iv) may be rewritten using the two-sided $R$-action on $C\otimes_SC$ given by $r\mathord{\cdot}(c\otimes c')\mathord{\cdot}r'=(rc)\otimes(c'r')$, which is different than the action used in (ii).
\begin{itemize}
\item[(iv')] $r\mathord{\cdot}\delta(c)=\delta(c)\mathord{\cdot}r$, the image of the comultiplication $\delta$ is in the $R$-centralizer of $C\otimes_SC$.
\end{itemize}
\end{defi}
According to \cite{Takeuchi1987}, \cite{Szlachanyi2004}, or \cite{Hai2008}, conditions (iv) and (v) are logically equivalent. In Example~\ref{exa:CoalgebroidsAsOplaxActions} at the end of Section~\ref{sec:OplaxActionsOpmonadicity}, we give another equivalent and simpler definition of a coalgebroid by using the tool developed in that section: oplax actions.

It is immediate from the definition of an $R|S$-coalgebroid that if $R=k$ then conditions (ii), (iv), and (v) are trivial, thus a $k|S$-coalgebroid is nothing but a comonoid in $S$-$\Mod$-$S$, i.e. an $S$-coring. Going up one dimension, since $S$-$\Mod$-$S$ is a hom category of the monoidal bicategory $\Mod_k$, then an $S$-coring is a comonad in $\Mod_k$ on $S$. And, as mentioned in Remark~\ref{rem:EndoHomMonoidale}, comonads correspond to opmonoidal arrows by transposition, which implies that $k|S$-coalgebroids correspond to opmonoidal arrows $\xymatrix@1@C=5mm{k\ar[r]&S\ot S}$.

In fact, all $R|S$-coalgebroids \emph{are} opmonoidal arrows; the following lemma is the behaviour on objects of an isomorphism of bicategories between the full subbicategory $\Opmon^{\mathrm{e}}(\Mod_k)$ of $\Opmon(\Mod_k)$ on the enveloping monoidales in $\Mod_k$, and the bicategory $\Cgb_k$, defined in \cite{Szlachanyi2004}, whose objects are $k$-algebras and arrows $\xymatrix@1@C=5mm{R\ar[r]&S}$ are $R|S$-coalgebroids.
\[
\Opmon^{\mathrm{e}}(\Mod_k)\cong\Cgb_k
\]
In the proof, there are modules that have more than two actions with respect to the same $k$-algebra as well as tensor products of these modules over one or more of these actions. To avoid confusion, we use coloured $k$-algebras as subscripts for modules and tensor products to distinguish which actions are being used while tensoring. For example, ${_{\textcolor{Red}{R}}M_{\textcolor{ForestGreen}{S}}}$ is a module in $\textcolor{Red}{R}$-$\Mod$-$\textcolor{ForestGreen}{S}$, and with another module ${_{\textcolor{ForestGreen}{S}}N_{\textcolor{BlueViolet}{T}}}$ we can form the tensor product ${_{\textcolor{Red}{R}}M_{\textcolor{ForestGreen}{S}}}\underset{\textcolor{ForestGreen}{S}}\otimes{_{\textcolor{ForestGreen}{S}}N_{\textcolor{BlueViolet}{T}}}$ to get a module ${_{\textcolor{Red}{R}}L_{\textcolor{BlueViolet}{T}}}$.
\begin{lem}\label{lem:CoalgebroidsAsQuantum}
For a commutative ring $k$, opmonoidal arrows in the bicategory $\Mod_k$ of the form $\xymatrix@1@C=5mm{C:R\ot R\ar[r]&S\ot S}$ are $R|S$-coalgebroids.
\end{lem}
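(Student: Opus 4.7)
The plan is to unpack, in the language of bimodules over $k$-algebras, each piece of data of an opmonoidal arrow $C\colon R\ot R \to S\ot S$ in $\Mod_k$ and to match it with the corresponding piece of the $R|S$-coalgebroid structure listed in Definition~\ref{def:coalgebroid}.

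First I would identify the underlying datum. An arrow $C\colon R\ot R \to S\ot S$ in $\Mod_k$ is by definition a bimodule in $(R\ot\otimes R)$-$\Mod$-$(S\ot\otimes S)$. Since a left $R\ot$-action on a $k$-module is the same thing as a right $R$-action, and likewise for $S$, the left $(R\ot\otimes R)$-action on $C$ gives two commuting $R$-actions, and the right $(S\ot\otimes S)$-action gives two commuting $S$-actions, all four mutually commuting; this is precisely a module in $RS$-$\Mod$-$RS$.

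Next I would unpack the opmonoidal unit constraint $C^0\colon C\circ n_R \Rightarrow n_S$. The unit arrow $n_R\colon I \to R\ot R$ of the enveloping monoidale is represented by $R$ itself, regarded as a $(k,R\ot\otimes R)$-bimodule with the two $R$-actions being left and right multiplication. Computing the coequaliser one finds that $C\circ n_R = R\otimes_{R\ot\otimes R} C$ has underlying $k$-module the quotient $C/\langle rc-cr\rangle$, with residual $S$-$S$-bimodule structure inherited from the right $(S\ot\otimes S)$-action on $C$. Since $n_S = S$ as a $(k, S\ot\otimes S)$-bimodule, the 2-cell $C^0$ amounts to a morphism $C/\langle rc-cr\rangle \to S$ of $S$-$S$-bimodules, equivalently a morphism $\varepsilon\colon C \to S$ of $S$-$S$-bimodules (yielding axiom (iii)) which equalises the left and right $R$-actions (yielding axiom (v)).

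I would then turn to the opmonoidal composition constraint $C^2\colon C\circ m_M \Rightarrow m_N\circ(CC)$. By Remark~\ref{rem:PantsMonoidale} the multiplications $m_M$ and $m_N$ are $1\otimes e\otimes 1$, and the counit $e\colon RR\ot \to I$ of the biduality is represented by the regular $R$-$R$-bimodule $R$. A direct computation of the relevant tensor products shows that $m_N\circ(CC)$ has underlying $k$-module $C\otimes_S C$, with the two factors of the outer $(R\ot\otimes R)\otimes(R\ot\otimes R)$-action acting on the two tensor factors respectively, while $C\circ m_M$ is represented by $R\otimes_k C$ with an action in which the two inner $R$-factors of the source are contracted via $e$. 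The 2-cell $C^2$ is thus determined by its value at the generator $1\otimes c$, giving a $k$-linear map $\delta\colon C \to C\otimes_S C$ with $\delta(c)=\sum c_{(1)}\otimes c_{(2)}$. The constraints for $C^2$ to be a well-defined morphism of bimodules translate as follows: right $(S\ot\otimes S)$-linearity yields axiom (i); equivariance with respect to the outer $(R\ot\otimes R)$-actions yields axiom (ii); and the fact that the two distinct left actions which both produce $v\otimes c$ from $1\otimes c$ must agree after applying $\tilde\delta$ forces $\sum vc_{(1)}\otimes c_{(2)} = \sum c_{(1)}\otimes c_{(2)}v$, which is exactly axiom (iv). This last point is the conceptual heart of the correspondence: the $R$-centrality condition is exactly the manifestation of the $e$-contraction in $m_M$.

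Finally, I would verify that the three opmonoidal axioms \eqref{ax:OM1}, \eqref{ax:OM2}, \eqref{ax:OM3} translate under these identifications to coassociativity and the left and right counit laws for $(C,\delta,\varepsilon)$, i.e.\ to axiom (vi). This reduces to a routine diagram chase, using that the associator of the enveloping monoidale is the interchange isomorphism and the unitors come from the snake equations of the biduality. The main obstacle throughout is purely one of bookkeeping: one must keep careful track of which of the several $R$- and $S$-actions on $C$ appears in each position, which is exactly what the coloured-subscript notation introduced just before the lemma is designed to handle.
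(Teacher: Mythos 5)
Your proposal is correct and follows essentially the same route as the paper's proof: identify the underlying bimodule in $RS$-$\Mod$-$RS$, unpack $C^0$ as $\varepsilon$ together with axioms (iii) and (v) via the coequaliser $R\otimes_{R\ot\otimes R}C\cong C/\langle rc-cr\rangle$, unpack $C^2$ as $\delta$ together with axioms (i), (ii), and (iv) by simplifying the source to $R\otimes_k C$ and the target to $C\otimes_S C$, and finally match the three opmonoidal axioms with the comonoid axioms. The paper carries out the same bookkeeping with its coloured-subscript notation and likewise leaves the final translation of \eqref{ax:OM1}--\eqref{ax:OM3} as a direct check.
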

\begin{proof}

The isomorphism $R\ot R$-$\Mod$-$S\ot S\cong RS$-$\Mod$-$RS$ is used without changing the name of the modules. Let $C$ be an opmonoidal arrow as in the statement. One may rewrite the structure cell $C^0$ in the language of the category $\Mod$-$\textcolor{Red}{S\ot S}$ instead of the language of the monoidal bicategory $\Mod_k$. Both notations are shown below.
\[
\vcenter{\hbox{\xymatrix@!0@R=15mm@C=10mm{
&k\ar[ld]_-{n}\ar[rd]^-{n}\xtwocell[rd]{}<>{^<3>C^0}&\\
\textcolor{ForestGreen}{R\ot R}\ar[rr]_-{C}&&\textcolor{Red}{S\ot S}
}}}
\qquad\quad
\vcenter{\hbox{\xymatrix{
R_{\textcolor{ForestGreen}{R\ot R}}
\underset{\textcolor{ForestGreen}{R\ot R}}{\otimes}
{_{\textcolor{ForestGreen}{R\ot R}}C}_{\textcolor{Red}{S\ot S}}
\ar[r]^-{C^0}&
S_{\textcolor{Red}{S\ot S}}
}}}
\]
And module morphisms $C^0$ are in bijective correspondence with morphisms $\xymatrix@1@C=5mm{\varepsilon:C\ar[r]&S}$ in $\textcolor{Red}{S}$-$\Mod$-$\textcolor{Red}{S}$ for which the condition (v) $\varepsilon(\textcolor{ForestGreen}{r}c)=\varepsilon(c\textcolor{ForestGreen}{r})$ is satisfied. Now, one needs to be more careful with the structure cell $C^2$ as there are several $R$-actions which may be confusing. Here is where the colours are most helpful; $C^2$ is a cell in $\Mod_k$ as follows.
\[
\vcenter{\hbox{\xymatrix@!0@=15mm{
**{!<3mm>}\textcolor{Red}{R\ot\textcolor{ForestGreen}{RR\ot}R}\ar[r]^-{CC}\ar[d]_-{1e1}\xtwocell[rd]{}<>{^C^2}&**{!<-3mm>}\textcolor{olive}{S\ot\textcolor{BlueViolet}{SS\ot}S}\ar[d]^-{1e1}\\
\textcolor{BlueViolet}{R\ot R}\ar[r]_-{C}&\textcolor{Red}{S\ot S}
}}}
\]
But now, one may rewrite it in the language of $\textcolor{Red}{R\ot\textcolor{ForestGreen}{RR\ot}R}$-$\Mod$-$\textcolor{Red}{S\ot S}$, hence $C^2$ is a module morphism with source and target as shown below.
\[
\vcenter{\hbox{\xymatrix@!0@C=50mm@R=15mm{
({_{\textcolor{Red}{R\ot}}R_{\textcolor{BlueViolet}{R\ot}}}
\otimes
{_{\textcolor{ForestGreen}{RR\ot}}R}
\otimes
{_{\textcolor{Red}{R}}R_{\textcolor{BlueViolet}{R}}})
\underset{\textcolor{BlueViolet}{R\ot R}}\otimes
{_{\textcolor{BlueViolet}{R\ot R}}C_{\textcolor{Red}{S\ot S}}}
\ar[rd]^-{C^2}&\\
&({_{\textcolor{Red}{R\ot}\textcolor{ForestGreen}{R}}C_{\textcolor{olive}{S\ot}\textcolor{BlueViolet}{S}}}
\otimes
{_{\textcolor{ForestGreen}{R\ot}\textcolor{Red}{R}}C_{\textcolor{BlueViolet}{S\ot}\textcolor{olive}{S}}})
\underset{\textcolor{olive}{S\ot\textcolor{BlueViolet}{SS\ot}S}}\otimes
({_{\textcolor{olive}{S\ot}}S_{\textcolor{Red}{S\ot}}}
\otimes
{_{\textcolor{BlueViolet}{SS\ot}}S}
\otimes
{_{\textcolor{olive}{S}}S_{\textcolor{Red}{S}}})
}}}
\]
The source may be simplified as follows,
\[
({_{\textcolor{Red}{R\ot}}R_{\textcolor{BlueViolet}{R\ot}}}
\otimes
{_{\textcolor{ForestGreen}{RR\ot}}R}
\otimes
{_{\textcolor{Red}{R}}R_{\textcolor{BlueViolet}{R}}})
\underset{\textcolor{BlueViolet}{R\ot R}}\otimes
{_{\textcolor{BlueViolet}{R\ot R}}C_{\textcolor{Red}{S\ot S}}}
\cong
{_{\textcolor{ForestGreen}{RR\ot}}R}
\otimes
{_{\textcolor{Red}{R\ot R}}C_{\textcolor{Red}{S\ot S}}}
\]
and the target is simplified as below.
\begin{align*}
&({_{\textcolor{Red}{R\ot}\textcolor{ForestGreen}{R}}C_{\textcolor{olive}{S\ot}\textcolor{BlueViolet}{S}}}
\otimes
{_{\textcolor{ForestGreen}{R\ot}\textcolor{Red}{R}}C_{\textcolor{BlueViolet}{S\ot}\textcolor{olive}{S}}})
\underset{\textcolor{olive}{S\ot\textcolor{BlueViolet}{SS\ot}S}}\otimes
({_{\textcolor{olive}{S\ot}}S_{\textcolor{Red}{S\ot}}}
\otimes
{_{\textcolor{BlueViolet}{SS\ot}}S}
\otimes
{_{\textcolor{olive}{S}}S_{\textcolor{Red}{S}}})\\
\cong\ &
({_{\textcolor{Red}{R\ot}\textcolor{ForestGreen}{R}}C_{\textcolor{Red}{S\ot}\textcolor{BlueViolet}{S}}}
\otimes
{_{\textcolor{ForestGreen}{R\ot}\textcolor{Red}{R}}C_{\textcolor{BlueViolet}{S\ot}\textcolor{Red}{S}}})
\underset{\textcolor{BlueViolet}{SS\ot}}\otimes
{_{\textcolor{BlueViolet}{SS\ot}}S}\\
\cong\ &
{_{\textcolor{Red}{R\ot}\textcolor{ForestGreen}{R}}C_{\textcolor{Red}{S\ot}\textcolor{BlueViolet}{S}}}
\underset{\textcolor{BlueViolet}{S}}\otimes
{_{\textcolor{ForestGreen}{R\ot}\textcolor{Red}{R}}C_{\textcolor{BlueViolet}{S\ot}\textcolor{Red}{S}}}
\end{align*}
Thus in $\textcolor{Red}{R\ot\textcolor{ForestGreen}{RR\ot}R}$-$\Mod$-$\textcolor{Red}{S\ot S}$, module morphisms $C^2$ are in bijection with module morphisms of the following form,
\[
\vcenter{\hbox{\xymatrix{
{_{\textcolor{ForestGreen}{RR\ot}}R}
\otimes
{_{\textcolor{Red}{R\ot R}}C_{\textcolor{Red}{S\ot S}}}
\ar[r]&
{_{\textcolor{Red}{R\ot}\textcolor{ForestGreen}{R}}C_{\textcolor{Red}{S\ot}\textcolor{BlueViolet}{S}}}
\underset{\textcolor{BlueViolet}{S}}\otimes
{_{\textcolor{ForestGreen}{R\ot}\textcolor{Red}{R}}C_{\textcolor{BlueViolet}{S\ot}\textcolor{Red}{S}}}
}}}
\]
which in turn are in bijection with module morphisms
\[
\xymatrix@1@C=5mm{\delta:{_{\textcolor{Red}{R\ot R}}C_{\textcolor{Red}{S\ot S}}}\ar[r]&{_{\textcolor{Red}{R\ot}}C_{\textcolor{Red}{S\ot}\textcolor{BlueViolet}{S}}}\underset{\textcolor{BlueViolet}{S}}\otimes{_{\textcolor{Red}{R}}C_{\textcolor{BlueViolet}{S\ot}\textcolor{Red}{S}}}}
\]
in $\textcolor{Red}{R\ot R}$-$\Mod$-$\textcolor{Red}{S\ot S}$ which satisfy (iv) $\sum \textcolor{ForestGreen}{r}c_{(1)}\otimes c_{(2)}=\sum c_{(1)}\otimes c_{(2)}\textcolor{ForestGreen}{r}$, by using the $\textcolor{ForestGreen}{R}$-actions. Now that we have translated the data, the three axioms of a comonoid for $(C,\varepsilon,\delta)$ translate exactly into the those of an opmonoidal arrow for $(C,C^0,C^2)$.
\end{proof}
\section{Opmonoidal \texorpdfstring{$\dashv$}{-|} Monoidal Adjunctions and Opmonadicity}\label{sec:OpmonoidalOpmonadicity}
We dedicate this section to investigating the interaction between opmonoidal arrows $\xymatrix@1@C=5mm{R\ot R\ar[r]&N}$ and certain opmonadic adjunctions. In particular, we study opmonadic adjunctions where the left adjoint is opmonoidal and the right adjoint is monoidal. This ``opmonoidal $\dashv$ monoidal opmonadicity'' is one of the most powerful tools used throughout the paper, providing us with non-trivial equivalences of categories. But for that, we require that opmonadic adjunctions behave well with respect to the overall structure of the monoidal bicategory.

\begin{defi}\label{def:OpmonadicFriendly}
An \emph{opmonadic-friendly monoidal bicategory} $\mathcal{M}$, is a monoidal bicategory such that
\begin{itemize}
\item Tensoring with objects on either side preserves opmonadicity.
\item Composing with arrows on either side preserves any existing reflexive coequaliser in the hom categories.
\end{itemize}
\end{defi}

A fairly common behaviour of an adjunction in a monoidal bicategory $\mathcal{M}$ between objects that have a (skew) monoidal structure is that the left adjoint is opmonoidal while the right adjoint is monoidal. Surprisingly, these two properties are logically equivalent: for if an opmonoidal arrow has a right adjoint, then the mates of its opmonoidal constraints provide the right adjoint with a monoidal structure and vice versa. Moreover, the right adjoint is strong monoidal if and only if the left adjoint, the unit, and the counit are all opmonoidal, in which case the whole adjunction is in $\SkOpmon(\mathcal{M})$. All of this fits along with a phenomenon called doctrinal adjunction described in \cite{Kelly1974a}.
\begin{defi}
An \emph{opmonoidal $\dashv$ monoidal adjunction} $f\dashv g$ in a monoidal category $\mathcal{M}$, is an adjunction between (skew) monoidales where the left adjoint is opmonoidal and the right adjoint is monoidal.
\end{defi}

Here are some examples of opmonoidal $\dashv$ monoidal adjunctions.

\begin{lem}\label{lem:OpmonoidalUnit}
For every right skew monoidale $(M,m,u,\alpha,\lambda,\rho)$, the unit $\xymatrix@1@C=5mm{u:I\ar[r]&M}$ is an opmonoidal arrow, where $I$ has the trivial monoidal structure. The opmonoidal constraints are given by the diagrams below.
\[
\vcenter{\hbox{\xymatrix@!0@=15mm{
I\ar[r]^{uu}\ar[d]_{1}\ar[rd]_-{u}\xtwocell[rd]{}<>{^<-2>\lambda u\ }&MM\ar[d]^{m}\\
I\ar[r]_{u}&M
}}}
\qquad\qquad
\vcenter{\hbox{\xymatrix@!0@R=15mm@C=10mm{
&I\ar[dl]_-1\ar[rd]^-u&\\I\ar[rr]_-u&&M
}}}
\]\qed
\end{lem}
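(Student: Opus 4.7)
\emph{Proof sketch.} Define the opmonoidal constraints by $u^{2} := \lambda u$ (the whiskering of the left unitor $\lambda$ of $M$ by $u$) and $u^{0} := 1_{u}$ (the identity cell on $u$). It then remains to verify the three opmonoidal arrow axioms \eqref{ax:OM1}, \eqref{ax:OM2}, \eqref{ax:OM3}.

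The key simplification is that the monoidal unit $I$, viewed as a skew monoidale, carries the trivial structure: its product, unit, associator, and both unitors are all identities. Consequently, every sub-pasting in the opmonoidal axioms that is built purely from the source skew monoidale collapses. Moreover, since $u^{0}$ is taken to be the identity, the composite cells $C u^{0} = u u^{0}$ and $u^{0} C = u^{0} u$ appearing in the axioms become identities as well.

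With these collapses in hand, axiom \eqref{ax:OM3} reduces to the tautology $\lambda u = \lambda u$: the source $\lambda_{I}$ and $u^{0}u$ are identities on the left-hand side, leaving $u^{2}$, while the right-hand side is $\lambda$ whiskered by $u$, which is $u^{2}$ by definition. Axiom \eqref{ax:OM2} reduces, after the trivial cells disappear, to the assertion that the paste of $\rho u$ with $u^{2} = \lambda u$ equals $1_{u}$; this is precisely the $\lambda$-$\rho$ compatibility axiom \eqref{ax:SKM5} of the skew monoidale $M$ whiskered by $u$. Axiom \eqref{ax:OM1} reduces to a paste involving the associator $\alpha$ of $M$ together with two instances of $\lambda u$, and is exactly the $\alpha$-$\lambda$ compatibility axiom \eqref{ax:SKM3} of $M$ whiskered by $u$ on the appropriate side.

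The only real effort is parsing each of the three OM pasting diagrams and identifying, after the collapses above, the relevant axiom of the skew monoidale $M$; the remaining manipulations are whiskering-and-interchange bookkeeping. In particular, neither the $\alpha$-$\rho$ axiom \eqref{ax:SKM4} nor the triangle axiom \eqref{ax:SKM2} is invoked, and the unit constraint $u^{0}$ may be taken as strict because $I$ has no nontrivial unit-of-the-unit data.
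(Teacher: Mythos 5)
Your proof is correct, and in fact the paper offers no argument at all for this lemma --- it is stated with an immediate \qed as a routine verification --- so there is nothing to diverge from. Your identifications are the right ones: with $u^{2}=\lambda u$ and $u^{0}=\mathrm{id}_{u}$, axiom \eqref{ax:OM3} is a tautology, axiom \eqref{ax:OM2} collapses to $\rho u\circ\lambda u=1_{u}$, i.e.\ \eqref{ax:SKM5}, and axiom \eqref{ax:OM1} is the coassociativity of the ``comonoid'' $(u,\lambda u,1_u)$, which follows from \eqref{ax:SKM3} instantiated at the unit. The only point I would make explicit is that in \eqref{ax:OM1} the two sides do not literally match an instance of \eqref{ax:SKM3} until you use the middle-four interchange to rewrite $m(u\otimes\lambda u)\circ\lambda u$ as $(\lambda m)(uu)\circ\lambda u$ (the bicategorical avatar of naturality of $\lambda$); you flag this as ``whiskering-and-interchange bookkeeping'', which is accurate, and your observation that \eqref{ax:SKM1}, \eqref{ax:SKM2} and \eqref{ax:SKM4} are never needed is also correct.
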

\begin{rem}
As a consequence, every arrow $\xymatrix@1@C=5mm{i:I\ar[r]&R}$ that has a right adjoint $i^*$ is automatically opmonoidal, taking the skew monoidal structure on $R$ induced by the adjunction $i\dashv i^*$ in Lemma~\ref{lem:OneRightSkewMonoidale}. In other words, every adjunction such that the source of the left adjoint is $I$ is automatically an ``opmonoidal $\dashv$ monoidal adjunction''. In general, the unit and counit are neither monoidal nor opmonoidal.
\end{rem}
\begin{prop}\label{prop:OneOpmonoidalArrow}
For every biduality $R\dashv R\ot$ and every adjunction $i\dashv i^*$ the equality between the triangles below holds.
\[
\vcenter{\hbox{\xymatrix@!0@C=8mm@R=7mm{
&&I\ar[ddddll]_-i\ar[rdd]^-n\ar[ddd]_-{i\ob}&&\\
&&&&\\
&&&R\ot R\ar[dl]_<<{1i^*}\ar[rdd]^-{1}\xtwocell[dd]{}<>{^<1>1\varepsilon }&\\
&&R\ot\ar[rrd]_<<<<{1i}&&\\
R\ar[rrrr]_-{i\ob 1}\ar@{}[rru]|*[@]{\cong}&&&&R\ot R
}}}
\quad=\quad
\vcenter{\hbox{\xymatrix@!0@C=8mm@R=7mm{
\ar@{}[rrrrddd]|*[@]{\cong}&&I\ar[ddddll]_i\ar[rdd]^n&&\\
&&&&\\
&&{\xtwocell[rdd]{}<\omit>{^<2>\varepsilon\ob 1}}&R\ot R\ar[ddlll]_{i\ot 1}\ar[rdd]^1&\\
&&&&\\
R\ar[rrrr]_{i\ob 1}&&&&R\ot R
}}}
\]
Furthermore, taking the skew monoidal structure on $R$ induced by the adjunction $i\dashv i^*$ as in Lemma~\ref{lem:OneRightSkewMonoidale}, and the enveloping monoidale $R\ot R$ induced by the biduality $R\dashv R\ot$ as in Remark~\ref{rem:PantsMonoidale}, the arrow $\xymatrix@1@C=5mm{i\ob 1:R\ar[r]&R\ot R}$ is an opmonoidal arrow and its structure cells are the triangle above and the square below.
\[
\vcenter{\hbox{\xymatrix@!0@=13mm{
RR\ar[r]^-{1i\ob 1}\ar[dd]_-{i^*1}&RR\ot R\ar[r]^-{i\ob 111}\ar[ldd]^-{e1}&**[r]R\ot RR\ot R\ar[dd]^-{1e1}\\
\ar@{}[ru]|<<<<<<*[@ru]{\cong}&&\\
R\ar[rr]_-{i\ob 1}\ar@{}[rruu]|*[@]{\cong}&&R\ot R
}}}
\]
\end{prop}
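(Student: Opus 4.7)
The plan is to proceed in two stages. First I would establish the equality of the two pastings, and then use its common value as the opmonoidal unit constraint $C^0$ to verify that the triple $(i\ob 1, C^0, C^2)$ is an opmonoidal arrow.

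For the first equation, the key observation is that the mate adjunction $i\ob\dashv i\ot$ is, by its very construction, obtained from $i\dashv i^*$ by transposing the unit and counit across the biduality $R\dashv R\ot$. In particular the counit $\varepsilon\ob$ is characterised as the mate of $\varepsilon$. Expanding $i\ob=(1i^*)n$ and $i\ot=e(i1)$ inside the right-hand diagram and applying one copy of the swallowtail $\varsigma_l$ (together with the appropriate interchangers of $\mathcal{M}$) rewrites the right-hand pasting exactly as $1\varepsilon$ pasted with an interchanger, which is the left-hand side. This is essentially the defining equation of $\varepsilon\ob$ as a mate, read through the explicit formulas for $i\ob$ and $i\ot$.

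With $C^0$ thus defined and $C^2$ given by the square in the statement (an interchanger composed with $\varsigma_l$), one needs to verify the opmonoidal axioms \eqref{ax:OM1}, \eqref{ax:OM2} and \eqref{ax:OM3}. Axioms \eqref{ax:OM2} and \eqref{ax:OM3} express the compatibility of $C^0$ and $C^2$ with the unitors. The right and left unitors on $R$ are built from $\varepsilon$ and $\eta$ via the interchanger (Lemma~\ref{lem:OneRightSkewMonoidale}), while those on the enveloping monoidale $R\ot R$ involve $\varsigma_l$ and $\varsigma_r$ (Remark~\ref{rem:PantsMonoidale}); each axiom reduces, after cancelling whiskered copies of $\varsigma_l$, to a single swallowtail combined with a snake equation for $i\dashv i^*$.

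The main obstacle is axiom~\eqref{ax:OM1}, which pits the two associators (both interchange isomorphisms) against two whiskerings of $C^2$. My plan is to rewrite every diagram in a normal form involving only interchangers and copies of $\varsigma_l$, and then to apply the coherence axiom for the interchanger in $\mathcal{M}$ (displayed in Subsection~\ref{ssec:2.5DimCatyTheory}) together with one swallowtail equation to conclude that both pastings coincide. Because every nontrivial cell appearing is either an interchanger or $\varsigma_l$, no further input beyond the axioms of the biduality and of a Gray-monoid is required.
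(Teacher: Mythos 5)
Your proposal matches the paper's proof in substance: the paper establishes the triangle equality exactly by the direct calculation you describe (noting also the alternative of transposing both sides along $\mathcal{M}(I,R\ot R)\simeq\mathcal{M}(R,R)$ and observing that each yields $\varepsilon$), and it verifies \eqref{ax:OM1} by precisely the kind of explicit interchanger/naturality diagram chase that your normal-form plan would produce, with \eqref{ax:OM2} and \eqref{ax:OM3} dismissed as ``similar''. The only quibbles are that the snake isomorphism entering $C^2$ and the unit comparison is $\varsigma_r$ rather than $\varsigma_l$, and that the paper's \eqref{ax:OM1} computation gets by on naturality and coherence of the interchanger alone (the two whiskered copies of the snake on each side cancel positionally), so no swallowtail equation is actually consumed there.
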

\begin{proof}

The equality between the triangular cells in the statement follows either by direct calculation using the definition of $\varepsilon\ob$ in terms of $\varepsilon$, or by transposing both triangles along the equivalence $\mathcal{M}(I,R\ot R)\simeq\mathcal{M}(R,R)$, and noticing that this yields the cell $\varepsilon$ in each case. Now we prove that $i\ob 1$ is opmonoidal; axiom \eqref{ax:OM1} follows from the calculation below, and axioms \eqref{ax:OM2} and \eqref{ax:OM3} are verified in a similar way.

{\scriptsize
\begin{align*}
\vcenter{\hbox{\xymatrix@!0@C=12mm@R=7mm{
&&R\ot RR\ot RR\ot R\ar[rrdd]^-{1e111}&&\\
&&RR\ot RR\ot R\ar@<2.5mm>[u]_<<{i\ob 11111}\ar[rdd]|-{e111}&&\\
RRR\ar[rruu]^-{i\ob 1i\ob 1i\ob 1}\ar[dddd]_-{1i^*1}\ar[rrdd]_-{i^*11}\ar[r]_-{11i\ob 1}&**[r]RRR\ot R\ar[ru]^<<<{1i\ob 111}\ar[rrd]_-{i^*111}\ar@{}[luu]|<*=0[@]{\cong}&&&R\ot RR\ot R\ar[dddd]_-{1e1}\\
&&\ar@{}[uu]|*[@]{\cong}&RR\ot R\ar[dddddl]^-{e1}\ar[ru]^<<<{i\ob 111}\ar@{}[uu]|>>>>*=0[@]{\cong}&\\
&&RR\ar[dddd]_{i^*1}\ar[ru]^-{1i\ob 1}\ar@{}[luu]|*=0[@]{\cong}&&\\
&&\ar@{}[ruu]|<<<<<<<*[@]{\cong}&&\\
RR\ar[rrdd]_-{i^*1}\ar@{}[rruu]|*[@]{\cong}&&&&R\ot R\\
&&&&\\
&&R\ar[rruu]_-{i\ob 1}\ar@{}[rruuuuuu]|*[@]{\cong}&&
}}}
\!\!\!\!\!\!\!\!\!&=
\vcenter{\hbox{\xymatrix@!0@C=12mm@R=7mm{
&&R\ot RR\ot RR\ot R\ar[rrdd]^-{1e111}&&\\
&&RR\ot RR\ot R\ar@<2.5mm>[u]_<<{i\ob 11111}\ar[rdd]|-{e111}&&\\
RRR\ar[rruu]^-{i\ob 1i\ob 1i\ob 1}\ar[dddd]_-{1i^*1}\ar[r]_-{11i\ob 1}&**[r]RRR\ot R\ar[ru]^<<<{1i\ob 111}\ar[rrd]_-{i^*111}\ar[ldddd]^-{11e1}\ar@{}[luu]|<*=0[@]{\cong}&&&R\ot RR\ot R\ar[dddd]_-{1e1}\\
\ar@{}[ru]|<<<<<*[@]{\cong}&&\ar@{}[uu]|*[@]{\cong}&RR\ot R\ar[dddddl]^-{e1}\ar[ru]^<<<{i\ob 111}\ar@{}[uu]|>>>>*=0[@]{\cong}&\\
&&&&\\
&&&&\\
RR\ar[rrdd]_-{i^*1}\ar@{}[rrruuu]|*[@]{\cong}&&&&R\ot R\\
&&&&\\
&&R\ar[rruu]_-{i\ob 1}\ar@{}[rruuuuuu]|*[@]{\cong}&&
}}}
\\
=
\vcenter{\hbox{\xymatrix@!0@C=6mm@R=3.5mm{
&&&&R\ot RR\ot RR\ot R\ar[rrrrdddd]^-{1e111}&&&&\\
&&&&&&&&\\
&&&&&&&&\\
&&&&&&&&\\
RRR\ar[rrrruuuu]^-{i\ob 1i\ob 1i\ob 1}\ar[dddddddd]_-{1i^*1}\ar[rdd]^{11i\ob 1}&&\ar@{}[lluuuu]|<<<*=0[@]{\cong}&RR\ot RR\ot R\ar[ruuuu]|<<<{i\ob 11111}\ar@<1mm>[ldddddd]^-{11e1}\ar[rrrdd]^-{e111}&&&&&R\ot RR\ot R\ar[dddddddd]_-{1e1}\\
&&&&&&&&\\
&RRR\ot R\ar[rruu]_<<<<{1i\ob 111}\ar[ldddddd]^-{1e1}&&&&&RR\ot R\ar[ddddddddddll]^-{e1}\ar[rruu]^<<<{i\ob 111}\ar@{}[luuuu]|>>>>*=0[@]{\cong}&&\\
&&&&&&&&\\
\ar@{}[ru]|<<<*[@]{\cong}&&&&&&&&\\
&&&&&&&&\\
&&RR\ot R\ar[rrdddddd]^-{e111}\ar@{}[rrrruuuu]|*[@]{\cong}&&&&&&\\
&&&&&&&&\\
RR\ar[rrrrdddd]_-{i^*1}\ar[rruu]_-{1i\ob 1}\ar@{}[rrruuuuuuuu]|*[@]{\cong}&&&&&&&&R\ot R\\
&&&&&&&&\\
&&\ar@{}[uuu]|*[@]{\cong}&&&&&&\\
&&&&&&&&\\
&&&&R\ar[rrrruuuu]_-{i\ob 1}\ar@{}[rrrruuuuuuuuuuuu]|*[@]{\cong}&&&&
}}}
\!\!\!\!\!\!\!\!\!&=
\vcenter{\hbox{\xymatrix@!0@C=6mm@R=3.5mm{
&&&&R\ot RR\ot RR\ot R\ar[rrrrdddd]^-{1e111}\ar[dddddddd]^-{111e1}&&&&\\
&&&&&&&&\\
&&&&&&&&\\
&&&&&&&&\\
RRR\ar[rrrruuuu]^-{i\ob 1i\ob 1i\ob 1}\ar[dddddddd]_-{1i^*1}\ar[rdd]^{11i\ob 1}&&\ar@{}[lluuuu]|<<<*=0[@]{\cong}&RR\ot RR\ot R\hspace{3mm}\ar[ruuuu]^<<<{i\ob 11111}\ar@<1mm>[ldddddd]|-{11e1}&&&&&R\ot RR\ot R\ar[dddddddd]_-{1e1}\\
&&&&&&&&\\
&RRR\ot R\ar[rruu]_<<<<{1i\ob 111}\ar[ldddddd]^-{1e1}&&&&&&&\\
&&&&&&&&\\
\ar@{}[ru]|<<<*[@]{\cong}&&&&R\ot RR\ot R\ar[rrrrdddd]^-{1e1}\ar@{}[rrrruuuu]|*[@]{\cong}&&&&\\
&&&&&&&&\\
&&RR\ot R\ar[rrdddddd]^-{e111}\ar[rruu]_{i\ob 11}\ar@{}[rrruuuuuuuu]|*[@]{\cong}&&&&&&\\
&&&&&&&&\\
RR\ar[rrrrdddd]_-{i^*1}\ar[rruu]_-{1i\ob 1}\ar@{}[rrruuuuuuuu]|*[@]{\cong}&&&&&&&&R\ot R\\
&&&&&&&&\\
&&\ar@{}[uuu]|*[@]{\cong}&&&&&&\\
&&&&&&&&\\
&&&&R\ar[rrrruuuu]_-{i\ob 1}\ar@{}[uuuuuuuu]|*[@]{\cong}&&&&
}}}
\end{align*}}
\end{proof}

In Lemma~\ref{lem:OpmonoidalUnit} and Proposition~\ref{prop:OneOpmonoidalArrow} we exhibit two opmonoidal left adjoints $\xymatrix@1@C=5mm{i:I\ar[r]&R}$ and $\xymatrix@1@C=5mm{i\ob\, 1:R\ar[r]&R\ot R}$ which may be composed into a new opmonoidal left adjoint $\xymatrix@1@C=5mm{i\ob\, i:I\ar[r]&R\ot R}$. And by a doctrinal adjunction argument, the opmonoidal structures on the left adjoints $i$, $i\ob 1$ and $i\ob i$ induce monoidal structures on the right adjoints $i^*$, $i\ot\, 1$ and $i\ot\, i^*$ which in general are not strong monoidal, hence these adjunctions do not belong to $\Opmon(\mathcal{M})$.

\subsection{A Bicategorical Theorem}
We proceed with one of the main results: in an opmonadic-friendly monoidal bicategory $\mathcal{M}$ the functor
\[
\xymatrix@!0@C=50mm{
\SkOpmon(i\ob 1,N):\Opmon(R\ot R,N)\ar[r]_-{\simeq}&\SkOpmon(R,N)
}
\]
is an equivalence of categories, provided that the opmonoidal arrow $i\ob 1$ in Proposition~\ref{prop:OneOpmonoidalArrow} is opmonadic, and $N$ is a genuine monoidale (not just a skew one). This is stated formally as Theorem~\ref{teo:OpmonadicityOpmonoidal} below. Its proof uses some of the important techniques employed throughout this paper, and it naturally breaks down into two parts: an isomorphism followed by an equivalence of categories, therefore, to gain some clarity we present these separately in Lemma~\ref{lem:OpmonadicityOpmonoidal} and Theorem~\ref{teo:OpmonadicityOpmonoidal} below. Taking the middle step and most of the technicalities, there is a category that we denote by $\mathcal{X}(R,N)$. One way to informally interpret the category $\mathcal{X}(R,N)$ is as follows: its objects are opmonoidal arrows $\xymatrix@1@C=5mm{R\ar[r]&N}$ equipped with a module structure for the monad induced by the adjunction
\[
\vcenter{\hbox{\xymatrix{
R\ot R\xtwocell[d]{}_{i\ob 1\ }^{\ i\ot 1}{'\dashv}\\
R
}}}
\]
together with compatibility conditions between the opmonoidal and the module structures which involve the ``opmonoidal $\dashv$ monoidal'' structure of the adjunction $i\ob 1\dashv i\ot 1$. What we show in Lemma~\ref{lem:OpmonadicityOpmonoidal} is that this extra module structure on the opmonoidal arrows $\xymatrix@1@C=5mm{R\ar[r]&N}$ is in fact redundant, hence the isomorphism $\mathcal{X}(R,N)\cong\SkOpmon(R,N)$. And when $i\ob 1\dashv i\ot 1$ is opmonadic the category $\mathcal{X}(R,N)$ of ``opmonoidal $\dashv$ monoidal modules'' (as we may informally call them) is equivalent to $\Opmon(R\ot R,N)$, as some sort of ``opmonoidal $\dashv$ monoidal opmonadicity''.
\[
\vcenter{\hbox{\xymatrix{
R\ot R\dtwocell_{i\ob 1\ }^{\ i\ot 1}{'\dashv}\\
R
}}}
\vcenter{\hbox{\xymatrix@!0{
\ar@{|~>}[r]&
}}}
\!\!\!\!\!\!\!\!\!\!\!\!\!\!\!\!\!\!\!\!\!\!\!\!\!\!\!\!\!\!\!\!\!\!\!\!\!\!\!\!\!
\vcenter{\hbox{\xymatrix{
\qquad\qquad\qquad\qquad\mathcal{M}(R\ot R,N)\simeq\mathcal{M}(R,N)^{\mathcal{M}(t1,N)}\dtwocell_{\mathcal{M}(i\ot 1,N)\hspace{11mm}}^{\hspace{11mm}\mathcal{M}(i\ob 1,N)}{'\dashv}\\
\mathcal{M}(R,N)
}}}
\!\!\!\!\!\!\!\!\!\!\!\!\!\!\!\!
\vcenter{\hbox{\xymatrix@!0{
\ar@{|~>}[r]&
}}}
\!\!\!
\vcenter{\hbox{\xymatrix{
\quad\Opmon(R\ot R,N)\simeq\mathcal{X}(R,N)\ar[d]_\simeq^{\Opmon(i\ob 1,N)}\\
\SkOpmon(R,N)
}}}
\]
We now make this precise.
\begin{defi}
For a right skew monoidale $(N,m,u)$, a biduality $R\dashv R\ot$, and an adjunction $i\ob\dashv i\ot$ in $\mathcal{M}$,
\[
\vcenter{\hbox{\xymatrix{
R\ot\xtwocell[d]{}_{i\ob}^{i\ot}{'\dashv}\\
I
}}}
\]
the category $\mathcal{X}(R,N)$ has objects pairs $(D,\varphi)$ where $\xymatrix@1@C=5mm{D:R\ar[r]&N}$ is an opmonoidal arrow in $\mathcal{M}$ and $\varphi$ is a cell
\[
\vcenter{\hbox{\xymatrix{
R\ar[r]_-{i\ob 1}\ar@/^9mm/[rrr]^-{D}&R\ot R\ar[r]_-{i\ot 1}\rtwocell<\omit>{^<-3>\varphi}&R\ar[r]_-{D}&N
}}}
\]
satisfying five axioms: two which assert that $\varphi$ is an action for the monad induced by the adjunction $i\ob 1\dashv i\ot 1$, and three of which express the following compatibility between $\varphi$ and the opmonoidal constraints of $D$.
\begin{align}
\tag{X1}\label{ax:X1}
\vcenter{\hbox{\xymatrix@!0@R=16mm@C=8mm{
&{\xtwocell[rrrdd]{}<>{^<-2>D^0}}&&I\ar[llldd]_-{i}\ar[rrrdd]^-{u}&&&\\
&&&&&&\\
R\ar[rr]_-{i\ob 1}\ar@/^9mm/[rrrrrr]^-{D}\xtwocell[rrrrrr]{}<>{^<-3>\varphi}&&R\ot R\ar[rr]_-{i\ot 1}&&R\ar[rr]_-{D}&&N
}}}
&=
\vcenter{\hbox{\xymatrix@!0@R=16mm@C=8mm{
&\ar@{}[rrdd]|*[@]{\cong}&&I\ar[llldd]_-{i}\ar[d]_-{n}\ar[rdd]^-{i}\ar[rrrdd]^-{u}&&&\\
&&{\xtwocell[rd]{}<>{^<3>\ \varepsilon\ob 1}}&**[l]R\ot R\ar[llld]_-{i\ot 1}\ar[ld]^-{1}\xtwocell[rrd]{}<>{^<-2>D^0}&&&\\
R\ar[rr]_-{i\ob 1}&&R\ot R\ar[rr]_-{i\ot 1}\ar@{}[rru]|*[@ru]{\quad\cong}&&R\ar[rr]_-{D}&&N
}}}
\\
\tag{X2}\label{ax:X2}
\vcenter{\hbox{\xymatrix@!0@R=7mm@C=7mm{
&&&&NN\ar[rrrd]^-{m}&&&\\
RR\ar[rrrru]^-{DD}\ar[rddd]_-{i\ob 11}\xtwocell[rrrr]{}<>{^<8>\varphi D\quad}&&&&&&&N\\
&&&&&&&\\
&&&&&&&\\
&R\ot RR\ar[rddd]_-{i\ot 11}&&{\xtwocell[rrru]{}<>{^<2>D^2\quad}}&&&&\\
&&&&&&&\\
&&&&&&&\\
&&RR\ar[rrrd]_-{i^*1}\ar[rruuuuuuu]^-{DD}&&&&&\\
&&&&&R\ar[rruuuuuuu]_-{D}&&
}}}
\!\!&=\!\!
\vcenter{\hbox{\xymatrix@!0@R=7mm@C=7mm{
&&&&NN\ar[rrrd]^-{m}&&&\\
RR\ar[rrrru]^-{DD}\ar[rrrd]^-{i^*1}\ar[rddd]_-{i\ob 11}\xtwocell[rrrrrrr]{}<>{^D^2\ \ }&&&&&&&N\\
&&&R\ar[rrrru]^-{D}\ar[rddd]_-{i\ob 1}\xtwocell[rrrr]{}<>{^<8>\varphi}&&&&\\
&&&&&&&\\
&R\ot RR\ar[rrrd]^-{1i^*1}\ar[rddd]_-{i\ot 11}\ar@{}[rruu]|*[@]{\cong}&&&&&&\\
&&&&R\ot R\ar[rddd]_-{i\ot 1}&&&\\
&&&&&&&\\
&&RR\ar[rrrd]_-{i^*1}\ar@{}[rruu]|*[@]{\cong}&&&&&\\
&&&&&R\ar[rruuuuuuu]_-{D}&&
}}}
\\
\tag{X3}\label{ax:X3}
\vcenter{\hbox{\xymatrix@!0@R=7mm@C=7mm{
&&&&NN\ar[rrrd]^-{m}&&&\\
RR\ar[rrrru]^-{DD}\ar[rddd]_-{1i\ob 1}\xtwocell[rrrr]{}<>{^<8>D\varphi\quad}&&&&&&&N\\
&&&&&&&\\
&&&&&&&\\
&RR\ot R\ar[rddd]_-{1i\ot 1}&&{\xtwocell[rrru]{}<>{^<2>D^2\quad}}&&&&\\
&&&&&&&\\
&&&&&&&\\
&&RR\ar[rrrd]_-{i^*1}\ar[rruuuuuuu]_-{DD}&&&&&\\
&&&&&R\ar[rruuuuuuu]_-{D}&&
}}}
\!\!&=\!\!
\vcenter{\hbox{\xymatrix@!0@R=7mm@C=7mm{
&&&&NN\ar[rrrd]^-{m}&&&\\
RR\ar[rrrru]^-{DD}\ar@/^8mm/[rrrrrddddddd]^-{i^*1}\ar[rddd]_-{1i\ob 1}&&&&&&&N\\
&&&{\xtwocell[rrr]{}<>{^<4>D^2\quad}}&&&&\\
&&&&&&&\\
&RR\ot R\ar[rddd]_-{1i\ot 1}\ar@/^2mm/[rrd]^-{1}\ar@{}[rru]|*[@]{\cong}\xtwocell[rrrd]{}<>{^<2>1\varepsilon\ob 1\quad}&&&&&&\\
&&&\quad RR\ot R\ar[rrddd]^-{e1}\ar@{}[ddd]|*[@u]{\cong}&&&&\\
&&&&&&&\\
&&RR\ar[rrrd]_-{i^*1}\ar[ruu]|-{1i\ob 1}&&&&&\\
&&&&&R\ar[rruuuuuuu]_-{D}&&
}}}
\end{align}
And an arrow $\xymatrix@1@C=5mm{\gamma:(D,\varphi)\ar[r]&(D',\varphi')}$ in $\mathcal{X}(R,N)$ is defined as an opmonoidal cell $\xymatrix@1@C=5mm{\gamma:D\ar[r]&D'}$ in $\mathcal{M}$ which preserves the actions $\varphi$ and $\varphi'$, in the sense of the equation below.
\begin{equation}\tag{X4}\label{ax:X4}
\vcenter{\hbox{\xymatrix{
R\ar[r]_{i\ob 1}\ar@/^20mm/[rrr]<1mm>^{D'}&R\ot R\ar[r]_{i\ot 1}\rtwocell<\omit>{^<-6>\varphi'}&R\ar[r]_D\ar@/^7mm/[r]^<<<<<{D'}\rtwocell<>{^<-2>\gamma}&N
}}}
\quad=\quad
\vcenter{\hbox{\xymatrix{
R\ar[r]_{i\ob 1}\ar@/^9mm/[rrr]^D\ar@/^20mm/[rrr]<1mm>^{D'}&R\ot R\ar[r]_{i\ot 1}\rtwocell<\omit>{^<-3>\varphi}\rtwocell<\omit>{^<-9>\gamma}&R\ar[r]_D&N
}}}
\end{equation}
Composition and identities are defined as in $\mathcal{M}(R,N)$.
\end{defi}
\begin{lem}\label{lem:OpmonadicityOpmonoidal}
For every monoidale $(N,m,u)$, every biduality $R\dashv R\ot$, and every adjunction $i\ob\dashv i\ot$ in $\mathcal{M}$
\[
\vcenter{\hbox{\xymatrix{
R\ot\dtwocell_{i\ob}^{i\ot}{'\dashv}\\
I
}}}
\]
the forgetful functor $\xymatrix@1@C=5mm{F:\mathcal{X}(R,N)\ar[r]&\SkOpmon(R,N)}$ is an isomorphism of categories.
\end{lem}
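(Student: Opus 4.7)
The plan is to explicitly construct a two-sided inverse to $F$: for each opmonoidal arrow $D\in\SkOpmon(R,N)$ I would produce a canonical cell $\varphi_D$ making $(D,\varphi_D)$ an object of $\mathcal{X}(R,N)$, show that $\varphi_D$ is the unique such cell, and verify that opmonoidal cells automatically satisfy \eqref{ax:X4}. Since objects and arrows of $\mathcal{X}(R,N)$ project onto objects and arrows of $\SkOpmon(R,N)$ by forgetting $\varphi$, this is enough to make $F$ a strict isomorphism.

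For the construction, I would exploit the explicit formulas $i\ob=(1\,i^{*})n$ and $i\ot=e(i\,1)$, which present the monad arrow $i\ot 1\cdot i\ob 1:R\to R$ as a pasting involving the skew multiplication $i^{*}1$, the skew unit $i$, and the snake data of $R\dashv R\ot$. Applying $D^{2}$ to the $i^{*}1$ portion replaces $D\cdot i^{*}1$ by $m\cdot DD$, and applying $D^{0}$ to the $i$ portion replaces $D\cdot i$ by $u$; the resulting composite contains $m\cdot (1\,u)$ on the $N$ side, which is canonically isomorphic to the identity on $N$ via the \emph{invertible} right unitor of the monoidale $N$. The cell $\varphi_D$ is this composite.

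Verifying that $\varphi_D$ satisfies the five axioms of $\mathcal{X}(R,N)$ is then a pasting calculation. The two monad-action axioms reduce to the swallowtail equations of the biduality together with the opmonoidal axioms \eqref{ax:OM1}, \eqref{ax:OM2}, \eqref{ax:OM3}. The compatibility axioms \eqref{ax:X1}, \eqref{ax:X2}, \eqref{ax:X3} follow by unwinding the definition of $\varphi_D$ and applying the corresponding opmonoidal axiom together with the snake identities and interchange coherence. For uniqueness, given any $\varphi$ with $(D,\varphi)\in\mathcal{X}(R,N)$, axiom \eqref{ax:X1} combined with the invertibility of the unitors of $N$ forces $\varphi=\varphi_D$; this is the crucial point at which the hypothesis that $N$ is a genuine monoidale, rather than merely a right skew one, enters.

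On morphisms, an opmonoidal cell $\gamma:D\to D'$ automatically satisfies \eqref{ax:X4}: because $\varphi_D$ and $\varphi_{D'}$ are assembled from $D^{0},D^{2}$ and $(D')^{0},(D')^{2}$ by the same canonical recipe, and $\gamma$ respects these opmonoidal constraints by \eqref{ax:OM4} and \eqref{ax:OM5}, $\gamma$ respects the induced actions. I expect the main obstacle to be axiom \eqref{ax:X2}, which pits two pastings involving $\varphi$ and $D^{2}$ against each other and requires careful juggling of the opmonoidal pentagon, both snake identities, and the interchange coherence of $\mathcal{M}$; once this is in place, \eqref{ax:X3} should follow by a dual calculation using $\varepsilon\ob$ rather than $\eta\ob$.
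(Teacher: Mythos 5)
Your overall strategy is the same as the paper's: build the canonical action $\varphi_D$ out of $D^0$, $D^2$ and an invertible unitor of $N$, show it is the only possible action, and observe that opmonoidal cells automatically preserve it (the fullness step via \eqref{ax:OM4} and \eqref{ax:OM5} is exactly the paper's argument). However, there is a genuine gap in your uniqueness step. Axiom \eqref{ax:X1} is an equation between cells whose source is $I$: it only constrains the whiskering of $\varphi$ along $i:I\to R$, and whiskering with $i$ is in general neither faithful nor surjective on cells, so \eqref{ax:X1} together with invertibility of unitors cannot by itself force $\varphi=\varphi_D$. The paper's argument first uses the opmonoidal unit axiom \eqref{ax:OM3} to expand $\mathrm{id}_D$ as a pasting involving $D^0D$, $D^2$, $\eta 1$ and $\lambda_N^{-1}$, then uses \eqref{ax:X2} to commute $\varphi$ past $D^2$ (turning $\varphi$ into $\varphi D$ on $RR$), and only then applies \eqref{ax:X1} to eliminate the remaining occurrence of $\varphi$. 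Without the \eqref{ax:OM3}--\eqref{ax:X2} preprocessing, \eqref{ax:X1} has nothing to bite on.

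Two smaller corrections. First, since the skew unit $i$ of $R$ is inserted into the \emph{first} tensor factor (the monad of $i\ob 1\dashv i\ot 1$ is isomorphic to $(i^*1)(i1)$), the composite you obtain on the $N$ side is $m\cdot(u1)$, so the unitor you invert is the \emph{left} unitor of $N$, not the right one; this is why the paper phrases the construction of $\varphi_D$ as needing $N$ to be left normal. Second, you locate the need for $N$ to be a genuine monoidale entirely in the uniqueness step, but left normality already suffices there; the place where the full monoidale structure is indispensable is in verifying that $\varphi_D$ satisfies the \emph{associativity} axiom for the monad action (the paper flags exactly this point), which your sketch folds into "the two monad-action axioms reduce to the swallowtail equations plus the opmonoidal axioms". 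Relatedly, the coherence identities that actually drive those verifications are the skew-monoidale axioms \eqref{ax:SKM2}, \eqref{ax:SKM3}, \eqref{ax:SKM5} for $R$ (equivalently, the snake equations of $i\dashv i^*$), rather than the swallowtail equations of the biduality $R\dashv R\ot$.
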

\begin{proof}

It is clear that $F$ is faithful. To see that $F$ is injective on objects observe that for an object $(D,\varphi)$ of $\mathcal{X}(R,N)$ the following calculation exhibits $\varphi$ purely in terms of the opmonoidal constraints $D^0$ and $D^2$ of $D$ (note the need for $N$ to be left normal).
\[
\vcenter{\hbox{\xymatrix{
R\ar[r]_-{i\ob 1}\ar@/^9mm/[rrr]^-{D}&R\ot R\ar[r]_-{i\ot 1}\rtwocell<\omit>{^<-3>\varphi}&R\ar[r]_-{D}&N
}}}
\]
\[
\stackrel{\eqref{ax:OM3}}{=}\!
\vcenter{\hbox{\xymatrix@!0@R=7mm@C=15mm{
R\ar[dd]_-{i1}\ar@/^9mm/[rrr]^-{D}\ar@/_9mm/[dddd]_1\xtwocell[rrrd]{}<>{^<-3>D^0D\quad\ }\xtwocell[dddd]{}<>{^<3>\eta 1}&&&N\ar[dd]^-{u1}\ar@/^9mm/[dddd]^-{1}&\\
&&&&\\
RR\ar[dd]_-{i^*1}\ar@/^9mm/[rrr]^-{DD}\xtwocell[rrrd]{}<>{^<-3>D^2\ }&&&NN\ar[dd]^-{m}\ar@{}[r]|<<*=0[@l]{\cong}&\\
&&&&\\
R\ar[r]_-{i\ob 1}\ar@/^9mm/[rrr]^-{D}&R\ot R\ar[r]_-{i\ot 1}\rtwocell<\omit>{^<-3>\varphi}&R\ar[r]_-{D}&N&
}}}
\]
\[
\stackrel{\eqref{ax:X2}}{=}
\vcenter{\hbox{\xymatrix@!0@R=7mm@C=15mm{
R\ar[dd]_-{i1}\ar@/^9mm/[rrr]^-{D}\ar@/_9mm/[dddd]_-{1}\xtwocell[rrrd]{}<>{^<-3>D^0D\quad\ }\xtwocell[dddd]{}<>{^<3>\eta 1}&&&N\ar[dd]^-{u1}\ar@/^9mm/[dddd]^-{1}&\\
&&&&\\
RR\ar[dd]_-{i^*1}\ar[r]_-{i\ob 11}\ar@/^9mm/[rrr]^-{DD}&R\ot RR\ar[r]_-{i\ot 1}\ar[dd]|-{1i^*1}\rtwocell<\omit>{^<-3>\varphi D\ \ }&RR\ar[r]_-{DD}\ar[dd]|-{i^*1}\xtwocell[rdd]{}<>{^D^2\ }&NN\ar[dd]^-{m}\ar@{}[r]|<<*=0[@l]{\cong}&\\
&&&&\\
R\ar[r]_-{i\ob 1}\ar@{}[ruu]|-*[@]{\cong}&R\ot R\ar[r]_-{i\ot 1}\ar@{}[ruu]|-*[@]{\cong}&R\ar[r]_-{D}&N&
}}}
\]
\[
\stackrel{\eqref{ax:X1}}{=}
\vcenter{\hbox{\xymatrix@!0@R=7mm@C=15mm{
R\ar@/_9mm/[dddd]_-{1}\ar[dd]_-{i1}\ar[r]|-{n1}\ar@/^13mm/[rrdd]^-{i1}\ar@/^9mm/[rrr]^-{D}\xtwocell[dddd]{}<>{^<3>\eta 1}\xtwocell[rddd]{}<>{^<-1>\varepsilon\ob 11}&**[r]R\ot RR\ar@/_/[ldd]|-{i\ot 11\quad}\ar[dd]^-{1}&{\xtwocell[rd]{}<>{^D^0D\quad\ }}&N\ar[dd]^-{u1}\ar@/^9mm/[dddd]^-{1}\\
&&&\\
RR\ar[dd]_-{i^*1}\ar[r]_-{i\ob 11}&R\ot RR\ar[r]_-{i\ot 1}\ar[dd]|-{1i^*1}\ar@{}[ruu]|-*[@]{\cong}&RR\ar[r]^-{DD}\ar[dd]|-{i^*1}\xtwocell[rdd]{}<>{^D^2\ }&NN\ar[dd]^-{m}\ar@{}[r]|<<*=0[@l]{\cong}&\\
&&&\\
R\ar[r]_-{i\ob 1}\ar@{}[ruu]|-*[@]{\cong}&R\ot R\ar[r]_-{i\ot 1}\ar@{}[ruu]|-*[@]{\cong}&R\ar[r]_-{D}&N
}}}
\]
\[
=
\vcenter{\hbox{\xymatrix@!0@R=10mm@C=8mm{
&&N\ar[rrd]^-{u1}\ar@/^18mm/[rrrrddd]^-{1}&&&&\ar@{}[lld]|*=0[@]{\cong}\\
&&&&NN\ar[rrdd]^-{m}&&\\
{\xtwocell[rrrru]{}<>{^<-2>D^0D\quad}}&&RR\ar[rrd]|-{i^*1}\ar[rru]^-{DD}\xtwocell[rrrrd]{}<>{^<-2>D^2\ }&&&&\\
R\ar[rr]_-{i\ob 1}\ar[rru]^-{i1}\ar@/^/[rruuu]^-{D}&&R\ot R\ar[rr]_-{i\ot 1}\ar@{}[u]|*[@]{\cong}&&R\ar[rr]_-{D}&&N
}}}
\]
Now, to see that $F$ is surjective on objects take an arbitrary opmonoidal arrow $D$ in $\Opmon(R,N)$, then let $\varphi$ be the cell below.
\begin{equation}\label{eq:redundant_action1}
\vcenter{\hbox{\xymatrix{
R\ar[r]_-{i\ob 1}\ar@/^9mm/[rrr]^-{D}&R\ot R\ar[r]_-{i\ot 1}\xtwocell[r]{}<>{^<-3>\varphi}&R\ar[r]_-{D}&N
}}}
:=
\vcenter{\hbox{\xymatrix@!0@R=10mm@C=8mm{
&&N\ar[rrd]^-{u1}\ar@/^18mm/[rrrrddd]^-{1}&&&&\ar@{}[lld]|*=0[@]{\cong}\\
&&&&NN\ar[rrdd]^-{m}&&\\
{\xtwocell[rrrru]{}<>{^<-2>D^0D\quad}}&&RR\ar[rrd]|-{i^*1}\ar[rru]^-{DD}\xtwocell[rrrrd]{}<>{^<-2>D^2\ }&&&&\\
R\ar[rr]_-{i\ob 1}\ar[rru]^-{i1}\ar@/^/[rruuu]^-{D}&&R\ot R\ar[rr]_-{i\ot 1}\ar@{}[u]|*[@]{\cong}&&R\ar[rr]_-{D}&&N
}}}
\end{equation}
This cell $\varphi$ exhibits $D$ as an object of $\mathcal{X}(R,N)$. We shall prove the five axioms that make it happen, starting with the two that make $\varphi$ into an action for the monad induced by $i\ob 1\dashv i\ot 1$.
\begin{gather*}
\vcenter{\hbox{\xymatrix@!0@R=10mm@C=8mm{
&&N\ar[rrd]^-{u1}\ar@/^18mm/[rrrrddd]^-{1}&&&&\ar@{}[lld]|*=0[@]{\cong}\\
&&&&NN\ar[rrdd]^-{m}&&\\
{\xtwocell[rrrru]{}<>{^<-2>D^0D\quad}}&&RR\ar[rrd]|-{i^*1}\ar[rru]^-{DD}\xtwocell[rrrrd]{}<>{^<-2>D^2\ }&&&&\\
R\ar[rr]_-{i\ob 1}\ar[rru]^-{i1}\ar@/^/[rruuu]^-{D}\ar@/_10mm/[rrrr]_-{1}\xtwocell[rrrr]{}<>{^<3>\eta\ob 1\quad}&&R\ot R\ar[rr]_-{i\ot 1}\ar@{}[u]|*[@]{\cong}&&R\ar[rr]_-{D}&&N
}}}
\quad=\quad
\vcenter{\hbox{\xymatrix@!0@R=10mm@C=8mm{
&&N\ar[rrd]^-{u1}\ar@/^18mm/[rrrrddd]^-{1}&&&&\ar@{}[lld]|*=0[@]{\cong}\\
&&&&NN\ar[rrdd]^-{m}&&\\
{\xtwocell[rrrru]{}<>{^<-2>D^0D\quad}}&&RR\ar[rrd]|-{i^*1}\ar[rru]^-{DD}\xtwocell[rrrrd]{}<>{^<-2>D^2\ }&&&&\\
R\ar[rru]^-{i1}\ar@/^/[rruuu]^-{D}\ar@/_10mm/[rrrr]_-{1}\xtwocell[rrrr]{}<>{^\eta 1\ }&&&&R\ar[rr]_-{D}&&N
}}}
\\
\stackrel{\eqref{ax:OM3}}{=}
\vcenter{\hbox{\xymatrix@!0@R=10mm@C=8mm{
&&N\ar[rrd]^-{u1}\ar@/^18mm/[rrrrddd]^-{1}\ar@/_7mm/[rrrrddd]_-{1}&&&&\ar@{}[lld]|*=0[@]{\cong}\\
&&&&NN\ar[rrdd]^-{m}&&\\
&&&\ar@{}[ru]|*[@]{\cong}&&&\\
R\ar@/^/[rruuu]^-{D}\ar[rrrr]_-{1}&&&&R\ar[rr]_-{D}&&N
}}}
\quad=\quad
\id_D
\end{gather*}
The proof of the second axiom requires $N$ to be a genuine monoidale (not just a skew left normal one).
\[
\vcenter{\hbox{\xymatrix@!0@R=10mm@C=8mm{
&&N\ar[rrd]^-{u1}\ar@/^5mm/[rrrr]^-{1}&&&&N\ar[rrd]^-{u1}\ar@/^18mm/[rrrrddd]^-{1}&&&&\ar@{}[lld]|*=0[@]{\cong}\\
&&&&NN\ar[rru]^-{m}\ar@{}[u]|>*[@d]{\cong}&&&&NN\ar[rrdd]^-{m}&&\\
{\xtwocell[rrrru]{}<>{^<-2>D^0D\quad}}&&RR\ar[rrd]|-{i^*1}\ar[rru]^-{DD}\xtwocell[rrrd]{}<>{^<-3>D^2\ }&&{\xtwocell[rrrru]{}<>{^<-2>D^0D\quad}}&&RR\ar[rrd]|-{i^*1}\ar[rru]^-{DD}\xtwocell[rrrrd]{}<>{^<-2>D^2\ }&&&&\\
R\ar[rr]_-{i\ob 1}\ar[rru]^-{i1}\ar@/^/[rruuu]^-{D}&&R\ot R\ar[rr]_-{i\ot 1}\ar@{}[u]|*[@]{\cong}&&R\ar[rr]_-{i\ob 1}\ar[rru]^-{i1}\ar@/^/[rruuu]^-{D}&&R\ot R\ar[rr]_-{i\ot 1}\ar@{}[u]|*[@]{\cong}&&R\ar[rr]_-{D}&&N
}}}
\]
\[
=
\vcenter{\hbox{\xymatrix@!0@R=10mm@C=8mm{
&&N\ar[rrd]^-{u1}\ar@/^5mm/[rrrr]^-{1}&&&&N\ar[rrd]^-{u1}\ar@/^18mm/[rrrrdddd]^-{1}&&&&\ar@{}[llddd]|*=0[@]{\cong}\\
&&&&NN\ar[rru]^-{m}\ar@{}[u]|>*[@d]{\cong}\ar[rr]^-{u11}\xtwocell[dd]{}<>{^D^0DD}&&NNN\ar[rr]^-{1m}\ar@{}[u]|*[@d]{\cong}\xtwocell[dd]{}<>{^DD^2}&&NN\ar[rrddd]^-{m}&&\\
&&&&&&&&&&\\
{\xtwocell[rrrru]{}<>{^<-3>D^0D\quad}}&&RR\ar[rrd]|-{i^*1}\ar[rr]_>>>>{i11}\ar[rruu]^-{DD}&&RRR\ar[rr]_<<<<{1i^*1}\ar[rruu]|-{DDD}&&RR\ar[rrd]|-{i^*1}\ar[rruu]|-{DD}\xtwocell[rrrrd]{}<>{^<-4>D^2\ }&&&&\\
R\ar[rr]_-{i\ob 1}\ar[rru]^-{i1}\ar@/^/[rruuuu]^-{D}&&R\ot R\ar[rr]_-{i\ot 1}\ar@{}[u]|*[@]{\cong}&&R\ar[rr]_-{i\ob 1}\ar[rru]|-{i1}\ar@{}[u]|*[@]{\cong}&&R\ot R\ar[rr]_-{i\ot 1}\ar@{}[u]|*[@]{\cong}&&R\ar[rr]_-{D}&&N
}}}
\]
\[
\stackrel{\eqref{ax:OM1}}{=}
\vcenter{\hbox{\xymatrix@!0@R=10mm@C=8mm{
&&N\ar[rrd]^-{u1}\ar[rdd]_-{u1}\ar@/^5mm/[rrrr]^-{1}&&&&N\ar[rrd]|-{u1}\ar@/^18mm/[rrrrddddd]^-{1}&&&&\\
&&&&NN\ar[rru]^-{m}\ar@{}[u]|>*[@d]{\cong}\ar[rr]^-{u11}&&NNN\ar[rr]^-{1m}\ar[rd]|-{m1}\ar@{}[u]|*[@d]{\cong}&&NN\ar[rrdddd]^-{m}&&\ar@{}[lldd]|*=0[@]{\cong}\\
&&&NN\ar[rrru]|-{1u1}\ar@{}[ru]|>>>>*=0[@ld]{\cong}&&&&NN\ar[rrrddd]^-{m}\ar@{}[ru]|*=0[@ld]{\cong}&&&\\
&&RR\ar[ru]^-{DD}\ar[rrd]^-{1i1}\xtwocell[rrruu]{}<>{^<3>DD^0D\qquad}&&{\xtwocell[rrrru]{}<>{^<-1>D^2D\quad}}&{\xtwocell[rrrrrd]{}<>{^D^2\ }}&RR\ar[ru]|-{DD}\ar[rrdd]|-{i^*1}&&&&\\
{\xtwocell[rrrrru]{}<>{^<-10>D^0D\quad}}&&RR\ar[rrd]|-{i^*1}\ar[rr]_>>>>{i11}\ar@{}[u]|*[@]{\cong}&&RRR\ar[rr]_<<<<{1i^*1}\ar[rruuu]|<<<<<<<<<<<{DDD}\ar[rru]|-{i^*11}&&RR\ar[rrd]|-{i^*1}\ar@{}[u]|*[@]{\cong}&&&&\\
R\ar[rr]_-{i\ob 1}\ar[rru]|-{i1}\ar[rruu]^-{i1}\ar@/^/[rruuuuu]^-{D}&&R\ot R\ar[rr]_-{i\ot 1}\ar@{}[u]|*[@]{\cong}&&R\ar[rr]_-{i\ob 1}\ar[rru]|-{i1}\ar@{}[u]|*[@]{\cong}&&R\ot R\ar[rr]_-{i\ot 1}\ar@{}[u]|*[@]{\cong}&&R\ar[rr]_-{D}&&N
}}}
\]
\[
\stackrel{\eqref{ax:SKM3}}{\stackrel{\eqref{ax:OM2}}{=}}
\vcenter{\hbox{\xymatrix@!0@R=10mm@C=8mm{
&&N\ar[rrd]^-{u1}\ar[rdd]_-{u1}\ar@/^5mm/[rrrr]^-{1}&&&&N\ar@/^18mm/[rrrrddddd]^-{1}&&&&\\
&&&&NN\ar[]!<-3mm,0mm>;[rru]^-{m}\ar@{}[u]|>*[@d]{\cong}\ar[rr]_<<{u11}\ar@/^12mm/[]!<3mm,1mm>;[rrrd]!<2mm,0mm>^>>>>{1}&&NNN\ar[rd]|-{m1}\ar@{}[u]|*=0[@ld]{\cong}&&&&\\
&&&NN\ar[rrru]|-{1u1}\ar[rrrr]^-{1}\ar@{}[ru]|>>>>*=0[@ld]{\cong}&&\ar@{}[ru]|*=0[@ld]{\cong}&&NN\ar[rrrddd]^-{m}&&&\\
&&RR\ar[ru]^-{DD}\ar[rrd]|-{1i1}\ar[rr]|-{i^*1}\ar@/^6mm/[rrrr]^-{1}&{\xtwocell[rrr]{}<>{^<-2>\varepsilon 1}}&R\ar[rr]|-{i1}&{\xtwocell[rrrrrd]{}<>{^D^2\ }}&RR\ar[ru]|-{DD}\ar[rrdd]|-{i^*1}&&&&\\
{\xtwocell[rrrrru]{}<>{^<-10>D^0D\quad}}&&RR\ar[rrd]|-{i^*1}\ar[rr]_>>>>{i11}\ar@{}[u]|*[@]{\cong}&&RRR\ar[rr]_<<<<{1i^*1}\ar[rru]|-{i^*11}\ar@{}[u]|*[@]{\cong}&&RR\ar[rrd]|-{i^*1}\ar@{}[u]|*[@]{\cong}&&&&\\
R\ar[rr]_-{i\ob 1}\ar[rru]|-{i1}\ar[rruu]^-{i1}\ar@/^/[rruuuuu]^-{D}&&R\ot R\ar[rr]_-{i\ot 1}\ar@{}[u]|*[@]{\cong}&&R\ar[rr]_-{i\ob 1}\ar[rru]|-{i1}\ar@{}[u]|*[@]{\cong}&&R\ot R\ar[rr]_-{i\ot 1}\ar@{}[u]|*[@]{\cong}&&R\ar[rr]_-{D}&&N
}}}
\]
\[
\stackrel{\eqref{ax:SKM5}}{=}
\vcenter{\hbox{\xymatrix@!0@R=8mm@C=8mm{
&&N\ar[rrd]^-{u1}\ar[rdd]_-{u1}\ar@/^5mm/[rrrr]^-{1}&&&&N\ar@/^16mm/[rrrrddddd]^-{1}&&&&\\
&&&&NN\ar[]!<-3mm,0mm>;[rru]^-{m}\ar@{}[u]|>*[@d]{\cong}\ar@/^5mm/[]!<3mm,1mm>;[rrrd]!<2mm,0mm>^-{1}&&&&&&\\
&&&NN\ar[rrrr]^-{1}&&&&NN\ar[rrrddd]^-{m}&&&\\
&&RR\ar[ru]^-{DD}\ar@/^/[rrrr]^-{1}&&&{\xtwocell[rrrrrd]{}<>{^D^2\ }}&RR\ar[ru]_-{DD}\ar[rrdd]|-{i^*1}&&&&\\
{\xtwocell[rrrrru]{}<>{^<-9>D^0D\quad}}&&&&&&&&&&\\
R\ar[rr]_-{i\ob 1}\ar[rruu]^-{i1}\ar@/^/[rruuuuu]^-{D}&&R\ot R\ar[rr]_-{i\ot 1}\ar@/^10mm/[rrrr]^-{1}\xtwocell[rrrr]{}<>{^<-3>\varepsilon\ob 1\quad}&&R\ar[rr]_-{i\ob 1}&&R\ot R\ar[rr]_-{i\ot 1}&&R\ar[rr]_-{D}&&N
}}}
\]
\[
=
\vcenter{\hbox{\xymatrix@!0@R=8mm@C=8mm{
&&N\ar[rrrrdd]_-{u1}\ar@/^17mm/[rrrrrrrrddddd]^-{1}&&&&&\ar@{}[ddl]|*[@]{\cong}&&&\\
&&&&&&&&&&\\
&&&&&&NN\ar[rrrrddd]^-{m}&&&&\\
{\xtwocell[rrrrrru]{}<>{^<-3>D^0D\quad}}&&&&RR\ar[rru]^-{DD}\ar[rrrrdd]|-{i^*1}\xtwocell[rrrrrd]{}<>{^D^2\ }&&&&&&\\
&&&&\ar@{}[u]|*[@]{\cong}&&&&&&\\
R\ar[rr]_-{i\ob 1}\ar[rrrruu]^-{i1}\ar@/^/[rruuuuu]^-{D}&&R\ot R\ar[rr]_-{i\ot 1}\ar@/^9mm/[rrrr]|-{1}\xtwocell[rrrr]{}<>{^<-3>\varepsilon\ob 1\quad}&&R\ar[rr]_-{i\ob 1}&&R\ot R\ar[rr]_-{i\ot 1}&&R\ar[rr]_-{D}&&N
}}}
\]
The axiom \eqref{ax:X1} holds.
\[
\vcenter{\hbox{\xymatrix@!0@R=10mm@C=8mm{
I\ar[rrr]^-{u}\ar[rddd]_-{i}\xtwocell[rr]{}<>{^<4>\quad\qquad D^0}&&&N\ar[rrd]^-{u1}\ar@/^17mm/[rrrrddd]^-{1}&&&&\ar@{}[lld]|*=0[@]{\cong}\\
&&&&&NN\ar[rrdd]^-{m}&&\\
&{\xtwocell[rrrru]{}<>{^<-2>D^0D\quad}}&&RR\ar[rrd]|-{i^*1}\ar[rru]^-{DD}\xtwocell[rrrrd]{}<>{^<-2>D^2\ }&&&&\\
&R\ar[rr]_-{i\ob 1}\ar[rru]^-{i1}\ar@/^/[rruuu]^-{D}&&R\ot R\ar[rr]_-{i\ot 1}\ar@{}[u]|*[@]{\cong}&&R\ar[rr]_-{D}&&N
}}}
=\!\!
\vcenter{\hbox{\xymatrix@!0@R=9mm@C=8mm{
I\ar[rrr]^-{u}\ar[rrrd]^-{u}\ar[rddd]_-{i}\ar[rdd]^-{i}\xtwocell[rrrd]{}<>{^<3>\quad\qquad D^0}&&&N\ar[rrd]^-{u1}\ar@/^17mm/[rrrrddd]^-{1}&&&&\ar@{}[lld]|*=0[@]{\cong}\\
&&&N\ar[rr]^-{1u}\ar@{}[u]|*[@d]{\cong}&&NN\ar[rrdd]^-{m}&&\\
&R\ar[rr]|-{1i}\ar[rru]^-{D}\xtwocell[rrrru]{}<>{^DD^0\quad}&&RR\ar[rrd]|-{i^*1}\ar[rru]|-{DD}\xtwocell[rrrrd]{}<>{^<-2>D^2\ }&&&&\\
&R\ar[rr]_-{i\ob 1}\ar[rru]|-{i1}\ar@{}[ruu]|<<<<<*[@]{\cong}&&R\ot R\ar[rr]_-{i\ot 1}\ar@{}[u]|*[@]{\cong}&&R\ar[rr]_-{D}&&N
}}}
\]
\[
\stackrel{\eqref{ax:SKM5}}{=}
\vcenter{\hbox{\xymatrix@!0@R=9mm@C=8mm{
I\ar[rrr]^-{u}\ar[rddd]_-{i}\ar[rrd]_-{i}\xtwocell[rrrr]{}<>{^<2>D^0\ }&&&N\ar[rrd]^-{1u}\ar@/^17mm/[rrrrddd]^-{1}&&&&\ar@{}[lld]|*=0[@]{\cong}\\
&&R\ar[rd]|-{1i}\ar[ru]_-{D}\xtwocell[rrr]{}<>{^DD^0\quad}&&&NN\ar[rrdd]^-{m}&&\\
&&&RR\ar[rrd]|-{i^*1}\ar[rru]|-{DD}\xtwocell[rrrrd]{}<>{^<-2>D^2\ }&&&&\\
&R\ar[rr]_-{i\ob 1}\ar[rru]|-{i1}\ar@{}[ruu]|*[@]{\cong}&&R\ot R\ar[rr]_-{i\ot 1}\ar@{}[u]|*[@]{\cong}&&R\ar[rr]_-{D}&&N
}}}
\]
\[
\stackrel{\eqref{ax:OM2}}{=}\!\!
\vcenter{\hbox{\xymatrix@!0@R=9mm@C=7mm{
I\ar[rrr]^-{u}\ar[rddd]_-{i}\ar[rrd]_-{i}\xtwocell[rrrr]{}<>{^<2>D^0\ }&&&N\ar@/^15mm/[rrrrddd]^-{1}&&&&\\
&&R\ar[rd]|-{1i}\ar[ru]_-{D}\ar[rrd]^-{i^*}\ar@/^12mm/[rrrdd]^-{1}\xtwocell[rrrdd]{}<>{^<-4>\varepsilon }&&&&&\\
&&&RR\ar[rrd]|-{i^*1}&I\ar[rd]^-{i}&&&\\
&R\ar[rr]_-{i\ob 1}\ar[rru]|-{i1}\ar@{}[ruu]|*[@]{\cong}&&R\ot R\ar[rr]_-{i\ot 1}\ar@{}[u]|*[@]{\cong}&&R\ar[rr]_-{D}&&N
}}}
=\!\!
\vcenter{\hbox{\xymatrix@!0@R=9mm@C=7mm{
I\ar@/^10mm/[rrrrrrrddd]^-{u}\ar[rddd]_-{i}\ar[rrrrrddd]^-{i}\ar[rrdd]_-{n}\xtwocell[rrrrrrrddd]{}<>{^<-2>D^0\ }&&&N&&&&\\
&&&&&&&\\
&&R\ot R\ar[rd]^-{1}\ar[ld]_-{i\ot 1}\xtwocell[rd]{}<>{^<2>\varepsilon\ob 1\ }&&&&&\\
&R\ar[rr]_-{i\ob 1}&&R\ot R\ar[rr]_-{i\ot 1}&&R\ar[rr]_-{D}&&N
}}}
\]
The axiom \eqref{ax:X2} holds.
\begin{gather*}
\vcenter{\hbox{\xymatrix@!0@R=7mm@C=6mm{
&&&&\ar@{}[lddd]|*=0[@]{\cong}&&NN\ar[rrrd]^-{m}&&&\\
&&&NN\ar[dd]_-{u11}\ar[rrru]^-{1}&&&&&&N\\
RR\ar[rrru]^-{DD}\ar[rdddd]_-{i\ob 11}\ar[rrddd]^-{i11}\xtwocell[rrrrd]{}<>{^D^0DD\qquad}&&&&&&&&&\\
&&&NNN\ar[rrruuu]|-{m1}&&&&&&\\
&&&&&&&&&\\
&\ar@{}[ru]|<*[@ru]{\cong}&RRR\ar[ddddd]|-{i^*11}\ar[ruu]|-{DDD}\xtwocell[rrruu]{}<>{^<1>D^2D\quad}&&&&&&&\\
&R\ot RR\ar[rdddd]_-{i\ot 11}&&&{\xtwocell[rrru]{}<>{^<2>D^2\quad}}&&&&&\\
&&&&&&&&&\\
&&&&&&&&&\\
&&&&&&&&&\\
&&RR\ar[rrrd]_-{i^*1}\ar[rrrruuuuuuuuuu]_-{DD}&&&&&&&\\
&&&&&R\ar[rrrruuuuuuuuuu]_-{D}&&&&
}}}
\!\!\!\!\!\!\!\!\!\!\!\!\!\stackrel{\eqref{ax:OM1}}{=}\;
\vcenter{\hbox{\xymatrix@!0@R=7mm@C=6mm{
&&&&\ar@{}[lddd]|*=0[@]{\cong}&&NN\ar[rrrd]^-{m}&&&\\
&&&NN\ar[dd]_-{u11}\ar[rrru]^-{1}&&&&&&N\\
RR\ar[rrru]^-{DD}\ar[rdddd]_-{i\ob 11}\ar[rrddd]^-{i11}\xtwocell[rrrrd]{}<>{^D^0DD\qquad}&&&&&&&&&\\
&&&NNN\ar[rrruuu]|-{m1}\ar[rrrd]^-{1m}&&&&&&\\
&&&&&&NN\ar[rrruuu]^-{m}\ar@{}[uuuu]|*[@d]{\cong}&&&\\
&\ar@{}[ru]|<*[@ru]{\cong}&RRR\ar[ddddd]|-{i^*11}\ar[ruu]|-{DDD}\ar[rrrd]_-{1i^*1}\xtwocell[rrrruu]{}<>{^<1>DD^2\quad\ }&&&&&&&\\
&R\ot RR\ar[rdddd]_-{i\ot 11}&&&&RR\ar[ddddd]_-{i^*1}\ar[ruu]^-{DD}\xtwocell[rrruu]{}<>{^<1>D^2\quad}&&&&\\
&&&&&&&&&\\
&&&&&&&&&\\
&&&&&&&&&\\
&&RR\ar[rrrd]_-{i^*1}\ar@{}[rrruuuu]|*[@]{\cong}&&&&&&&\\
&&&&&R\ar[rrrruuuuuuuuuu]_-{D}&&&&
}}}
\\
\\
\\
\stackrel{\eqref{ax:SKM3}}{=}\;
\vcenter{\hbox{\xymatrix@!0@R=7mm@C=6mm{
&&&&&&NN\ar[rrrd]^-{m}&&&\\
&&&NN\ar[dd]_-{u11}\ar[rrrd]^-{m}\ar[rrru]^-{1}&&&&\ar@{}[lddd]|*=0[@]{\cong}&&N\\
RR\ar[rrru]^-{DD}\ar[rdddd]_-{i\ob 11}\ar[rrddd]^-{i11}\xtwocell[rrrrd]{}<>{^D^0DD\qquad}&&&&&&N\ar[dd]_-{u1}\ar[rrru]^-{1}\ar@{}[llld]|*=0[@]{\cong}&&&\\
&&&NNN\ar[rrrd]^-{1m}&&&&&&\\
&&&&&&NN\ar[rrruuu]|-{m}&&&\\
&\ar@{}[ru]|<*[@ru]{\cong}&RRR\ar[ddddd]|-{i^*11}\ar[ruu]|-{DDD}\ar[rrrd]_-{1i^*1}\xtwocell[rrrruu]{}<>{^<1>DD^2\quad\ }&&&&&&&\\
&R\ot RR\ar[rdddd]_-{i\ot 11}&&&&RR\ar[ddddd]_-{i^*1}\ar[ruu]^-{DD}\xtwocell[rrruu]{}<>{^<1>D^2\quad\ }&&&&\\
&&&&&&&&&\\
&&&&&&&&&\\
&&&&&&&&&\\
&&RR\ar[rrrd]_-{i^*1}\ar@{}[rrruuuu]|*[@]{\cong}&&&&&&&\\
&&&&&R\ar[rrrruuuuuuuuuu]_-{D}&&&&
}}}
\\
\\
\\
=\!\!
\vcenter{\hbox{\xymatrix@!0@R=7mm@C=6mm{
&&&&&&NN\ar[rrrd]^-{m}&&&\\
&&&NN\ar[rrrd]^-{m}\ar[rrru]^-{1}&&&&\ar@{}[lddd]|*=0[@]{\cong}&&N\\
RR\ar[rrru]^-{DD}\ar[rdddd]_-{i\ob 11}\ar[rrddd]^-{i11}\ar[rrrd]^-{i^*1}\xtwocell[rrrrrr]{}<>{^D^2\ }&&&&&&N\ar[dd]_-{u1}\ar[rrru]^-{1}&&&\\
&&&R\ar[rrru]^-{D}\ar[rrddd]^-{i1}\xtwocell[rrrrd]{}<>{^D^0D\quad\ }&&&&&&\\
&&&&&&NN\ar[rrruuu]|-{m}&&&\\
&\ar@{}[ru]|<*[@ru]{\cong}&RRR\ar[ddddd]|-{i^*11}\ar[rrrd]_-{1i^*1}\ar@{}[ruu]|*[@]{\cong}&&&&&&&\\
&R\ot RR\ar[rdddd]_-{i\ot 11}&&&&RR\ar[ddddd]_-{i^*1}\ar[ruu]^-{DD}\xtwocell[rrruu]{}<>{^<1>D^2\quad}&&&&\\
&&&&&&&&&\\
&&&&&&&&&\\
&&&&&&&&&\\
&&RR\ar[rrrd]_-{i^*1}\ar@{}[rrruuuu]|*[@]{\cong}&&&&&&&\\
&&&&&R\ar[rrrruuuuuuuuuu]_-{D}&&&&
}}}
\!\!\!\!\!\!\!\!\!=\;\;
\vcenter{\hbox{\xymatrix@!0@R=7mm@C=6mm{
&&&&&&NN\ar[rrrd]^-{m}&&&\\
&&&NN\ar[rrrd]^-{m}\ar[rrru]^-{1}&&&&\ar@{}[lddd]|*=0[@]{\cong}&&N\\
RR\ar[rrru]^-{DD}\ar[rdddd]_-{i\ob 11}\ar[rrrd]^-{i^*1}\xtwocell[rrrrrr]{}<>{^D^2\ }&&&&&&N\ar[dd]_-{u1}\ar[rrru]^-{1}&&&\\
&&&R\ar[rrru]^-{D}\ar[rrddd]^-{i1}\ar[rdddd]_-{i\ob 1}\xtwocell[rrrrd]{}<>{^D^0D\quad\ }&&&&&&\\
&&&&&&NN\ar[rrruuu]|-{m}&&&\\
&&&&&&&&&\\
&R\ot RR\ar[rdddd]_-{i\ot 11}\ar[rrrd]_-{1i^*1}\ar@{}[rruuu]|*[@]{\cong}&&&\ar@{}[ru]|<*[@ru]{\cong}&RR\ar[ddddd]|-{i^*1}\ar[ruu]^-{DD}\xtwocell[rrruu]{}<>{^<1>D^2\quad}&&&&\\
&&&&R\ot R\ar[rdddd]_-{i\ot 1}&&&&&\\
&&&&&&&&&\\
&&&&&&&&&\\
&&RR\ar[rrrd]_-{i^*1}\ar@{}[rruuu]|*[@]{\cong}&&&&&&&\\
&&&&&R\ar[rrrruuuuuuuuuu]_-{D}&&&&
}}}
\end{gather*}
The axiom \eqref{ax:X3} holds.
\begin{gather*}
\vcenter{\hbox{\xymatrix@!0@R=7mm@C=6mm{
&&&&\ar@{}[lddd]|*=0[@]{\cong}&&NN\ar[rrrd]^-{m}&&&\\
&&&NN\ar[dd]_-{1u1}\ar[rrru]^-{1}&&&&&&N\\
RR\ar[rrru]^-{DD}\ar[rdddd]_-{1i\ob 1}\ar[rrddd]^-{1i1}\xtwocell[rrrrd]{}<>{^DD^0D\qquad}&&&&&&&&&\\
&&&NNN\ar[rrruuu]|-{1m}&&&&&&\\
&&&&&&&&&\\
&\ar@{}[ru]|<*[@ru]{\cong}&RRR\ar[ddddd]|-{1i^*1}\ar[ruu]|-{DDD}\xtwocell[rrruu]{}<>{^<1>DD^2\quad\ }&&&&&&&\\
&RR\ot R\ar[rdddd]_-{1i\ot 1}&&&{\xtwocell[rrru]{}<>{^<2>D^2\quad}}&&&&&\\
&&&&&&&&&\\
&&&&&&&&&\\
&&&&&&&&&\\
&&RR\ar[rrrd]_-{i^*1}\ar[rrrruuuuuuuuuu]_-{DD}&&&&&&&\\
&&&&&R\ar[rrrruuuuuuuuuu]_-{D}&&&&
}}}
\!\!\!\!\!\!\!\!\!\!\!\!\!\stackrel{\eqref{ax:OM1}}{=}\;
\vcenter{\hbox{\xymatrix@!0@R=7mm@C=6mm{
&&&&\ar@{}[lddd]|*=0[@]{\cong}&&NN\ar[rrrd]^-{m}&&&\\
&&&NN\ar[dd]_-{1u1}\ar[rrru]^-{1}&&&&&&N\\
RR\ar[rrru]^-{DD}\ar[rdddd]_-{1i\ob 1}\ar[rrddd]^-{1i1}\xtwocell[rrrrd]{}<>{^DD^0D\qquad}&&&&&&&&&\\
&&&NNN\ar[rrruuu]|-{1m}\ar[rrrd]^-{m1}&&&&&&\\
&&&&&&NN\ar[rrruuu]^-{m}\ar@{}[uuuu]|*[@d]{\cong}&&&\\
&\ar@{}[ru]|<*[@ru]{\cong}&RRR\ar[ddddd]|-{1i^*1}\ar[ruu]|-{DDD}\ar[rrrd]_-{i^*11}\xtwocell[rrrruu]{}<>{^<1>D^2D\quad\ }&&&&&&&\\
&RR\ot R\ar[rdddd]_-{1i\ot 1}&&&&RR\ar[ddddd]_-{i^*1}\ar[ruu]^-{DD}\xtwocell[rrruu]{}<>{^<1>D^2\quad}&&&&\\
&&&&&&&&&\\
&&&&&&&&&\\
&&&&&&&&&\\
&&RR\ar[rrrd]_-{i^*1}\ar@{}[rrruuuu]|*[@]{\cong}&&&&&&&\\
&&&&&R\ar[rrrruuuuuuuuuu]_-{D}&&&&
}}}
\\
\\
\\
\stackrel{\eqref{ax:SKM2}}{=}\;
\vcenter{\hbox{\xymatrix@!0@R=7mm@C=6mm{
&&&&&&NN\ar[rrrd]^-{m}&&&\\
&&&NN\ar[dd]_-{1u1}\ar[rrru]^-{1}\ar@/^4mm/[rrrddd]^-{1}&&&&&&N\\
RR\ar[rrru]^-{DD}\ar[rdddd]_-{1i\ob 1}\ar[rrddd]^-{1i1}\xtwocell[rrrrd]{}<>{^DD^0D\qquad}&&&&&&\ar@{}[llld]|*=0[@]{\cong}&&&\\
&&&NNN\ar[rrrd]^-{m1}&&&&&&\\
&&&&&&NN\ar[rrruuu]^-{m}&&&\\
&\ar@{}[ru]|<*[@ru]{\cong}&RRR\ar[ddddd]|-{1i^*1}\ar[ruu]|-{DDD}\ar[rrrd]_-{i^*11}\xtwocell[rrrruu]{}<>{^<1>D^2D\quad\ }&&&&&&&\\
&RR\ot R\ar[rdddd]_-{1i\ot 1}&&&&RR\ar[ddddd]_-{i^*1}\ar[ruu]^-{DD}\xtwocell[rrruu]{}<>{^<1>D^2\quad}&&&&\\
&&&&&&&&&\\
&&&&&&&&&\\
&&&&&&&&&\\
&&RR\ar[rrrd]_-{i^*1}\ar@{}[rrruuuu]|*[@]{\cong}&&&&&&&\\
&&&&&R\ar[rrrruuuuuuuuuu]_-{D}&&&&
}}}
\\
\\
\\
\stackrel{\eqref{ax:OM2}}{=}\!\!\!\!\!\!\!\!\!\!
\vcenter{\hbox{\xymatrix@!0@R=7mm@C=6mm{
&&&&&&NN\ar[rrrd]^-{m}&&&\\
&&&NN\ar[rrru]^-{1}\ar@/^4mm/[rrrddd]^-{1}&&&&&&N\\
RR\ar[rrru]^-{DD}\ar[rdddd]_-{1i\ob 1}\ar[rrddd]|-{1i1}\ar[rrdd]^-{i^*1}\ar@/^7mm/[rrrrrdddd]^-{1}\xtwocell[rrrrrdddd]{}<>{^<-2>\varepsilon 1}&&&&&&&&&\\
&&&&&&&&&\\
&&R\ar[rrrdd]^-{i1}&&&&NN\ar[rrruuu]^-{m}&&&\\
&\ar@{}[ru]|<*[@ru]{\cong}&RRR\ar[ddddd]|-{1i^*1}\ar[rrrd]_-{i^*11}&&&&&&&\\
&RR\ot R\ar[rdddd]_-{1i\ot 1}&&&&RR\ar[ddddd]_-{i^*1}\ar[ruu]^>>>>{DD}\xtwocell[rrruu]{}<>{^<1>D^2\quad}&&&&\\
&&&&&&&&&\\
&&&&&&&&&\\
&&&&&&&&&\\
&&RR\ar[rrrd]_-{i^*1}\ar@{}[rrruuuu]|*[@]{\cong}&&&&&&&\\
&&&&&R\ar[rrrruuuuuuuuuu]_-{D}&&&&
}}}
\!\!\!\!\!\!\!\!\!=\;
\vcenter{\hbox{\xymatrix@!0@R=7mm@C=6mm{
&&&&&&NN\ar[rrrd]^-{m}&&&\\
&&&NN\ar[rrru]^-{1}\ar@/^4mm/[rrrddd]^-{1}&&&&&&N\\
RR\ar[rrru]^-{DD}\ar[rdddd]_-{1i\ob 1}\ar@/^7mm/[rrrrrdddd]^-{1}&&&&&&&&&\\
&&&&&&&&&\\
&&&&&&NN\ar[rrruuu]^-{m}&&&\\
&&&&&&&&&\\
&RR\ot R\ar[rdddd]_-{1i\ot 1}\ar@/^/[rrdd]!<4mm,0mm>^-{1}&&&&RR\ar[ddddd]|-{i^*1}\ar[ruu]^>>>>{DD}\xtwocell[rrruu]{}<>{^<1>D^2\quad}&&&&\\
&{\xtwocell[rrrd]{!<2mm,1mm>}<>{^<1>1\varepsilon\ob 1\ \ }}&&&&&&&&\\
&&&**{!<-4mm,0mm>}RR\ot R\ar[]!<4mm,0mm>;[rrddd]^-{e1}&&&&&&\\
&&&&&&&&&\\
&&RR\ar[rrrd]_-{i^*1}\ar[ruu]!<4mm,0mm>|-{1i\ob 1}&&&&&&&\\
&&&\ar@{}[ruuu]|*=0[@]{\cong}&&R\ar[rrrruuuuuuuuuu]_-{D}&&&&
}}}
\end{gather*}
Hence $F$ is surjective on objects. Now, these actions defined purely in terms of the opmonoidal constraints turn every opmonoidal cell $\xymatrix@1@C=5mm{\gamma:D\ar[r]&D'}$ into an arrow in $\mathcal{X}(R,N)$, because as one can see below the axiom \eqref{ax:X4} holds.
\begin{gather*}
\vcenter{\hbox{\xymatrix@!0@R=10mm@C=8mm{
&&N\ar[rrd]^-{u1}\ar@/^18mm/[rrrrddd]^-{1}&&&&\ar@{}[lld]|*=0[@]{\cong}\\
&&&&NN\ar[rrdd]^-{m}&&\\
{\xtwocell[rrrrru]{}<>{^<-3>D^0D\quad\ }}&&RR\ar[rrd]|-{i^*1}\ar[rru]^-{DD}\xtwocell[rrrrd]{}<>{^<-2>D^2\ \ }&&&&\\
R\ar[rr]_-{i\ob 1}\ar[rru]^-{i1}\ar[rruuu]_-{D}\ar@/^10mm/[rruuu]^-{D'}\xtwocell[rruuu]{}<>{^<-3>\gamma}&&R\ot R\ar[rr]_-{i\ot 1}\ar@{}[u]|*[@]{\cong}&&R\ar[rr]_-{D}&&N
}}}
\stackrel{\eqref{ax:OM5}}{=}
\vcenter{\hbox{\xymatrix@!0@R=10mm@C=8mm{
&&N\ar[rrd]^-{u1}\ar@/^18mm/[rrrrddd]^-{1}&&&&\ar@{}[lld]|*=0[@]{\cong}\\
&&&&NN\ar[rrdd]^-{m}&&\\
{\xtwocell[rrru]{}<>{^<-2>D'^0D'\qquad}}&&RR\ar[rrd]|-{i^*1}\ar[rru]_-{DD}\ar@/^7mm/[rru]^-{D'D'}\xtwocell[rru]{}<>{^<-2>\gamma\gamma\ }\xtwocell[rrrrd]{}<>{^<-2>D^2\ \ }&&&&\\
R\ar[rr]_-{i\ob 1}\ar[rru]^-{i1}\ar@/^10mm/[rruuu]^-{D'}&&R\ot R\ar[rr]_-{i\ot 1}\ar@{}[u]|*[@]{\cong}&&R\ar[rr]_-{D}&&N
}}}
\\
\stackrel{\eqref{ax:OM4}}{=}\;\;
\vcenter{\hbox{\xymatrix@!0@R=10mm@C=8mm{
&&N\ar[rrd]^-{u1}\ar@/^18mm/[rrrrddd]^-{1}&&&&\ar@{}[lld]|*=0[@]{\cong}\\
&&&&NN\ar[rrdd]^-{m}&&\\
{\xtwocell[rrru]{}<>{^<-2>D'^0D'\qquad}}&&RR\ar[rrd]|-{i^*1}\ar@/^7mm/[rru]^-{D'D'}\xtwocell[rrrd]{}<>{^<-4>D'^2\ \ }&&&&\\
R\ar[rr]_-{i\ob 1}\ar[rru]^-{i1}\ar@/^10mm/[rruuu]^-{D'}&&R\ot R\ar[rr]_-{i\ot 1}\ar@{}[u]|*[@]{\cong}&&R\ar[rr]_-{D}\ar@/^6mm/[rr]!<-1mm,-1mm>^<<<<{D'}\xtwocell[rr]{}<>{^<-1.5>\gamma}&&N
}}}
\end{gather*}
Thus $F$ is full and therefore invertible.
\end{proof}

\begin{teo}\label{teo:OpmonadicityOpmonoidal}
Let $\mathcal{M}$ be an opmonadic-friendly monoidal bicategory in the sense of Definition~\ref{def:OpmonadicFriendly}. For every monoidale $(N,m,u)$, every biduality $R\dashv R\ot$, and every opmonadic adjunction $i\ob\dashv i\ot$ in $\mathcal{M}$
\[
\vcenter{\hbox{\xymatrix{
R\ot\dtwocell_{i\ob}^{i\ot}{'\dashv}\\
I
}}}
\]
precomposition with $\xymatrix@1@C=5mm{i\ob 1:R\ar[r]&R\ot R}$ defines an equivalence of categories.
\[
\Opmon(R\ot R,N)\simeq\SkOpmon(R,N)
\]
\end{teo}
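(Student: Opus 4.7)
The plan is to factor precomposition with $i\ob 1$ through the auxiliary category $\mathcal{X}(R,N)$ and invoke Lemma~\ref{lem:OpmonadicityOpmonoidal}, thereby reducing the theorem to an equivalence $\Opmon(R\ot R, N) \simeq \mathcal{X}(R,N)$ that may be thought of as an ``opmonoidal $\dashv$ monoidal opmonadicity'' statement. The starting observation is that opmonadic-friendliness guarantees that $i\ob 1 \dashv i\ot 1: R \rightleftarrows R\ot R$ is opmonadic (tensoring with $R$ preserves opmonadicity). Applying the representable $\mathcal{M}(-,N)$ and using the universal property of the Kleisli object yields an equivalence of categories
\[
\mathcal{M}(R\ot R, N) \simeq \mathcal{M}(R, N)^{\mathcal{M}((i\ot 1)(i\ob 1), N)}
\]
sending an arrow $C: R\ot R \to N$ to the pair $(C \cdot (i\ob 1),\, C \cdot (\varepsilon\ob 1))$, with $\varphi := C \cdot (\varepsilon\ob 1)$ the canonical action induced by whiskering the counit.

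Given an opmonoidal $C$, the composite $D := C \cdot (i\ob 1)$ is opmonoidal (as a composite of the opmonoidal $C$ with the opmonoidal $i\ob 1$ supplied by Proposition~\ref{prop:OneOpmonoidalArrow}), and I claim that $(D,\varphi)$ is an object of $\mathcal{X}(R,N)$. Axioms \eqref{ax:X1}, \eqref{ax:X2}, \eqref{ax:X3} would follow by pasting verifications combining the opmonoidal axioms \eqref{ax:OM1}--\eqref{ax:OM3} for $C$, the triangle identities for $i\ob \dashv i\ot$, the interchange isomorphisms, and the definitions of the opmonoidal constraints of $i\ob 1$ given in Proposition~\ref{prop:OneOpmonoidalArrow}. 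This defines a functor $\Opmon(R\ot R, N) \to \mathcal{X}(R,N)$, and full faithfulness is immediate from the hom-category equivalence above: opmonoidal cells $\xi: C \Rightarrow C'$ are exactly those cells in $\mathcal{M}(R\ot R, N)$ whose image under the equivalence is a cell in $\mathcal{X}(R,N)$ satisfying \eqref{ax:X4}.

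For essential surjectivity I would invert the construction. Given $(D,\varphi) \in \mathcal{X}(R,N)$, opmonadicity yields an arrow $C: R\ot R \to N$, unique up to isomorphism, with $C \cdot (i\ob 1) \cong D$ and matching action. The unit constraint $C^0: C \cdot n \Rightarrow u$ is built from $D^0$ via the isomorphism $n \cong i\ob \cdot i$ of Proposition~\ref{prop:OneOpmonoidalArrow}. The composition constraint $C^2$ is built using the opmonadic adjunction at the tensor square $(i\ob 1)(i\ob 1) \dashv (i\ot 1)(i\ot 1): RR \rightleftarrows R\ot R R\ot R$, opmonadic by two applications of opmonadic-friendliness: under its hom-level equivalence, cells between arrows out of $R\ot R R\ot R$ correspond to equivariant cells between arrows out of $RR$, and axioms \eqref{ax:X2} and \eqref{ax:X3} supply precisely the equivariance needed to lift the candidate cell built from $D^2$ to $C^2$. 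The opmonoidal axioms \eqref{ax:OM1}--\eqref{ax:OM3} for $C$ can then be verified by precomposing with $i\ob 1$ (or its tensor powers), reducing each to an identity between equivariant cells which combines the opmonoidal axioms for $D$ with \eqref{ax:X1}--\eqref{ax:X3}; faithfulness of the equivalence promotes these local equalities to the desired global ones. Composition with the isomorphism $\mathcal{X}(R,N) \cong \SkOpmon(R,N)$ of Lemma~\ref{lem:OpmonadicityOpmonoidal} then gives the theorem.

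The main obstacle is the construction and verification of $C^2$, which requires juggling opmonadic adjunctions at levels one and two simultaneously and tracking the associator and unitors of the enveloping monoidale $R\ot R$ as given in Remark~\ref{rem:PantsMonoidale}. The local-to-global translation of cells along these opmonadic equivalences relies essentially on the second clause of opmonadic-friendliness in Definition~\ref{def:OpmonadicFriendly} (preservation of reflexive coequalisers under composition), which guarantees the coherence between the two levels; without this clause the transport of $C^2$ through the hom-level equivalence would not be functorial with respect to the monoidal structure of $\mathcal{M}$.
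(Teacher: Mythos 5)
Your overall architecture coincides with the paper's: factor through $\mathcal{X}(R,N)$, use Lemma~\ref{lem:OpmonadicityOpmonoidal} for the isomorphism $\mathcal{X}(R,N)\cong\SkOpmon(R,N)$, and use opmonadicity of $i\ob 1\dashv i\ot 1$ and of its tensor square to reconstruct $C$, its constraints, and the cells. Your construction of $C^2$ from axioms \eqref{ax:X2} and \eqref{ax:X3} via the opmonadic adjunction $i\ob 1i\ob 1\dashv i\ot 1i\ot 1$, and the verification of \eqref{ax:OM1}--\eqref{ax:OM3} by precomposition with opmonadic arrows, are exactly as in the paper.

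The genuine gap is in your construction of $C^0$. You propose to build $C^0$ from $D^0$ ``via the isomorphism $n\cong i\ob\cdot i$'', but no such isomorphism exists: by Proposition~\ref{prop:OneOpmonoidalArrow} the canonical comparison $(i\ob 1)\circ i\Rightarrow n$ is the opmonoidal unit constraint of $i\ob 1$, a whiskering of the counit $\varepsilon$ (equivalently $\varepsilon\ob$), and it is not invertible in general. Whiskering it with $C$ gives a cell $D\circ i\Rightarrow C\circ n$ pointing the same way as $D^0:D\circ i\Rightarrow u$, so the two cannot be composed to yield $C\circ n\Rightarrow u$. What the paper does instead is observe that, because the counit $\varepsilon\ob 1$ is a reflexive coequaliser by opmonadicity and because composition with $C$ preserves reflexive coequalisers (the second clause of Definition~\ref{def:OpmonadicFriendly}), the cell $C\cdot(\varepsilon\ob 1)$ evaluated at $n$ exhibits $C\circ n$ as a coequaliser with $D\circ i$ at its source; axiom \eqref{ax:X1} is precisely the cofork condition, and $C^0$ is the map induced out of that coequaliser. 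This is also where the reflexive-coequaliser clause is actually used --- not, as your closing paragraph suggests, in the transport of $C^2$, which needs only the first clause (tensoring preserves opmonadicity). A smaller expository point: fullness is not ``immediate'' from the hom-level equivalence; the cell $\xi$ induced by \eqref{ax:X4} must still be shown to satisfy \eqref{ax:OM4} and \eqref{ax:OM5}, the latter again by precomposing with the epimorphic cell $\varepsilon\ob 1$.
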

\begin{proof}

By Proposition~\ref{prop:OneOpmonoidalArrow} $i\ob 1$ is an opmonoidal arrow, thus precomposition along this arrow in $\SkOpmon(\mathcal{M})$ is a well defined functor.
\[
\xymatrix@1@C=5mm{\SkOpmon(i\ob 1,N):\Opmon(R\ot R,N)\ar[r]&\SkOpmon(R,N)}
\]
Let $G$ be the composite of $\SkOpmon(i\ob 1,N)$ followed by the inverse of the isomorphism $F$ in Lemma~\ref{lem:OpmonadicityOpmonoidal} which equips an opmonoidal arrow with its canonical module structure \eqref{eq:redundant_action1} for the monad induced by $i\ob 1\dashv i\ot 1$. We shall now see that the functor $G$ is an equivalence of categories.
\[
\vcenter{\hbox{\xymatrix@!0@=55mm{
G:\Opmon(R\ot R,N)\ar[r]^-{\SkOpmon(i\ob 1,N)}&\SkOpmon(R,N)
}
\!\!\!\!
\xymatrix{
\ar[r]^-{F^{-1}}_-{\cong}&\mathcal{X}(R,N)
}}}
\]
Faithfulness of $G$ follows easily because precomposing with the opmonadic arrow $i\ot 1$ is faithful in $\mathcal{M}$, and since the forgetful functor
\[
\xymatrix{\SkOpmon(R\ot R,N)\ar[r]&\mathcal{M}(R\ot R,N)}
\]
is faithful so is precomposing with $i\ot 1$ in $\SkOpmon(\mathcal{M})$. Now, the functor $G$ is essentially surjective on objects and full, mainly due to the opmonadicity of $i\ob\dashv i\ot$. Remember that opmonadicity in $\mathcal{M}$ is preserved by tensoring with objects, so $i\ob 1\dashv i\ot 1$ is opmonadic, and so for an object $(D,\varphi)$ in $\mathcal{X}(R,N)$, the action cell $\varphi$ induces an arrow $\xymatrix@1@C=5mm{C:R\ot R\ar[r]&N}$ and an isomorphism
\begin{equation}\label{iso:opmonadicity1}
\vcenter{\hbox{\xymatrix@!0@C=15mm{
R\ar[rd]_-{i\ob 1}\ar[rr]^-{D}&&N\\
&R\ot R\ar[ru]_-{C}\ar@{}[u]|-*[@]{\cong}&
}}}
\end{equation}
such that the following equation holds.
\begin{equation}\label{iso:opmonadicity1:module}
\vcenter{\hbox{\xymatrix@!0@=11mm{
R\ar[rd]_-{i\ob 1}\ar[rrrr]<1mm>^D&&&&N\\
&R\ot R\ar[rd]_-{i\ot 1}\ar[rr]^-{1}\xtwocell[rr]{}<>{^<2>\varepsilon\ob 1\quad}&\ar@{}[u]|*[@]{\cong}&R\ot R\ar[ru]_-{C}&\\
&&R\ar[ru]_-{i\ob 1}&&
}}}
\quad=\quad
\vcenter{\hbox{\xymatrix@!0@=11mm{
R\ar[rd]_-{i\ob 1}\ar[rrrr]<1mm>^D\xtwocell[rrrd]{}<>{^\varphi}&&&&N\\
&R\ot R\ar[rd]_-{i\ot 1}&&R\ot R\ar[ru]_-{C}\ar@{}[ul]|<<*=0[@]{\cong}&\\
&&R\ar[ru]_-{i\ob 1}\ar@/^8mm/[rruu]^-{D}&&
}}}
\end{equation}

Now, since $i\ob 1\dashv i\ot 1$ is an opmonadic adjunction, its counit $\varepsilon\ob 1$ is a coequaliser, and then, by hypothesis, the cell
\begin{equation}\label{cell:coequaliser}
\vcenter{\hbox{\xymatrix@!0@C=11mm@R=6mm{
&I\ar[ld]_-{n}&\\
R\ot R\ar[dd]_-{i\ot 1}\ar@/^/[rddd]^-{1}\xtwocell[rddd]{}<>{^<1>\varepsilon\ob 1}&&\\
&&\\
R\ar[rd]_-{i\ob 1}&&N\\
&R\ot R\ar[ru]_-{C}&
}}}
\end{equation}
is the coequaliser of the parallel pair of cells below.
\begin{equation}\label{cell:cofork}
\vcenter{\hbox{\xymatrix@!0@=10mm{
&R\ot R\ar[ld]_-{i\ot 1}\ar@/^5mm/[ldd]^-{1}\xtwocell[ldd]{}<>{^\varepsilon\ob 1}&I\ar[l]_-{n}&\\
R\ar[d]_-{i\ob 1}&&&\\
R\ot R\ar[rd]_-{i\ot 1}&&&N\\
&R\ar[r]_-{i\ob 1}&R\ot R\ar[ru]_-{C}&
}}}
\qquad\qquad\qquad
\vcenter{\hbox{\xymatrix@!0@=10mm{
&R\ot R\ar[ld]_-{i\ot 1}&I\ar[l]_-{n}&\\
R\ar[d]_-{i\ob 1}&&&\\
R\ot R\ar[rd]_-{i\ot 1}\ar@/^5mm/[rrd]^-{1}\xtwocell[rrd]{}<>{^\varepsilon\ob 1\ }&&&N\\
&R\ar[r]_-{i\ob 1}&R\ot R\ar[ru]_-{C}&
}}}
\end{equation}
Taking this into account, one may read axiom \eqref{ax:X1} for $(D,\varphi)$ as saying that precomposing the cell below with each of the parallel cells \eqref{cell:cofork} gives the same result.
\[
\vcenter{\hbox{\xymatrix@!0@C=15mm{
&I\ar[ld]_-{n}\ar[rddd]^-u\ar[dddl]^-i\xtwocell[rddd]{}<>{^<5>D^0}&\\
R\ot R\ar[dd]_-{i\ot 1}\ar@{}[rd]|<<<*[@]{\cong}&&\\
&&\\
R\ar[rd]_-{i\ob 1}\ar[rr]^-{D}&&N\\
&R\ot R\ar[ru]_-{C}\ar@{}[u]|*[@]{\cong}&
}}}
\]
Ergo, by universality of the coequaliser \eqref{cell:coequaliser} there exists a cell
\[
\vcenter{\hbox{\xymatrix@!0@R=15mm@C=9mm{&I\ar[dl]_n\ar[rd]^-u\xtwocell[rd]{}<>{^<2>C^0}&\\
R\ot R\ar[rr]_C&&N
}}}
\]
such that the following equation holds.
\begin{equation}\label{iso:opmonadicity1:unit}
\vcenter{\hbox{\xymatrix@!0@C=10mm@R=8mm{
&&I\ar[ddddll]_-i\ar[dd]^-n\ar[rrdddd]^-u&&\\
&\ar@{}[rdd]|<<<<<<<*[@]{\cong}&&&\\
&&R\ot R\ar[ddll]|-{i\ot 1}\ar[dd]^-{1}\xtwocell[rdd]{}<>{^<6>\varepsilon\ob 1}\xtwocell[rdd]{}<>{^<-1>C^0}&&\\
&&&&\\
R\ar[rr]_{i\ob 1}&&R\ot R\ar[rr]_C&&N
}}}
\quad=\quad
\vcenter{\hbox{\xymatrix@!0@C=10mm@R=8mm{
&&I\ar[ddddll]_i\ar[rrdddd]^-u&&\\
&{\xtwocell[rrdd]{}<>{^D^0}}&&&\\
&&&&\\
&&\ar@{}[d]|<<*[@]{\cong}&&\\
R\ar[rr]_{i\ob 1}\ar@/^10mm/[rrrr]^-{D}&&R\ot R\ar[rr]_C&&N
}}}
\end{equation}

The axioms \eqref{ax:X2} and \eqref{ax:X3} say precisely that $D^2$ is a morphism of modules for the monads induced by the opmonadic adjunctions
\[
\vcenter{\hbox{\xymatrix{
R\ot RR\dtwocell_{i\ob 11\quad}^{\quad i\ot 11}{'\dashv}\\
RR
}}}
\qquad\qquad
\vcenter{\hbox{\xymatrix{
RR\ot R\dtwocell_{1i\ob 1\quad}^{\quad 1i\ot 1}{'\dashv}\\
RR
}}}
\]
with the obvious actions on the target of $D^2$ for each of the monads, and the following actions on the source of $D^2$.
\[
\vcenter{\hbox{\xymatrix@!0@C=13mm{
&R\ar[rd]^-{i\ob 1}\ar@/^8mm/[rrrdd]^-{D}\ar@{}[dd]|*[@]{\cong}\xtwocell[rrrdd]{}<>{^<-2>\varphi}&&&\\
&&R\ot R\ar[rd]^-{i\ot 1}\ar@{}[d]|*[@]{\cong}&&\\
RR\ar[r]_-{i\ob 11}\ar[ruu]^-{i^*1}&R\ot RR\ar[r]_-{i\ot 11}\ar[ru]^-{1i^*1}&RR\ar[r]_-{i^*1}&R\ar[r]_-{D}&N}
\qquad
\xymatrix@!0@C=6.5mm{&&&&&&&&\\
&&&&&**[l]RR\ot R\ar[]!<-3mm,0mm>;[rd]^-{e1}&&&\\
RR\ar[rr]_-{1i\ob 1}\ar@/^20mm/[rrrrrr]!<3mm,0mm>^-{i^*1}&&RR\ot R\ar[rr]_-{1i\ot 1}\ar@/^5mm/[]!<-4mm,0mm>;[rrru]^-{1}\xtwocell[rrru]{}<>{^<-1>1\varepsilon\ob 1\quad}&\ar@{}[uu]|>>*[@]{\cong}&RR\ar[rr]_-{i^*1}\ar[]!<-3mm,0mm>;[ru]!<-2mm,0mm>|{1i\ot 1}&\ar@{}[u]|<<<*[@]{\cong}&R\ar[rr]_-{D}&&N
}}}
\]
Hence, one may read the axioms \eqref{ax:X2} and \eqref{ax:X3} for $(D,\varphi)$ as saying that $D^2$ is a morphism of modules for the monad induced by the adjunction $i\ob 1i\ob1\dashv i\ot 1i\ot 1$ (below left). This adjunction is opmonadic by hypothesis, and so it induces a cell $C^2$ (as shown on the right)
\[
\vcenter{\hbox{\xymatrix{
R\ot RR\ot R\dtwocell_{i\ob 1i\ob 1\qquad}^{\qquad i\ot 1i\ot 1}{'\dashv}\\
RR
}}}
\qquad\qquad\qquad
\vcenter{\hbox{\xymatrix@!0@=16mm{
R\ot RR\ot R\ar[r]^-{CC}\ar[d]_{1e1}\xtwocell[rd]{}<>{^C^2}&NN\ar[d]^-{m}\\
R\ot R\ar[r]_C&N
}}}
\]
such that the following equation holds.
\begin{equation}\label{iso:opmonadicity1:prod}
\vcenter{\hbox{\xymatrix@!0@C=20mm{
RR\ar[rd]|-{i\ob 1i\ob 1}\ar[dd]_{i^*1}\ar[rr]^-{DD}&&NN\ar[dd]^-{m}\\
&R\ot RR\ot R\ar[ru]_-{CC}\ar[dd]_{1e1}\ar@{}[u]|*[@]{\cong}\xtwocell[rd]{}<>{^C^2}&\\
R\ar[rd]_-{i\ot 1}\ar@{}[ru]|-*[@]{\cong}&&N\\
&R\ot R\ar[ru]_-{C}&
}}}
\quad=\quad
\vcenter{\hbox{\xymatrix@!0@C=20mm{
RR\ar[dd]_{i^*1}\ar[rr]^-{DD}\xtwocell[rrdd]{}<>{^D^2\ }&&NN\ar[dd]^-{m}\\
&&\\
R\ar[rd]_-{i\ot 1}\ar[rr]^-{DD}&&N\\
&R\ot R\ar[ru]_-{C}\ar@{}[u]|*[@u]{\cong}&
}}}
\end{equation}

To deduce that the data $(C,C^0,C^2)$ constitute an opmonoidal arrow, first take axiom \eqref{ax:OM1} for the data $(C,C^0,C^2)$ and apply the faithful functor given by precomposition with the opmonadic arrow
\[
\vcenter{\hbox{\xymatrix{
i\ob 1i\ob 1i\ob 1:RRR\ar[r]&R\ot RR\ot RR\ot R
}}}
\]
to both sides of the axiom; this produces the two sides of axiom \eqref{ax:OM1} for $D$, which are equal. Then, precompose both sides of axiom \eqref{ax:OM2} for the data $(C,C^0,C^2)$ with the opmonadic arrow $\xymatrix@1@C=5mm{i\ob 1:R\ar[r]&R\ot R}$, and also with the epimorphic cell
\[
\vcenter{\hbox{\xymatrix@!0@=12mm{
&R\ot RR\ot R\ar[rrd]^-{1}\ar[ld]_-{11i\ot 1}\xtwocell[rd]{}<>{^<2>11\varepsilon\ob 1\quad}&&\\
R\ot RR\ar[rrr]_-{11i\ob 1}&&&R\ot RR\ot R
}}}
\]
at $\id_{R\ot RR\ot R}$; this produces the two sides of axiom \eqref{ax:OM2} for $D$, which are equal. And finally, precompose the two sides of axiom \eqref{ax:OM3} for the data $(C,C^0,C^2)$ with the opmonadic arrow $\xymatrix@1@C=5mm{i\ob 1:R\ar[r]&R\ot R}$, and substitute the cell below for the identity on $i\ob 1$; this produces both sides of axiom \eqref{ax:OM3} for $D$, which are equal.
\[
\vcenter{\hbox{\xymatrix@!0@=20mm{
R\ar[r]^-{i\ob 1}\ar[rd]_-{1}\xtwocell[rd]{}<>{^<-3>\eta\ob 1\ }&R\ot R\ar[d]|-{i\ot 1}\ar[rd]^-{1}\xtwocell[rd]{}<>{^<3>\varepsilon\ob 1\ }&\\
&R\ar[r]_-{i\ob 1}&R\ot R
}}}
=\quad
\vcenter{\hbox{\xymatrix@!0@R=10mm@C=12mm{
R\ar[rr]^-{i\ob 1}\ar[rdd]_-{i1}\ar@/_15mm/[rrdddd]_-{1}\xtwocell[rrdddd]{}<>{^<4>\eta 1}&&R\ot R\ar[dd]_-{i11}\ar[rd]^-{n11}\ar@/^20mm/[rrdddd]^-{1}&&\\
&&{\xtwocell[rddd]{}<>{^<-4>\varepsilon\ob 111}}&R\ot RR\ot R\ar[ld]|-{i\ot 111}\ar[rdd]^-{1}&\\
&**[l]RR\ar[r]^-{1i\ob 1}\ar[rdd]_-{i^*1}&RR\ot R\ar[dd]|-{e1}\ar[rrd]_-{i\ob 111}&&\\
&&&&**[l]R\ot RR\ot R\ar[d]_-{1e1}\\
&&R\ar[rr]_-{i\ob 1}&&R\ot R
}}}
\]
We have now built an opmonoidal arrow $\xymatrix@1@C=5mm{C:R\ot R\ar[r]&N}$ for every $D$ and the isomorphism \eqref{iso:opmonadicity1} reads as $G(C)\cong D$; moreover, this isomorphism is in $\mathcal{X}(R,N)$ by \eqref{iso:opmonadicity1:module}, \eqref{iso:opmonadicity1:unit}, and \eqref{iso:opmonadicity1:prod}, therefore $G$ is essentially surjective. Now, let $\xymatrix@1@C=5mm{\gamma:D\ar[r]&D'}$ be an arrow in $\mathcal{X}(R,N)$; by the opmonadicity of $i\ob 1$ axiom $\eqref{ax:X4}$ implies the existence of a cell $\xymatrix@1@C=5mm{\xi:C\ar[r]&C'}$ in $\mathcal{M}$ such that the following equation holds.
\[
\vcenter{\hbox{\xymatrix@!0@C=15mm{
R\ar[rd]_-{i\ob 1}\ar[rr]^-{D}\ar@/^10mm/[rr]<1mm>^-{D'}\xtwocell[rr]{}<>{^<-4>\gamma}&&N\\&R\ot R\ar[ru]_-{C}\ar@{}[u]|-*[@]{\cong}&
}}}
\quad=\quad
\vcenter{\hbox{\xymatrix@!0@C=15mm{
R\ar[rd]_-{i\ob 1}\ar@/^10mm/[rr]<1mm>^-{D'}\ar@{}[r]|>>>>>*[@u]{\cong}&&N\\&R\ot R\ar[ru]_-{C}\ar@/^8mm/[ru]^-{C'}\xtwocell[ru]{}<>{^<-2>\xi}&
}}}
\]
To prove that $\xi$ is an opmonoidal arrow, first precompose the two sides of axiom \eqref{ax:OM4} for $\xi$ with the opmonadic arrow $\xymatrix@1@C=5mm{i\ob 1i\ob 1:RR\ar[r]&R\ot RR\ot R}$; this produces each side of axiom $\eqref{ax:OM4}$ for $\gamma$, which holds true. And then, precompose the two sides of axiom \eqref{ax:OM5} for $\xi$ with the epimorphic cell $\varepsilon\ob 1$ at $R\ot R$; this produces each side of axiom $\eqref{ax:OM5}$ for $\gamma$, which holds true. Therefore $G$ is full, and consequently is an equivalence.
\end{proof}

\begin{rem}
Suppose that $\mathcal{M}$ is a right autonomous opmonadic-friendly monoidal bicategory and that for every object there is an opmonadic adjunction $i\ob\dashv i\ot$, as is the case in the examples $\mathcal{M}=\Mod_k$ and $\mathcal{M}=\Span^{\mathrm{co}}$. Then the equivalence in Theorem~\ref{teo:OpmonadicityOpmonoidal} suggests that the assignation $\xymatrix@1@C=5mm{R\:\ar@{|->}[r]&R\ot R}$ behaves as a partial left adjoint to the forgetful functor, or as a strictification of the skew monoidal structure on $R$ into the monoidal structure of $R\ot R$.
\[
\vcenter{\hbox{\xymatrix{
R\ot R\\
R\ar@{|->}[u]
}}}
\vcenter{\hbox{\xymatrix{
&\Opmon\mathcal{M}\ar@/^3mm/[d]^{\text{forget}}\ar@{}[d]|\dashv\\
&\SkOpmon\mathcal{M}\ar@/^3mm/@{..>}[u]
}}}
\]
\end{rem}
\section{Oplax Actions}\label{sec:OplaxActionsOpmonadicity}
In this section we introduce a new concept that plays a central role in this paper, we call it ``oplax action''. Monoids and actions with respect to a monoid may be defined in a context as general as a monoidal category, but here we work one dimension higher: in the context of a monoidal bicategory. Because of the extra dimension, there are various ways to generalise the concept of a monoid in a monoidal bicategory. Examples of such generalisations are monoidales and skew monoidales which we have used earlier. In these examples the associative and unit laws do not hold strictly as they do with monoids, but only up to a cell satisfying some coherence axioms; for monoidales these cells must be isomorphisms, and for skew monoidales there is no such restriction. Oplax actions are defined with respect to a fixed skew monoidale, they generalise actions with respect to a monoid in a similar way as skew monoidales generalise monoids. That is, the associative and unit laws hold up to a not necessarily invertible cell satisfying some coherence axioms. Syntactically there is no distinction between ``actions'' and ``modules'' whatever the context, but their spirit is slightly different, the former focuses on the arrow bit while the latter focuses on the object bit. During this research having the arrow perspective proves to be useful. Oplax actions arise in the following way: we know that a bialgebroid corresponds to an opmonoidal monad on $R$-$\Mod$-$R$ which in turn corresponds to a skew monoidal structure on $\Mod$-$R$. We also know that bialgebroids are coalgebroids with some additional structure. One is led to ask what happens when we focus only on the coalgebroid bit of a bialgebroid in the correspondences above. It is known that a coalgebroid corresponds to an opmonoidal functor $\xymatrix@1@C=5mm{R\text{-}\Mod\text{-}R\ar[r]&S\text{-}\Mod\text{-}S}$, and in this section we prove that these opmonoidal functors also correspond to oplax action structures on $\Mod$-$S$ with respect to a particular skew monoidal structure on $\Mod$-$R$.

\begin{defi}\label{def:OplaxAction}
Let $(M,u,m,\alpha,\lambda,\rho)$ be a right skew monoidale, and let $A$ be an object in $\mathcal{M}$. An \emph{oplax right $M!$-action on $A$} consists of an arrow $\xymatrix@1@C=5mm{a:AM\ar[r]&A}$, an associator cell $a^2$, and a right unitor cell $a^0$ in $\mathcal{M}$
\[
\vcenter{\hbox{\xymatrix@!0@=15mm{
AMM\ar[r]^-{a1}\ar[d]_-{1m}\xtwocell[rd]{}<>{^a^2}&AM\ar[d]^-{a}\\
AM\ar[r]_-{a}&A
}
\qquad
\xymatrix@!0@=15mm{
AM\ar[d]_-{a}&A\ar[l]_-{1u}\ar[dl]^-{1}\xtwocell[ld]{}<>{^<2>\ \ a^0}\\
A
}}}
\]
satisfying the following three axioms.
\begin{align}
\tag{OLA1}\label{ax:OLA1}
\vcenter{\hbox{\xymatrix@!0@C=15mm{
&AMM\ar[rd]^-{a1}&\\
AMMM\ar[ru]^-{a11}\ar[dd]_-{11m}\ar[rd]_-{1m1}\xtwocell[rddd]{}<>{^\alpha}\xtwocell[rr]{}<>{^a^21\ \ }&&AM\ar[dd]^-{a}\\
&AMM\ar[dd]^-{1m}\ar[ru]^-{a1}\xtwocell[rd]{}<>{^a^2}&\\
AMM\ar[rd]_-{1m}&&A\\
&AM\ar[ru]_-{a}&
}}}
\quad&=\quad
\vcenter{\hbox{\xymatrix@!0@C=15mm{
&AMM\ar[rd]^-{a1}\ar[dd]_-{1m}\xtwocell[rddd]{}<>{^a^2}&\\
AMMM\ar[ru]^-{a11}\ar[dd]_-{11m}&&AM\ar[dd]^-{a}\\
&AMM\ar[rd]^-{a}&\\
AMM\ar[rd]_-{1m}\ar[ru]_-{a1}\ar@{}[ruuu]|*[@]{\cong}\xtwocell[rr]{}<>{^a^2\ }&&A\\
&AM\ar[ru]_-{a}&
}}}
\\
\tag{OLA2}\label{ax:OLA2}
\vcenter{\hbox{\xymatrix@!0@C=15mm{
&A\ar@/^7mm/[rddd]^-{1}&\\
AM\ar[ru]^-{a}\ar[dd]_-{11u}\xtwocell[rddd]{}<>{^1\rho\ }\ar@/^7mm/[rddd]^-{1}&&\\
&&\\
AMM\ar[rd]_-{1m}&&A\\
&AM\ar[ru]_-{a}&
}}}
\quad&=\quad
\vcenter{\hbox{\xymatrix@!0@C=15mm{
&A\ar[dd]_-{1u}\xtwocell[rddd]{}<>{^t^0}\ar@/^7mm/[rddd]^-{1}&\\
AM\ar[ru]^-{a}\ar[dd]_-{11u}&&\\
&AM\ar[rd]^-{a}&\\
AMM\ar[rd]_-{1m}\ar[ru]_-{a1}\ar@{}[ruuu]|*[@]{\cong}\xtwocell[rr]{}<>{^a^2\ }&&A\\
&AM\ar[ru]_-{a}&
}}}
\\
\tag{OLA3}\label{ax:OLA3}
\vcenter{\hbox{\xymatrix@!0@C=15mm{
&&\\
AM\ar@/^7mm/[rr]^-{1}\ar[rd]_-{1u1}\ar@/_7mm/[rddd]_-{1}\xtwocell[rddd]{}<>{^1\lambda\ }\xtwocell[rr]{}<>{^a^01\ \ }&&AM\ar[dd]^-{a}\\
&AMM\ar[ru]^-{a1}\ar[dd]^-{1m}\xtwocell[rd]{}<>{^a^2}&\\
&&A\\
&AM\ar[ru]_-{a}&
}}}
\quad&=\quad
\vcenter{\hbox{\xymatrix@!0@C=15mm{
&&\\
AM\ar@/^7mm/[rrdd]^-{a}\ar@/_7mm/[rrdd]_-{a}&&\\
&&\\
&&A\\
&&
}}}
\end{align}
\end{defi}
\begin{rem}\label{rem:SkewInducesAction}
One can similarly define \emph{oplax left actions}, \emph{lax right actions}, and \emph{lax left actions} with respect to a right skew monoidale or with respect to a left skew monoidale. If the associator and the left unitor are isomorphisms we speak of \emph{pseudo right actions}.

For every right skew monoidale $(M,u,m,\alpha,\lambda,\rho)$ there is a \emph{regular} oplax right $M$-action on $M$ given by its product arrow $\xymatrix@1@C=5mm{m:MM\ar[r]&M}$, associator cell $\alpha$, and right unitor cell $\rho$; axioms~\eqref{ax:OLA1}, \eqref{ax:OLA2}, and \eqref{ax:OLA3} are respectively axioms~\eqref{ax:SKM1}, \eqref{ax:SKM4}, and \eqref{ax:SKM5} for $M$. In particular, adjunctions and bidualities induce oplax right actions because, as explained in Lemma~\ref{lem:OneRightSkewMonoidale}, an adjunction
\[
\vcenter{\hbox{\xymatrix{
R\dtwocell_{i}^{i^*}{'\dashv}\\
I
}}}
\]
induces a right skew monoidal structure on $R$; and as explained in Remark~\ref{rem:PantsMonoidale}, a biduality $R\dashv R\ot$ induces the enveloping monoidale $R\ot R$. On the other hand, a biduality induces another pseudo action with respect to the enveloping monoidale but on the object $R$, it is given by the arrow $\xymatrix@1@C=5mm{e1:RR\ot R\ar[r]&R}$. When we think of the enveloping monoidale as an internal endo-hom monoidale the pseudo action $e1$ is the internal evaluation arrow.
\end{rem}

\begin{eje}
In $\Cat$ right oplax actions with respect to a skew monoidal category may be called \emph{oplax right actegories}. These are related to right skew monoidal bicategories as defined in \cite[Section 3]{Lack2014b} in the following way: a right skew monoidal bicategory $\mathcal{B}$ consists of a set of objects $X$, $Y$, and so on; for each object $X$ a right skew monoidal category $\mathcal{B}(X,X)$, and for each pair of objects $X$ and $Y$ a left-$\mathcal{B}(X,X)$ right-$\mathcal{B}(Y,Y)$ oplax actegory $\mathcal{B}(X,Y)$.
\end{eje}

\begin{rem}
Motivated by the previous example one may have chosen to name oplax actions as skew actions. But one needs to specify if it is a right action or left action, and also if it is right skew or left skew depending on the direction of the cells $a^2$ and $a^0$. Thus the full name for right oplax actions with this perspective would be right skew right actions which seems inconveniently long. Furthermore, a monoidale $M$ in $\mathcal{M}$ defines a pseudomonad by tensoring on the right $\xymatrix@1@C=5mm{\_\otimes M:\mathcal{M}\ar[r]&\mathcal{M}}$ whose oplax algebras are our oplax $M$-actions.
\end{rem}

\begin{defi} Let $a$ and $a'$ be oplax right $M$-actions on $A$. A \emph{cell of oplax right $M$-actions on $A$} from $a$ to $a'$ consists of a cell $\varphi$ in $\mathcal{M}$
\[
\vcenter{\hbox{\xymatrix{
**[l]AM\ar@/^3mm/[r]^-{a'}\ar@/_3mm/[r]_-{a}\xtwocell[r]{}<>{^\varphi}&A
}}}
\]
satisfying the following two conditions
\begin{align}
\tag{OLA4}\label{ax:OLA4}
\vcenter{\hbox{\xymatrix@!0@=19mm{
AMM\ar@/^3mm/[r]^-{a'1}\ar@/_3mm/[d]_-{1m}\xtwocell[rd]{}<>{^<-1>a'^2}&AM\ar@/^3mm/[d]^-{a'}\\
AM\ar@/^3mm/[r]^-{a'}\ar@/_3mm/[r]_-{a}\xtwocell[r]{}<>{^\varphi}&A
}}}
\quad&=\quad
\vcenter{\hbox{\xymatrix@!0@=19mm{
AMM\ar@/_3mm/[d]_-{1m}\xtwocell[rd]{}<>{^<1>a^2}\ar@/^3mm/[r]^-{a'1}\ar@/_3mm/[r]_-{a1}\xtwocell[r]{!<3mm,0mm>}<>{^\varphi 1\ }&AM\ar@/^3mm/[d]^-{a'}\ar@/_3mm/[d]_-{a}\xtwocell[d]{}<>{^\varphi}\\
AM\ar@/_3mm/[r]_-{a}&A
}}}
\\
\tag{OLA5}\label{ax:OLA5}\vcenter{
\hbox{\xymatrix@!0@=19mm{
AM\ar@/^3mm/[d]^-{a'}\ar@/_3mm/[d]_-{a}\xtwocell[d]{}<>{^\varphi}&A\ar[l]_-{1u}\ar[dl]^-{1}\xtwocell[dl]{}<>{^<3>\ \ a'^0}\\
A
}}}
\quad&=\quad
\vcenter{\hbox{\xymatrix@!0@=19mm{
AM\ar@/_3mm/[d]_-{a}&A\ar[l]_-{1u}\ar[dl]^-{1}\xtwocell[dl]{}<>{^<3>\ \ a^0}\\
A
}}}
\end{align}
\end{defi}

Oplax actions and their cells form a category $\OplaxAct(M;A)$, composition and identities in this category are calculated at the level of their underlying counterparts in $\mathcal{M}(AM,A)$, which means that there is a forgetful functor.
\[
\vcenter{\hbox{\xymatrix{
\OplaxAct(M;A)\ar[r]&\mathcal{M}(AM,A)
}}}
\]

\begin{rem}\label{rem:ComonadsAreIActions}
A glance at the axioms reveals that oplax $I$-actions on an object $A$ are nothing but comonads on $A$. Another point of view of this phenomenon is that comonads are oplax algebras of the identity pseudomonad, this is considered in \cite[Section 9]{Lack2014c}. In the classical case actions and representations always come hand in hand, and there is no exception with oplax actions. One may as well say that an \emph{oplax representation} of a skew monoidale $M$ with respect to an object $A$ is an opmonoidal arrow to the internal endo-hom monoidale of $A$.
\[
\vcenter{\hbox{\xymatrix{
M\ar[r]&[A,A]
}}}
\]
This means that we require the existence of an object $[A,A]$ in $\mathcal{M}$ with the universal property $\mathcal{M}(AX,A)\simeq\mathcal{M}(X,[A,A])$. One way it might exist is if $A$ has a right bidual, in which case we take the enveloping monoidale induced by the biduality. Another way is if the monoidal bicategory has a right closed monoidal structure as defined in \cite[Definition~5. and Example~2.]{Day1997}. In any case, right oplax actions and oplax representations are in correspondence by the usual means of transposition. A particular case of this situation was mentioned in Remark~\ref{rem:EndoHomMonoidale} where it is said that comonads on $R$ are oplax $R$-representations of the unit object $I$. We make all this very precise in the following theorem which we prove in full detail since with little effort its proof may be adapted to other results: see Corollary~\ref{cor:CompOpmonWithOplax}.
\end{rem}

\begin{teo}\label{teo:BidualityOpmonoidalOplaxAction}
For every right skew monoidale $M$ and every biduality $S\dashv S\ot$, there is an equivalence of categories given by transposition along the biduality.
\[
\SkOpmon(M,S\ot S)\simeq\OplaxAct(M;S)
\]
\end{teo}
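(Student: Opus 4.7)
The plan is to lift the local equivalence $\mathcal{M}(M,S\ot S)\simeq\mathcal{M}(SM,S)$ of Lemma~\ref{lem:LocalEquivalence} to an equivalence between $\SkOpmon(M,S\ot S)$ and $\OplaxAct(M;S)$. The underlying equivalence is transposition along the biduality: it sends $\xymatrix@1@C=5mm{C:M\ar[r]&S\ot S}$ to $\xymatrix@1@C=5mm{a:=(e1)(1C):SM\ar[r]&S}$, with quasi-inverse sending $\xymatrix@1@C=5mm{a:SM\ar[r]&S}$ to $\xymatrix@1@C=5mm{\widetilde{a}:=(1a)(n1):M\ar[r]&S\ot S}$. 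It then remains to check that this equivalence transports the opmonoidal structure precisely onto the oplax action structure, both on objects and on cells.

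The crucial ingredient is the pseudo right $(S\ot S)$-action $\xymatrix@1@C=5mm{e1:SS\ot S\ar[r]&S}$ on $S$ noted in Remark~\ref{rem:SkewInducesAction}: its associator is an instance of the interchanger and its unitor is the snake $\varsigma_r$, so both of its constraint cells are invertible. Given opmonoidal data $(C,C^2,C^0)$, the candidate oplax right action constraints on $a$ are obtained by pasting $(e1)(1C^2)$ with the pseudo-associator of $e1$ (to produce $a^2$) and pasting $(e1)(1C^0)$ with $\varsigma_r$ (to produce $a^0$). Conversely, starting from $(a,a^2,a^0)$ the opmonoidal constraints on $\widetilde{a}=(1a)(n1)$ are recovered by whiskering $a^2$ and $a^0$ with $n1$ and pasting with $\varsigma_l$. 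One then verifies that the three opmonoidal axioms~\eqref{ax:OM1}--\eqref{ax:OM3} for $C$ correspond bijectively to the three oplax action axioms~\eqref{ax:OLA1}--\eqref{ax:OLA3} for $a$ under this assignment, and similarly that the cell axioms~\eqref{ax:OM4}--\eqref{ax:OM5} correspond to~\eqref{ax:OLA4}--\eqref{ax:OLA5}. Mutual inverseness of the two directions of the assignment, up to canonical isomorphism, is then immediate from the snake identities of the biduality applied to the underlying arrow.

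The main obstacle is the pasting bookkeeping in the correspondence between~\eqref{ax:OM1} and~\eqref{ax:OLA1}, since both sides of each axiom involve several copies of $S$ and $M$ that must be realigned through multiple applications of the interchanger and one snake cancellation. The cleanest way to handle this, and the one I would adopt, is to work throughout in the Gray-monoid strictification afforded by Subsection~\ref{ssec:2.5DimCatyTheory}, in the style of the pasting proofs of Proposition~\ref{prop:OneOpmonoidalArrow}, so that every step reduces to an interchange instance or a swallowtail equation. Faithfulness of the resulting functor is automatic because the forgetful functors $\SkOpmon(M,S\ot S)\to\mathcal{M}(M,S\ot S)$ and $\OplaxAct(M;S)\to\mathcal{M}(SM,S)$ are faithful and the underlying transposition is an equivalence; essential surjectivity and fullness then follow from the bijective correspondence of structure constraints established above.
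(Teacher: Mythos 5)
Your proposal is correct and follows essentially the same route as the paper: transpose along the biduality $S\dashv S\ot$ using the hom-equivalences $\mathcal{M}(M,S\ot S)\simeq\mathcal{M}(SM,S)$, $\mathcal{M}(MM,S\ot S)\simeq\mathcal{M}(SMM,S)$, $\mathcal{M}(I,S\ot S)\simeq\mathcal{M}(S,S)$, match the opmonoidal data to the oplax-action data via the pseudo-action $e1$ and the snake isomorphisms, and then check by pasting (in the Gray-monoid strictification) that axioms \eqref{ax:OM1}--\eqref{ax:OM5} correspond to \eqref{ax:OLA1}--\eqref{ax:OLA5}. This is exactly the structure of the paper's proof, which carries out those pasting verifications explicitly.
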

\begin{proof}

The biduality $S\dashv S\ot$ induces the following equivalences of categories
\begin{align}
\label{eq:Opmon-OplaxAct1}
\mathcal{M}(M,S\ot S)&\simeq\mathcal{M}(SM,S)\\
\label{eq:Opmon-OplaxAct2}
\mathcal{M}(MM,S\ot S)&\simeq\mathcal{M}(SMM,S)\\
\label{eq:Opmon-OplaxAct3}
\mathcal{M}(I,S\ot S)&\simeq\mathcal{M}(S,S)
\end{align}
The data of an opmonoidal arrow consists of items in the left hand side of these equivalences: an object in \eqref{eq:Opmon-OplaxAct1}, an arrow in \eqref{eq:Opmon-OplaxAct2}, and an arrow in \eqref{eq:Opmon-OplaxAct3}. These ``opmonoidal data'' correspond under the equivalences above to the data for an oplax action:
\[
\begin{array}{c}
\xymatrix{
M\ar[r]^-{C}&S\ot S
}
\\
\hline
\xymatrix{
SM\ar[r]_-{s}&S}
\end{array}
\simeq\qquad
\begin{array}{c}
\vcenter{\hbox{\xymatrix@!0@=15mm{
MM\ar[r]^-{CC}\ar[d]_-{m}\xtwocell[rd]{}<>{^C^2}&S\ot SS\ot S\ar[d]^-{m}\\
M\ar[r]_-{C}&S\ot S
}}}
\\
\hline
\vcenter{\hbox{\xymatrix@!0@=15mm{
SMM\ar[r]^-{s1}\ar[d]_-{1m}\xtwocell[rd]{}<>{^s^2}&SM\ar[d]^-{s}\\
SM\ar[r]_-{s}&S
}}}
\end{array}
\cong\qquad
\begin{array}{c}
\vcenter{\hbox{\xymatrix@!0@R=15mm@C=10mm{
&I\ar[dl]_-{u}\ar[dr]^-{u}\xtwocell[rd]{}<>{^<3>C^0}&\\
M\ar[rr]_-{C}&&S\ot S
}}}
\\
\hline
\vcenter{\hbox{\xymatrix@!0@=15mm{
SM\ar[d]_-{s}&S\ar[l]_-{1u}\ar[dl]^-{1}\xtwocell[ld]{}<>{^<2>\ s^0}\\
S
}}}
\end{array}
\cong
\]
and the data for opmonoidal cells and oplax action cells is in a bijective correspondence.
\[
\begin{array}{c}
\xymatrix{
M\ar@/^3mm/[r]^-{C'}\ar@/_3mm/[r]_-{C}\xtwocell[r]{}<>{^\xi}&**[r]S\ot S
}
\\
\hline
\xymatrix{
**[l]SM\ar@/^3mm/[r]^-{s'}\ar@/_3mm/[r]_-{s}\xtwocell[r]{}<>{^\sigma}&S
}
\end{array}\cong
\]

If under this equivalence the property of being opmonoidal corresponds to the property of being an oplax action, then the theorem follows. We prove it by direct calculation: Let $\xymatrix@1@C=5mm{C:M\ar[r]&S\ot S}$ be an opmonoidal arrow, then the axioms \eqref{ax:OM1} and \eqref{ax:OLA1} are equations that lie in each of the sides of $\mathcal{M}(MMM,S\ot S)\simeq\mathcal{M}(SMMM,S)$, and the calculation below shows that \eqref{ax:OM1} for $C$ corresponds to \eqref{ax:OLA1} under the equivalence.
{\scriptsize
\[
\vcenter{\hbox{\xymatrix@!0@C=12mm@R=7mm{
&&SMM\ar[rd]^-{1C1}&&\\
&SS\ot SMM\ar[ru]^-{e111}\ar[rd]^-{111C1}&&SS\ot SM\ar[rd]^-{e11}&\\
SMMM\ar[ru]^-{1C11}\ar[dddd]|-{11m}\ar[rrdd]_-{1m1}\ar[rr]_-{1CC1}\xtwocell[rrdddddd]{}<>{^1\alpha\ }\xtwocell[rrrrd]{}<>{^<1>1C^21\quad\ }&&SS\ot SS\ot SM\ar[ru]^-{e1111}\ar[rd]^-{11e11}\ar@{}[uu]|*[@]{\cong}&&SM\ar[dd]^-{1C}\\
&\ar@{}[uu]|>>>>*[@]{\cong}&&SS\ot SM\ar[ru]^-{e11}\ar[dd]^-{111C}\ar@{}[uu]|*[@]{\cong}&\\
&&SMM\ar[dddd]^-{1m}\ar[ru]^-{1C1}\ar[rd]_-{1CC}\xtwocell[rdddd]{}<>{^<-1>1C^2}&&SS\ot S\ar[dd]^-{e1}\\
&&&SS\ot SS\ot S\ar[ru]_-{e111}\ar[dd]^-{11e1}\ar@{}[ruuu]|*[@]{\cong}&\\
SMM\ar[rrdd]_-{1m}&&\ar@{}[ruuu]|>>>>>>>*[@]{\cong}&&S\\
&&&SS\ot S\ar[ru]_-{e1}\ar@{}[ruuu]|*[@]{\cong}&\\
&&SM\ar[ru]_-{1C}&&
}}}
\!\!\!\stackrel{\eqref{ax:OM1}}{=}\!\!\!\!
\vcenter{\hbox{\xymatrix@!0@C=12mm@R=7mm{
&&SMM\ar[rd]^-{1C1}&&\\
&SS\ot SMM\ar[ru]^-{e111}\ar[rd]^-{111C1}\ar[dddd]_-{111m}\ar[rddd]|-{111CC}\xtwocell[rddddd]{}<>{^<1>111C^2}&&SS\ot SM\ar[rd]^-{e11}&\\
SMMM\ar[ru]^-{1C11}\ar[dddd]|-{11m}&&SS\ot SS\ot SM\ar[ru]^-{e1111}\ar[rd]^-{11e11}\ar[dd]|-{11111C}\ar@{}[uu]|*[@]{\cong}&&SM\ar[dd]^-{1C}\\
&&&SS\ot SM\ar[ru]^-{e11}\ar[dd]^-{111C}\ar@{}[uu]|*[@]{\cong}&\\
&&SS\ot SS\ot SS\ot S\ar[dd]|-{1111e1}\ar[rd]^-{11e111}\ar@{}[ru]|*[@]{\cong}&&SS\ot S\ar[dd]^-{e1}\\
&SS\ot SM\ar[rd]^-{111C}&&SS\ot SS\ot S\ar[ru]_-{e111}\ar[dd]^-{11e1}\ar@{}[ruuu]|*[@]{\cong}&\\
SMM\ar[rrdd]_-{1m}\ar[ru]^-{1C1}\ar[rr]_-{1CC}\ar@{}[ruuuuu]|*[@]{\cong}\xtwocell[rrrrd]{}<>{^<1>1C^2\quad}&&SS\ot SS\ot S\ar[rd]^-{11e1}\ar@{}[ru]|*[@]{\cong}&&S\\
&\ar@{}[uu]|>>>>*[@]{\cong}&&SS\ot S\ar[ru]_-{e1}\ar@{}[ruuu]|*[@]{\cong}&\\
&&SM\ar[ru]_-{1C}&&
}}}
\]
\[
=
\vcenter{\hbox{\xymatrix@!0@C=12mm@R=7mm{
&&SMM\ar[rd]^-{1C1}&&\\
&SS\ot SMM\ar[ru]^-{e111}\ar[rd]^-{111C1}\ar[dddd]_-{111m}\ar[rddd]|-{111CC}\xtwocell[rddddd]{}<>{^<1>111C^2}&&SS\ot SM\ar[rd]^-{e11}\ar[dd]^-{111C}&\\
SMMM\ar[ru]^-{1C11}\ar[dddd]|-{11m}&&SS\ot SS\ot SM\ar[ru]^-{e1111}\ar[dd]|-{11111C}\ar@{}[uu]|*[@]{\cong}\ar@{}[rd]|*[@]{\cong}&&SM\ar[dd]^-{1C}\\
&&&SS\ot SS\ot S\ar[rd]^-{e111}\ar@{}[ru]|*[@]{\cong}&\\
&&SS\ot SS\ot SS\ot S\ar[dd]|-{1111e1}\ar[rd]^-{11e111}\ar[ru]_-{e11111}&&SS\ot S\ar[dd]^-{e1}\\
&SS\ot SM\ar[rd]^-{111C}&&SS\ot SS\ot S\ar[ru]_-{e111}\ar[dd]^-{11e1}\ar@{}[uu]|*[@]{\cong}&\\
SMM\ar[rrdd]_-{1m}\ar[ru]^-{1C1}\ar[rr]_-{1CC}\ar@{}[ruuuuu]|*[@]{\cong}\xtwocell[rrrrd]{}<>{^<1>1C^2\quad}&&SS\ot SS\ot S\ar[rd]^-{11e1}\ar@{}[ru]|*[@]{\cong}&&S\\
&\ar@{}[uu]|>>>>*[@]{\cong}&&SS\ot S\ar[ru]_-{e1}\ar@{}[ruuu]|*[@]{\cong}&\\
&&SM\ar[ru]_-{1C}&&
}}}
\]
\[
=\!\!
\vcenter{\hbox{\xymatrix@!0@C=12mm@R=7mm{
&&SMM\ar[rd]^-{1C1}\ar[rddd]_-{1CC}&&\\
&SS\ot SMM\ar[ru]^-{e111}\ar[dddd]_-{111m}\ar[rddd]|-{111CC}\xtwocell[rddddd]{}<>{^<1>111C^2}&&SS\ot SM\ar[rd]^-{e11}\ar[dd]^-{111C}&\\
SMMM\ar[ru]^-{1C11}\ar[dddd]|-{11m}&&\ar@{}[ru]|>>*[@]{\cong}&&SM\ar[dd]^-{1C}\\
&&&SS\ot SS\ot S\ar[rd]^-{e111}\ar[dd]^-{11e1}\ar@{}[ru]|*[@]{\cong}&\\
&&SS\ot SS\ot SS\ot S\ar[dd]|-{1111e1}\ar[ru]_-{e11111}\ar@{}[rd]|*[@]{\cong}\ar@{}[uuuu]|*[@]{\cong}&&SS\ot S\ar[dd]^-{e1}\\
&SS\ot SM\ar[rd]^-{111C}&&SS\ot S\ar[rd]_-{e1}\ar@{}[ur]|*[@]{\cong}&\\
SMM\ar[rrdd]_-{1m}\ar[ru]^-{1C1}\ar[rr]_-{1CC}\ar@{}[ruuuuu]|*[@]{\cong}\xtwocell[rrrrd]{}<>{^<1>1C^2\quad}&&SS\ot SS\ot S\ar[rd]^-{11e1}\ar[ru]^-{e111}&&S\\
&\ar@{}[uu]|>>>>*[@]{\cong}&&SS\ot S\ar[ru]_-{e1}\ar@{}[uu]|*[@]{\cong}&\\
&&SM\ar[ru]_-{1C}&&
}}}
=\!\!
\vcenter{\hbox{\xymatrix@!0@C=12mm@R=7mm{
&&SMM\ar[rd]^-{1C1}\ar[rddd]_-{1CC}\ar[dddd]_-{1m}\xtwocell[rddddd]{}<>{^<1>1C^2}&&\\
&SS\ot SMM\ar[ru]^-{e111}\ar[dddd]_-{111m}&&SS\ot SM\ar[rd]^-{e11}\ar[dd]^-{111C}&\\
SMMM\ar[ru]^-{1C11}\ar[dddd]|-{11m}&&\ar@{}[ru]|>>*[@]{\cong}&&SM\ar[dd]^-{1C}\\
&&&SS\ot SS\ot S\ar[rd]^-{e111}\ar[dd]^-{11e1}\ar@{}[ru]|*[@]{\cong}&\\
&&SM\ar[rd]^-{1C}&&SS\ot S\ar[dd]^-{e1}\\
&SS\ot SM\ar[rd]^-{111C}\ar[ru]^-{e11}\ar@{}[ruuuuu]|*[@]{\cong}&&SS\ot S\ar[rd]_-{e1}\ar@{}[ur]|*[@]{\cong}&\\
SMM\ar[rrdd]_-{1m}\ar[ru]^-{1C1}\ar[rr]_-{1CC}\ar@{}[ruuuuu]|*[@]{\cong}\xtwocell[rrrrd]{}<>{^<1>1C^2\quad}&&SS\ot SS\ot S\ar[rd]^-{11e1}\ar[ru]^-{e111}\ar@{}[uu]|*[@]{\cong}&&S\\
&\ar@{}[uu]|>>>>*[@]{\cong}&&SS\ot S\ar[ru]_-{e1}\ar@{}[uu]|*[@]{\cong}&\\
&&SM\ar[ru]_-{1C}&&
}}}
\]
}
The other axioms \eqref{ax:OM2}, \eqref{ax:OM3}, \eqref{ax:OLA2}, and \eqref{ax:OLA3} lie in each of the sides of \eqref{eq:Opmon-OplaxAct1}. The calculation below shows that \eqref{ax:OM2} for $C$ corresponds to \eqref{ax:OLA2} under the equivalence \eqref{eq:Opmon-OplaxAct1};
{\scriptsize
\[
\qquad\vcenter{\hbox{\xymatrix@!0@C=12mm@R=7mm{
&&S\ar@/^9mm/[ddddddrr]^-{1}&&\\
&SS\ot S\ar[ru]^-{e1}\ar@/^9mm/[ddddddrr]^-{1}&&&\\
SM\ar[ru]^-{1C}\ar[dddd]_-{11u}\xtwocell[rrdddddd]{}<\omit>{^1\rho}\ar@/^9mm/[ddddddrr]^-{1}&&&&\\
&&&&\\
&&&&\\
&&&&\\
SMM\ar[rrdd]_-{1m}&&&&S\\
&&&SS\ot S\ar[ru]_-{e1}&\\
&&SM\ar[ru]_-{1C}&&
}}}
\!\!\!\stackrel{\eqref{ax:OM2}}{=}\!\!\!
\vcenter{\hbox{\xymatrix@!0@C=12mm@R=7mm{
&&S\ar@/^9mm/[ddddddrr]^-{1}&&\\
&SS\ot S\ar[ru]^-{e1}\ar[dddd]_-{111u}\ar@/^3mm/[rddddd]^-{111n}\ar@/^9mm/[ddddddrr]^-{1}\xtwocell[rddddd]{}<>{^111C^0\quad}&&&\\
SM\ar[ru]^-{1C}\ar[dddd]_-{11u}&&&&\\
&&&&\\
&&&&\\
&SS\ot SM\ar[rd]^-{111C}&&&\\
SMM\ar[rrdd]_-{1m}\ar[ru]^-{1C1}\ar[rr]_-{1CC}\ar@{}[ruuuuu]|*[@]{\cong}\xtwocell[rrrrd]{}<>{^<1>1C^2\quad}&\ar@{}[u]|*=0[@]{\cong}&SS\ot SS\ot S\ar[rd]^-{11e1}&&S\\
&&&SS\ot S\ar[ru]_-{e1}&\\
&&SM\ar[ru]_-{1C}&&
}}}
\]
\[
=
\vcenter{\hbox{\xymatrix@!0@C=12mm@R=7mm{
&&S\ar@/^9mm/[ddddddrr]^-{1}\ar@/^3mm/[rddddd]^-{1n}\ar[dddd]_-{1u}\xtwocell[rddddd]{}<>{^1C^0\quad}&&\\
&SS\ot S\ar[ru]^-{e1}\ar[dddd]_-{111u}&&&\\
SM\ar[ru]^-{1C}\ar[dddd]_-{11u}&&&&\\
&&&&\\
&&SM\ar[rd]^-{1C}&&\\
&SS\ot SM\ar[rd]^-{111C}\ar[ru]_-{e11}\ar@{}[ruuuuu]|*[@]{\cong}&&SS\ot S\ar[rd]^-{e1}\ar@{}[ruuu]|*[@]{\cong}&\\
SMM\ar[rrdd]_-{1m}\ar[ru]^-{1C1}\ar[rr]_-{1CC}\ar@{}[ruuuuu]|*[@]{\cong}\xtwocell[rrrrd]{}<>{^<1>1C^2\quad}&\ar@{}[u]|*=0[@]{\cong}&SS\ot SS\ot S\ar[rd]^-{11e1}\ar[ru]_-{e111}\ar@{}[uu]|*[@]{\cong}&&S\\
&&&SS\ot S\ar[ru]_-{e1}\ar@{}[uu]|*[@]{\cong}&\\
&&SM\ar[ru]_-{1C}&&
}}}
\]
}
and the calculation below shows that \eqref{ax:OM3} for $C$ corresponds to \eqref{ax:OLA3} under the equivalence \eqref{eq:Opmon-OplaxAct1}.
{\scriptsize
\[
\vcenter{\hbox{\xymatrix@!0@C=12mm@R=7mm{
&&&&\\
&&&&\\
SM\ar@/^9mm/[rrrr]^-{1}\ar@/^5mm/[rrrd]^-{1n1}\ar[rrdd]_-{1u1}\ar@/_9mm/[ddddddrr]_-{1}\xtwocell[rrdddddd]{}<>{^1\lambda}\xtwocell[rrrd]{}<>{^1C^01\quad }&&&&SM\ar[dd]^-{1C}\\
&&&SS\ot SM\ar[ru]^-{e11}\ar[dd]^-{111C}&\\
&&SMM\ar[dddd]_-{1m}\ar[ru]^-{1C1}\ar[rd]_-{1CC}\xtwocell[rdddd]{}<>{^<-1>1C^2}&&SS\ot S\ar[dd]^-{e1}\\
&&&SS\ot SS\ot S\ar[ru]_-{e111}\ar[dd]^-{11e1}\ar@{}[ruuu]|*[@]{\cong}&\\
&&\ar@{}[ruuu]|>>>>>>>*[@]{\cong}&&S\\
&&&SS\ot S\ar[ru]_-{e1}\ar@{}[ruuu]|*[@]{\cong}&\\
&&SM\ar[ru]_-{1C}&&
}}}
\stackrel{\eqref{ax:OM3}}{=}
\vcenter{\hbox{\xymatrix@!0@C=12mm@R=7mm{
&&&&\\
&&&&\\
SM\ar@/^9mm/[rrrr]^-{1}\ar@/^5mm/[rrrd]^-{1n1}\ar[rrdd]_-{1C}\ar@/_9mm/[ddddddrr]_-{1}&&&&SM\ar[dd]^-{1C}\\
&&&SS\ot SM\ar[ru]^-{e11}\ar[dd]^-{111C}\ar@{}[uu]|*[@]{\cong}&\\
&\ar@{}[rruu]|*=0[@]{\cong}&SS\ot S\ar[rd]^-{1n11}\ar@/_7mm/[rddd]_-{1}&&SS\ot S\ar[dd]^-{e1}\\
&&&SS\ot SS\ot S\ar[ru]_-{e111}\ar[dd]^-{11e1}\ar@{}[ruuu]|*[@]{\cong}&\\
&&\ar@{}[ru]|*[@]{\cong}&&S\\
&&&SS\ot S\ar[ru]_-{e1}\ar@{}[ruuu]|*[@]{\cong}&\\
&&SM\ar[ru]_-{1C}&&
}}}
\]
\[
=
\vcenter{\hbox{\xymatrix@!0@C=12mm@R=7mm{
&&&&\\
&&&&\\
SM\ar@/^9mm/[rrrr]^-{1}\ar[rrdd]_-{1C}\ar@/_9mm/[ddddddrr]_-{1}&&&&SM\ar[dd]^-{1C}\\
&&&&\\
&&SS\ot S\ar[rd]^-{1n11}\ar@/_7mm/[rddd]_-{1}\ar@/^7mm/[rr]^-{1}&&SS\ot S\ar[dd]^-{e1}\\
&&&SS\ot SS\ot S\ar[ru]_-{e111}\ar[dd]^-{11e1}\ar@{}[uu]|*[@]{\cong}&\\
&&\ar@{}[ru]|*[@]{\cong}&&S\\
&&&SS\ot S\ar[ru]_-{e1}\ar@{}[ruuu]|*[@]{\cong}&\\
&&SM\ar[ru]_-{1C}&&
}}}
=
\vcenter{\hbox{\xymatrix@!0@C=12mm@R=7mm{
&&&&\\
&&&&\\
SM\ar@/^9mm/[ddrrrr]^-{1C}\ar@/_9mm/[dddddrrr]_-{1C}&&&&\\
&&&&\\
&&&&SS\ot S\ar[dd]^-{e1}\\
&&&&\\
&&&&S\\
&&&SS\ot S\ar[ru]_-{e1}&\\
&&&&
}}}
\]
}
Therefore $\xymatrix@1@C=5mm{SM\ar[r]^-{1C}&SS\ot S\ar[r]^-{e1}&S}$ is an oplax $S$-action. Now let $\xymatrix@1@C=5mm{\xi:C\ar[r]&C'}$ be an opmonoidal cell, then \eqref{ax:OLA4} and \eqref{ax:OLA5} hold by the calculations below.
\[
\vcenter{\hbox{\xymatrix@!0@R=18mm@C=20mm{
SMM\ar@/^3mm/[r]^-{1C'1}\ar[dd]_-{1m}\ar[rd]|-{1C'C'\quad}\xtwocell[rdd]{!<0mm,-6mm>}<>{^1C^2\ \ }&SS\ot SM\ar[r]^-{e1}\ar[d]|-{111C'}&SM\ar@/^3mm/[d]^-{1C'}\\
\ar@{}[ru]|>>>>>>*=0[@]{\cong}&SS\ot SS\ot S\ar[r]^-{e111}\ar[d]|-{11e1}\ar@{}[ru]|*=0[@]{\cong}&SS\ot S\ar[d]^-{e1}\\
SM\ar@/^3mm/[r]^-{1C'}\ar@/_3mm/[r]_-{1C}\xtwocell[r]{!<2mm,0mm>}<>{^1\xi\ }&SS\ot S\ar[r]_-{e1}\ar@{}[ru]|*=0[@]{\cong}&S
}}}
\stackrel{\eqref{ax:OM4}}{=}\;\;
\vcenter{\hbox{\xymatrix@!0@R=18mm@C=20mm{
SMM\ar@/^3mm/[r]^-{1C'1}\ar[dd]_-{1m}\ar@/_3mm/[rd]_>>>>{1CC}\ar@/^3mm/[rd]^<<<<<<{1C'C'}\xtwocell[rd]{}<>{^1\xi\xi\ \ }&SS\ot SM\ar[r]^-{e1}\ar[d]|-{\quad 111C'}&SM\ar@/^3mm/[d]^-{1C'}\\
{\xtwocell[rd]{!<0mm,4mm>}<>{^1C^2\ \ }}&SS\ot SS\ot S\ar[r]^-{e111}\ar[d]|-{11e1}\ar@{}[ru]|*=0[@]{\cong}&SS\ot S\ar[d]^-{e1}\\
SM\ar@/_3mm/[r]_-{1C}&SS\ot S\ar[r]_-{e1}\ar@{}[ru]|*=0[@]{\cong}&S
}}}
\]
\[
=\quad
\vcenter{\hbox{\xymatrix@!0@R=18mm@C=20mm{
SMM\ar[dd]_-{1m}\ar[rd]_-{1CC}\ar@/^3mm/[r]^-{1C'1}\ar@/_3mm/[r]_-{1C1}\xtwocell[r]{!<4mm,0mm>}<>{^1\xi 1\ \ }&SS\ot SM\ar[r]^-{e1}\ar[d]|-{\ \ 111C}&SM\ar@/_3mm/[d]_-{1C'}\ar@/^3mm/[d]^-{1C'}\xtwocell[d]{}<>{^1\xi}\\
{\xtwocell[rd]{!<0mm,4mm>}<>{^1C^2\ \ }}&SS\ot SS\ot S\ar[r]^-{e111}\ar[d]|-{11e1}\ar@{}[ru]!<-4mm,0mm>|*=0[@ru]{\cong}&SS\ot S\ar[d]^-{e1}\\
SM\ar@/_3mm/[r]_-{1C}\ar@{}[ruu]|>>>>>>>*=0[@ru]{\cong}&SS\ot S\ar[r]_-{e1}\ar@{}[ru]|*=0[@]{\cong}&S
}}}
\]
\[
\vcenter{\hbox{\xymatrix@!0@R=13mm@C=26mm{
SM\ar@/_3mm/[d]_-{C'}\ar@/^3mm/[d]^-{C'}\xtwocell[d]{}<>{^\xi}\ar@{}[ddr]|<<<<<<<<<<<<<*[@]{\cong}&S\ar[l]_-{1u}\ar[ld]|-{1n}\ar[ldd]^-{1}\xtwocell[ld]{}<>{^<2>\ \ \ C'^0}\\
SS\ot S\ar[d]_-{e1}&\\
S&
}}}
\quad=\quad
\vcenter{\hbox{\xymatrix@!0@R=13mm@C=26mm{
SM\ar@/_3mm/[d]_-{C}\ar@{}[ddr]|<<<<<<<<<<<<<*[@]{\cong}&S\ar[l]_-{1u}\ar[ld]|-{1n}\ar[ddl]^-{1}\xtwocell[ddl]{}<>{^<6>\ \ C^0}\\
SS\ot S\ar[d]_-{e1}&\\
S&
}}}
\]
Therefore the cell $\xymatrix@1@=10mm{**[l]SM\ar@/^3mm/[r]^-{1C'}\ar@/_3mm/[r]_-{1C}\xtwocell[r]{}{^1\xi\ }&**[r]SS\ot S\ar[r]^-{e1}&S}$ is a cell of oplax right $S$-actions.
\end{proof}

\begin{cor}\label{cor:CompOpmonWithOplax}
For every object $A$ in a monoidal bicategory $\mathcal{M}$ there is a pseudofunctor
\[
\vcenter{\hbox{\xymatrix{
\OplaxAct(\_;A):\SkOpmon^{\mathrm{op}}(\mathcal{M})\ar[r]&\Cat.
}}}
\]
\end{cor}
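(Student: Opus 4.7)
The plan is to build the pseudofunctor explicitly by precomposition, leveraging the pattern of proof already developed for Theorem~\ref{teo:BidualityOpmonoidalOplaxAction}. On objects, a right skew monoidale $M$ is sent to the category $\OplaxAct(M;A)$ defined earlier. For an opmonoidal arrow $C:M\to N$ in $\SkOpmon(\mathcal{M})$ (that is, an arrow $N\to M$ in the opposite) I define the functor $C^{*}:\OplaxAct(N;A)\to\OplaxAct(M;A)$ sending an oplax right $N$-action $(a,a^{2},a^{0})$ to the composite arrow $a\cdot(1C):AM\to AN\to A$, with associator and unitor obtained by pasting $a^{2}$ with $C^{2}$ (mediated by the interchange cell between $1C$ and $1C$ with $m$) and $a^{0}$ with $C^{0}$ respectively. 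On an opmonoidal cell $\xi:C\Rightarrow C'$, I define the component at $a$ of $\xi^{*}:C'^{*}\Rightarrow C^{*}$ to be the whiskering $a\cdot(1\xi)$.

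The bulk of the verification is then showing that this data lands in $\OplaxAct(M;A)$ and respects composition. The oplax action axioms \eqref{ax:OLA1}, \eqref{ax:OLA2}, \eqref{ax:OLA3} for $a\cdot(1C)$ follow by pasting the axioms \eqref{ax:OM1}, \eqref{ax:OM2}, \eqref{ax:OM3} for $C$ together with \eqref{ax:OLA1}, \eqref{ax:OLA2}, \eqref{ax:OLA3} for $a$, modulo several instances of the coherence of the interchange law. This is the exact same calculational pattern as the one carried out in the proof of Theorem~\ref{teo:BidualityOpmonoidalOplaxAction}, with the pseudo action $e1:SS\ot S\to S$ there replaced by an arbitrary oplax action $a:AN\to A$. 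Likewise the preservation axioms \eqref{ax:OLA4}, \eqref{ax:OLA5} for $\xi^{*}$ at $a$ come from whiskering \eqref{ax:OM4}, \eqref{ax:OM5} for $\xi$ with $a$.

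For pseudofunctoriality, given composable opmonoidal arrows $C:M\to N$ and $D:N\to P$, the compositor isomorphism $(DC)^{*}\cong C^{*}\circ D^{*}$ arises from the canonical identification of $a\cdot(1D)\cdot(1C)$ with $a\cdot(1(DC))$ via the associativity of composition together with the interchange isomorphism of $\mathcal{M}$; the unit comparison $\id_{M}^{*}\cong\id$ is analogous. That these are isomorphisms in $\OplaxAct(M;A)$, and not merely in $\mathcal{M}(AM,A)$, reduces to straightforward compatibility between the interchange cell and the pasted structure cells. The hexagon and triangle coherence axioms of the pseudofunctor descend, via the faithful forgetful functor to $\mathcal{M}(AM,A)$, from the tricategorical coherence of the ambient monoidal bicategory $\mathcal{M}$, which by the coherence theorem for monoidal bicategories we may safely treat as Gray-monoid coherence.

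The main obstacle is purely diagrammatic: the pasting diagrams that witness \eqref{ax:OLA1}--\eqref{ax:OLA3} for the composite action are large and bookkeeping-intensive, and recording explicitly the mediating interchange cells between $1D$, $1C$, and the products is tedious. However, as remarked immediately before the statement of this corollary, no new idea beyond those already deployed in the proof of Theorem~\ref{teo:BidualityOpmonoidalOplaxAction} is required; it is essentially a matter of transcribing the same sequence of pastings in the slightly more general setting.
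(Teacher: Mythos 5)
Your proposal is correct and follows essentially the same route as the paper: define the functoriality by precomposition $a\mapsto a\cdot(1C)$ and verify the oplax-action axioms by rerunning the pasting calculation of Theorem~\ref{teo:BidualityOpmonoidalOplaxAction} with $e1$ replaced by an arbitrary oplax action $a$. You are in fact somewhat more explicit than the paper about the compositor and unitor constraints and their coherence, which the paper's proof leaves implicit.
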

\begin{proof}

For objects $S$ which are part of a biduality $S\dashv S\ot$ the previous theorem provides the pseudonatural equivalence given by,
\[
\SkOpmon(\_,S\ot S)\simeq\OplaxAct(\_;S)
\]
In general, for an object $A$ in $\mathcal{M}$, if $\xymatrix@1@C=5mm{C:M\ar[r]&N}$ is an opmonoidal arrow and $\xymatrix@1@C=5mm{a:AN\ar[r]&A}$ an oplax right $N$-action, then the proof that the composite
\[
\vcenter{\hbox{\xymatrix{
AM\ar[r]^-{1C}&AN\ar[r]^-{a}&A
}}}
\]
is an oplax right $M$-action is analogous to the big diagram calculation of Theorem~\ref{teo:BidualityOpmonoidalOplaxAction} but replacing each instance of $\xymatrix@1@C=5mm{e1:SS\ot S\ar[r]&S}$ with $a$ where appropriate. The same argument goes for morphisms of opmonoidal arrows and cells of oplax actions.
\end{proof}

\begin{rem}\label{rem:InducedComonad}
Oplax representations with respect to an object $S$ induce comonads on $S$. Indeed, let $\xymatrix@1@C=5mm{C:M\ar[r]&S\ot S}$ be an oplax representation of a right skew monoidale $M$; precomposition of $C$ with the unit of $M$ is an opmonoidal arrow
\[
\vcenter{\hbox{\xymatrix{
I\ar[r]^-{u}&M\ar[r]^-{C}&S\ot S
}}}
\]
since the unit $\xymatrix@1@C=5mm{u:I\ar[r]&M}$ is an opmonoidal arrow by Lemma~\ref{lem:OpmonoidalUnit}. Then the transposition along the enveloping monoidale $S\ot S$ as in Theorem~\ref{teo:BidualityOpmonoidalOplaxAction} is an oplax $I$-action, in other words a comonad on $S$, see Remark~\ref{rem:ComonadsAreIActions}.

But we may get another perspective on this as a simple application of the previous corollary: any oplax right action $\xymatrix@1@C=5mm{a:AM\ar[r]&A}$ induces a comonad on $A$ in exactly the same way. This time we may precompose $a$ with the unit of $M$ using Corollary~\ref{cor:CompOpmonWithOplax} to get an oplax right $I$-action on $A$.
\[
\xymatrix{AI\ar[r]^-{1u}&AM\ar[r]^-{a}&A}
\]
Again, this is a comonad and it has comultiplication and counit as shown below.
\[
\vcenter{\hbox{\xymatrix@!0@R=10mm@C=10mm{
&&A\ar[rd]^-{1u}&&\\
&AM\ar[rd]|-{11u}\ar[ru]^-{a}&&AM\ar[rd]^-{a}&\\
A\ar[rd]_-{1u}\ar[ru]^-{1u}&&AMM\ar[rd]|-{1m}\ar[ru]|-{a1}\ar@{}[uu]|*[@]{\cong}\xtwocell[rr]{!<3mm,0mm>}<>{^a^2\ }&&A\\
&AM\ar[ru]|-{1u1}\ar@/_7mm/[rr]_-{1}\ar@{}[uu]|*[@]{\cong}\xtwocell[rr]{}<>{^1\lambda\ }&&AM\ar[ru]_-{a}&\\
}}}
\qquad
\vcenter{\hbox{\xymatrix{
A\ar[r]_-{1u}\ar@/^10mm/[rr]^-{1}\xtwocell[rr]{}<>{^<-3>a^0\ }&AM\ar[r]_-{a}&A
}}}
\]
A nice case to consider is the oplax right $R$-action $\xymatrix@1@C=5mm{i^*1:RR\ar[r]&R}$ induced by an adjunction $i\dashv i^*$ in $\mathcal{M}$,
\[
\vcenter{\hbox{\xymatrix{
R\dtwocell_{i}^{i^*}{'\dashv}\\
I
}}}
\]
then the process above recovers the comonad on $R$ associated to the adjunction.

\end{rem}

The next theorem asserts that the functor
\[
\vcenter{\hbox{\xymatrix{
\OplaxAct(i\ob 1,A):\OplaxAct(R\ot R;A)\ar[r]&\OplaxAct(R,A)
}}}
\]
is an equivalence of categories, assuming that the opmonoidal left adjoint $i\ob 1$ in Proposition~\ref{prop:OneOpmonoidalArrow} is opmonadic and that we are in an opmonadic-friendly monoidal bicategory. Its proof is entirely analogous to the one of Theorem~\ref{teo:OpmonadicityOpmonoidal} so we present only a sketch. Its statement may also be informally interpreted as an ``opmonoidal $\dashv$ monoidal opmonadicity'', again in analogy with Theorem~\ref{teo:OpmonadicityOpmonoidal}, but instead of taking the hom functor $\Opmon(\_,N)$, it takes the functor $\OplaxAct(\_;A)$ of Corollary~\ref{cor:CompOpmonWithOplax}. In the case that $A$ has a right bidual it is possible to make this analogy into a formal statement; it takes the shape of the commutative square of equivalences in Corollary~\ref{cor:Opmon_is_OplaxAct_Local}.

\begin{teo}\label{teo:OpmonadicityOplaxActions}
Let $\mathcal{M}$ be an opmonadic-friendly monoidal bicategory. For every object $A$, every biduality $R\dashv R\ot$, and every opmonadic adjunction $i\ob\dashv i\ot$ in $\mathcal{M}$,
\[
\vcenter{\hbox{\xymatrix{
R\ot\dtwocell_{i\ob}^{i\ot}{'\dashv}\\
I
}}}
\]
there is an equivalence of categories given by precomposition along $\xymatrix@1@C=5mm{1i_o1:AR\ar[r]&AR\ot R}$.
\[
\OplaxAct(R\ot R;A)\simeq\OplaxAct(R;A)
\]
\end{teo}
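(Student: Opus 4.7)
The plan is to mirror the two-step strategy of Theorem~\ref{teo:OpmonadicityOpmonoidal} verbatim, with oplax right actions on the fixed object $A$ taking over the role played there by opmonoidal arrows into the monoidale $N$. First I introduce an intermediate category $\mathcal{Y}(R;A)$ whose objects are oplax right $R$-actions $(a,a^2,a^0)$ together with a cell
\[
\vcenter{\hbox{\xymatrix{
AR\ar[r]_-{1i\ob 1}\ar@/^9mm/[rrr]^-{a}&AR\ot R\ar[r]_-{1i\ot 1}\rtwocell<\omit>{^<-3>\varphi}&AR\ar[r]_-{a}&A
}}}
\]
making $(a,\varphi)$ a module for the monad on $\mathcal{M}(AR,A)$ induced by $1i\ob 1\dashv 1i\ot 1$ and subject to three compatibility axioms that are the evident translations of \eqref{ax:X1}--\eqref{ax:X3}, now relating $\varphi$ to $a^2$ and $a^0$ through the adjunction data. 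Morphisms are cells of oplax actions additionally respecting $\varphi$, analogously to \eqref{ax:X4}.

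The first step is the analogue of Lemma~\ref{lem:OpmonadicityOpmonoidal}: the forgetful functor $\mathcal{Y}(R;A)\to\OplaxAct(R;A)$ is an isomorphism of categories. The key point is that for every oplax action $a$ the three compatibility axioms force $\varphi$ to coincide with an explicit pasting built out of $a^2,a^0$ and the adjunction data, precisely mirroring equation~\eqref{eq:redundant_action1}. Surjectivity on objects reduces to verifying the five axioms for this canonical $\varphi$ by a diagrammatic computation that invokes \eqref{ax:OLA1}--\eqref{ax:OLA3} for $a$, the skew-monoidale axioms of Lemma~\ref{lem:OneRightSkewMonoidale}, and the triangle identities for $i\dashv i^*$; fullness is automatic from the canonical formula.

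The second step upgrades the composite functor $\OplaxAct(R\ot R;A)\to\OplaxAct(R;A)\cong\mathcal{Y}(R;A)$, obtained from the pseudofunctoriality supplied by Corollary~\ref{cor:CompOpmonWithOplax}, into an equivalence by invoking opmonadicity. Since $\mathcal{M}$ is opmonadic-friendly, tensoring on the left with $A$ preserves opmonadicity, so $1i\ob 1\dashv 1i\ot 1$ is itself opmonadic, which gives faithfulness immediately. For essential surjectivity, each $(a,\varphi)\in\mathcal{Y}(R;A)$ produces via the Kleisli universal property an arrow $b\colon AR\ot R\to A$ and an isomorphism $a\cong b\cdot(1i\ob 1)$ transporting $\varphi$ to $b\cdot(1\varepsilon\ob 1)$. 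Axioms \eqref{ax:X2}--\eqref{ax:X3} then assert precisely that $a^2$ is a module morphism for the monad induced by $1i\ob 1i\ob 1\dashv 1i\ot 1i\ot 1$, which is again opmonadic because opmonadicity is preserved under both tensor and composition with arrows that preserve reflexive coequalisers; this descends $a^2$ to an associator $b^2$. Axiom \eqref{ax:X1} together with the reflexive coequaliser presentation of $1\varepsilon\ob 1$ descends $a^0$ to a unitor $b^0$. Fullness of the functor is handled in identical fashion.

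The main obstacle is the final verification that $(b,b^2,b^0)$ actually satisfies \eqref{ax:OLA1}--\eqref{ax:OLA3}. Following the template of Theorem~\ref{teo:OpmonadicityOpmonoidal}, each oplax action axiom for $b$ will be reduced to its counterpart for $a$ by precomposing with a suitably iterated opmonadic arrow (namely $1i\ob 1i\ob 1i\ob 1\colon ARRR\to AR\ot RR\ot RR\ot R$ for \eqref{ax:OLA1}) or with the epimorphic cell $1\varepsilon\ob 1$ (for \eqref{ax:OLA2} and \eqref{ax:OLA3}); the second clause of Definition~\ref{def:OpmonadicFriendly} is what ensures that these reflexive coequalisers remain coequalisers after composition, so that the descended structure cells $b^2$ and $b^0$ exist and are well-defined.
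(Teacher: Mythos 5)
Your proposal is correct and follows essentially the same route as the paper's own (sketched) proof: the paper likewise introduces the intermediate category $\mathcal{Y}(R;A)$ of oplax actions equipped with a redundant module structure for the monad induced by $1i\ob 1\dashv 1i\ot 1$, establishes the isomorphism $\mathcal{Y}(R;A)\cong\OplaxAct(R;A)$ by exhibiting the canonical formula for that structure, and then uses opmonadicity to descend $a$, $a^2$, $a^0$ to an oplax $R\ot R$-action, all in direct analogy with Theorem~\ref{teo:OpmonadicityOpmonoidal}. The roles you assign to the three compatibility axioms (one producing the unitor, two producing the associator) agree with the paper's (Y1)--(Y3).
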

\begin{proof}[Sketch]

The strategy is to consider the category $\mathcal{Y}(R;A)$ of oplax right actions $\xymatrix@1@C=5mm{a:AR\ar[r]&A}$ together with an action $\psi$ (two axioms) for the monad induced by $1i\ob 1\dashv 1i\ot 1$ which is compatible in the appropriate way (three axioms: (Y1), (Y2) and (Y3)) with the oplax action constraints $a^2$ and $a^0$,
\[
\vcenter{\hbox{\xymatrix@!0@C=16mm{
AR\ar[r]_-{1i\ob 1}\ar@/^9mm/[rrr]^-{a}&AR\ot R\ar[r]_-{1i\ot 1}\xtwocell[r]{}<>{^<-3>\psi}&AR\ar[r]_-{a}&A
}}}
\]
and then prove the existence of an equivalence and an isomorphism as shown.
\[
\OplaxAct(R\ot R;A)\simeq\mathcal{Y}(R;A)\cong\OplaxAct(R;A)
\]

For an object $(a,\psi)$ in $\mathcal{Y}(R;A)$ the action $\psi$ is redundant as it may be written in terms of the oplax action constraints $a^0$ and $a^2$,
\[
\vcenter{\hbox{\xymatrix@!0@C=16mm{
AR\ar[r]_-{1i\ob 1}\ar@/^9mm/[rrr]^-{a}&AR\ot R\ar[r]_-{1i\ot 1}\xtwocell[r]{}<>{^<-3>\psi}&AR\ar[r]_-{a}&A
}}}
\quad=\quad
\vcenter{\hbox{\xymatrix@!0@R=10mm@C=8mm{
&&&&AR\ar[rrdd]^-{a}&&\\
&&ARR\ar[rrd]|-{1i^*1}\ar[rru]^-{a1}&{\xtwocell[rrd]{}<>{^<-2>a^2\ }}&&\\
AR\ar[rr]_-{1i\ob 1}\ar@/^12mm/[rrrruu]^-{1}\ar[rru]^-{1i1}\xtwocell[rrrruu]{}<>{^<-4>a^01\quad}&&AR\ot R\ar[rr]_-{1i\ot 1}\ar@{}[u]|*[@]{\cong}&&AR\ar[rr]_-{a}&&A
}}}
\]
and for an arbitrary oplax action $a$ the cell $\psi$ written in terms of $a^0$ and $a^2$ as above provides $a$ with the structure of an object in $\mathcal{Y}(R;A)$, hence the functor that forgets this structure is an isomorphism $\mathcal{Y}(R;A)\cong\OplaxAct(R;A)$.

Yet with the five axioms that hold for the objects $(a,\psi)$ of $\mathcal{Y}(R;A)$ and the opmonadicity of $i\ob\dashv i\ot$, we get the data for an oplax $R\ot R$-action on $A$: The first two axioms say that $a$ is a module for the monad induced by $1i\ob 1\dashv 1i\ot 1$, guaranteeing the existence of an arrow $\xymatrix@1@C=5mm{AR\ot R\ar[r]&A}$; axiom~(Y1) confirms the existence of the unitor cell; and axioms~(Y2) and (Y3) ensure the existence of the associator cell. Finally, this induced data constitute an oplax $R\ot R$-action with the property that precomposing with $\xymatrix@1@C=5mm{1i\ob 1:AR\ar[r]&AR\ot R}$ gives back the original oplax $R$-action one started with, up to isomorphism. This gives the behaviour on objects of an equivalence of categories $\OplaxAct(R\ot R;A)\simeq\mathcal{Y}(R;A)$.
\end{proof}

\begin{cor}\label{cor:Opmon_is_OplaxAct_Local}
Let $\mathcal{M}$ be an opmonadic-friendly monoidal bicategory. For every two bidualities $R\dashv R\ot$ and $S\dashv S\ot$, and every opmonadic adjunction $i\ob\dashv i\ot$ in $\mathcal{M}$,
\[
\vcenter{\hbox{\xymatrix{R\ot\dtwocell_{i\ob}^{i\ot}{'\dashv}\\
I
}}}
\]
there is an equivalence of categories as shown,
\[
\Opmon(R\ot R,S\ot S)\simeq\OplaxAct(R;S)
\]
where $R$ has the skew monoidal structure induced by the adjunction $i\dashv i^*$ opposite to $i\ob\dashv i\ot$ as in Lemma~\ref{lem:OneRightSkewMonoidale}. Moreover, the following square of equivalences commutes up to isomorphism,
\begin{equation}
\label{eq:OpmonOplaxactSquare}
\vcenter{\hbox{\xymatrix@!0@C=60mm@R=18mm{
\Opmon(R\ot R,S\ot S)\ar[r]_-{\simeq}^{\SkOpmon(i\ob 1,S\ot S)}\ar[d]^-{\simeq}&\SkOpmon(R,S\ot S)\ar[d]_-{\simeq}\\
\OplaxAct(R\ot R;S)\ar[r]_-{\OplaxAct(i\ob 1;S)}^-{\simeq}\ar@{}[ru]|*[@]{\cong}&\OplaxAct(R;S)
}}}
\end{equation}
where the vertical functors in the square are given by transposition along $S\dashv S\ot$ as in Theorem~\ref{teo:BidualityOpmonoidalOplaxAction}, the functor on the top is an instance of the equivalence in Theorem~\ref{teo:OpmonadicityOpmonoidal}, and the functor on the bottom is an instance of the equivalence in Theorem~\ref{teo:OpmonadicityOplaxActions}.
\end{cor}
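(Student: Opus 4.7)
The plan is to reduce the entire statement to results already proved in the paper. Each of the four arrows of the square \eqref{eq:OpmonOplaxactSquare} is already known to be an equivalence: the two vertical arrows are instances of Theorem~\ref{teo:BidualityOpmonoidalOplaxAction} applied to the skew monoidales $M=R\ot R$ and $M=R$ respectively; the top arrow is Theorem~\ref{teo:OpmonadicityOpmonoidal} specialised to the monoidale $N=S\ot S$; and the bottom arrow is Theorem~\ref{teo:OpmonadicityOplaxActions} specialised to the object $A=S$. Once commutativity of the square up to isomorphism is established, the claimed equivalence $\Opmon(R\ot R,S\ot S)\simeq\OplaxAct(R;S)$ will follow by composing any two adjacent sides around the square.

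To check the commutativity I would trace an opmonoidal arrow $C:R\ot R\to S\ot S$ around both paths. The top-then-right composite first precomposes $C$ with the opmonoidal arrow $i\ob 1$ of Proposition~\ref{prop:OneOpmonoidalArrow} in $\SkOpmon(\mathcal{M})$ and then transposes along the biduality $S\dashv S\ot$, producing the oplax right $R$-action
\[
SR\xrightarrow{1i\ob 1}SR\ot R\xrightarrow{1C}SS\ot S\xrightarrow{e1}S
\]
on $S$. The left-then-bottom composite first transposes $C$ to the oplax right $R\ot R$-action $e1\circ 1C:SR\ot R\to S$ and then applies the pseudofunctor $\OplaxAct(\_;S)$ of Corollary~\ref{cor:CompOpmonWithOplax} to $i\ob 1$, which by construction precomposes the underlying arrow with $1i\ob 1$. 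The two underlying arrows in $\mathcal{M}(SR,S)$ therefore agree on the nose, and the real content lies in comparing the associator and unitor cells of the resulting oplax $R$-actions.

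The key observation is that the transposition equivalences of Theorem~\ref{teo:BidualityOpmonoidalOplaxAction} assemble, via Corollary~\ref{cor:CompOpmonWithOplax}, into a pseudonatural equivalence of pseudofunctors $\SkOpmon^{\mathrm{op}}(\mathcal{M})\to\Cat$ between $\SkOpmon(\_,S\ot S)$ and $\OplaxAct(\_;S)$, since Corollary~\ref{cor:CompOpmonWithOplax} was proved by extending the transposition recipe verbatim to arbitrary opmonoidal arrows. The commutativity of the square is then just the instance of this pseudonaturality evaluated at the opmonoidal arrow $i\ob 1:R\to R\ot R$. Concretely, along the top-right route the structure cells of the resulting oplax action are obtained by first composing the opmonoidal constraints of $C$ with those of $i\ob 1$ under the standard composition of opmonoidal arrows, and then applying the transposition formulas from the proof of Theorem~\ref{teo:BidualityOpmonoidalOplaxAction}; along the left-bottom route they are obtained by first transposing the constraints of $C$ and then precomposing the resulting oplax action with $i\ob 1$ by the recipe of Corollary~\ref{cor:CompOpmonWithOplax}. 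Both prescriptions produce the same pasting diagram.

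The hard part will be the bookkeeping needed to confirm that the two prescriptions for the associator and unitor really do collapse to identical pasting diagrams, since both invocations rely on the interchange isomorphism of $\mathcal{M}$ and on the counit $\varepsilon\ob$ of $i\ob\dashv i\ot$ appearing inside the opmonoidal constraints of $i\ob 1$ computed in Proposition~\ref{prop:OneOpmonoidalArrow}. However, no genuinely new identities beyond those used in the big pasting computations of Theorem~\ref{teo:BidualityOpmonoidalOplaxAction}, Corollary~\ref{cor:CompOpmonWithOplax}, and Proposition~\ref{prop:OneOpmonoidalArrow} should be required; the verification is routine once one organises it as an application of pseudonaturality at $i\ob 1$.
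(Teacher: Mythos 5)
Your proposal is correct and matches the paper's argument in substance: both paths send $C$ to the composite $e1\circ 1C\circ 1i\ob 1$, and the paper disposes of the structure-cell bookkeeping exactly as you anticipate, by observing that in a Gray-monoid the two prescriptions coincide on the nose, so that for a general monoidal bicategory they differ only by a coherent isomorphism built from associativity and interchanger constraints. Your packaging of this as pseudonaturality of the transposition equivalence of Corollary~\ref{cor:CompOpmonWithOplax} evaluated at $i\ob 1$ is just a restatement of that same observation, so nothing further is required.
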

\begin{proof}

The equivalence in the statement follows from either of the two composites in the square \eqref{eq:OpmonOplaxactSquare}. The commutativity of the square follows strictly in the case that $\mathcal{M}$ is a strict monoidal 2-category, because an opmonoidal arrow $C$ in $\Opmon(R\ot R,S\ot S)$ gets sent to the unambiguous composite below,
\[
\vcenter{\hbox{\xymatrix{SR\ar[r]^-{1i\ob 1}&SR\ot R\ar[r]^-{1C}&SS\ot S\ar[r]^-{e1}&S}}}
\]
which, in the case of an arbitrary monoidal bicategory $\mathcal{M}$, depends on how the parenthesis are placed. The two different ways to do it corresponding to the top path and the bottom path in the square of the statement differ by a coherent isomorphism which consists of instances of the associativity of the composition and instances of the interchanger between the composition and the tensor.
\end{proof}

\begin{rem}\label{rem:SimpleArrows}
In an opmonadic-friendly monoidal bicategory $\mathcal{M}$, for a duality $R\dashv R\ot$ and an opmonadic adjunction $i\ob\dashv i\ot$ with opposite adjunction $i\dashv i^*$,
\[
\vcenter{\hbox{\xymatrix{
R\dtwocell_{i\ob}^{i\ot}{'\dashv}\\
I
}}}
\]
one may take the identity opmonoidal arrow on $R\ot R$ through all the equivalences in the square~\eqref{eq:OpmonOplaxactSquare} which gives the following interesting items.
\[
\begin{array}{c@{}c@{}c}
\vcenter{\hbox{\xymatrix{
(R\ot R\ar[r]^-{1}&R\ot R)
}}}
&
\vcenter{\hbox{\xymatrix@C=5mm{
\ar@{|->}[r]&
}}}&
\vcenter{\hbox{\xymatrix{
(R\ar[r]^-{i\ob 1}&R\ot R)
}}}
\\
\vcenter{\hbox{\xymatrix@R=5mm{
\ar@{|->}[d]\\
\\
}}}
&&
\vcenter{\hbox{\xymatrix@R=5mm{
\ar@{|->}[d]\\
\\
}}}
\\
\vcenter{\hbox{\xymatrix{
(RR\ot R\ar[r]_-{e1}&R)
}}}
&
\vcenter{\hbox{\xymatrix@C=5mm{
\ar@{|->}[r]&
}}}
&
\vcenter{\hbox{\xymatrix{
(RR\ar[r]_-{i^*1}&R)
}}}
\end{array}
\]
On the top-right, $i\ob 1$ has the opmonoidal structure defined in Proposition~\ref{prop:OneOpmonoidalArrow}. On the bottom-left, $e1$ is the evaluation arrow of the internal hom $R\ot R$ and is canonically a pseudo right $R\ot R$-action structure on $R$. And on the bottom-right, $i^*1$ has the regular oplax $R$-action structure of the skew monoidale structure on $R$ (as in Remark~\ref{rem:SkewInducesAction}), which is induced by the adjunction $i\dashv i^*$ as in Lemma~\ref{lem:OneRightSkewMonoidale}.

\end{rem}

We close this section by going back to the example of $R|S$-coalgebroids given at the end of Section~\ref{sec:SkewMonoidalesBidualitiesAdjunctions}, which by means of Corollary~\ref{cor:Opmon_is_OplaxAct_Local} may now be described with less effort.

\begin{eje}\label{exa:CoalgebroidsAsOplaxActions}
Let $R$ and $S$ be $k$-algebras for a commutative ring $k$. In Lemma~\ref{lem:CoalgebroidsAsQuantum} we showed that $R|S$-coalgebroids are equivalent to opmonoidal arrows between enveloping monoidales in $\Mod_k$. We know that $\Mod_k$ is an opmonadic-friendly autonomous monoidal bicategory and every adjunction is monadic and opmonadic. Moreover, the unit of $R$ which might be seen as a ring morphism from $k$ to $R$ induces an adjunction $i\dashv i^*$ and its dual $i\ob\dashv i\ot$. Hence, we may use the equivalence in Corollary~\ref{cor:Opmon_is_OplaxAct_Local} to express an $R|S$-coalgebroid via oplax right $R$-actions on $S$ in $\Mod_k$ where $R$ has the right skew monoidal structure induced by the unit $\xymatrix@1@C=5mm{i:k\ar[r]&R}$. This definition involves considerably less information than the one in \ref{def:coalgebroid}.

An \emph{$R|S$-coalgebroid via oplax right $R$-actions} is an $S$-coring $(C,\varepsilon,\delta)$ together with a left $R$-module structure on $C$ compatible with both of its $S$-module structures and such that $\delta(rc)=\sum c_{(1)}\otimes rc_{(2)}$. More explicitly, one has a module $C$ in $SR$-$\Mod$-$S$, a morphism $\xymatrix@1@C=5mm{\varepsilon:C\ar[r]&S}$ in $S$-$\Mod$-$S$, and a morphism $\xymatrix@1@C=5mm{\delta:C\ar[r]&C\otimes_S C}$ in $SR$-$\Mod$-$S$ where the left $R$-module structure of $C\otimes_SC$ is given by $r.(c\otimes c')=c\otimes rc'$. And together, these constitute a comonoid $(C,\varepsilon,\delta)$ in the monoidal category $S$-$\Mod$-$S$.
\end{eje}
\section{Comodules for Opmonoidal Arrows}\label{sec:Comodules}

In this last section we define comodules with respect to an opmonoidal arrow. We saw in Lemma~\ref{lem:CoalgebroidsAsQuantum} that $R|S$-coalgebroids are opmonoidal arrows between enveloping monoidales in the bicategory $\Mod_k$. Comodules for $R|S$-coalgebroids are classically defined as the comodules with respect to their underlying comonoid in $S$-$\Mod$-$S$. There is no problem in expressing this definition purely in terms of a monoidal bicategory $\mathcal{M}$. Now, by using the same techniques as in the two previous sections, we show in Corollary~\ref{cor:ComodulesAreComodules} that both definitions of a comodule coincide modulo an equivalence of categories. Moreover, we exhibit a monoidal structure on the category of comodules for the underlying opmonoidal arrow of an opmonoidal monad, and this monoidal structure is such that the forgetful functor down to the underlying arrows of the comodules is strong monoidal. This generalises the classical case for $R|S$-coalgebroids in \cite{Hai2008}.
\begin{defi}
Let $M$ and $N$ be two right skew monoidales, $A$ and $B$ two objects, and $\xymatrix@1@C=5mm{a:AM\ar[r]&A}$ and $\xymatrix@1@C=5mm{b:BN\ar[r]&B}$ two oplax right actions in $\mathcal{M}$. A \emph{right comodule} $(Y,C,y)$ from $a$ to $b$ consists of an arrow $\xymatrix@1@C=5mm{Y:A\ar[r]&B}$, an opmonoidal arrow $\xymatrix@1@C=5mm{C:M\ar[r]&N}$, and a cell $y$ in $\mathcal{M}$
\[
\vcenter{\hbox{\xymatrix@!0@=15mm{
AM\ar[r]^-{YC}\ar[d]_-{a}\xtwocell[rd]{}<>{^y}&BN\ar[d]^-{b}\\
A\ar[r]_-{Y}&B
}}}
\]
called the $C$-coaction, satisfying the coassociative and counit laws.
\begin{align}
\tag{COM1}\label{ax:COM1}
\vcenter{\hbox{\xymatrix@!0@C=15mm{
&BNN\ar[rd]^-{b1}&\\
AMM\ar[ru]^-{YCC}\ar[dd]_-{1m}\ar[rd]_-{a1}\xtwocell[rddd]{}<>{^a^2}\xtwocell[rr]{}<>{^yC\quad}&&BN\ar[dd]^-{b}\\
&AM\ar[dd]^-{a}\ar[ru]^-{YC}\xtwocell[rd]{}<>{^y}&\\
AM\ar[rd]_-{a}&&B\\
&A\ar[ru]_-{Y}&
}}}
\quad&=\quad
\vcenter{\hbox{\xymatrix@!0@C=15mm{
&BNN\ar[rd]^-{b1}\ar[dd]_-{1m}\xtwocell[rddd]{}<>{^b^2}&\\
AMM\ar[ru]^-{YCC}\ar[dd]_-{1m}\xtwocell[rd]{}<>{^YC^2\quad}&&BN\ar[dd]^-{b}\\
&BN\ar[rd]^-{b}&\\
AM\ar[rd]_-{a}\ar[ru]_-{YC}\xtwocell[rr]{}<>{^y}&&B\\
&A\ar[ru]_-{Y}&
}}}
\\
\tag{COM2}\label{ax:COM2}
\vcenter{\hbox{\xymatrix@!0@C=15mm{
&B\ar@/^7mm/[dddr]^-{1}&\\
A\ar[ru]^-{Y}\ar[dd]_-{1u}\ar@/^7mm/[dddr]^-{1}\xtwocell[rddd]{}<>{^a^0}&&\\
&&\\
AM\ar[rd]_-{a}&&B\\
&A\ar[ru]_-{Y}&
}}}
\quad&=\quad
\vcenter{\hbox{\xymatrix@!0@C=15mm{
&B\ar[dd]_-{1u}\ar@/^7mm/[dddr]^-{1}\xtwocell[rddd]{}<>{^b^0}&\\
A\ar[ru]^-{Y}\ar[dd]_-{1u}\xtwocell[rd]{}<>{^YC^0\quad}&&\\
&BN\ar[rd]^-{b}&\\
AM\ar[rd]_-{a}\ar[ru]_-{YC}\xtwocell[rr]{}<>{^y}&&B\\
&A\ar[ru]_-{Y}&
}}}
\end{align}
\end{defi}

\begin{rem}
For a fixed opmonoidal arrow $\xymatrix@1@C=5mm{C:M\ar[r]&N}$ right comodules $(Y,C,y)$ from $a$ to $b$ are also called \emph{right $C$-comodules from $a$ to $b$} or \emph{right comodules over $C$ from $a$ to $b$}, and shall be denoted by $(Y,y)$. Note that the opmonoidal arrow $C$ plays a similar role as a comonoid in the definition of comodules for comonoids. One may similarly define \emph{right modules over} monoidal arrows (instead of opmonoidal ones) between right skew monoidales by changing the direction of $y$ and by modifying the axioms accordingly.
\end{rem}

\begin{defi}
A morphism $\xymatrix@1@C=5mm{(\gamma,\xi):(Y,C,y)\ar[r]&(Y',C',y')}$ of right comodules from $a$ to $b$ consists of a cell $\gamma$ and an opmonoidal cell $\xi$ in $\mathcal{M}$,
\[
\vcenter{\hbox{\xymatrix{
A\xtwocell[r]{}^{Y'}_Y{^\gamma}&B
}}}
\qquad
\vcenter{\hbox{\xymatrix{
M\xtwocell[r]{}^{C'}_C{^\xi}&N
}}}
\]
satisfying the following equation.
\begin{equation}
\tag{COM3}\label{ax:COM3}
\vcenter{\hbox{\xymatrix@!0@R=15mm@C=19mm{
AM\ar@/^3mm/[r]^-{Y'C'}\ar[d]_-a\xtwocell[rd]{!<-1mm,2mm>}<>{^<-1>y'}&BN\ar[d]^-b\\
A\rtwocell^{Y'}_Y{^\gamma}&B
}}}
\quad=\quad
\vcenter{\hbox{\xymatrix@!0@R=15mm@C=19mm{
AM\rtwocell^{Y'C'}_{YC}{^\gamma\xi\ }\ar[d]_-a\xtwocell[rd]{!<3mm,-4mm>}<>{^<1>y}&BN\ar[d]^-b\\
A\ar@/_3mm/[r]_-Y&B
}}}
\end{equation}
\end{defi}

Right comodules from $a$ to $b$ and their morphisms constitute a category that we denote by $\rComod((A,M,a),(B,N,b))$, its composition and identities are taken as the ones in $\mathcal{M}(A,B)\times\SkOpmon(M,N)$, hence the forgetful functor below.
\[
\vcenter{\hbox{\xymatrix@R=1mm{
\rComod((A,M,a),(B,N,b))\ar[r]&\mathcal{M}(A,B)\times\SkOpmon(M,N)\\
(Y,C,y)\ar@{|->}[r]&(Y,C)
}}}
\]

\begin{rem}\label{rem:HorizontalCompComodules}
There is a horizontal composition functor of right comodules given in the following way,
\begin{multline*}
\rComod((A',M',a'),(A'',M'',a''))\times\rComod((A,M,a),(A',M',a'))\\
\xymatrix{\ar[r]&\rComod((A,M,a),(A'',M'',a''))}
\end{multline*}
\[
((Z,D,z),(Y,C,y))
\vcenter{\hbox{\xymatrix@!0@=15mm{
\ar@{|->}[r]&
}}}
\vcenter{\hbox{\xymatrix@!0@=15mm{
AM\ar[r]^-{YC}\ar[d]_-{a}\xtwocell[rd]{}<>{^y}&A'N\ar[d]_-{a'}\ar[r]^-{ZD}\xtwocell[rd]{}<>{^z}&A''L\ar[d]^-{a''}\\
A\ar[r]_-Y&A'\ar[r]_-{Z}&A''
}}}
\]
as well as an identity in $\rComod((A,M,a),(A,M,a))$ as shown below.
\[
\vcenter{\hbox{\xymatrix@!0@=15mm{
AM\ar[r]^-{1}\ar[d]_-{a}&AM\ar[d]^-{a}\\
A\ar[r]_-{1}&A
}}}
\]
Together these constitute a bicategory $\rComod(\mathcal{M})$ which comes equipped with a strict functor $\xymatrix@1@C=5mm{\rComod(\mathcal{M})\ar[r]&\mathcal{M}\times\SkOpmon(\mathcal{M})}$. This is the bicategory of oplax right actions in $\mathcal{M}$, right comodules between them, and morphisms of right comodules. The reader should not confuse $\rComod(\mathcal{M})$ with the bicategory $\Comod(\mathcal{V})$ of comonoids, two sided comodules between them, and their morphisms in suitable a monoidal category $\mathcal{V}$. There is no way to compare, for example, the objects of these bicategories: the data for a comonoid in $\mathcal{V}$ consist of an object $V$ and two arrows $\xymatrix@1@C=5mm{V\ar[r]&VV}$ and $\xymatrix@1@C=5mm{V\ar[r]&I}$ in $\mathcal{V}$; and the data for an object in $\rComod(\mathcal{M})$ consist of a right skew monoidale $M$, an object $A$, and an oplax right $M$-action $\xymatrix@1@C=5mm{AM\ar[r]&A}$ in $\mathcal{M}$.

There are other reasonable names for the objects, arrows, and cells of $\rComod(\mathcal{M})$ which one might be tempted to give. In an action-oriented approach one might say: oplax right actions, oplax morphisms of oplax right actions, and transformations of oplax right actions between them. Although one may feel inclined to reserve the name of \emph{oplax morphisms of oplax actions} for the case of $\id_M$-comodules,
\[
\vcenter{\hbox{\xymatrix@!0@=15mm{
AM\ar[r]^-{Y1}\ar[d]_-{a}\xtwocell[rd]{}<>{^y}&BM\ar[d]^-{b}\\
A\ar[r]_-{Y}&B
}}}
\]
which is certainly the case when we mention them in the introduction. When $\mathcal{M}$ is a locally discrete monoidal bicategory, i.e. it is obtained by adding identity cells to a monoidal category, one has the usual notion of morphism between two actions.

Perhaps for a more module-oriented approach to $\rComod(\mathcal{M})$ one may give the names: oplax right modules, oplax morphisms of oplax right modules, and transformations of oplax right modules between them. We opted for the ones that are conveniently shorter because of how they fit in the forthcoming theorems, particularly in Corollary~\ref{cor:ComodulesAreComodules}; where right comodules for an opmonoidal arrow are comodules for a comonad in $\mathcal{M}$.
\end{rem}

\begin{rem}
For two oplax $M$-actions $a$ and $\xymatrix@1@C=5mm{a':AM\ar[r]&A}$, right comodules $(\id_{A},\id_{M},y)$ from $a$ to $a'$ are nothing but cells of oplax actions $y$ from $a$ to $a'$: axiom~\eqref{ax:COM1} for $(\id_{A},\id_{M},y)$ is \eqref{ax:OLA4} for $y$, and axiom~\eqref{ax:COM2} for $(\id_{A},\id_{M},y)$ is \eqref{ax:OLA5} for $y$. Hence, for a right skew monoidale $M$ and an object $A$ in $\mathcal{M}$, we recover the categories $\OplaxAct(M;A)$ from $\rComod(\mathcal{M})$ by taking the pullback below,
\[
\vcenter{\hbox{\xymatrix@!0@R=15mm@C=25mm{
**{!<5mm>}\OplaxAct(M;A)\ar[r]\ar[d]\pb{rd}&\rComod(\mathcal{M})\ar[d]\\
\mathbbm{1}\ar[r]_-{(\id_A,\id_M)}&**{!<-10mm>}\mathcal{M}\times\SkOpmon(\mathcal{M})
}}}
\]
which picks those comodules in $\rComod(\mathcal{M})$ of the form $(\id_{A},\id_{M},y)$ between oplax $M$-actions on $A$.

For a fixed opmonoidal arrow $\xymatrix@1@C=5mm{C:M\ar[r]&N}$, right $C$-comodules from $a$ to $b$ also constitute a category which we denote by $\rComod_C((A,a),(B,b))$. This category may be described by a pullback along the forgetful functor from $\rComod((A,M,a),(B,N,b))$ down to $\SkOpmon(M,N)$ as shown below.
\[
\vcenter{\hbox{\xymatrix@!0@R=15mm@C=35mm{
**{!<8mm>}\rComod_C((A,a),(B,b))\ar[r]\ar[d]\pb{rd}&**{!<-8mm>}\rComod((A,M,a),(B,N,b))\ar[d]\\
\mathbbm{1}\ar[r]_-{C}&\SkOpmon(M,N)
}}}
\]
\end{rem}
\begin{eje}\label{exa:ARightComodule}
Now for a biduality $R\dashv R\ot$ and an adjunction $i\dashv i^*$ in an opmonadic-friendly monoidal bicategory the items of Remark~\ref{rem:SimpleArrows} show an even closer relationship. The identity arrow on $R$ comes equipped with a $i\ob 1$-comodule structure from $i^*1$ to $e1$ given by the square below.
\[
\vcenter{\hbox{\xymatrix@!0@=15mm{
RR\ar[r]^{1i\ob 1}\ar[d]_{i^*1}&RR\ot R\ar[d]^{e1}\\
R\ar[r]_{1}\ar@{}[ru]|*[@]{\cong}&R
}}}
\]
\end{eje}
\begin{eje}\label{exa:ComodulesForCoalgebroids}
Let $R$ and $S$ be two $k$-algebras and $C$ an $R|S$-coalgebroid. In Lemma~\ref{lem:CoalgebroidsAsQuantum} we saw that $C$ is an opmonoidal arrow between enveloping monoidales in $\Mod_k$. In a similar fashion, the objects of $\rComod_C((R,e1),(S,e1))$ may be described in the language of classical ring and module theory. A comodule over the opmonoidal arrow $C$ consists of a module $Y$ in $R$-$\Mod$-$S$ together with a coaction morphism $\xymatrix@1@C=5mm{\varrho:Y\ar[r]&Y\otimes_SC}$ in $R$-$\Mod$-$S$ in which $Y\otimes_SC$ has the module structure from $R$ to $S$ given by $r(a\otimes c)s=a\otimes rcs$, that is
\begin{itemize}
\item $\varrho(as)=\sum a_{(1)}\otimes a_{(2)}s$
\item $\varrho(ra)=\sum a_{(1)}\otimes ra_{(2)}$
\end{itemize}
subject to the following axioms.
\begin{enumerate}
\item $\sum ra_{(1)}\otimes a_{(2)}=\sum a_{(1)}\otimes a_{(2)}r$
\item $(Y,\varrho)$ forms a $C$-comodule in the category $R$-$\Mod$-$S$
\end{enumerate}
Note that using the two sided $R$-module structure on $Y\otimes_SC$ given by $r\cdot(a\otimes c)\cdot r'=ra\otimes cr'$ item (i) may be rewritten as follows.
\begin{itemize}
\item[(i').] The image of the coaction $\varrho$ is in the $R$-centralizer of $Y\otimes_SC$, that is $r\cdot\varrho(a)=\varrho(a)\cdot r$.
\end{itemize}
\end{eje}

For the rest of this section our goal is to simplify the definition of comodules for opmonoidal arrows between enveloping monoidales in an opmonadic-friendly monoidal bicategory $\mathcal{M}$. And, in the two theorems that follow we apply the same technique used in Corollary~\ref{cor:Opmon_is_OplaxAct_Local} to simplify the definition of coalgebroids in terms of opmonoidal arrows to coalgebroids in terms of oplax actions. This technique consists of two steps: that of Theorem~\ref{teo:BidualityOpmonoidalOplaxAction}, which is basically the transposition along a biduality; and that of Theorems~\ref{teo:OpmonadicityOpmonoidal} or \ref{teo:OpmonadicityOplaxActions}, where the main tool is the universal property of an opmonadic adjunction. Hence the two theorems below: Theorem~\ref{teo:BidualityComodule} is the first step which corresponds to the use of transposition along bidualities, although, in this case it is considerably simpler; and Theorem~\ref{teo:OpmonadicityComodules} below is the second step which is analogous to the one that relies on the opmonadicity of an adjunction. We combine these two results in Corollary~\ref{cor:Comodules_Local} to obtain an equivalence between comodules for opmonoidal arrows between enveloping monoidales and certain oplax morphisms of oplax actions.

\begin{teo}\label{teo:BidualityComodule}
Let $\mathcal{M}$ be a monoidal bicategory. For every right skew monoidale $M$, every biduality $S\dashv S\ot$, every oplax right $M$-action $\xymatrix@1@C=5mm{a:AM\ar[r]&A}$, and every opmonoidal arrow $\xymatrix@1@C=5mm{C:M\ar[r]&S\ot S}$ in $\mathcal{M}$ there is an isomorphism between the categories,
\[
\rComod_C((A,a),(S,e1))\cong\rComod_{\id_{M}}((A,a),(S,s))
\]
\[
\vcenter{\hbox{\xymatrix@!0@=15mm{
AM\ar[r]^-{YC}\ar[d]_-{a}\xtwocell[rd]{}<>{^}&SS\ot S\ar[d]^-{e1}\\
A\ar[r]_-{Y}&S
}}}
\quad
\vcenter{\hbox{\xymatrix@!0@=15mm{
\ar@{<~>}[r]&
}}}
\quad
\vcenter{\hbox{\xymatrix@!0@=15mm{
AM\ar[r]^-{Y1}\ar[d]_-{a}\xtwocell[rd]{}<>{^}&SM\ar[d]^-{s}\\
A\ar[r]_-{Y}&S
}}}
\]
where $\xymatrix@1@C=5mm{s:SM\ar[r]&S}$ is the oplax right $M$-action which corresponds to $C$ under Theorem~\ref{teo:BidualityOpmonoidalOplaxAction}.
\end{teo}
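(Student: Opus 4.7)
The plan is to reduce the statement, via the same transposition along the biduality $S\dashv S\ot$ used in Theorem~\ref{teo:BidualityOpmonoidalOplaxAction}, to a purely formal identification. Recall that under that theorem the oplax right $M$-action $s$ corresponding to $C$ is by definition the composite
\[
s \;=\; \bigl[\,SM\xrightarrow{1_SC}SS\ot S\xrightarrow{e1}S\,\bigr].
\]
Consequently, for any arrow $\xymatrix@1@C=5mm{Y:A\ar[r]&S}$ one has, up to the interchanger isomorphism $(1_SC)\cdot(Y1_M)\cong YC$, an identification $s\cdot(Y1_M)\cong (e1)\cdot(YC)$. Pasting with this interchanger therefore sets up a canonical bijection between cells of shape $y:(e1)(YC)\Rightarrow Ya$ and cells of shape $y':s(Y1_M)\Rightarrow Ya$. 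This is the underlying data correspondence on objects of the two categories in the statement; its inverse is obtained by pasting with the inverse interchanger.

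It remains to check that the coassociativity and counit axioms \eqref{ax:COM1} and \eqref{ax:COM2} match up under this correspondence. But this is precisely an instance of the same big pasting calculation already carried out in the proof of Theorem~\ref{teo:BidualityOpmonoidalOplaxAction}, with the single modification that the arrow $Y$ is interposed between $A$ and $S$ throughout. Concretely, axiom \eqref{ax:COM1} for $y$ involves the opmonoidal composition constraint $C^2$ on one side and $a^2$ with two occurrences of $y$ on the other; transposing along $S\dashv S\ot$ with $Y$ interposed converts $C^2$ into $s^2$ exactly as in Theorem~\ref{teo:BidualityOpmonoidalOplaxAction}, and since the product on $S\ot S$ is $1e1$ and $\id_M^2$ is trivial, this produces exactly axiom \eqref{ax:COM1} for $y'$ viewed as an $\id_M$-comodule from $a$ to $s$. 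An analogous argument using the correspondence $C^0\leftrightarrow s^0$ handles axiom \eqref{ax:COM2}.

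For morphisms, observe that in both categories in the statement the opmonoidal component $\xi$ of a morphism is forced to be the identity cell (on $C$ and on $\id_M$ respectively), so a morphism is in each case just a cell $\gamma:Y\Rightarrow Y'$ satisfying \eqref{ax:COM3}. The same interchanger pasting shows that \eqref{ax:COM3} for $\gamma$ with respect to $y$ is equivalent to \eqref{ax:COM3} for $\gamma$ with respect to $y'$. Composition and identities are inherited from $\mathcal{M}(A,S)$ on both sides, so the correspondence is strictly functorial and therefore an isomorphism of categories. The only mild obstacle is the bookkeeping of interchangers when $\mathcal{M}$ is not a strict Gray-monoid; however, by the coherence theorem we may freely pretend it is, reducing every equation to the trivial identity $s\cdot(Y1_M)=(e1)\cdot(YC)$ together with the axiom correspondence already established in Theorem~\ref{teo:BidualityOpmonoidalOplaxAction}.
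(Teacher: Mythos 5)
Your proposal is correct and takes essentially the same route as the paper: the paper's own proof consists precisely of the observation that the two kinds of comodule data differ only by pasting with the canonical isomorphism $s\cong (e1)(1C)$ coming from Theorem~\ref{teo:BidualityOpmonoidalOplaxAction}, which is exactly your identification $s\cdot(Y1_M)\cong(e1)\cdot(YC)$. You simply spell out in more detail the axiom-matching and the fact that morphisms in both categories carry an identity opmonoidal component, both of which the paper leaves implicit.
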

\begin{proof}

The objects of these two categories differ only by the isomorphism
\[
\vcenter{\hbox{\xymatrix@!0@C=15mm{
SM\ar[rr]^{s}\ar[rd]_{1C}&&S\\
&SS\ot S\ar[ru]_{e1}\ar@{}[u]|*[@]{\cong}&
}}}
\]
induced by the equivalence of Theorem~\ref{teo:BidualityOpmonoidalOplaxAction} between the opmonoidal arrow $C$ and the oplax $M$-action $s$.
\end{proof}

Now, in the following theorem the comodule in Example~\ref{exa:ARightComodule} plays the role of the opmonoidal left adjoint for the ``opmonoidal $\dashv$ monoidal opmonadicity'' of Theorem~\ref{teo:OpmonadicityOpmonoidal}, but in the bicategory $\rComod(\mathcal{M})$ instead of $\Opmon(\mathcal{M})$.

\begin{teo}\label{teo:OpmonadicityComodules}
Let $\mathcal{M}$ be an opmonadic-friendly monoidal bicategory. For every biduality $R\dashv R\ot$, every opmonadic adjunction $i\ob\dashv i\ot$ in $\mathcal{M}$,
\[
\vcenter{\hbox{\xymatrix{
R\ot\dtwocell_{i\ob}^{i\ot}{'\dashv}\\
I
}}}
\]
every monoidale $N$, every pseudo right action $\xymatrix@1@C=5mm{b:BN\ar[r]&B}$, and every pair of opmonoidal arrows $\xymatrix@1@C=5mm{C:R\ot R\ar[r]&N}$ and $\xymatrix@1@C=5mm{D:R\ar[r]&N}$ which correspond to each other under the equivalence of Theorem~\ref{teo:OpmonadicityOpmonoidal}, there is an isomorphism of categories
\[
\rComod_C((R,e1),(B,b))\cong\rComod_D((R,i^*1),(B,b))
\]
\[
\vcenter{\hbox{\xymatrix@!0@=15mm{
RR\ot R\ar[r]^-{YC}\ar[d]_-{e1}\xtwocell[rd]{}<>{^\bar{y}}&BN\ar[d]^-{b}\\
R\ar[r]_-{Y}&B
}}}
\quad
\vcenter{\hbox{\xymatrix@!0@=15mm{
\ar@{<~>}[r]&
}}}
\quad
\vcenter{\hbox{\xymatrix@!0@=15mm{
RR\ar[r]^-{YD}\ar[d]_-{i^*\!1}\xtwocell[rd]{}<>{^y}&BN\ar[d]^-{b}\\
R\ar[r]_-{Y}&B
}}}
\]
given by precomposition with the $i\ob 1$-comodule in Example~\ref{exa:ARightComodule}.
\end{teo}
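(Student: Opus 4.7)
The plan is to mirror the two-step strategy used for Theorem~\ref{teo:OpmonadicityOpmonoidal}, which was broken up into Lemma~\ref{lem:OpmonadicityOpmonoidal} (a forgetful isomorphism onto the skew side) and the subsequent theorem (the equivalence supplied by opmonadicity). Because $\mathcal{M}$ is opmonadic-friendly, tensoring $i\ob\dashv i\ot$ with the identity on $R$ on both sides produces another opmonadic adjunction $1i\ob 1\dashv 1i\ot 1:RR\to RR\ot R$, whose counit $\varepsilon\ob 1$ is a reflexive coequaliser preserved by pre- and post-composition with arrows of $\mathcal{M}$. This universal property, together with the fact that $D\cong C\circ i\ob 1$ under the equivalence of Theorem~\ref{teo:OpmonadicityOpmonoidal}, will produce the required bijection at the level of coaction cells.

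First I would introduce an intermediate category $\mathcal{W}$ whose objects are $D$-comodules $(Y,y)$ in $\rComod_D((R,i^*1),(B,b))$ equipped with a further cell $\psi$ that makes the underlying coaction data an action for the monad on $RR$ induced by $1i\ob 1\dashv 1i\ot 1$, and that satisfies two compatibility axioms with the coaction $y$ and the opmonoidal constraints of $D$. In analogy with Lemma~\ref{lem:OpmonadicityOpmonoidal}, I would show that the forgetful functor $\mathcal{W}\to\rComod_D((R,i^*1),(B,b))$ is an isomorphism by writing $\psi$ explicitly in terms of $y$, $D^0$, $D^2$, and the counit of the biduality, verifying via a pasting computation that these axioms are automatic once the comodule axioms hold. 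This step uses only the monoidale axioms for $N$ and the pseudo right action axioms for $b$, which is where the hypothesis that $b$ is a \emph{pseudo} right action is used.

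Next I would show that $\rComod_C((R,e1),(B,b))\cong\mathcal{W}$. One direction is precisely precomposition with the $i\ob 1$-comodule of Example~\ref{exa:ARightComodule}: this produces a $D$-comodule automatically equipped with the canonical action $\psi$. For the inverse, given $(Y,y,\psi)\in\mathcal{W}$, the two action axioms for $\psi$, combined with the opmonadicity of $1i\ob 1\dashv 1i\ot 1$, yield a unique cell $\bar{y}$ filling the $C$-comodule square such that precomposing with the Example~\ref{exa:ARightComodule} comodule recovers $y$. The two remaining compatibility axioms then translate, via the coequaliser property of $\varepsilon\ob 1$ at the appropriate pasting composite, into the comodule axioms \eqref{ax:COM1} and \eqref{ax:COM2} for $\bar{y}$. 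On morphisms the correspondence is the identity on the underlying cell $\gamma$, so bijectivity on morphisms reduces to checking that the compatibility condition \eqref{ax:COM3} is equivalent on both sides under the opmonadicity of $1i\ob 1$.

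The main obstacle is the diagrammatic bookkeeping: one must verify, by pasting computations entirely parallel to those in Lemma~\ref{lem:OpmonadicityOpmonoidal} and Theorem~\ref{teo:OpmonadicityOpmonoidal} (with the pseudo action $b$ playing the role formerly played by the multiplication $m$ of $N$), that the axioms for $\psi$ translate exactly into the comodule axioms on each side. A minor subtlety is that the statement asserts an \emph{isomorphism} rather than a mere equivalence, but this is automatic because the precomposition functor does not modify the underlying arrow $Y$ or the cell $\gamma$, and the opmonadicity of $1i\ob 1\dashv 1i\ot 1$ gives genuine uniqueness of the induced cell $\bar{y}$ (not only uniqueness up to isomorphism).
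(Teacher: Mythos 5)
Your proposal is correct and follows essentially the same route as the paper: the canonical module structures for the monad induced by $1i\ob 1\dashv 1i\ot 1$ (coming from the redundant action of Lemma~\ref{lem:OpmonadicityOpmonoidal}) turn the comodule axioms into the statement that $y$ is a morphism of modules, and opmonadicity then induces the unique $\bar{y}$. The only cosmetic difference is that the paper does not package the intermediate step as a separate category $\mathcal{W}$, but instead proves directly that precomposition with the comodule of Example~\ref{exa:ARightComodule} is faithful, full, and bijective on objects.
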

\begin{proof}

Let $H$ be the functor in the statement; its action on objects is given below.
\[
\vcenter{\hbox{\xymatrix{
H:\rComod_C((R,e1),(B,b))\ar[r]&\rComod_D((R,i^*1),(B,b))
}}}
\]
\[
\left(
\vcenter{\hbox{\xymatrix{
R\ar[d]^-{Y}\\
B
}}}
\quad,
\vcenter{\hbox{\xymatrix@!0@=15mm{
RR\ot R\ar[r]^-{YC}\ar[d]_-{e1}\xtwocell[rd]{}<>{^\bar{y}}&BN\ar[d]^-{b}\\
R\ar[r]_-{Y}&B
}}}
\right)
\vcenter{\hbox{\xymatrix@!0{\ar@{|->}[r]&
}}}
\left(
\vcenter{\hbox{\xymatrix{
R\ar[d]^-{Y}
\\
B
}}}
\quad,
\vcenter{\hbox{\xymatrix@!0@=15mm{
RR\ar[rd]|-{1i\ob 1}\ar@/_3mm/[rdd]_-{i^*1}\ar@/^3mm/[rrd]^-{YD}&&\\
\ar@{}[rru]|*[@]{\cong}&RR\ot R\ar[r]^-{YC}\ar[d]_-{e1}\xtwocell[rd]{}<>{^\bar{y}}&BN\ar[d]^-{b}\\
\ar@{}[ruu]|*[@]{\cong}&R\ar[r]_-{Y}&B
}}}
\right)
\]
This functor is faithful because it is equal to the identity on the underlying cells $\xymatrix@1@C=5mm{\gamma:Y\ar[r]&Y'}$ in $\mathcal{M}$. It is essentially surjective on objects since for every $D$-comodule $(\xymatrix@1@C=5mm{Y:R\ar[r]&B},y)$ from $e1$ to $b$ in the codomain of $H$, the source and target of $y$ have a structure of module for the monad induced by $1i\ob 1\dashv 1i\ot 1$,
\[
\vcenter{\hbox{\xymatrix@!0@C=6.5mm@R=11mm{
&&&&&RR\ot R\ar[rd]^-{e1}&&&\\
RR\ar[rr]_-{1i\ob 1}&&RR\ot R\ar[rr]_-{1i\ot 1}\ar@/^3mm/[]!<-3mm,2mm>;[rrru]^-{1}\xtwocell[rrru]{}<>{^1\varepsilon\ob 1\quad}&&RR\ar[rr]_-{i^*1}\ar[]!<-1mm,1mm>;[ru]|-{\ 1i\ob 1}&&R\ar[rr]_-{Y}&&B
}
\quad\xymatrix@!0@C=13mm@R=11mm{
&&&&\\
RR\ar[r]_-{1i\ob 1}\ar@/^9mm/[rrr]^-{1}\xtwocell[rrr]{}<>{^<-3>1\varphi\ }&RR\ot R\ar[r]_-{1i\ot 1}&RR\ar[r]_-{YD}&BN\ar[r]_-{b}&B
}}}
\]
where $\varphi$ is the action~\eqref{eq:redundant_action1}. Furthermore, the axioms for a $D$-comodule together with the fact that $\varphi$ is defined in terms of $D^2$ and $D^0$ imply that $y$ is a morphism of modules for the monad induced by the adjunction $1i\ob 1\dashv 1i\ot 1$.
\begin{align*}
&
\vcenter{\hbox{\xymatrix@!0@R=10mm@C=10mm{
&&BN\ar[rrd]^-{1u1}\ar@/^14mm/[rrrrdd]^-{1}&&&\ar@{}[ld]|<<<<<*=0[@]{\cong}_-{1\lambda}&&&\\
&&&&BNN\ar[rrd]^-{1m}&&&&\\
{\xtwocell[rrrrru]{}<>{^<-2>YD^0D\qquad}}&&RRR\ar[rrd]|-{1i^*1}\ar[rru]|-{YDD}\xtwocell[rrrr]{}<>{^YD^2\quad}&&&&BN\ar[rrd]^-{b}&&\\
RR\ar[rr]_-{1i\ob 1}\ar[rru]^-{1i1}\ar@/^/[rruuu]^-{YD}&&RR\ot R\ar[rr]_-{1i\ot 1}\ar@{}[u]|*[@]{\cong}&&RR\ar[rru]|-{YD}\ar[rrd]_-{i^*1}\xtwocell[rrrr]{}<>{^y}&&&&B\\
&&&&&&R\ar[rru]_-{Y}&&
}}}
\\
\stackrel{\eqref{ax:COM1}}{=}&
\vcenter{\hbox{\xymatrix@!0@R=5mm@C=10mm{
&&BN\ar[rrdd]^-{1u1}\ar@/^10mm/[rrrrrddd]^-{1}&&&\ar@{}[ldd]|<<<<<*=0[@]{\cong}_-{1\lambda}&&&\\
&&&&&&&&\\
&&&&BNN\ar[rrdd]_-{b1}\ar[rrrd]^-{1m}&&&&\\
&&&&&&&BN\ar[rddd]^-{b}\ar@{}[ld]|*=0[@]{\cong}^-{b^2}&\\
{\xtwocell[rrrrruu]{}<>{^<-2>YD^0D\qquad}}&&RRR\ar[rrdd]|-{1i^*1}\ar[rruu]|-{YDD}\ar[rrrd]|-{i^*11}\xtwocell[rrrr]{}<>{^<-1>yD\quad}&&&&BN\ar[rrdd]^-{b}&&\\
&&&&&RR\ar[rddd]|-{i^*1}\ar[ru]^-{YD}\xtwocell[rrrd]{}<>{^<1>y}&&&\\
RR\ar[rr]_-{1i\ob 1}\ar[rruu]^-{1i1}\ar@/^/[rruuuuuu]^-{YD}&&RR\ot R\ar[rr]_-{1i\ot 1}\ar@{}[uu]|*[@]{\cong}&&RR\ar[rrdd]_-{i^*1}\ar@{}[ru]|*=0[@]{\cong}&&&&B\\
&&&&&&&&\\
&&&&&&R\ar[rruu]_-{Y}&&
}}}
\\
\stackrel{\eqref{ax:COM2}}{=}&
\vcenter{\hbox{\xymatrix@!0@R=5mm@C=10mm{
&&BN\ar[rrd]^-{1u1}\ar@/_3mm/[rrrrdddd]|-{1}\ar@/^10mm/[rrrrrddd]^-{1}&&&\ar@{}[ldd]|<<<*=0[@]{\cong}_<<<<{1\lambda}&&&\\
&&&&BNN\ar[rrddd]|-{b1}\ar[rrrdd]^-{1m}\ar@{}[ldd]^<<<*=0[@]{\cong}^<{b^0}&&&&\\
&&&&&&&&\\
&&R\ar[rrrdd]^-{i1}&&&&&BN\ar[rddd]^-{b}\ar@{}[ld]|*=0[@]{\cong}^-{b^2}&\\
{\xtwocell[rrrrrru]{}<>{^<-2>\varepsilon 1\ }}&&RRR\ar[rrdd]|-{1i^*1}\ar[rrrd]|-{i^*11}&&&&BN\ar[rrdd]^-{b}&&\\
&&&&&RR\ar[rddd]|-{i^*1}\ar[ru]^-{YD}\xtwocell[rrrd]{}<>{^<1>y}&&&\\
RR\ar[rr]_-{1i\ob 1}\ar[rruu]|-{1i1}\ar[rruuu]^>>>>>{i^*1}\ar@/^18mm/[rrrrru]^-{1}\ar@/^/[rruuuuuu]^-{YD}&&RR\ot R\ar[rr]_-{1i\ot 1}\ar@{}[uu]|*[@]{\cong}&&RR\ar[rrdd]_-{i^*1}\ar@{}[ru]|*=0[@]{\cong}&&&&B\\
&&&&&&&&\\
&&&&&&R\ar[rruu]_-{Y}&&
}}}
\\
\stackrel{\eqref{ax:SKM2}}{=}&
\vcenter{\hbox{\xymatrix@!0@R=5mm@C=10mm{
&&BN\ar@/^10mm/[rrrrrddd]^-{1}&&&&&&\\
&&&&&&&&\\
&&&&&&&&\\
&&&&&&&BN\ar[rddd]^-{b}&\\
&&&&**[l]RR\ot R\ar[rrdddd]|-{e1}&&&&\\
&&&&&RR\ar[rddd]^-{i^*1}\ar[rruu]^-{YD}\xtwocell[rrrd]{}<>{^y}&&&\\
RR\ar[rr]_-{1i\ob 1}\ar@/^18mm/[rrrrru]^-{1}\ar@/^/[rruuuuuu]^-{YD}&&RR\ot R\ar[rr]_-{1i\ot 1}\ar@/^5mm/[]!<-2mm,1mm>;[rruu]^-{1}\xtwocell[rruu]{}<>{^<0>1\varepsilon\ob 1\quad}&&RR\ar[rrdd]_-{i^*1}\ar[uu]^-{1i\ob 1}&&&&B\\
&&&&&&&&\\
&&&&&\ar@{}[luuuuu]|*=0[@]{\cong}&R\ar[rruu]_-{Y}&&
}}}
\end{align*}
Therefore by opmonadicity of $1i\ob 1\dashv 1i\ot 1$ there exists a cell $\bar{y}$ as below,
\[
\vcenter{\hbox{\xymatrix@!0@=15mm{
RR\ot R\ar[r]^-{YC}\ar[d]_-{e1}\xtwocell[rd]{}<>{^\bar{y}}&BN\ar[d]^-{b}\\
R\ar[r]_-{Y}&B
}}}
\]
that composed with $1i\ob 1$ is equal to $y$. The cell $\bar{y}$ provides the arrow $Y$ with a $C$-comodule structure; one proves axiom~\eqref{ax:COM1} for $\bar{y}$ by precomposing both sides with the opmonadic left adjoint $\xymatrix@1@C=5mm{1i\ob 1i\ob 1:RRR\ar[r]&RR\ot RR\ot R}$, to get each side of axiom~\eqref{ax:COM1} for $y$, which are equal. And to prove axiom~\eqref{ax:COM2} for $\bar{y}$ precompose both sides of the axiom with the epimorphic cell
\[
\vcenter{\hbox{\xymatrix@!0@=12mm{
&RR\ot R\ar[rrd]^-{1}\ar[ld]_-{1i\ot 1}\xtwocell[rd]{}<>{^<2>1\varepsilon\ob 1\quad}&&\\
RR\ar[rrr]_-{1i\ob 1}&&&RR\ot R
}}}
\]
at $RR\ot R$ to get each side of axiom~\eqref{ax:COM2} for $y$, which are equal. Therefore $H(Y,\bar{y})=(Y,y)$, which means $H$ is surjective on objects. The proof that $H$ is full consists of a similar calculation for axiom~\eqref{ax:COM3}, ergo $H$ is an isomorphism.
\end{proof}

\begin{rem}
In view of Theorem~\ref{teo:OpmonadicityOpmonoidal}, by varying the opmonoidal arrows $C$ and $D$ we may lift the isomorphisms in the previous theorem to an equivalence
\[
\rComod((R,R\ot R,e1),(B,N,b))\simeq\rComod((R,R,i^*1),(B,N,b))
\]
between the hom categories of $\rComod(\mathcal{M})$.
\end{rem}

Together, the two previous theorems imply the following.
\begin{cor}\label{cor:Comodules_Local}
Let $\mathcal{M}$ be an opmonadic-friendly monoidal bicategory. For every pair of bidualities $R\dashv R\ot$ and $S\dashv S\ot$, every opmonoidal arrow $\xymatrix@1@C=5mm{C:R\ot R\ar[r]&S\ot S}$, and every opmonadic adjunction $i\ob\dashv i\ot$ in $\mathcal{M}$,
\[
\vcenter{\hbox{\xymatrix{
R\ot\xtwocell[d]{}_{i\ob}^{i\ot}{'\dashv}\\
I
}}}
\]
there is an isomorphism of categories,
\[
\rComod_C((R,e1),(S,e1))\cong\rComod_{\id_{R}}((R,i^*1),(S,s))
\]
where $\xymatrix@1@C=5mm{s:SM\ar[r]&S}$ is the oplax right $M$-action which corresponds to $C$ under Corollary~\ref{cor:Opmon_is_OplaxAct_Local}. Moreover, the pentagon below commutes strictly,
\begin{equation}
\label{eq:ComodulePentagon}
\vcenter{\hbox{\xymatrix@!0@R=20mm@C=11mm{
&**[l]\rComod_C((R,e1),(S,e1))\ar[rr]_-{\cong}\ar[ld]^-{\cong}&&**[r]\rComod_D((R,i^*1),(S,e1))\ar[rd]_-{\cong}&\\
**{!<5mm>}\rComod_{\id_{R\ot R}}((R,e1),(S,\widehat{s}))\ar[rrd]^-{\cong}&&&&**{!<-6mm>}\rComod_{\id_{R}}((R,i^*1),(S,s))\\
&&\rComod_{i\ob 1}((R,i^*1),(S,\widehat{s}))\ar[rru]^-{\cong}&&
}}}
\end{equation}
where $\xymatrix@1@C=5mm{\widehat{s}:SR\ot R\ar[r]&S}$ is the oplax right action that corresponds to $C$ under the equivalence in Theorem~\ref{teo:BidualityOpmonoidalOplaxAction}, $\xymatrix@1@C=5mm{D:R\ar[r]&S\ot S}$ is the opmonoidal arrow that corresponds to $C$ under the equivalence in Theorem~\ref{teo:OpmonadicityOpmonoidal}, and the edges of the pentagon are instances of the isomorphisms in Theorems~\ref{teo:OpmonadicityComodules} and \ref{teo:BidualityComodule}.\qed
\end{cor}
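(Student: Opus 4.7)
\textbf{Plan.} The claimed isomorphism will be obtained as the composite of two of the previously established isomorphisms, and then the pentagon will be verified by unwinding the explicit definitions of each edge.

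First I would read off the main isomorphism as follows. Apply Theorem~\ref{teo:OpmonadicityComodules} with monoidale $N=S\ot S$, pseudo action $b=e1$, and the given pair $C,D$ corresponding under Theorem~\ref{teo:OpmonadicityOpmonoidal}; this yields $\rComod_C((R,e1),(S,e1))\cong\rComod_D((R,i^*1),(S,e1))$ by precomposition with the $i\ob 1$-comodule of Example~\ref{exa:ARightComodule}. Then apply Theorem~\ref{teo:BidualityComodule} with skew monoidale $M=R$, oplax action $a=i^*1$, and opmonoidal arrow $D:R\to S\ot S$; this yields $\rComod_D((R,i^*1),(S,e1))\cong\rComod_{\id_R}((R,i^*1),(S,s))$ by transposition of the coaction along the biduality $S\dashv S\ot$, where $s$ is the oplax action corresponding to $D$. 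Composing these two isomorphisms gives the statement, with the explicit effect of sending a coaction cell $\bar y$ to the cell obtained by first precomposing with $1\cdot i\ob 1$ on the domain of the top edge and then whiskering with the structural isomorphism $e1\circ 1D\cong s$ on the codomain.

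For the pentagon~\eqref{eq:ComodulePentagon}, the plan is to identify each edge as a concrete functor and then observe that the top path and the bottom path apply the same two elementary operations in different orders. The edges labelled by Theorem~\ref{teo:BidualityComodule} act on the codomain of the coaction: they use the counit of the biduality $S\dashv S\ot$ to convert an opmonoidal arrow (one of $C$, $D$, $\id_{R\ot R}$, or $i\ob 1$ in the various vertices) into its corresponding oplax action on $S$ (one of $\widehat s$, $s$, $\widehat s$, or $s$ respectively). The edges labelled by Theorem~\ref{teo:OpmonadicityComodules} act on the domain of the coaction: they use the opmonadic adjunction $i\ob 1\dashv i\ot 1$ to factor the arrow $i\ob 1:R\to R\ot R$ on the $R$-side (together with its opmonoidal structure from Proposition~\ref{prop:OneOpmonoidalArrow}). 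Because these two manipulations operate on disjoint parts of the pasting square defining a comodule, the order in which they are performed is immaterial, and the pentagon commutes strictly rather than merely up to isomorphism; this is exactly analogous to the strict commutativity observed in the analogous square of Corollary~\ref{cor:Opmon_is_OplaxAct_Local}.

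\textbf{Main obstacle.} The delicate point is recognising the bottom-middle to bottom-right edge $\rComod_{i\ob 1}((R,i^*1),(S,\widehat s))\cong\rComod_{\id_R}((R,i^*1),(S,s))$ as an instance of Theorem~\ref{teo:BidualityComodule}: here the relevant biduality is $R\dashv R\ot$ rather than $S\dashv S\ot$, and the oplax action $\widehat s$ on $S$ plays the role of the pseudo action $e1$ on $R$, with $s=\widehat s\circ(1\cdot i\ob 1)$ playing the role of the oplax action induced by $i\ob 1$ via the transposition of Theorem~\ref{teo:BidualityOpmonoidalOplaxAction}. Once this identification is made, the strict commutativity of the pentagon reduces to checking that in a Gray-monoid the two pasting schemes agree on the nose, which follows from the pentagon in Corollary~\ref{cor:Opmon_is_OplaxAct_Local} combined with the associativity of pasting.
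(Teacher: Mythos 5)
Your derivation of the main isomorphism is exactly the paper's: the corollary is presented as an immediate consequence of the two preceding results, and composing Theorem~\ref{teo:OpmonadicityComodules} (with $N=S\ot S$, $b=e1$) with Theorem~\ref{teo:BidualityComodule} (with $M=R$, $a=i^*1$, and the opmonoidal arrow $D$) is precisely the top--right path of the pentagon, so that part is correct and matches the intended argument. Two details in your discussion of the pentagon are off, though neither affects the isomorphism of categories that is the substance of the corollary. First, the edge $\rComod_{i\ob 1}((R,i^*1),(S,\widehat{s}))\cong\rComod_{\id_{R}}((R,i^*1),(S,s))$ involves no biduality of $R$: nothing is transposed along $R\dashv R\ot$ there. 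What happens is that the coaction $\widehat{s}\circ(Y\,i\ob 1)\Rightarrow Y\circ i^*1$ is rewritten along the isomorphism $s\cong\widehat{s}\circ(1\,i\ob 1)$ furnished by Corollary~\ref{cor:CompOpmonWithOplax} (restriction of an oplax action along an opmonoidal arrow). This is the same \emph{technique} as the proof of Theorem~\ref{teo:BidualityComodule} --- replace the composite codomain action by an isomorphic one --- but it is not an instance of that theorem's statement for any biduality, since the target action $\widehat{s}$ is an $R\ot R$-action on $S$ rather than an evaluation action of an enveloping monoidale. Second, the square of Corollary~\ref{cor:Opmon_is_OplaxAct_Local} commutes only up to a coherent isomorphism (strictly only when $\mathcal{M}$ is a strict monoidal 2-category), so appealing to its ``strict commutativity'' cannot by itself deliver the strictness claim for the pentagon; that claim must be checked directly from the explicit formulas for the five edge functors, with a fixed choice of $s$. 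Your underlying intuition --- that the opmonadicity edges modify the domain side of the coaction square while the transposition/restriction edges rewrite the codomain composite, so the two operations commute --- is the right reason the pentagon commutes.
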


\begin{rem}\label{rem:SimpleActions}
Given a duality $R\dashv R\ot$ and an opmonadic adjunction $i\ob\dashv i\ot$ with opposite adjunction $i\dashv i^*$ in an opmonadic-friendly monoidal bicategory $\mathcal{M}$,
\[
\vcenter{\hbox{\xymatrix{
R\dtwocell_{i\ob}^{i\ot}{'\dashv}\\
I
}}}
\]
we can take comodules between the actions in Remark~\ref{rem:SimpleArrows} around the pentagon~\eqref{eq:ComodulePentagon} above. Start with the identity comodule on $\xymatrix@1@C=5mm{e1:RR\ot R\ar[r]&R}$ that lives in the source category of the pentagon, taking it down the first equivalence does not change it, taking it down-right the second equivalence, as well as to take it from the source to the right, gives the comodule from Example~\ref{exa:ARightComodule}, and to take it all the way to the target gives the identity comodule on $\xymatrix@1@C=5mm{i^*1:RR\ar[r]&R}$.
\[
\vcenter{\hbox{\xymatrix@!0@=15mm{
RR\ot R\ar[r]^-{1}\ar[d]_-{e1}&RR\ot R\ar[d]^-{e1}\\
R\ar[r]_-{1}&R
}}}
\xymatrix@C=5mm{\ar@{|->}[r]&}
\vcenter{\hbox{\xymatrix@!0@=15mm{
RR\ar[r]^-{1i\ob 1}\ar[d]_-{i^*1}&RR\ot R\ar[d]^-{e1}\\
R\ar[r]_-{1}\ar@{}[ru]|*[@]{\cong}&R
}}}
\xymatrix@C=5mm{\ar@{|->}[r]&}
\vcenter{\hbox{\xymatrix@!0@=15mm{
RR\ar[r]^-{1}\ar[d]_-{i^*1}&RR\ar[d]^-{i^*1}\\
R\ar[r]_-{1}&R
}}}
\]
\end{rem}

\begin{eje}\label{exa:ComoduleForOplaxAction}
Comodules for opmonoidal arrows in $\Mod_k$ that live in the source of the pentagon of Corollary~\ref{cor:Comodules_Local} were described in Example~\ref{exa:ComodulesForCoalgebroids}. Let us describe what is a comodule in the target of the pentagon, that is, an $\id_R$-comodule from $i^*1$ to $s$ in $\Mod_k$. Here $s$ is an $R|S$-coalgebroid via oplax actions which is denoted by a module $C$ in $SR$-$\Mod$-$S$ as in Example~\ref{exa:CoalgebroidsAsOplaxActions}. An object in $\rComod_{\id_{R}}((R,i^*1),(S,s))$ consists of a module $Y$ in $R$-$\Mod$-$S$ together with a module morphism $\xymatrix@1@C=5mm{\widetilde{\varrho}:Y\ar[r]&Y\otimes_S C}$ in $\Mod$-$S$ where $Y\otimes_S C$ takes the right $S$ module structure given by $(y\otimes c).s=y\otimes cs$, hence the condition below,
\begin{itemize}
\item $\widetilde{\varrho}(as)=\sum a_{(1)}\otimes a_{(2)}s$
\end{itemize}
and it is subject to the following axiom.
\begin{enumerate}
\item $(Y,\widetilde{\varrho})$ forms a $C$-comodule in the category $\Mod$-$S$
\end{enumerate}

What changed from Example~\ref{exa:ComodulesForCoalgebroids} is that the coaction $\widetilde{\varrho}$ is not necessarily a left $R$-morphism, condition \ref{exa:ComodulesForCoalgebroids}.(i) vanishes, and $(Y,\widetilde{\varrho})$ is a $C$-comodule in $\Mod$-$S$ rather than $R$-$\Mod$-$S$.

\end{eje}

At this point we pause to recall the main results of this and the previous sections. These may be arranged in the chart below using the bar notation for equivalences. So, let $R\dashv R\ot$ and $S\dashv S\ot$ be two bidualities and let $i\dashv i^*$ be an adjunction whose opposite is opmonadic all of which are in an opmonadic-friendly monoidal bicategory $\mathcal{M}$. On the left column we have the equivalence of  Corollary~\ref{cor:Opmon_is_OplaxAct_Local}, and on the right column we have the equivalence of Corollary~\ref{cor:Comodules_Local} for a fixed pair of items in the left column.
\begin{center}
\begin{tabular}{|c|c|}
\hline
\begin{tabular}{c}
Opmonoidal arrow\\
$\xymatrix@C=5mm{
R\ot R\ar[r]^-{C}&S\ot S
}$
\\
\hline
$\xymatrix@C=5mm{
SR\ar[r]_-{s}&R
}$
\\
Oplax action
\end{tabular}
$\simeq$&
\begin{tabular}{c}
$C$-comodule\\
$\vcenter{\hbox{\xymatrix@!0@=15mm{
RR\ot R\ar[r]^-{YC}\ar[d]_-{e1}\xtwocell[rd]{}<>{^\bar{y}}&S\ot SS\ar[d]^-{e1}\\
R\ar[r]_-{Y}&S
}}}$
\\
\hline
$\vcenter{\hbox{\xymatrix@!0@=15mm{
RR\ar[r]^-{Y1}\ar[d]_-{i^*1}\xtwocell[rd]{}<>{^y}&SR\ar[d]^-{s}\\
R\ar[r]_-{Y}&S
}}}$
\\
$\id_{R}$-comodule to $s$
\end{tabular}
$\cong$\\
\hline
\end{tabular}
\end{center}
But this table is still incomplete: there is one more equivalence of categories to add at the bottom of the right column and that is precisely what the next theorem is about. This new equivalence is another application of opmonadicity, but this time from the adjunction $i\dashv i^*$. The target category is the category $\mathcal{M}(I,S)_{\mathcal{M}(I,c)}$ of comodules for a comonad $\xymatrix@1@C=5mm{c:S\ar[r]&S}$ based at $I$. What completes the chart is our concluding corollary below, in which the interesting case is when $c$ is the comonad induced by an opmonoidal arrow or by an oplax action as discussed in Remark~\ref{rem:InducedComonad}.

\begin{teo}\label{teo:SkewActions_are_Coactions}
For every biduality $R\dashv R\ot$, every opmonadic adjunction
\[
\xymatrix{R\dtwocell_{i}^{i^*}{'\dashv}\\
I
}
\]
and every oplax right $R$-action $\xymatrix@1@C=5mm{b:BR\ar[r]&B}$ with respect to the right skew monoidal structure induced by $i\dashv i^*$ as in Lemma~\ref{lem:OneRightSkewMonoidale}, there is an equivalence of categories
\[
\rComod_{\id_{R}}((R,i^*1),(B,b))\simeq\mathcal{M}(I,B)_{\mathcal{M}(I,c)}
\]
where $\xymatrix@1@C=5mm{c:B\ar[r]&B}$ is the comonad induced by $b$ as in Remark~\ref{rem:InducedComonad}; and the equivalence is given as follows.
\[
\left(
\vcenter{\hbox{\xymatrix{
R\ar[d]^-{Y}\\
B
}}}
\ ,\ 
\vcenter{\hbox{\xymatrix@!0@=15mm{
RR\ar[r]^-{Y1}\ar[d]_-{i^*\!1}\xtwocell[rd]{}<>{^y}&BR\ar[d]^-{b}\\
R\ar[r]_-{Y}&B
}}}
\right)
\vcenter{\hbox{\xymatrix@!0{\ar@{|->}[r]&}}}
\left(
\vcenter{\hbox{\xymatrix{
I\ar[d]^-{i}\\
R\ar[d]^-{Y}\\
B}}}
\ ,
\vcenter{\hbox{\xymatrix@!0@R=3.75mm@C=5mm{
&&&&B\ar[rrdddd]^-{1i}\ar@/^8mm/[rrrrdddddddd]^-{c}&&&&\\
&&&&&&&&\\
&&&&&&&&\\
&&&&&&&&\\
&&R\ar[rrd]^-{1i}\ar[rruuuu]^-{Y}&&&&BR\ar[rrdddd]^-{b}&&\\
&&&&RR\ar[rddd]^-{i^*\!1}\ar[rru]^-{Y1}\ar@{}[uuuu]|*=0[@]{\cong}\xtwocell[rrrrddd]{}<>{^<-.5>y}&&&&\\
&&&&&&&&\\
&&&&&&&&\\
I\ar[rrr]_-{i}\ar[rruuuu]^-{i}&&&R\ar[rr]_-{1}\ar[ruuu]^-{i1}\ar@{}[luuuu]|*=0[@]{\cong}\xtwocell[rr]{!<2mm,0mm>}<>{^<-2>\eta 1 }&&R\ar[rrr]_-{Y}&&&B
}}}
\right)
\]
\end{teo}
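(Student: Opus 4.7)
The strategy is to exploit the opmonadicity of $i\dashv i^*$ together with the opmonadic-friendliness of $\mathcal{M}$ to transfer structure between cells on $R$ and cells on $I$, in the same spirit as Theorems~\ref{teo:OpmonadicityOpmonoidal} and \ref{teo:OpmonadicityComodules}. The forward functor $\Psi$ is given explicitly in the statement: it sends $(Y,y)$ to $(X,\xi)=(Yi,\xi)$, where $\xi$ is obtained by whiskering the left unitor cell $\eta 1$ of $R$'s skew monoidal structure (from Lemma~\ref{lem:OneRightSkewMonoidale}) with $Y$ on the left and $i$ on the right, then pasting with $y$, and identifying the target with $cX=b\cdot 1i\cdot Yi$ via an interchanger.

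The first task will be to verify that $\xi$ actually satisfies the coassociativity and counit axioms of a comodule for the comonad $c$ on $B$, whose comultiplication is built from $b^2$ and the skew left unitor $\lambda=\eta 1$, and whose counit is $b^0$ (see Remark~\ref{rem:InducedComonad}). Expanding both sides of the coassociative law for $\xi$ produces pastings containing two occurrences of $y$ together with the cells defining $\Delta_c$; their equality follows from \eqref{ax:COM1} combined with the pentagon \eqref{ax:SKM1} and the unit compatibilities \eqref{ax:SKM3} and \eqref{ax:SKM5} of $R$'s skew structure, and the snake identities for $i\dashv i^*$. The counit law for $\xi$ reduces to \eqref{ax:COM2} together with one of those snake identities. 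These are direct (if lengthy) pasting verifications in the style of the proof of Lemma~\ref{lem:OpmonadicityOpmonoidal}.

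For the equivalence, I will invoke the opmonadicity of $i\dashv i^*$: since $R$ is a Kleisli object for the monad $i^*i$ on $I$, precomposition with $i$ is an equivalence
\[
\mathcal{M}(R,B)\simeq \mathcal{M}(I,B)^{(-)\cdot i^*i},
\]
and by opmonadic-friendliness the same holds after tensoring with $R$, so that cells of the form of $y$ (i.e.\ in $\mathcal{M}(RR,B)$) are controlled by cells of the form of $\xi$ (i.e.\ in $\mathcal{M}(I,B)$) modulo the evident action-preservation conditions. In the reverse direction, given $(X,\xi)$, the Kleisli property extends $X$ uniquely (up to isomorphism) to some $Y\colon R\to B$ with $Yi\cong X$, and the cell $\xi$ determines a unique cell $y\colon Yi^*1\Rightarrow bY1$ by undoing the whiskering in the definition of $\xi$; the $c$-comodule axioms for $\xi$ then translate back to \eqref{ax:COM1} and \eqref{ax:COM2} for $y$. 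Fullness and faithfulness of $\Psi$ on morphisms follow along the same lines: a cell $\gamma\colon Y\Rightarrow Y'$ lifts (by opmonadicity) from a cell $\gamma\cdot i$ if and only if it is a morphism of the corresponding monad modules, and this condition matches exactly \eqref{ax:COM3} for the induced $c$-comodule cells.

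The main obstacle will be matching the axioms precisely at the diagrammatic level. While the overall argument is clean (the Kleisli universal property carries everything across), the cells $y$ and $\xi$ live in different hom categories and the verifications involve carefully tracking interchangers between $i$ and the various whiskerings, together with the snake identities for the adjunction and the oplax action axioms \eqref{ax:OLA1}--\eqref{ax:OLA3} for $b$ (which enter through $c$'s comultiplication). The technical heart of the proof is therefore analogous to, but considerably simpler than, the computations in Lemma~\ref{lem:OpmonadicityOpmonoidal} and Theorem~\ref{teo:OpmonadicityComodules}.
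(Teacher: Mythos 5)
Your overall strategy (transfer along the opmonadicity of $i\dashv i^*$, with the forward functor given by pasting with $\eta 1$) is the same as the paper's, but there is a genuine gap in your reverse direction. You write that ``given $(X,\xi)$, the Kleisli property extends $X$ uniquely (up to isomorphism) to some $Y\colon R\to B$ with $Yi\cong X$.'' The Kleisli universal property does not do this: opmonadicity gives $\mathcal{M}(R,B)\simeq\mathcal{M}(I,B)^{\mathcal{M}(I,i^*i)}$, so only arrows $X\colon I\to B$ \emph{equipped with a module structure for the monad $i^*i$} extend to arrows out of $R$, and only module morphisms lift to cells. An arbitrary object of $\mathcal{M}(I,B)_{\mathcal{M}(I,c)}$ is a $c$-comodule $(X,x)$ and carries no such module structure a priori, so essential surjectivity does not follow as you state it. The paper fills exactly this hole: it introduces an intermediate category $\mathcal{Z}(I,B)$ of triples $(X,x,\zeta)$ where $\zeta$ is an $i^*i$-module action compatible with the coaction $x$, proves that $\zeta$ is \emph{redundant} (it is forced to be the composite of $x$ with $b^0$ and the counit of $i\dashv i^*$, and conversely this formula always yields a compatible action), hence $\mathcal{Z}(I,B)\cong\mathcal{M}(I,B)_{\mathcal{M}(I,c)}$, and only then applies opmonadicity to get $\rComod^{\text{mate}}_{\id_R}((R,i^*1),(B,b))\simeq\mathcal{Z}(I,B)$. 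The compatibility axiom (the paper's \eqref{ax:Z1}) is also what guarantees that $x$ is a morphism of $i^*i$-modules, which is the hypothesis needed to lift it to the cell $\tilde y$; your proposal asserts the translation of axioms without supplying this bridge.

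A secondary issue: to relate cells in $\mathcal{M}(RR,B)$ (where $y$ lives) to cells in $\mathcal{M}(R,B)$ you appeal to opmonadic-friendliness so as to tensor the opmonadic adjunction with $R$. That hypothesis is not part of this theorem's statement, and it is not needed: the paper instead takes the \emph{mate} of $y$ along $i1\dashv i^*1$, which is an exact bijection of cells $y\leftrightarrow\tilde y$ (giving the isomorphism $\rComod_{\id_R}((R,i^*1),(B,b))\cong\rComod^{\text{mate}}_{\id_R}((R,i^*1),(B,b))$) and requires no preservation of opmonadicity under tensoring. Your forward functor and the verification sketches for the comodule axioms of $\xi$ are fine in spirit, but the proof as proposed does not close without the redundancy-of-the-action argument.
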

\begin{proof}[Sketch]

The action of the proposed functor in the statement on the structure cells $y$ may be factorised by first taking the mate of $y$ with respect to the adjunction $i\dashv i^*$ and then by precomposing with $i$.

For an arrow $\xymatrix@1@C=5mm{Y:R\ar[r]&B}$, cells $y$ as in the statement are in bijection with their mates with respect to the adjunction $i\dashv i^*$.
\[
\vcenter{\hbox{\xymatrix@!0@=15mm{
RR\ar[r]^-{Y1}\ar[d]_-{i^*1}\xtwocell[rd]{}<>{^y}&BR\ar[d]^-{b}\\
R\ar[r]_-{Y}&B
}}}
\vcenter{\hbox{\xymatrix{
\ar@{<~>}[r]&
}}}
\vcenter{\hbox{\xymatrix@!0@=15mm{
RR\ar[r]^-{Y1}&BR\ar[d]^-{b}\\
R\ar[r]_-{Y}\ar[u]^-{i1}\xtwocell[r]{}<>{^<-4.5>\tilde{y}}&B
}}}
\]
A cell $y$ satisfies the axioms \eqref{ax:COM1} and \eqref{ax:COM2} that turn $(Y,y)$ into an $\id_R$-comodule from $i^*1$ to $b$ if and only if its mate $\tilde{y}$ satisfies two other axioms \eqref{ax:COM1}\textsuperscript{mate} and \eqref{ax:COM2}\textsuperscript{mate} obtained by taking mates of each side of the original ones for $y$. Call $\rComod_{\id_{R}}^{\text{mate}}((R,i^*1),(B,b))$ the category whose objects consist of an arrow $\xymatrix@1@C=5mm{Y:R\ar[r]&B}$ together with a cell $\tilde{y}$ as above, satisfying axioms \eqref{ax:COM1}\textsuperscript{mate} and \eqref{ax:COM2}\textsuperscript{mate}. The arrows of $\rComod_{\id_{R}}^{\text{mate}}((R,i^*1),(B,b))$ are cells $\xymatrix@1@C=5mm{\gamma:Y\ar[r]&Y'}$ obtained in a similar fashion. Hence the isomorphism of categories below.
\[
\rComod_{\id_{R}}((R,i^*1),(B,b))\cong\rComod_{\id_{R}}^{\text{mate}}((R,i^*1),(B,b))
\]

Now define $\mathcal{Z}(I,B)$ as the category whose objects are triples $(X,x,\zeta)$,
\[
\vcenter{\hbox{\xymatrix{
I\ar[r]^-{X}&B
}}}
\qquad
\vcenter{\hbox{\xymatrix@!0@=15mm{
B\ar[r]^-{1i}&BR\ar[d]^-{b}\\
I\ar[u]^-{X}\ar[r]_-{X}\xtwocell[r]{}<>{^<-4.5>x}&B
}}}
\qquad
\vcenter{\hbox{\xymatrix{
I\ar[r]_-{i}\ar@/^9mm/[rrr]^-{X}\xtwocell[rrr]{}<>{^<-3>\zeta}&R\ar[r]_-{i^*}&I\ar[r]_-{X}&B
}}}
\]
such that $(X,x)$ is a comodule for the comonad $c$, and the cell $\zeta$ is an action on $X$ with respect to the monad induced by $i\dashv i^*$, satisfying the following compatibility condition,
\begin{equation}
\tag{Z1}\label{ax:Z1}
\vcenter{\hbox{\xymatrix@!0@=15mm{
B\ar@/^9mm/[rrr]^-{1i}\xtwocell[rrr]{}<>{^<-1>x}&&&BR\ar[d]^-{b}\\
I\ar[r]_-{i}\ar[u]^-{X}\ar@/^9mm/[rrr]^-{X}\xtwocell[rrr]{}<>{^<-3>\zeta}&R\ar[r]_-{i^*}&I\ar[r]_-{X}&B
}}}
=
\vcenter{\hbox{\xymatrix@!0@=15mm{
B\ar[r]^-{1i}&BR\ar[r]^-{1i^*}\ar@/^9mm/[rr]^-{1}\xtwocell[rr]{}<>{^<-3>1\varepsilon\ }&B\ar[r]^-{1i}&BR\ar[d]^-{b}\\
I\ar[r]_-{i}\ar[u]^-{X}\ar@{}[ru]|*[@d]{\cong}&R\ar[u]|-{X1}\ar[r]_-{i^*}\ar@{}[ru]|*[@d]{\cong}&I\ar[u]|-{X}\ar[r]_-{X}\xtwocell[r]{}<>{^<-4.5>x}&B
}}}
\end{equation}
An arrow $\xymatrix@1@C=5mm{\chi:(X,x,\zeta)\ar[r]&(X',x',\zeta')}$ in $\mathcal{Z}(I,B)$ is a cell $\xymatrix@1@C=5mm{\chi:X\ar[r]&X'}$ which is simultaneously a morphism of $c$-comodules and a morphism of modules for the monad associated to $i\dashv i^*$.

There is an isomorphism of categories $\mathcal{Z}(I,B)\cong\mathcal{M}(I,B)_{\mathcal{M}(I,c)}$ which is deduced from the redundancy of the action $\zeta$ in the objects of $\mathcal{Z}(I,B)$. Indeed, for every object $(X,x,\zeta)$ in $\mathcal{Z}(I,B)$ one may express $\zeta$ in terms of $x$ and $b^0$ as follows.
\[
\vcenter{\hbox{\xymatrix{
I\ar[r]_-{i}\ar@/^10mm/[rrr]^-{X}&R\ar[r]_-{i^*}\xtwocell[r]{}<>{^<-3>\zeta}&I\ar[r]_-{X}&B
}}}
=
\vcenter{\hbox{\xymatrix@!0@=15mm{
&&B\ar@/^3mm/[rd]^-{1i}\ar@/^18mm/[rdd]^-{1}\xtwocell[rdd]{}<>{^<-7>b^0}&\\
B\ar[r]^-{1i}\ar@/^5mm/[rru]^-{1}&BR\ar[r]^-{1i^*}\ar@/^9mm/[rr]^-{1}\xtwocell[rr]{}<>{^<-3>1\varepsilon\ }&B\ar[r]^-{1i}&BR\ar[d]^-{b}\\
I\ar[r]_-{i}\ar[u]^-{X}\ar@{}[ru]|*[@d]{\cong}&R\ar[u]|-{X1}\ar[r]_-{i^*}\ar@{}[ru]|*[@d]{\cong}&I\ar[u]|-{X}\ar[r]_-{X}\xtwocell[r]{}<>{^<-4.5>x}&B
}}}
\]
And if for an arbitrary $c$-comodule $(X,x)$ one defines $\zeta$ by the equation above, then $(X,x,\zeta)$ becomes an object of $\mathcal{Z}(I,B)$, therefore the functor $\xymatrix@1@C=5mm{\mathcal{Z}(I,B)\ar[r]&\mathcal{M}(I,B)_{\mathcal{M}(I,c)}}$ which forgets the action $\zeta$ is an isomorphism of categories.

Now, the functor $K$ below induced by precomposition with the opmonadic arrow $\xymatrix@1@C=5mm{i:I\ar[r]&R}$ is an equivalence of categories $\rComod_{\id_{R}}^{\text{mate}}((R,i^*1),(B,b))\simeq\mathcal{Z}(I,B)$ because of the opmonadicity of $i\dashv i^*$.
\[
\vcenter{\hbox{\xymatrix{K:\rComod_{\id_{R}}^{\text{mate}}((R,i^*1),(B,b))\ar[rr]&&\mathcal{M}(I,B)_{\mathcal{M}(I,c)}\cong\mathcal{Z}(I,B)
}}}
\]
\[
\left(
\vcenter{\hbox{\xymatrix{
R\ar[d]^-{Y}\\
B
}}}
\quad,
\vcenter{\hbox{\xymatrix@!0@=15mm{
RR\ar[r]^-{Y1}&BR\ar[d]^-{b}\\
R\ar[r]_-{Y}\ar[u]^-{i1}\xtwocell[r]{}<>{^<-4.5>\tilde{y}}&B
}}}
\right)
\vcenter{\hbox{\xymatrix@!0{\ar@{|->}[r]&}}}
\left(
\vcenter{\hbox{\xymatrix{
I\ar[d]^-{i}\\
R\ar[d]^-{Y}\\
B}}}
\quad,
\vcenter{\hbox{\xymatrix@!0@R=7.5mm@C=10mm{
&&B\ar[rdd]^-{1i}&&\\
&&&&\\
&R\ar[rd]_-{1i}\ar[ruu]^-{Y}&&BR\ar[rdd]^-{b}&\\
&&RR\ar[ru]^-{Y1}\ar@{}[uu]|*[@]{\cong}&&\\
I\ar[rr]_-{i}\ar[ruu]^-{i}\ar@{}[rru]|*[@]{\cong}&&R\ar[rr]_-{Y}\ar[u]^-{i1}\xtwocell[rru]{}<>{^<-1>\tilde{y}}&&B
}}}
\right)
\]
This assignation $K$ is a well defined functor because axioms \eqref{ax:COM1}\textsuperscript{mate} and \eqref{ax:COM2}\textsuperscript{mate} translate precisely into the axioms for a $c$-comodule from $I$ to $B$. In fact, the unit axiom is literally the same, and the associative axiom follows from the calculation below.
\[
\vcenter{\hbox{\xymatrix@!0@R=7mm@C=13mm{
&&B\ar[rdd]^-{1i}&&\\
&&&&\\
&R\ar[rd]^-{1i}\ar[ruu]^-{Y}&&BR\ar[rdd]^-{b}&\\
&&RR\ar[ru]^-{Y1}\ar@{}[uu]|*[@]{\cong}&&\\
I\ar[rr]|-{i}\ar[ruu]^-{i}\ar[rdd]_-{i}\ar@{}[rru]|*[@]{\cong}\ar@{}[rrd]|*[@]{\cong}&&R\ar[rr]|-{Y}\ar[u]^-{i1}\ar[d]_-{1i}\xtwocell[rru]{}<>{^<-1>\tilde{y}}&&B\ar[ldd]^-{1i}\\
&&RR\ar[rd]^-{Y1}\ar@{}[rru]|*[@]{\cong}&&\\
&R\ar[ru]_-{i1}\ar[rdd]_-{Y}\xtwocell[rrd]{}<>{^\tilde{y}}&&BR\ar[ldd]^-{b}&\\
&&&&\\
&&B&&\\
}}}
\quad=\quad
\vcenter{\hbox{\xymatrix@!0@R=7mm@C=6.5mm{
&&&&B\ar[rrdd]^-{1i}&&&&\\
&&&&&&&&\\
&&R\ar[rrd]^-{1i}\ar[rruu]^-{Y}&&&&BR\ar[rrdd]^-{b}\ar[dd]|-{11i}&&\\
&&&&RR\ar[rru]^-{Y1}\ar[d]_>>{11i}\ar@{}[uu]|*[@]{\cong}&&&&\\
I\ar[rr]|-{i}\ar[rruu]^-{i}\ar[rrdd]_-{i}\ar@{}[rrrruu]|*[@]{\cong}\ar@{}[rrrrdd]|*[@]{\cong}&&R\ar[rru]^-{i1}\ar[rrd]_-{1i}&&RRR\ar[rr]^-{Y11}\ar@{}[rruu]|*[@]{\cong}&&BRR\ar[dd]^-{b1}\ar@{}[rr]|*[@]{\cong}&&B\ar[lldd]^-{1i}\\
&&&&RR\ar[rrd]_-{Y1}\ar[u]^>>{i11}\xtwocell[rrd]{}<>{^<-3>\tilde{y}1}&&&&\\
&&R\ar[rru]_-{i1}\ar[rrdd]_-{Y}\xtwocell[rrrrd]{}<>{^\tilde{y}}&&&&BR\ar[lldd]^-{b}&&\\
&&&&&&&&\\
&&&&B&&&&\\
}}}
\]
\[
=\quad
\vcenter{\hbox{\xymatrix@!0@R=7mm@C=6.5mm{
&&&&B\ar[rrdd]^-{1i}&&&&\\
&&&&&&&&\\
&&R\ar[rrd]^-{1i}\ar[dd]_-{1i}\ar[rruu]^-{Y}&&&&BR\ar[rrdd]^-{b}\ar[dd]|-{11i}&&\\
&&&&RR\ar[rru]^-{Y1}\ar[d]_-{11i}\ar@{}[uu]|*[@]{\cong}&&&&\\
I\ar[rruu]^-{i}\ar[rrdd]_-{i}\ar@{}[rr]|*[@]{\cong}&&R\ar[rr]^-{1i1}\ar@{}[rruu]|*[@]{\cong}\ar@{}[rrdd]|*[@]{\cong}&&RRR\ar[rr]^-{Y11}\ar@{}[rruu]|*[@]{\cong}&&BRR\ar[dd]^-{b1}\ar@{}[rr]|*[@]{\cong}&&B\ar[lldd]^-{1i}\\
&&&&RR\ar[rrd]_-{Y1}\ar[u]^-{i11}\xtwocell[rrd]{}<>{^<-3>\tilde{y}1}&&&&\\
&&R\ar[rru]_-{i1}\ar[uu]^-{i1}\ar[rrdd]_-{Y}\xtwocell[rrrrd]{}<>{^\tilde{y}}&&&&BR\ar[lldd]^-{b}&&\\
&&&&&&&&\\
&&&&B&&&&\\
}}}
\]
\[
\stackrel{\eqref{ax:COM1}}{=}\quad
\vcenter{\hbox{\xymatrix@!0@R=7mm@C=6.5mm{
&&&&B\ar[rrdd]^-{1i}&&&&\\
&&&&&&&&\\
&&R\ar[rrd]^-{1i}\ar[dd]_-{1i}\ar[rruu]^-{Y}&&&&BR\ar[rrdd]^-{b}\ar[dd]|-{11i}&&\\
&&&&RR\ar[rru]^-{Y1}\ar[d]_-{11i}\ar@{}[uu]|*[@]{\cong}&&&&\\
I\ar[rruu]^-{i}\ar[rrdd]_-{i}\ar@{}[rr]|*[@]{\cong}&&RR\ar[rr]^-{1i1}\ar[rrdd]|-{Y1}\ar@{}[rruu]|*[@]{\cong}\xtwocell[rrrrr]{}<>{^<3>Y\eta 1\quad}&&RRR\ar[rr]^-{Y11}\ar@{}[rruu]|*[@]{\cong}&{\xtwocell[dddd]{}<>{^b^2}}&BRR\ar[dd]^-{b1}\ar[lldd]|-{1i^*1}\ar@{}[rr]|*[@]{\cong}&&B\ar[lldd]^-{1i}\\
&&&&&&&&\\
&&R\ar[uu]^-{i1}\ar[rrdd]_-{Y}&&BR\ar[dd]^-{b}&&BR\ar[lldd]^-{b}&&\\
&{\xtwocell[rrd]{}<>{^<-7>\tilde{y}}}&&&&&&&\\
&&&&B&&&&\\
}}}
\quad=\quad
\vcenter{\hbox{\xymatrix@!0@R=7mm@C=6.5mm{
&&&&B\ar[dd]_-{1i}\ar[rrdd]^-{1i}&&&&\\
&&&&&&&&\\
&&R\ar[rruu]^-{Y}\ar[dd]_-{1i}\ar@{}[rr]|*[@]{\cong}&&BR\ar[rrdd]|-{1i1}\ar[dddd]_-{1}\ar@{}[rr]|*[@]{\cong}\xtwocell[dddd]{}<>{^<-2>1\eta 1}&&BR\ar[dd]|-{11i}\ar[rrdd]^-{b}&&\\
&&&&&&&&\\
I\ar[rruu]^-{i}\ar[rrdd]_-{i}\ar@{}[rr]|*[@]{\cong}&&RR\ar[rruu]|-{Y1}\ar[rrdd]|-{Y1}\ar[dd]_-{i1}&&&{\xtwocell[dddd]{}<>{^b^2}}&BRR\ar[lldd]|-{1i^*1}\ar[dd]^-{b1}\ar@{}[rr]|*[@]{\cong}&&B\ar[lldd]^-{1i}\\
&&&&&&&&\\
&&R\ar[rrdd]_-{Y}&&BR\ar[dd]_-{b}&&BR\ar[lldd]^-{b}&&\\
&{\xtwocell[rrd]{}<>{^<-7>\tilde{y}}}&&&&&&&\\
&&&&B&&&&
}}}
\]
The functor $K$ is automatically faithful since precomposing with an opmonadic arrow is a faithful process. By the opmonadicity of $i\dashv i^*$ the functor $K$ is essentially surjective on objects and full. Indeed, let $(X,x,\zeta)$ be an object in $\mathcal{Z}(I,B)$, since $(X,\zeta)$ is a module for the monad induced by $i\dashv i^*$ there exists an arrow $\xymatrix@1@C=5mm{Y:R\ar[r]&B}$ and an isomorphism
\begin{equation}\label{iso:coactions}
\vcenter{\hbox{\xymatrix@!0@C=15mm{
I\ar[rr]^{X}\ar[rd]_{i}&&B\\
&R\ar[ru]_{Y}\ar@{}[u]|*[@]{\cong}&
}}}
\end{equation}
such that the following equation holds.
\begin{equation}
\label{ax:coactions:module}
\vcenter{\hbox{\xymatrix@!0@C=12mm{
I\ar[r]_-{i}\ar@/^9mm/[rrrr]^-{X}\xtwocell[rrrr]{}<>{^<-3>\zeta}&R\ar[r]_-{i^*}&I\ar[rr]^{X}\ar[rd]_{i}&&B\\
&&&R\ar[ru]_{Y}\ar@{}[u]|*[@]{\cong}&
}}}
\quad=\quad
\vcenter{\hbox{\xymatrix@!0@C=12mm{
I\ar[r]_-{i}\ar@/^9mm/[rrrr]^-{X}&R\ar[dr]_-{i^*}\ar@/^3mm/[rr]|-{1}\xtwocell[rr]{}<>{^<1>\varepsilon}&\ar@{}[]!<0mm,13mm>|*=0[@]{\cong}&R\ar[r]_{Y}&B\\
&&I\ar[ru]_{i}&&
}}}
\end{equation}
Furthermore, axiom~\eqref{ax:Z1} may be read as the fact that $x$ is a morphism of modules for the monad induced by $i\dashv i^*$, thus by opmonadicity there exists a cell $\tilde{y}$ such that the following equation holds.
\begin{equation}
\label{ax:coactions:comodule}
\vcenter{\hbox{\xymatrix@!0@R=7.5mm@C=10mm{
&&B\ar[rdd]^-{1i}&&\\
&&&&\\
&&&BR\ar[rdd]^-{b}&\\
&&&&\\
I\ar[rr]_-{i}\ar@/^9mm/[rruuuu]^-{X}\ar@/^9mm/[rrrr]^-{X}\xtwocell[rrr]{}<>{^<-10>x}&&R\ar[rr]_-{Y}\ar@{}[u]|>>*=0[@]{\cong}&&B
}}}
\quad=\qquad
\vcenter{\hbox{\xymatrix@!0@R=7.5mm@C=10mm{
&&B\ar[rdd]^-{1i}&&\\
&&&&\\
&R\ar[rd]_-{1i}\ar[ruu]^-{Y}\ar@{}[lu]|<<<*=0[@]{\cong}&&BR\ar[rdd]^-{b}&\\
&&RR\ar[ru]^-{Y1}\ar@{}[uu]|*[@]{\cong}&&\\
I\ar[rr]_-{i}\ar[ruu]^-{i}\ar@/^9mm/[rruuuu]^-{X}\ar@{}[rru]|*[@]{\cong}&&R\ar[rr]_-{Y}\ar[u]^-{i1}\xtwocell[rru]{}<>{^<-1>\tilde{y}}&&B
}}}
\end{equation}
The data $(Y,\tilde{y})$ constitute an object of $\rComod_{\id_{R}}^{\text{mate}}((R,i^*1),(B,b))$: axiom~\eqref{ax:COM1}\textsuperscript{mate} for $(Y,\tilde{y})$ follows by precomposing both sides with the opmonadic arrow $\xymatrix@1@C=5mm{i:I\ar[r]&R}$ to obtain each side of the coassociative axiom for the coaction $x$, which are equal; and axiom~\eqref{ax:COM2}\textsuperscript{mate} is equal to the counit axiom for the coaction $x$. Hence, in light of equations~\eqref{ax:coactions:module} and \eqref{ax:coactions:comodule} the isomorphism~\eqref{iso:coactions} is in $\mathcal{Z}(I,B)$ and reads as $K(Y,\tilde{y})\cong(X,x)$, so $K$ is essentially surjective on objects. Now, let $\xymatrix@1@C=5mm{\chi:(X,x,\zeta)\ar[r]&(X',x',\zeta')}$ be a morphism in $\mathcal{Z}(I,B)$, as $\chi$ is a morphism of modules for the monad associated to $i\dashv i^*$ there exists a cell $\xymatrix@1@C=5mm{\gamma:Y\ar[r]&Y'}$ in $\mathcal{M}$ such that the following equation holds.
\begin{equation}
\label{iso:opmonadicity:cells2}
\vcenter{\hbox{\xymatrix@!0@C=15mm{
I\ar[rd]_-{i}\ar[rr]^-{X}\ar@/^10mm/[rr]<1mm>^-{X'}\xtwocell[rr]{}<>{^<-4>\chi}&&B\\
&R\ar[ru]_-{Y}\ar@{}[u]|-*[@]{\cong}&
}}}
\quad=\quad
\vcenter{\hbox{\xymatrix@!0@C=15mm{
I\ar[rd]_-{i}\ar@/^10mm/[rr]<1mm>^-{X'}\ar@{}[r]|>>>>>*[@u]{\cong}&&N\\
&R\ar[ru]_-{Y}\ar@/^8mm/[ru]^-{Y'}\xtwocell[ru]{}<>{^<-2>\gamma}&
}}}
\end{equation}
To prove that $\gamma$ is a cell in $\rComod_{\id_{R}}^{\text{mate}}((R,i^*1),(B,b))$ precompose both sides of axiom \eqref{ax:COM3}\textsuperscript{mate} with the opmonadic arrow $i$; this produces the two sides of the axiom that makes $\chi$ into a morphism of $c$-comodules, which are equal. Thus $\gamma$ is in $\rComod_{\id_{R}}^{\text{mate}}((R,i^*1),(B,b))$ and the equation~\eqref{iso:opmonadicity:cells2} now reads as $K(\gamma)=\chi$, so $K$ is full. The theorem follows by the sequence of equivalences and isomorphisms below.
\[
\rComod_{\id_{R}}((R,i^*1),(B,b))\cong\rComod_{\id_{R}}^{\text{mate}}((R,i^*1),(B,b))\simeq\mathcal{Z}(I,B)\cong\mathcal{M}(I,B)_{\mathcal{M}(I,c)}
\]
\end{proof}

\begin{cor}\label{cor:ComodulesAreComodules}
Let $\mathcal{M}$ be an opmonadic-friendly monoidal bicategory. For every pair of bidualities $R\dashv R\ot$ and $S\dashv S\ot$, every opmonoidal arrow $\xymatrix@1@C=5mm{C:R\ot R\ar[r]&S\ot S}$, and every opmonadic adjunction $i\dashv i^*$ whose dual $i\ob\dashv i\ot$ is opmonadic too,
\[
\xymatrix{
R\dtwocell_{i}^{i^*}{'\dashv}\\
I
}
\qquad
\xymatrix{
R\ot\dtwocell_{i\ob}^{i\ot}{'\dashv}\\
I
}
\]
there is an equivalence of categories,
\[
\rComod_C((R,e1),(S,e1))\simeq\mathcal{M}(I,S)_{\mathcal{M}(I,c)}
\]
\[
\vcenter{\hbox{\xymatrix@!0@=15mm{
RR\ot R\ar[r]^-{YC}\ar[d]_-{e1}\xtwocell[rd]{}<>{^y}&SS\ot S\ar[d]^-{e1}\\
R\ar[r]_-{Y}&S
}}}
\quad
\vcenter{\hbox{\xymatrix@!0@=15mm{
\ar@{<~>}[r]&
}}}
\quad
\vcenter{\hbox{\xymatrix@!0@C=12mm@R=5mm{
&S\ar[rd]^-{c}&\\
I\ar[ru]^-{X}\ar@/_4mm/[rr]_-{X}\xtwocell[rr]{}<>{^x}&&S\\
}}}
\]
where $\xymatrix@1@C=5mm{c:S\ar[r]&S}$ is the comonad induced by $C$ as in Remark~\ref{rem:InducedComonad}.
\end{cor}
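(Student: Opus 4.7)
The plan is to obtain the equivalence by composing two results already established in the paper. First I would apply Corollary~\ref{cor:Comodules_Local} with $B=S$ to obtain the isomorphism
\[
\rComod_C((R,e1),(S,e1))\cong\rComod_{\id_{R}}((R,i^*1),(S,s)),
\]
where $\xymatrix@1@C=5mm{s:SR\ar[r]&S}$ is the oplax right $R$-action corresponding to $C$ under Corollary~\ref{cor:Opmon_is_OplaxAct_Local}. This uses the opmonadicity of $i\ob\dashv i\ot$ together with the fact that $S\ot S$ is an enveloping monoidale and $e1$ is the canonical pseudo right action on $S$.

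Next I would invoke Theorem~\ref{teo:SkewActions_are_Coactions} with $B=S$ and $b=s$, which uses the opmonadicity of $i\dashv i^*$ to produce the equivalence
\[
\rComod_{\id_{R}}((R,i^*1),(S,s))\simeq\mathcal{M}(I,S)_{\mathcal{M}(I,c')}
\]
where $\xymatrix@1@C=5mm{c':S\ar[r]&S}$ is the comonad induced by the oplax action $s$ as in Remark~\ref{rem:InducedComonad}. Chaining these two steps gives the desired equivalence modulo identifying $c'$ with the comonad $c$ induced by $C$.

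The one point that requires justification is precisely this identification $c\simeq c'$. Remark~\ref{rem:InducedComonad} already observes that both constructions produce the same comonad on $S$: starting from $C$, one precomposes with the opmonoidal unit $\xymatrix@1@C=5mm{u:I\ar[r]&R\ot R}$ (which is $n$ in this case) and transposes along the biduality $S\dashv S\ot$, while starting from $s$ one applies the functorial construction of Corollary~\ref{cor:CompOpmonWithOplax} to the same unit. Because the equivalence of Theorem~\ref{teo:BidualityOpmonoidalOplaxAction} is given by transposition along $S\dashv S\ot$, these two procedures agree up to canonical isomorphism, so $c$ and $c'$ coincide as comonads on $S$ and the categories of comodules based at $I$ are canonically isomorphic.

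I expect no serious obstacle here; the main content of the corollary is already packaged in the two cited results, and the only delicate bookkeeping is tracking the structure cells of the composite equivalence so that an object on the left
\[
\vcenter{\hbox{\xymatrix@!0@=15mm{
RR\ot R\ar[r]^-{YC}\ar[d]_-{e1}\xtwocell[rd]{}<>{^y}&SS\ot S\ar[d]^-{e1}\\
R\ar[r]_-{Y}&S
}}}
\]
is sent to the $c$-comodule described by the displayed pasting in the statement. This follows by composing the explicit assignments in Theorems~\ref{teo:BidualityComodule}, \ref{teo:OpmonadicityComodules}, and~\ref{teo:SkewActions_are_Coactions}, each of which is given by an explicit precomposition or transposition, so no further calculation is needed.
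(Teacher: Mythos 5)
Your proposal is correct and follows the paper's own argument essentially verbatim: the paper also chains the isomorphism of Corollary~\ref{cor:Comodules_Local} with the equivalence of Theorem~\ref{teo:SkewActions_are_Coactions}, and invokes Remark~\ref{rem:InducedComonad} to identify the comonad induced by $C$ with the one induced by the corresponding oplax action $s$. No gaps.
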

\begin{proof}

Let $\xymatrix@1@C=5mm{s:SR\ar[r]&S}$ be the oplax right action that corresponds to the opmonoidal arrow $C$ under the equivalence in Corollary~\ref{cor:Opmon_is_OplaxAct_Local}. By Remark~\ref{rem:InducedComonad} $c$ is the comonad induced both by the opmonoidal arrow $C$ and by the oplax action $s$. Then there is an equivalence of categories,
\[
\rComod_C((R,e1),(S,e1))\cong\rComod_{\id_R}((R,i^*1),(S,s))\simeq\mathcal{M}(I,S)_{\mathcal{M}(I,c)}
\]
where the isomorphism is an instance of Corollary~\ref{cor:Comodules_Local} and the equivalence is an instance of Theorem~\ref{teo:SkewActions_are_Coactions}.
\end{proof}

We conclude with a remark about the motivating example $\mathcal{M}=\Mod_k$. In Lemma~\ref{lem:CoalgebroidsAsQuantum} we saw how to translate between $R|S$-coalgebroids and opmonoidal arrows between enveloping monoidales $\xymatrix@1@C=5mm{C:R\ot R\ar[r]&S\ot S}$ in $\Mod_k$. There is a standard definition of a comodule for an $R|S$-coalgebroid found for example in \cite[1.4]{Hai2008} or \cite[3.6]{Bohm2009}.

\begin{defi}
Let $R$ and $S$ be two $k$-algebras and $C$ an $R|S$-coalgebroid. A $C$-comodule $X$ is a comodule for the underlying comonoid in $S$-$\Mod$-$S$ of the coalgebroid $C$, i.e. a module $X$ in $\Mod$-$S$ together with a module morphism $\xymatrix@1@C=5mm{x:X\otimes_S C\ar[r]&X}$ in $\Mod$-$S$, called the $C$-coaction, which satisfies coassociative and counit laws.
\end{defi}

\begin{rem}
If we apply Corollary~\ref{cor:ComodulesAreComodules} to the case $\mathcal{M}=\Mod_k$, we recover \cite[Lemma~1.4.1]{Hai2008}, also found in \cite[Lemma~3.17]{Bohm2009}. This is an equivalence between comodules for coalgebroids, as defined above, and comodules for the opmonoidal arrows in $\Mod_k$ that correspond to coalgebroids as described in Example~\ref{exa:ComodulesForCoalgebroids}. Moreover, these two versions of comodules for coalgebroids are also equivalent to those defined via oplax actions as in Example~\ref{exa:ComoduleForOplaxAction}. The only difference between the definition of comodule for a coalgebroid via oplax actions and the standard definition is that in the former the underlying module is in $R$-$\Mod$-$S$ while in the latter is in $\Mod$-$S$.
\end{rem}

Now, a sufficient condition to have a monoidal structure on the category of comodules for a coalgebroid, is that the coalgebroid is in fact a bialgebroid \cite[Corollary 1.7.2]{Hai2008}. In our language, just as a coalgebroid means an opmonoidal arrow $\xymatrix@1@C=5mm{C:R\ot R\ar[r]&S\ot S}$, a bialgebroid is an opmonoidal monad $\xymatrix@1@C=5mm{B:R\ot R\ar[r]&R\ot R}$. This description of bialgebroids in the language of monoidal bicategories is due to \cite{Day2003a} and it is motivated by the work of \cite{Szlachanyi2003}. We get a monoidal structure in the category of comodules for opmonoidal monads on an enveloping monoidale $R\ot R$ in a similar way.

\begin{teo}
For every object $A$, every right skew monoidale $M$, every oplax right action $\xymatrix@1@C=5mm{a:AM\ar[r]&A}$, and every opmonoidal monad $\xymatrix@1@C=5mm{B:M\ar[r]&M}$ the category of right $B$-comodules $\rComod_B((A,a),(A,a))$ has a monoidal structure such that the forgetful functor
\[
\vcenter{\hbox{\xymatrix{
\rComod_B((A,a),(A,a))\ar[r]&\mathcal{M}(A,A)
}}}
\]
is strong monoidal. The tensor product and unit of $B$ comodules is calculated as shown.
\[
\vcenter{\hbox{\xymatrix@!0@=10mm{
&&AM\ar[rd]^-{Y11}&&\\
&AM\ar[ru]^-{1B}\ar[rd]|-{Y11}&&AM\ar[rd]^-{Z11}&\\
AM\ar[rr]_-{YB}\ar[dd]_-{a}\ar[ru]^-{1B}\ar@/^12mm/[rruu]^-{1B}\xtwocell[rrdd]{}<>{^y}\xtwocell[rruu]{}<>{^<-4>1\mu\ }&&AM\ar[rr]_-{ZB}\ar[dd]_-{a}\ar[ru]|-{1B}\xtwocell[rrdd]{}<>{^z}\ar@{}[uu]|*=0[@]{\cong}&&AM\ar[dd]^-{a}\\
\\
A\ar[rr]_Y&&A\ar[rr]_Z&&A
}
\qquad\quad
\xymatrix@!0@=20mm{
\\
AM\ar[r]_-{1}\ar[d]_-{a}\ar@/^7mm/[r]^-{1B}\xtwocell[r]{}<>{^<-2>1\eta\ }&AM\ar[d]^-{a}\\
A\ar[r]_-{1}&A
}}}
\]
\end{teo}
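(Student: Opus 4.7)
\textit{Proof plan.} We follow the approach indicated by the explicit formulas for the tensor product and unit. Given two right $B$-comodules $(Y,y)$ and $(Z,z)$, their tensor product is defined to be the composite $YZ$ in $\mathcal{M}(A,A)$, equipped with the $B$-coaction obtained by first horizontally composing the coactions $y$ and $z$ in $\rComod(\mathcal{M})$ (which naively yields a $BB$-coaction, as in Remark~\ref{rem:HorizontalCompComodules}) and then whiskering with the monad multiplication $1\mu$ to contract $BB$ into a single $B$. The unit object is $\id_A$ equipped with the coaction obtained by whiskering with the monad unit $1\eta$. This is the natural generalisation of the classical construction for bialgebras and bialgebroids.

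The core of the argument consists of verifying the two comodule axioms for these new coactions. For the tensor product, the coassociativity \eqref{ax:COM1} follows from a pasting diagram that combines: the associativity of the monad multiplication $\mu$; the opmonoidal-cell axiom \eqref{ax:OM4} for $\mu$, which relates $\mu$ to the opmonoidal composition constraint $B^2$ (this is precisely a part of the definition of an opmonoidal monad); and the coassociativity axioms for $y$ and $z$ individually. The counit axiom \eqref{ax:COM2} is handled similarly, now using the monad unit axiom, the opmonoidal-cell axiom \eqref{ax:OM5} for $\eta$, and the individual counit axioms for $y$ and $z$. For the unit object, both comodule axioms follow directly from the monad unit axioms together with \eqref{ax:OM4} and \eqref{ax:OM5} applied to $\eta$. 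Bifunctoriality of the tensor with respect to morphisms of comodules is a straightforward whiskering computation.

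Next, the associator and unitors of the monoidal structure are inherited from the composition of arrows in $\mathcal{M}(A,A)$. Verifying that they are morphisms of $B$-comodules, i.e.\ that they satisfy \eqref{ax:COM3}, uses only the associativity and unit axioms of the monad $B$; the pentagon and triangle coherence axioms then reduce to the corresponding strict equalities in $\mathcal{M}(A,A)$. By construction the forgetful functor $\rComod_B((A,a),(A,a))\to\mathcal{M}(A,A)$ sends the tensor of comodules to the composite of underlying arrows and the unit comodule to $\id_A$, so it is strict, and therefore strong, monoidal.

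The main obstacle is the pasting diagram verification of \eqref{ax:COM1} for the tensor coaction. This is a large but essentially mechanical pasting argument that must simultaneously juggle the interchange law (to arrange three copies of $B$ in the right positions), the opmonoidal structure cells $B^0,B^2$ of $B$, and the monad structure $\eta,\mu$ of $B$. The axioms \eqref{ax:OM4} and \eqref{ax:OM5} for opmonoidal cells are precisely what is needed to mediate between the monad data $(\eta,\mu)$ and the opmonoidal data $(B^0,B^2)$; their simultaneous validity is exactly the content of ``$B$ is an opmonoidal monad'', and the calculation should be organised to apply them at the appropriate step of the pasting.
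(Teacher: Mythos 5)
Your proposal is correct and follows essentially the same route as the paper, whose own proof is only a two-line sketch: the tensor product is the horizontal composite of comodules in $\rComod(\mathcal{M})$ contracted along the opmonoidal cell $\mu$, the unit object is $\id_A$ with coaction built from $\eta$, and the monoidal-category axioms reduce to the monad laws for $B$ together with the coherence of horizontal composition, with the forgetful functor strict (hence strong) monoidal by construction. One minor misattribution: the counit axiom (COM2) for the tensor coaction is transferred using the opmonoidal-cell axiom (OM5) for $\mu$ (not for $\eta$, and the monad unit law plays no role there), while (OM5) for $\eta$ is what the unit comodule requires.
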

\begin{proof}

The associator and left and right unitor isomorphisms are induced by those of the horizontal composition of $\rComod(\mathcal{M})$. And the axioms for a monoidal category follow from the associativity and unitality of the monad structure of $B$ and from coherence axioms for the horizontal composition of $B$-comodules.
\end{proof}

If we let $A=R$, $M=R\ot R$, and $\xymatrix@1@C=5mm{a=e1:RR\ot R\ar[r]&R}$ we obtain the following result.
\begin{cor}
For every opmonoidal monad $\xymatrix@1@C=5mm{B:R\ot R\ar[r]&R\ot R}$ on an enveloping monoidale $R\ot R$ in $\mathcal{M}$ the category $\rComod_B((R,e1),(R,e1))$ of right $B$-comodules has a monoidal structure such that the forgetful functor
\[
\xymatrix{\rComod_B((R,e1),(R,e1))\ar[r]&\mathcal{M}(R,R)}
\]
is strong monoidal.
\end{cor}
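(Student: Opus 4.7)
The plan is simply to invoke the preceding theorem with the specialisation $A := R$, $M := R\ot R$, and $a := e1$. The only thing to check, therefore, is that $\xymatrix@1@C=5mm{e1 : RR\ot R\ar[r]&R}$ genuinely qualifies as an oplax right action of the right skew monoidale $R\ot R$ on $R$; after that the corollary follows by direct substitution, since an opmonoidal monad $\xymatrix@1@C=5mm{B:R\ot R\ar[r]&R\ot R}$ is exactly what the theorem calls for.

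For the verification I would appeal to Remark~\ref{rem:SkewInducesAction}: the enveloping monoidale $R\ot R$ induced by the biduality $R\dashv R\ot$ (as in Remark~\ref{rem:PantsMonoidale}) comes equipped with the canonical evaluation arrow $e1$, which is in fact a \emph{pseudo} right $(R\ot R)$-action on $R$, with associator and unitor cells manufactured out of the swallowtail and snake structure of the biduality. A pseudo right action is in particular an oplax right action, so the hypotheses of the theorem are met. Substituting these data gives the claimed monoidal structure on $\rComod_B((R,e1),(R,e1))$, and the theorem's assertion that the forgetful functor down to the underlying arrows is strong monoidal translates under this specialisation to strong monoidality of $\xymatrix@1@C=5mm{\rComod_B((R,e1),(R,e1))\ar[r]&\mathcal{M}(R,R)}$, as required.

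The explicit tensor product of two $B$-comodules $(Y,y)$ and $(Z,z)$ and the unit $B$-comodule are then read off from the general formulae displayed in the theorem, with the action $a$ replaced everywhere by $e1$; in particular the tensor product of $Y$ and $Z$ in $\mathcal{M}(R,R)$ is their horizontal composite, and the unit is the identity arrow on $R$, confirming the strong monoidal character of the forgetful functor on the nose. I do not anticipate any obstacle: the only non-trivial input is the general theorem, which has already been established, and the pseudo-action structure on $e1$ is part of the standard package of a biduality.
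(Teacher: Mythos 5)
Your proposal is correct and matches the paper's own argument: the corollary is obtained precisely by specialising the preceding theorem to $A=R$, $M=R\ot R$, $a=e1$, with the fact that $e1$ is a (pseudo, hence oplax) right $R\ot R$-action already supplied by Remark~\ref{rem:SkewInducesAction}.
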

\bibliographystyle{alpha}
\bibliography{ComodulesForCoalgebroids}
\end{document}